\documentclass[reqno, xcolor=pdftex,letterpaper]{amsart}

\usepackage{longtable} 
\usepackage{hyperref}
\hypersetup{
    colorlinks=true,    % Colors the text, removes the boxes/squares
    citecolor=green,      % Citations [1] are Red
    linkcolor=red,     % Internal links (Eq. 1.1) are Blue (change to 'red' if you want them red too)   % URLs are Blue   % Ensures no border is drawn
}
\usepackage[T1]{fontenc}
\usepackage[utf8]{inputenc}
\usepackage[english]{babel}
\usepackage{textcomp}
\usepackage{dsfont}
\usepackage{latexsym}
\usepackage{amssymb}
\usepackage{amsthm}
\usepackage{amsmath}
\DeclareMathAlphabet{\mathpzc}{OT1}{pzc}{m}{en}
\usepackage{yfonts}
\usepackage{xfrac}
\usepackage{newlfont}
\usepackage{graphicx}
\usepackage{mathtools}
\usepackage{comment}
\usepackage{indentfirst}
\usepackage{braket}
\usepackage{mathrsfs}
\usepackage{xcolor} 
\usepackage{csquotes} 
\usepackage{tikz}
\usetikzlibrary{decorations.pathreplacing}

\usepackage{wela} %\wela
\usepackage{calligra}

\usepackage{enumerate}

\usepackage{etoolbox}

\usepackage{scalerel}[2014/03/10]
\usepackage[usestackEOL]{stackengine}
\newcommand{\dashint}{\,\ThisStyle{\ensurestackMath{%
			\stackinset{c}{.2\LMpt}{c}{.5\LMpt}{\SavedStyle-}{\SavedStyle\phantom{\int}}}%
		\setbox0=\hbox{$\SavedStyle\int\,$}\kern-\wd0}\int}

\newcommand{\Car}{\mathrm{C}}
\newcommand{\RC}{\mathrm{RC}}
\newcommand{\Samp}{\mathrm{S}}

\DeclareMathOperator{\supp}{supp}

\DeclareMathOperator{\tr}{tr}
\DeclareMathOperator{\ad}{ad}

\renewcommand{\Re}{\mathrm{Re}\,}
\renewcommand{\Im}{\mathrm{Im}\,}

\newcommand{\Supp}[1]{\supp\left( #1\right) }

\newcommand{\ee}{\mathrm{e}}

\newcommand{\dL}{\mathfrak{D}}

\newcommand{\loc}{\mathrm{loc}}
\newcommand{\vect}[1]{\mathbf{{#1}}}
\newcommand{\dd}{\mathrm{d}}

\DeclarePairedDelimiter{\abs}{\lvert}{\rvert}

\DeclarePairedDelimiter{\norm}{\lVert}{\rVert}

\let\originalleft\left
\let\originalright\right
\renewcommand{\left}{\mathopen{}\mathclose\bgroup\originalleft}
\renewcommand{\right}{\aftergroup\egroup\originalright}

\newcommand{\N}{\mathbb{N}}
\newcommand{\Z}{\mathbb{Z}}
\newcommand{\Q}{\mathbb{Q}}

\newcommand{\C}{\mathbb{C}}

\newcommand{\R}{\mathbb{R}}

\newcommand{\gf}{\mathfrak{g}}

\newcommand{\Mf}{\mathfrak{M}}

\newcommand{\Bs}{\mathscr{B}}

\newcommand{\Fc}{\mathcal{F}}

\newcommand{\Jc}{\mathcal{J}}

\newcommand{\Lc}{\mathcal{L}}
\renewcommand{\Mc}{\mathcal{M}}

\newcommand{\Pc}{\mathcal{P}}

\newcommand{\Rc}{\mathcal{R}}
\newcommand{\Sc}{\mathcal{S}}
\newcommand{\Tc}{\mathcal{T}}

\newcommand{\Fs}{\mathscr{F}}

\newcommand{\Mr}{\mathscr{M}}

\newcommand{\meg}{\leqslant}
\newcommand{\Meg}{\geqslant}
\renewcommand{\geq}{\geqslant}
\renewcommand{\leq}{\leqslant}
\newcommand{\eps}{\varepsilon}
\renewcommand{\phi}{\varphi}
\newcommand{\mi}{\mu}

\DeclareFontFamily{U}{mathx}{\hyphenchar\font45}
\DeclareFontShape{U}{mathx}{m}{n}{
      <5> <6> <7> <8> <9> <10>
      <10.95> <12> <14.4> <17.28> <20.74> <24.88>
      mathx10
      }{}
\DeclareSymbolFont{mathx}{U}{mathx}{m}{n}
\DeclareFontSubstitution{U}{mathx}{m}{n}
\DeclareMathAccent{\widecheck}{0}{mathx}{"71}
\DeclareMathAccent{\wideparen}{0}{mathx}{"75}
\keywords{Lie groups, Besov spaces, Triebel--Lizorkin spaces, weighted subcoercive operators.}
\thanks{{\em Math Subject Classification 2020}: 46E36, 22E30.}
\thanks{The authors are members of the 	Gruppo Nazionale per l'Analisi
	Matematica, la Probabilit\`a e le	loro Applicazioni (GNAMPA) of
	the Istituto Nazionale di Alta Matematica (INdAM). The authors were partially funded by the INdAM-GNAMPA Project CUP\_E5324001950001.
}

\begin{document}
	
		\title[Function spaces for weighted subcoercive operators]{Function spaces for weighted subcoercive operators}

	\author[T.\ Bruno]{Tommaso Bruno}
	\address{Dipartimento di Matematica, Universit\`a degli Studi di Genova\\ Via Dodecaneso 35, 16146 Genova, Italy}
	\email{tommaso.bruno@unige.it}
	
	\author[M.\ Calzi]{Mattia Calzi}
	\author[M.\ M.\ Peloso]{Marco M.\ Peloso}
	\address{Dipartimento di Matematica, Universit\`a degli Studi di
	Milano, Via C. Saldini 50, 20133 Milano, Italy}
\email{{\tt mattia.calzi@unimi.it}}
\email{{\tt marco.peloso@unimi.it}}

	\theoremstyle{definition}
	\newtheorem{deff}{Definition}[section]

	\newtheorem{oss}[deff]{Remark}
	
	\newtheorem{ass}[deff]{Assumptions}
	
	\newtheorem{nott}[deff]{Notation}

	\theoremstyle{plain}
	\newtheorem{teo}[deff]{Theorem}
	
	\newtheorem{lem}[deff]{Lemma}
	
	\newtheorem{prop}[deff]{Proposition}
	
	\newtheorem{cor}[deff]{Corollary}

	\begin{abstract}
	We develop a theory of Sobolev, Besov and Triebel--Lizorkin spaces associated with weighted subcoercive operators on any real connected Lie group.
	\end{abstract}
	
	\maketitle
\addtocontents{toc}{\protect\setcounter{tocdepth}{1}}
	\tableofcontents

	\section{Introduction}
        The theory of function spaces is a fundamental, classical
        subject that is intimately connected with  regularity of solutions of
differential equations, theory of singular integrals, geometric and
functional inequalities, just to name a few.  Sobolev spaces, their generalizations of Triebel--Lizorkin spaces,  as well as Besov
spaces, play
a prominent role.  It turns out that many of the questions arising in
this vast area are better explained in terms of a particular space,
i.e.\ a particular topology, adapted to the geometry of the problem
under consideration, just as the classical Laplacian is used to define
Sobolev, Besov and Triebel--Lizorkin spaces in the
Euclidean setting. 

In this paper we introduce and study the basic
properties of a class of function spaces that are modelled on a general class of operators on any connected Lie group.  The operators we
consider are the so-called \emph{weighted sub-coercive operators},
introduced by A.\ ter Elst and D.\ Robinson~\cite{ElstRobinson} and
subsequently studied, among others, by A.\ Martini~\cite{MartiniTesi,Martini}.  These
operators are hypoelliptic and include as particular cases the positive
elliptic operators in Euclidean spaces, the sub-Laplacians on general
Lie groups, and the Rockland operators on homogeneous Lie
groups. Furthermore, the heat kernels associated to a weighted
sub-coercive operator satisfy Gaussian estimates. One of
the main results in \cite{ElstRobinson} says that the converse implication
holds too.  We refer to Section \ref{sec3-rev} for the precise
statement.  Thus, the weighted sub-coercive operators constitute a
very general class of differential operators for which ``good''
properties hold and, starting from them, it is possible to develop a
satisfactory
regularity theory.

Thus, let $G$ denote a connected Lie group, $\mathfrak{g}$ its Lie
algebra, and $U(\mathfrak{g})$  the algebra of left-invariant
differential operators on $G$ with complex coefficients.

Unlike in the case of a homogeneous group, where
 every vector field is assigned its degree of homogeneity, 
 the weighted theory begins by choosing 
an algebraic basis $X_1,\dots,X_d$ of $\mathfrak g$ (that is, a system
of generators of the Lie algebra), together with a
 system of positive weights
$w_1,\dots,w_d\in [1,\infty)$,
attached to each basis element.  
We call $X_1,\dots,X_d$ a
\emph{weighted algebraic basis}.  
Such a basis
allows one to 
determine an increasing right-continuous filtration
$(F_\lambda)_{\lambda>0}$  of 
${\mathfrak g}$.
\begin{comment}
It is then possible to select a
suitable 
subset of the 
basis, that we call a \emph{reduced weighted basis} and still denote
by $X_1,\dots,X_d$, and  defines the
same filtration $(F_\lambda)_{\lambda>0}$.
\end{comment}  
Then, the filtration
$(F_\lambda)_{\lambda>0}$ defines a finite set of weights
$1\meg\lambda_1<\cdots<\lambda_k$, with the following properties.
Setting
\[
  F_\lambda = \bigcap_{\mu > \lambda} F_\mu, \qquad F_\lambda^-
  = \bigcup_{\mu < \lambda} F_\mu, \qquad {\mathfrak g}_{*,\lambda}=  F_\lambda/F_\lambda^-
  \]
  then $\mathfrak{g}_{*,\lambda}\neq\{ 0 \} $  exactly when
  $\lambda\in\{\lambda_1,\dots,\lambda_k\}$ and  
  \[
{\mathfrak g}_* :=  \mathfrak{g}_{*,\lambda_1} \oplus \dots \oplus
\mathfrak{g}_{*,
  \lambda_k}
  \]
 may be endowed with a natural homogeneous Lie algebra structure with degrees  $\lambda_1<\cdots<\lambda_k$.

  We call \emph{weighted Lie algebra} a Lie algebra ${\mathfrak g}$  with a chosen
  reduced weighted basis, and we call ${\mathfrak g}_*$ the \emph{associated
    homogeneous algebra}.   The Lie group $G$ is called
  \emph{weighted} if ${\mathfrak g}$ is weighted and the simply connected Lie group
  $G_*$ whose Lie algebra is ${\mathfrak g}_*$ is called the \emph{contraction}
  of $G$.   We refer the reader to Subsections \ref{weightedLie} and
  \ref{subcoercive-ops} for all the details and relevant references. 

The associated graded enveloping algebra
$U({\mathfrak g}_*)$
plays the role of the algebra of principal symbols in the weighted calculus.
It is important to emphasize that the weights do not arise from a
grading of the Lie algebra in general, as no homogeneous
structure is assumed on $\mathfrak g$. The weighted filtration is defined solely
by assigning positive  weights to an algebraic basis. When the
Lie algebra happens to be homogeneous
\[
\mathfrak g=\bigoplus_{j=1}^rV_{\lambda_j},
\]
one naturally assigns weight $j$ to every basis element of $V_{\lambda_j}$, and
the weighted filtration coincides with the homogeneous filtration
determined by the grading. Thus, the weighted theory strictly
generalizes the homogeneous setting.

Having assigned the weights $w_1,\dots,w_d$, given a multiindex
$\alpha=(\alpha_1,\dots,\alpha_k )$, with $\alpha_j\in \{1,\dots,d\}$
for every $j$, we define its
weighted length $\| \alpha\|=\sum_{j=1}^k w_{\alpha_j}$.

Consider a left-invariant differential operator, for $m\in\N$,
\[
D=\sum_{\|\alpha\|\le m}c_\alpha \mathbf X_\alpha,
\]
where $\mathbf X_\alpha = X_{\alpha_1}\cdots X_{\alpha_k}$.  We define
the degree of $D$ as $\deg(D)=\max \{
\|\alpha\|\colon c_\alpha\neq0\}$; for simplicity, we shall assume that $\deg(D)=m$. 
The component
\[
P=\sum_{\|\alpha\|=m}c_\alpha \bar{\mathbf{X}}_\alpha,
\]
where $\bar X_j$ is a natural homogeneous operator on $G_*$ induced by $X_j$ (see
Subsection~\ref{weightedLie}), plays the r\^ole of the principal part. Then, the operator $D$ will be called a
\emph{weighted subcoercive operator} if $m/w_j$ is even for
$j\in\{1,\dots,d\}$ and
$P+P^*$ is a positive Rockland operator on $G_*$.   We recall that a
left-invariant operator defined on a homogeneous group is a Rockland
operator if it is homogeneous of some positive degree and its
infinitesimal representation is injective on smooth vectors for all
non-trivial unitary irreducible representations.  Again, we refer the
reader to Subsections \ref{weightedLie} and
  \ref{subcoercive-ops} for the precise definitions and proofs. 

  Here are some simple
  examples:
  \begin{itemize}
	\item If $X_1,\dots,X_d$ linearly generate $\mathfrak{g}$ and
          each of them has weight $1$, then the contraction
          $\mathfrak{g}_*$ is Euclidean (abelian and isotropic), and
          it is not difficult to see that the positive left-invariant
          elliptic operators on $G$ are precisely the weighted
          subcoercive operators with respect to this structure.
	\item If $G = \R^d$ is endowed with isotropic dilations, a
          differential operator is Rockland if and only if it is a
          constant-coefficient homogeneous elliptic operator on
          $\R^d$. In the general case a homogeneous element of $
          U(\mathfrak{g}_*)$ is Rockland if and only if it is
          hypoelliptic, see \cite{HelfferNourrigat}.
	\item On $\R^{2}$ endowed with the basis
          $\{X_{1}=\partial_{x_{1}}, X_{2}=\partial_{x_{2}} \}$ and
          weights $w_{j} = j$, the operator $\partial_{x_{1}}^{4} -
          \partial_{x_{2}}^{2} + \partial_{x_{1}} $ is weighted
          subcoercive but not Rockland.
	\end{itemize}

  \medskip

The notion of weighted subcoercivity may be formulated in terms of
a weighted Gårding
inequality.
In fact, in the seminal work  \cite{ElstRobinson} ter Elst and Robinson
show that a left-invariant operator $D$ is weighted subcoercive if and
only if it satisfies the weighted Gårding
inequality described below.
An operator $D$ is a weighted subcoercive
operator if
$m/w_j$ is even for every $j\in \{1,\dots, d\}$ and if there exist constants $C>0$ and
$\omega\in\R$ such that 
\[
\Re \langle D\phi ,\phi\rangle
\Meg
C\sum_{ \|\alpha\|\le m/2} \| \mathbf X_\alpha \varphi \|_{L^2(G)}^2
-\omega\|\varphi\|_{L^2(G)}^2
\]
for every $\varphi \in C_c^\infty(G)$.

Another fundamental characterization of weighted subcoercive
operators is still given in \cite[Theorem 1.1]{ElstRobinson}: $D$ is 
weighted subcoercive  if and only if its closure  generates a
holomorphic semigroup  with kernels satisfying Gaussian
bounds.

For simplicity, in the sequel we will actually start with a filtration
$(\gf_{\lambda})_{\lambda>0}$ to which one can associate several
weighted algebraic bases. In this way we wish to emphasize 
the fact that our construction depends on the filtration and not on
the chosen basis.
\medskip

The above mentioned facts  about weighted subcoercive
operators show that they generalize many of the fundamental properties
of elliptic constant coefficients operators in $\R^d$ and of the
sub-Laplacians on graded Lie groups.  Thus, it is 
natural to study function spaces with regularity expressed in terms of
such operators, namely the Besov and Triebel--Lizorkin spaces. The
latter ones contain the Sobolev spaces as a special case.

This is the goal of the present paper.  Given a connected Lie group
$G$  endowed with a relatively invariant measure $\beta$, 
we fix an admissible filtration $(\gf_{\lambda})$ and a weighted
subcoercive operator $\Lc$ of degree $\dL$.
Then $\Lc$ generates a semigroup of operators $(\ee^{-t\Lc})_{t>0}$,
which satisfies gaussian-like estimates.

  Suppose $\alpha \in \R$, and
  $p,q\in [1,\infty]$.
 We define the Besov space $B^{p,q}_\alpha(\beta)$ and, if $p\in
 (1,\infty)$, the Triebel--Lizorkin space $F^{p,q}_\alpha(\beta)$,
as the space of $f\in \Sc'(G)$ such that, respectively,
\[
\norm{f}_{B^{p,q}_\alpha(\beta)}\coloneqq \norm{\ee^{-\Lc} f}_{L^p(\beta)} + \bigg( \int_{0}^{1} \big(t^{-\alpha/\dL}\| (t\Lc)^{m_{\alpha}} \ee^{-t\Lc} f\|_{L^p(\beta)}\big)^{q} \, \frac{\dd t}{t}\bigg)^{1/q}
\]
and
\[
\norm{f}_{F^{p,q}_\alpha(\beta)}\coloneqq \norm{\ee^{-\Lc} f}_{L^p(\beta)} + \bigg\|\bigg( \int_{0}^{1} t^{-\alpha/\dL} | (t \Lc)^{m_{\alpha}} \ee^{-t\Lc} f|^{q} \, \frac{\dd t}{t}\bigg)^{1/q}\bigg\|_{L^p(\beta)}
\]
are finite, with the obvious modifications when $q=\infty$, where
$m_{\alpha}= ([\alpha/\dL]+1)_{+}$.  Here and in what follows, we denote by $\Sc=\Sc(G)$ the (Schwartz) space of
smooth super-exponentially decaying functions on $G$ and by $\Sc'$ its dual, the space of
tempered distribution; see Definition~\ref{def:1}.

For these spaces we  prove the following basic facts:
\begin{enumerate}[(I)] 
\item they do not depend on the particular weighted subcoercive
  operator $\Lc$, whence they are intrinsically defined by the initial
  choice of the filtration $(\gf_\lambda)_{\lambda>0} $ on ${\mathfrak g}$;
\item they are Banach spaces;			
\item they embed in $ L^p(\beta)$ if $\alpha>0$;
\item the space of test functions $C_c(G)$ is dense in
  $B^{p,q}_\alpha(\beta)$ and $ F^{p,q}_\alpha(\beta)$, whenever $p,q<\infty$.
\end{enumerate}

Moreover, we provide several different characterizations for the norms
in $B^{p,q}_\alpha(\beta)$ and $F^{p,q}_\alpha(\beta)$, respectively.  Among these, a discretized version of the integral over $(0,1)$
with respect to the measure $\dd t/t$, providing a sort of Littlewood--Paley-type
characterization of the Besov and Triebel--Lizorkin spaces.

\medskip

Other properties of the  Besov and Triebel--Lizorkin spaces we
prove are: 
\begin{enumerate}[(I)] \setcounter{enumi}{4}
  \item $B^{p,q}_\alpha(\chi \beta)=\chi^{-1/p} B^{p,q}_\alpha(\beta)$
  and $F^{p,q}_\alpha(\chi \beta)=\chi^{-1/p} F^{p,q}_\alpha(\beta)$
  for every (continuous) positive character $\chi$ of $G$;
	\item there is $\omega_0$ such that $(\omega
          I+\Lc)^{\alpha'/d}$ induces   isomorphisms within the scales
          of Besov and of Triebel--Lizorkin spaces, namely
\[
\begin{aligned}
B^{p,q}_\alpha(\beta) &\to B^{p,q}_{\alpha-\alpha'}(\beta)\\
	F^{p,q}_\alpha(\beta) &\to F^{p,q}_{\alpha-\alpha'}(\beta)
	\end{aligned}
 \]
                      for every $\omega\Meg \omega_0$;
                    \item (recursive property) if $(X_j)$ are   elements of $\mathfrak g$
 which induce a homogeneous basis of $\gf_*/[\gf_*,\gf_*]$, and
 $d_j=\deg(X_j)$,  $\dd= \mathrm{l.c.m.}(d_j)$, then,
	\[
	\begin{aligned}
				B^{p,q}_\alpha(\beta) &=\{f\in \Sc'\colon \ee^{-\Lc}f\in L^p(\beta),\; \forall j\;\; X_j^{\dd/d_j} f\in B^{p,q}_{\alpha-\dd}(\beta)  \},\\
	F^{p,q}_\alpha(\beta) &=\{f\in \Sc'\colon \ee^{-\Lc}f\in L^p(\beta),\; \forall j\;\; X_j^{\dd/d_j} f\in F^{p,q}_{\alpha-\dd}(\beta)  \},
			\end{aligned} 
	\]
\end{enumerate}
in the sense that the spaces on the right hand sides coincide with the
Besov and Triebel--Lizorkin spaces, respectively, with equivalence of
norms.

Other remarkable properties of these space that we prove in this work
are the following. 
\begin{enumerate}[(I)] \setcounter{enumi}{7}
\item
(Duality) Let $p,q\in [1,\infty]$ and $\alpha\in \R$. Then, the map
\[
B\colon (f,g)\mapsto \lim_{t\to 0^+} \langle \ee^{-t \Lc}f \vert \ee^{-t \Lc^*}g\rangle
\]
induces a continuous sesquilinear map on
$B^{p,q}_\alpha(\beta)\times B^{p',q'}_{-\alpha}(\beta)$ and on
$F^{p,q}_\alpha(\beta)\times F^{p',q'}_{-\alpha}(\beta)$ if $p \in
(1,\infty)$, which identifies $ B^{p',q'}_{-\alpha}(\beta)$ and $F^{p',q}_{-\alpha}(\beta)$ with the dual of $B^{p,q}_\alpha(\beta)$ and $F^{p,q}_\alpha(\beta)$, respectively.\footnote{We point out that, throughout the whole paper,
  we denote by $\langle\cdot\,\vert\, \cdot\rangle$ a  sesquilinear
pairing, and by $\langle\cdot\,,\, \cdot\rangle$ a bilinear one.}
\item (Interpolation) For simplicity, we only state the result for the
  complex interpolation.  For the complete statement, see Section~\ref{Interpolation:sec}.

As customary, suppose  $\theta\in (0,1)$, $\alpha,\alpha_0,\alpha_1\in \R$, $p,p_0,p_1,q,q_0,q_1\in [1,\infty]$, and
\[
\alpha_\theta=(1-\theta)\alpha_0+\theta \alpha_1, \qquad
\frac{1}{p_\theta}=\frac{1-\theta}{p_0}+\frac{\theta}{p_1},
\qquad \frac{1}{q_\theta}=\frac{1-\theta}{q_0}+\frac{\theta}{q_1}.
\]
Then,
\begin{itemize}
  \item[{\tiny $\bullet$}]
$(B_{\alpha_0}^{p_0, q_0}(\beta),B_{\alpha_1}^{p_1,
  q_1}(\beta))_{[\theta]}= B_{\alpha_\theta}^{p_\theta,
  q_\theta}(\beta)$ ($q_\theta\neq \infty$ or $\alpha_0=\alpha_1$),\smallskip 
\item[{\tiny $\bullet$}]
  $(F_{\alpha_0}^{p_0, q_0}(\beta),F_{\alpha_1}^{p_1,
    q_1}(\beta))_{[\theta]}= F_{\alpha_\theta}^{p_\theta,
    q_\theta}(\beta)$ ($p_0,p_1\in (1,\infty)$ and either
  $q_\theta\neq \infty$ or $\alpha_0=\alpha_1$). 
  \end{itemize}
\end{enumerate}
\medskip

Our paper lies at the frontier of a longstanding line of research of classical potential theory in the Euclidean setting and on Lie groups. Its
 history is too vast to be recalled here in
detail, but let us mention a few key steps and works.

The first systematic developments were carried out on stratified
(Carnot) Lie groups by Folland and Stein, who introduced Sobolev and
Hardy spaces associated with sub-Laplacians and established the basic
tools of subelliptic harmonic analysis, including singular integral
theory, Littlewood–Paley decompositions, and Sobolev embedding
theorems. In their works, they showed that
many classical results of Euclidean analysis admit natural analogues
once the Euclidean Laplacian is replaced by a sub-Laplacian defined in terms of a basis satisfying
H\"ormander's condition. In later contributions, in the setting
of unimodular Lie groups, Coulhon, Russ, and
Tardivel-Nachef~\cite{CRTN}, and Bruno, Peloso, Tabacco and Vallarino~\cite{BPTV} 
on general Lie groups, studied Sobolev spaces defined by a
sub-Laplacian.

The theory of Sobolev spaces was subsequently extended to general homogeneous (graded)
Lie groups by Fischer and Ruzhansky~\cite{FischerRuzhansky} replacing
a sub-Laplacian on a stratified Lie group by a
positive Rockland operators, relying on a 
fundamental result of Helffer and Nourrigat~\cite{HelfferNourrigat}
that
asserts that positive
Rockland operators are hypoelliptic.
Fischer and Ruzhansky proved that these spaces are independent of the
particular choice of the
Rockland operator and established Sobolev embeddings, interpolation,
and other geometric inequalities, see also~\cite{FR2}.
\medskip

The theory of Besov and Triebel--Lizorkin spaces defined in terms of a
sub-Lalplacian on general Lie groups was developed in a
systematic way by Bruno, Peloso and Vallarino~\cite{BPV} and~\cite{BPV2}.  The authors proved
heat-semigroup characterizations, interpolation theorems, Sobolev
embeddings, and furthermore
algebra properties.
We also
mention the recent work on homogeneous Lie
groups by Hu, Rottensteiner, Ruzhansky, and Van
Velthoven~\cite{HuRottensteinerRuzhanskyVanVelthoven2025} where
the authors prove several further properties of  \emph{homogeneous} Besov and Triebel--Lizorkin spaces  
including molecular decompositions, maximal-function
characterizations, duality, and embedding theorems.

It should be mentioned that Triebel studied  Besov and
Triebel--Lizorkin spaces  on complete Riemannian manifolds with
bounded geometry and also on Lie groups endowed with a left-invariant Riemannian structure, see~\cite[Chap.\ 7]{TriebelFS2}.  In this setting the spaces were defined only in terms of the Laplace--Beltrami
operator.  For analogous spaces on metric measure spaces, we refer the reader, without pretense of exhaustiveness, to~\cite{B, GKKP1, WHHY, BBD, LYY}.

The paper is organized as follows.
Section 2 is devoted to   basic definitions and some preliminary facts.
In Section~\ref{weightedsubcoerciveops:sec} we
recall the theory of weighted Lie algebras and weighted subcoercive
operators as established by ter Elst and Robinson, and
Martini~\cite{ElstRobinson,MartiniTesi,Martini}.  
In Section~\ref{sec3-rev}, starting from a filtration of the Lie algebra rather than from a
grading, we reformulate the notion of weighted subcoercivity in
terms of filtrations and prove that the definition is intrinsic, i.e.,
independent of the choice of weighted basis. We proceed by establishing
several structural results on weighted subcoercive operators,
including their relationship with positive Rockland operators on the
associated graded group.

In Section~\ref{heatker:sec} we provide a detailed study of the heat
semigroup generated by a weighted subcoercive operator. Building on
the Gaussian kernel estimates of ter Elst--Robinson, we  obtain
additional heat-kernel estimates. In Section~\ref{Carmeas:sec} we introduce suitable Carleson
measures
and prove several estimates on integral operators on $L^p$-spaces
with respect to  these measures.
The measures and estimates we allow us to treat in a fairly unified
way a number of arguments on Besov and Triebel--Lizorkin spaces,
respectively,
that otherwise would require two separate sets of proofs.

In Section~\ref{BTL:sec}
we finally 
define Besov and Triebel--Lizorkin spaces using heat-semigroup norms,
generalizing the classical Euclidean, homogeneous-group
constructions.  In particular we prove
\begin{itemize}
\item completeness;
\item independence of the choice of $\Lc$;
equivalent norms;
\item
  discrete Littlewood--Paley-type  characterizations;
\item 
recursive characterizations via invariant derivatives, and 
invariance under multiplication by positive characters.
\end{itemize}
In Section~\ref{emb:sec}
we prove inclusions  between Besov and 
 between Triebel–-Lizorkin spaces, respectively, as well as 
comparisons with $L^p$-spaces. In Section~\ref{dual:sec} we prove duality between Besov and 
between Triebel–-Lizorkin spaces, respectively. Along the way, we also prove the
density of test functions in Besov and 
in Triebel–-Lizorkin spaces.
Section~\ref{Interpolation:sec} is devoted to interpolation theorems
with respect to both real and complex methods.  We conclude with Sections~\ref{A1:sec}
and \ref{A2:sec}, which contain proofs of two results stated in
previous sections.

\medskip

To conclude, we mention that several works are in preparation (cf.~\cite{Calzi2,Calzi3,Calzi4,Calzi5,CalziRizzo}), where further properties of these spaces will be studied, including:
\begin{itemize}
	\item characterizations in terms of differences;
	
	\item algebra properties;
	
	\item localization of norms;
	
	\item pointwise multipliers;
	
	\item the spaces $F^{\infty,q}_\alpha(\beta)$;
	
	\item the full scale of Besov and Triebel--Lizorkin spaces ($p,q\in(0,\infty]$) when $G$ has polynomial growth, including discretization theorems and comparison with local Hardy and $\mathrm{bmo}$ spaces.
\end{itemize}

	\subsection*{Source disclosure}  This paper was written without any assistance of large language models. The content is exclusively the authors' original work.

\section{Preliminaries}

Throughout the paper, we shall denote with $G$ a finite-dimensional,
real, connected Lie group with Lie algebra $\gf$.

	\subsection{Relatively Invariant Measures and Convolution}

	We shall endow $G$ with a non-trivial positive relatively invariant (Radon) measure $\beta$. Thus, there are two positive characters $\Delta_L$ and $\Delta_R$ of $G$ such that 
	\[
	\Delta_L(y)\int_G f(y x)\,\dd \beta(x) =  \int_G f(x)\,\dd \beta(x) = \Delta_R(y) \int_G f(x y )\,\dd \beta(x)
	\]
	for every $f\in L^1(\beta)$ and $y\in G$. In particular,
	\[
	\beta(x A)=\Delta_L(x) \beta(A) \qquad\text{and}\qquad \beta(A x)=\Delta_R(x) \beta(A)
	\]
	for every $\beta$-measurable subset $A$ of $G$ and $x\in G$. Notice that $\Delta_L\equiv 1$ if $\beta$ is a left Haar measure and $\Delta_R\equiv 1$ if $\beta$ is a right Haar measure. In addition,
\begin{equation}\label{reflection}
	\int_G f(x^{-1}) \,\dd \beta(x)= \int_G f(x) \Delta_L(x^{-1})\Delta_R(x^{-1})\,\dd \beta(x)
\end{equation}
	for every positive $\beta$-measurable function $f$ on $G$. We shall define 
	\[
	\beta_L\coloneqq \Delta_L^{-1}\cdot \beta, \qquad \beta_R\coloneqq \Delta_R^{-1}\cdot \beta,
	\]
	so that $\beta_L$ is a left Haar measure and $\beta_R$ is a right Haar measure. 
	
	\begin{oss}\label{comparison0}
Notice that if $\beta$ is as above, then it is absolutely continuous with respect to any right Haar measure and its density is a positive character $\chi$ of $G$. In other words, $\beta$ can be thought of as a measure $\mu_{\chi}$ as those considered in~\cite{BPTV,BPV}, namely as a measure of the form $\dd\beta = \chi\,  \dd \rho$ where $\rho$ is a fixed Haar measure of $G$ and $\chi = \Delta_{R}$. Viceversa, the measures considered in~\cite{BPTV,BPV} are relatively invariant in the sense above.
	\end{oss}
	
Given two convolvable distributions $f,g$ on $G$, their convolution is defined via
	\[
	\langle f*g, \phi\rangle= \langle f\otimes g, (x,y)\mapsto \phi(xy)\rangle
	\]
	for every $\phi\in C^\infty_c(G)$. If  $f\in L^1_\loc(\beta)$,
        we shall identify it with the distribution $f\cdot \beta$,
        that is, with the measure with density $f$ with respect to
        $\beta$. Analogously, if $f,g$ are two convolvable functions
        such that $f*g$ is absolutely continuous with respect to
        $\beta$, we shall identify $f*g$ with its density with respect
        to $\beta$. Thus, under very mild conditions which in this work will be always verified when needed,
	\[
	(f* g)(x)=\int_G f(x y^{-1}) g(y) \Delta_R(y^{-1})\,\dd \beta(y)= \int_G f(y) g(y^{-1}x)\Delta_L(y^{-1})\,\dd \beta(y).
\]
	The above in particular holds if $f$ and $g$ are positive measurable functions.	Similar formulae apply when either $f$ or $g$ is a distribution and the convolution is still a function.
	Observe that if $f,g$ are convolvable distributions, $X$ is a left-invariant differential operator, and $Y$ is a right-invariant differential operator, then $Yf $ and $X g$ are convolvable and
	\[
	YX(f*g)=(Yf)*(Xg).
	\]
	
	If $f,g,h$ are distributions then, under some reasonable conditions that will always be satisfied when needed,
\begin{equation}\label{fgh}
	\langle f * g, h \rangle =\langle f, h* (\Delta_R \Rc g)\rangle =\langle g, (\Delta_L \Rc f)* h\rangle,
\end{equation}
	where $\Rc$ is the reflection operator 
	\[
	\langle \Rc f,\phi\rangle=\langle f,\widecheck \phi \rangle, \qquad \phi\in C^\infty_c(G),
	\] 
	and $\widecheck \phi=\phi(\,\cdot\,^{-1})$. Note that since by~\eqref{reflection}
	\[
	\Rc \beta= \Delta_L^{-1} \Delta_R^{-1} \beta,
	\]
	one has 
	\[
	\Rc(f\cdot \beta) =(\Delta_L^{-1} \Delta_R^{-1}  \widecheck f)\cdot \beta.
	\]
	In other words, one must be careful not to confuse $\Rc f$ with $\widecheck f$ when $f\in L^1_\loc(\beta)$.
		
	We now recall Young's inequality in this context. Suppose $p_1,p_2,p_3\in [1,\infty]$ are such that $\frac{1}{p_1'}+\frac{1}{p_2'}=\frac{1}{p_3'}$. Then
	\[
	\norm{(\Delta_L^{1/p_2'} f)*(\Delta_R^{1/p_1'}g)}_{L^{p_3}(\beta)}\meg \norm{f}_{L^{p_1}(\beta)}\norm{g}_{L^{p_2}(\beta)}
	\]
	or, equivalently,
	\[
	\norm{f*g}_{L^{p_3}(\beta)}\meg \norm{\Delta_L^{-1/p_2'}f}_{L^{p_1}(\beta)}\norm{\Delta_R^{-1/p_1'}g}_{L^{p_2}(\beta)}
	\]
	for every two positive $\beta$-measurable functions $f,g$. We refer the reader to~\cite[Lemma 2.1]{KR78} in the case when $\Delta_L=1$, and the the general case follows easily.
	
	\subsection{Differential Operators} We identify $\gf$ with the algebra of left-invariant vector fields on $G$. Correspondingly,  the (complexification of the) enveloping algebra $U(\mathfrak{g})$ of $\gf$ will be identified with the algebra of left-invariant differential operators. We fix a scalar product on $\gf$, endow $U(\mathfrak{g})$ with the corresponding scalar product, namely the quotient of the natural scalar product on the (complexification of the) tensor algebra over $\gf$, and denote by $|X|$ the norm of an element $X\in
U(\mathfrak{g})$. 
	 
	Given a left-invariant differential operator $X$, we denote with $X^R$ the right-invariant differential operator which induces the same point distribution as $X$ at $e$, the identity element of $G$.  In other words, 
	\[
	(X f)(e)=(X^R f)(e), \qquad f\in C^\infty (G).
	\]
Recall that left-invariant vector fields commute with right-invariant ones.

	We denote with $X^+$ the transpose of $X$ in the enveloping algebra $U(\mathfrak{g})$. In other words, the mapping $X\mapsto X^+$ is the unique anti-automorphism of $U(\mathfrak{g})$ (that is, such that $(XY)^+=Y^+ X^+$) which extends the automorphism $X\mapsto -X$ of $\gf$. 
		
		We denote with $X^\dag$ the formal transpose of $X$ with respect to $\beta$, that is, the unique left-invariant differential operator such that
		\[
		\int_G (X f) g\,\dd \beta=\int_G f X^\dag g\,\dd \beta
		\]
		for every $f,g\in C^\infty_c(G)$. We denote with $X^*$ the formal adjoint of $X$, that is, $\overline X^\dag$. Notice that if $X\in \mathfrak{g}$, then $X^{*} = X^{\dagger}$. We define the formal transpose and the formal adjoint of right-invariant differential operators in a similar way. If $u$ is a distribution, we then define $X u$ so that
		\[
		\langle X u,\phi\rangle =\langle u, X^\dag \phi\rangle
		\]
		for every $\phi\in C^\infty_c(G)$. In this way, if $f\in C^\infty(G)$, then $(X f)\cdot \beta=X(f\cdot \beta)$. Analogously, we define
		\[
		\langle X^{\dag} u,\phi\rangle =\langle u, X \phi\rangle.
		\]
		To avoid cumbersome notation, we write $X^{R\dag}$ instead of $(X^R)^\dag$, etc.
	
	Notice that $X f$ \emph{depends} on $\beta$ if $f$ is a distribution, whereas $X^\dag f$ does not. On the contrary, $X^\dag f$ \emph{depends} on $\beta$ if $f\in C^\infty(G)$, whereas $X f$ does not.  This unfavorable dichotomy disappears if $\beta$ is right-invariant, as the next proposition shows.  We denote by $\delta_{e}$ the Dirac delta at $e\in G$.

	\begin{prop}\label{prop:9}
Let $X \in U(\mathfrak{g})$ be a left-invariant differential operator. Then the following hold.
		\begin{enumerate}
			\item[\textnormal{(1)}] $X^\dag f= \Delta_R^{-1} X^+(\Delta_R f)$ and $X^{R\dag} f=\Delta_L^{-1} X^{+R}(\Delta_L f)$ for every $f\in C^\infty(G)$.
			
			\item[\textnormal{(2)}] $X\delta_e= \Delta_R \Rc(X^\dag \delta_e)$  and $X^R\delta_e=  \Delta_L \Rc (X^\dag \delta_e)$.
			
			\item[\textnormal{(3)}] $X^+ f=(X^R \widecheck f) \widecheck{\,}$ for every $f\in C^\infty(G)$.

			\item[\textnormal{(4)}] $X\delta_e=X^{\dag R\dag}\delta_e$.
			
			\item[\textnormal{(5)}] $X\chi= (X\chi)(e)\chi$ for every character $\chi$  on $G$.
		\end{enumerate}
	\end{prop}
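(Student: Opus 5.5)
All five items reduce to two facts: for $Y\in\gf$ the one-parameter group $\ee^{sY}$ acts by right translations (generated by $Y$) and left translations (generated by $Y^R$); and the defining identities of $\Delta_L,\Delta_R$ may be differentiated along these translations. Since the maps $X\mapsto X^\dag$, $X\mapsto X^+$, $X\mapsto X^R$ are (anti-)multiplicative, it will suffice to prove each identity for $X=Y\in\gf$ and then extend to products. Here we use $(XY)^\dag=Y^\dag X^\dag$, $(XY)^+=Y^+X^+$, and the fact that $X\mapsto X^R$ is an anti-homomorphism, since left-invariant and right-invariant fields generate opposite actions.

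We start with (5). For a character $\chi$ we have $\chi(x\ee^{sY})=\chi(x)\chi(\ee^{sY})$. Differentiating at $s=0$ gives $Y\chi=(Y\chi)(e)\chi$, and iterating gives the claim for all of $U(\gf)$.

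For (1), take $Y\in\gf$ and $f,g\in C^\infty_c(G)$. Right invariance up to $\Delta_R$ gives
\[
\int_G \phi(x\ee^{sY})\,\dd\beta(x)=\Delta_R(\ee^{sY})^{-1}\int_G\phi\,\dd\beta .
\]
Differentiating with $\phi=fg$ yields $\int_G Y(fg)\,\dd\beta=-(Y\Delta_R)(e)\int_G fg\,\dd\beta$. Hence
\[
Y^\dag g=-Yg-(Y\Delta_R)(e)g=-\Delta_R^{-1}Y(\Delta_R g),
\]
where the last equality uses (5). Since $Y^+=-Y$, this is $\Delta_R^{-1}Y^+(\Delta_R\,\cdot)$. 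Composition preserves this conjugated form, so the identity extends to all $X$ by anti-multiplicativity. The right-invariant statement is identical, using left translations and $\Delta_L$.

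For (3), if $f$ is smooth then $Y(\widecheck f)$ at $x$ is the derivative in $s$ of $f(\ee^{-sY}x^{-1})$, that is, $-(Y^Rf)(x^{-1})$. Therefore $Y^+\widecheck f=(Y^R f)\widecheck{\,}$, which is the claim applied to $\widecheck f$. Both sides are anti-multiplicative in the appropriate way, so the identity extends.

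Items (2) and (4) concern distributions, and we test them against $\phi\in C^\infty_c$. We have $\langle X\delta_e,\phi\rangle=(X^\dag\phi)(e)$. On the other side,
\[
\langle\Delta_R\Rc(X^\dag\delta_e),\phi\rangle=\langle X^\dag\delta_e,(\Delta_R\phi)\widecheck{\,}\rangle=\bigl(X(\Delta_R\phi)\widecheck{\,}\bigr)(e).
\]
Item (3) turns the last expression into $\bigl(X^{+R}(\Delta_R\phi)\bigr)(e)$, which equals $\bigl(X^{+}(\Delta_R\phi)\bigr)(e)$ because a left- and a right-invariant operator inducing the same distribution at $e$ agree at $e$. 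Finally (1), evaluated at $e$ where $\Delta_R(e)=1$, identifies this with $(X^\dag\phi)(e)$. The right-invariant half of (2) and item (4) follow by the same chase, using $\langle X^{\dag R\dag}\delta_e,\phi\rangle=(X^{\dag R}\phi)(e)=(X^\dag\phi)(e)$.

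The main obstacle is bookkeeping rather than ideas. We must track where $\Delta_L$ versus $\Delta_R$ appears, where $\Rc$ versus $\widecheck{\ }$ appears, and the inverses (e.g.\ $\Delta_R^{-1}$ in $\Rc(f\beta)$). Sign and order conventions for the anti-homomorphism $X\mapsto X^R$ are the other source of error. I would first verify every item carefully for $X\in\gf$, then check that the composition laws match before extending.
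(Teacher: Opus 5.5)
Your proposal is correct, but it runs in a different order from the paper's proof. You argue ``Lie algebra first''. You get (5) directly from $\chi(x\ee^{sY})=\chi(x)\chi(\ee^{sY})$. You then get (1) for $Y\in\gf$ by differentiating $\int_G\phi(x\ee^{sY})\,\dd\beta(x)=\Delta_R(\ee^{sY})^{-1}\int_G\phi\,\dd\beta$, using (5) to rewrite $-Y-(Y\Delta_R)(e)$ as $\Delta_R^{-1}Y^+\Delta_R$. Next you extend (1) and (3) to all of $U(\gf)$ by (anti-)multiplicativity. Finally you obtain (2) and (4) by pairing with test functions, using (3) with $X^+$ in place of $X$ and (1) evaluated at $e$.

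The paper goes the other way. It gets the first half of (1) by passing to $\beta_R$, where $X^\dag=X^+$, and proves (2) from the convolution identities~\eqref{fgh}. It then \emph{deduces} $(XY)^R=Y^RX^R$ from (2), and this is what completes (3) and the second half of (1). Item (5) comes last, from $X\chi=\chi*X\delta_e$ together with (1) and (3).

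So the one ingredient you import rather than prove is the anti-multiplicativity of $X\mapsto X^R$. In your scheme it is load-bearing: (3) and hence (2) depend on it, as does the right-invariant half of (1). Saying that the two kinds of fields generate ``opposite actions'' is not quite a proof, but one line suffices. For $Y_1,\dots,Y_k\in\gf$, both $(Y_1\cdots Y_k)^Rf(x)$ and $(Y_k^R\cdots Y_1^Rf)(x)$ equal $\partial_{s_1}\cdots\partial_{s_k}f(\ee^{s_1Y_1}\cdots\ee^{s_kY_k}x)|_{s=0}$. For the former this holds because that operator is right-invariant and agrees with $Y_1\cdots Y_k$ at $e$.

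With that line added, your argument is more elementary and self-contained: it never invokes~\eqref{fgh}, which the paper states only under ``reasonable conditions''. The paper's route, by contrast, ties the identities directly to the convolution calculus it uses immediately afterwards, e.g.\ $(Xf)*g=f*(X^{\dag R\dag}g)$.
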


	\begin{proof}
		We start by the first equality in (1), and assume first that $\beta$ is right-invariant, that is, $\Delta_R=1$, and prove that $X^\dag=X^+$. Since  $(YZ)^\dag=Z^\dag Y^\dag$ for every $Y,Z\in U(\mathfrak{g})$, it suffices to observe that $Y^\dag=-Y$ for every $Y\in \gf$. Then, consider the general case, and observe that, for every $f \in C^\infty(G)$ and $g\in C^\infty_c(G)$,
		\[
		\int_G (Xg) f\,\dd \beta=\int_G (X g) (\Delta_R f) \,\dd \beta_R= \int_G g X^+(\Delta_R f)\,\dd \beta_R= \int_G g \Delta_R^{-1} X^+(\Delta_R f)\,\dd \beta,
		\]
		so that the first assertion follows by the arbitrariness of $g$.  The proof of the second assertion in (1) is slightly postponed.
		
	As for (2), we shall use repeatedly~\eqref{fgh}. For all $f\in C^\infty(G)$ and $g\in C^\infty_c(G)$,
		\[
		\langle f*(X\delta_e),g \rangle= \langle X f, g \rangle= \langle f, X^\dag g \rangle=\langle f, g*(X^\dag \delta_e)\rangle= \langle f*(\Delta_R \Rc(X^\dag \delta_e)),g \rangle,
		\]
whence the first assertion. Then, observe that for $f\in C^\infty(G)$
		\[
		\begin{split}
		(Xf)(e)&=(X^R f)(e)\\
			&=\langle(X^R \delta_e)*f,\delta_e\rangle\\
			&=\langle f, (\Delta_L \Rc(X^R \delta_e))*\delta_e\rangle\\
			&=\langle \Rc(X^R \delta_e), \Delta_L f\rangle= \langle X^R \delta_e, \Delta_L^{-1}\widecheck f \rangle= (X^{R\dag}(\Delta_L^{-1}\widecheck f))(e).
		\end{split}
		\]
		Applying the previous identity to $\Delta_L^{-1}\widecheck f$ instead of $f$, we then find
		\[
		X(\Delta_L^{-1}\widecheck f)(e)=(X^{R\dag}f)(e).
		\]
		Then,
		\[
		\begin{split}
			\langle X^R \delta_e, f \rangle&=\langle \delta_e, X^{R\dag} f\rangle\\
			& =\langle \delta_e, X(\Delta_L^{-1}\widecheck f)\rangle =\langle X^\dag \delta_e, \Delta_L^{-1} \widecheck f \rangle= \langle  \Rc (X^\dag \delta_e), \Delta_L f\rangle=\langle \Delta_L\Rc (X^\dag \delta_e), f\rangle,
		\end{split}
		\]
		whence the second  assertion.

		We now prove (3). Observe that if $X\in \gf$ and $f\in C^\infty(G)$, then
		\[
		\begin{split}
			(X^R \widecheck f)(x^{-1})&= \frac{\dd}{\dd t}\Big \vert_{t=0} \widecheck f(\exp_G(t X) x^{-1})\\
				&=\frac{\dd}{\dd t}\Big \vert_{t=0} f(x\exp_G(-t X))=-(Xf)(x)=(X^+ f)(x)
		\end{split}
		\]
		for every $x\in G$, so that $X^+ f=(X^R \widecheck f) \widecheck{\,}$. In addition, for every $X,Y\in U(\mathfrak{g})$,
		\[
		\begin{split}
			(XY)^R \delta_e&=\Delta_L \Rc((XY)^\dag \delta_e)\\
				&=\Delta_L \Rc(Y^\dag X^\dag \delta_e)\\
				&=\Delta_L \Rc( X^\dag \delta_e * Y^\dag \delta_e )\\
				&=[\Delta_L\Rc (Y^\dag \delta_e)]*[\Delta_L\Rc (X^\dag \delta_e)]=(Y^R\delta_e)*(X^R\delta_e)=(Y^R X^R)\delta_e
		\end{split}
		\]
		by (2), so that $(XY)^R=Y^R X^R$. The assertion follows.
		
We are now in a position to prove the second assertion in (1). The equality 
\[
X^{R\dag} f=\Delta_L^{-1}X^{+R}(\Delta_L f)
\]
may be proved as above when $X\in \gf$ (in which case it reduces to $X^{R\dag} f=-\Delta_L^{-1}X^{R}(\Delta_L f)$). Then, observe that by  (3),  
\[
(YZ)^{R\dag}=(Z^R Y^R)^\dag=Y^{R\dag}Z^{R\dag}, \qquad (YZ)^{+R}=(Z^+ Y^+)^R=Y^{+R}Z^{+R}
\]
for every $Y,Z\in U(\mathfrak{g})$, so that the assertion follows by the universal property of the enveloping algebra.
	
	To show (4), observe that for $f\in C^\infty(G)$
		\[
		\begin{split}
			\langle X \delta_e, f\rangle &=\langle \delta_e, X^\dag f\rangle= \langle \delta_e, X^{\dag R} f\rangle= \langle X^{\dag R \dag}\delta_e, f\rangle,
		\end{split}
		\]
		whence the assertion.
		
		We are left with proving (5). Observe first that 
		\[
		X\chi=\chi*X\delta_e=\chi(1*\chi^{-1}X\delta_e)
		\]
		and that moreover, for $x\in G$,
		\[
		\begin{split}
			(1*\chi^{-1}X\delta_e)(x)&=\langle \chi^{-1}X\delta_e, \Delta_R^{-1}\rangle\\
				&= \langle X\delta_e, (\chi \Delta_R)^{-1}\rangle =X^\dag((\chi \Delta_R)^{-1})(e)=(X^+ \chi^{-1})(e)=(X\chi)(e)
		\end{split}
		\]
		thanks to (1) and (3). The assertion follows.
	\end{proof}
	Let us notice that, by Proposition~\ref{prop:9}~(4), 
	\[
	\begin{split}
		(Xf)*g=(f*X\delta_e)*g=f*(X\delta_e*g)=f*(X^{\dag R \dag} \delta_e*g)= f*(X^{\dag R \dag} g)
	\end{split}
	\]
	whenever $f$ and $g$ are sufficiently regular to grant associativity of the convolution. Notice that $X^{\dag R \dag}-X^R$ is a differential operator whose order is strictly smaller than that of $X$, and whose degree (we shall see below its definition) is strictly less than that of $X$.
	
For future need, let us also state the following remark.
\begin{oss}\label{comparison1}
If we write $\beta$ in the form $\mu_{\chi}$ where $\chi$ is a positive character of $G$, recall Remark~\ref{comparison0}, then if $X\in \mathfrak{g}$
\[
X^{*} f = X^{\dag} f = -Xf  - (X\chi)(e)f
\]
 for all $f\in C^{\infty}(G)$.
\end{oss}

\subsection{Homogeneous groups}
A Lie group $G$ is called \emph{homogeneous} if so is $\mathfrak{g}$. Recall that $\mathfrak{g}$ is said to be homogeneous if it is endowed with a family of automorphic dilations $(\delta_t = e^{A \log t})_{t>0}$, where $A\in \mathrm{GL}(\mathfrak{g})$ is diagonalizable with strictly positive eigenvalues. In this case, the eigenspaces $\mathfrak{g}_\lambda$ of $A$ determine a direct-sum decomposition
\[
\mathfrak{g} = \bigoplus_{\lambda>0} \mathfrak{g}_\lambda = \mathfrak{g}_{\lambda_1} \oplus \dots \oplus \mathfrak{g}_{\lambda_k},
\]
where $0<\lambda_{1}< \dots <\lambda_k$ are the eigenvalues of $A$,
and $[\mathfrak{g}_\lambda,\mathfrak{g}_{\lambda'}] \subseteq
\mathfrak{g}_{\lambda+\lambda'}$  for  $\lambda,\lambda' >0$. Up to rescaling, one can always suppose that $\lambda_1 \geq 1$, and we
shall do so in what follows whenever a homogeneous group is involved.

A homogeneous Lie group $G$ is nilpotent, and it can be
identified with its Lie algebra $\mathfrak{g}$, and the automorphisms
$\delta_{t}$ on $\mathfrak{g}$ induce automorphisms of $G$ which we
still denote by $\delta_{t}$. Since a homogeneous group $G$ is
nilpotent, it is unimodular, i.e.\ $\Delta_L=\Delta_R = 1$ which in turn implies $\beta_{L}=\beta_{R}$. If $G$ is homogeneous, the quantity $Q:=\tr(A)$ is called the homogeneous dimension of $G$. Recall that $ \beta_{L}(\delta_t(U)) = t^{\tr(A)} \beta_{L}(U)$ for any measurable subset $U$ of $G$. 

\subsection{Notation}

Given two measure spaces $(X,\nu)$ and $(Y,\mu)$ and $q,p\in
[1,\infty]$, we denote by $L^{q,p}(\mu,\beta)$ the space of
$(\nu\otimes \mu)$-measurable functions $f\colon X\times Y \to \C$
such that 
	\[
	\| f\|_{L^{q,p}(\mu, \nu)}:= \bigg(\int_{X} \bigg( \int_{Y}|f(x,y)|^{q}\, d\mu(y)\bigg)^{p/q}\, d\nu(x)\bigg)^{1/p}<+\infty,
	\]
with the usual modifications when $\max(p,q)=\infty$.

		In order to simplify the notation, given a measure space $(X,\mi)$, $p\in (0,\infty]$, and a $\mi$-measurable function $f$ on $X$, we shall sometimes write $\norm{f(x)}_{L^p_x(\mi)}$ instead of $\norm{f}_{L^p(\mi)}$ (we allow the possibility $\norm{f}_{L^p(\mi)}=+\infty$). This will be particularly useful in the presence of nested $L^p$-norms.
For instance, we will also write
\[
	\| f\|_{L^{q,p}(\mu, \beta)}=  \big\| \| f(x,y) \|_{L^{q}_{y}(\mi)} \big\|_{L^{p}_{x}(\beta)}.
  \]

All throughout, given two quantities  $A$ and $B$ and a constant $C$ independent of them, we write $A\asymp_{C} B$ if $C^{-1} B \leq A\leq C B$. If the role of $C$ is not relevant, we just write  $A\asymp B$.

	\section{Weighted subcoercive operators}\label{weightedsubcoerciveops:sec}
In this section we summarize some results from~\cite{ElstRobinson, MartiniTesi, Martini} to which we refer for all the details. Note that in~\cite{ElstRobinson, MartiniTesi, Martini} the group is endowed with a right Haar measure, but the situation remains substantially the same in our setting.
\subsection{Rockland operators}
Let $G$ be a homogeneous group. The automorphic dilations $\delta_t$ of $\mathfrak{g}$ extend to automorphisms $\delta_t$ of $U(\mathfrak{g})$. An element $D \in U(\mathfrak{g})$ is said to be homogeneous if there exists $\lambda\in\R$ such that
\[\delta_t(D) = t^\lambda D \qquad \forall \, t > 0,\]
and such a $\lambda$ is called its degree of homogeneity.

\begin{deff}
A left-invariant differential operator $D \in U(\mathfrak{g})$ is said to be \emph{Rockland} if it is homogeneous and, for every non-trivial irreducible unitary representation $\pi$ of $G$ on a Hilbert space $\mathcal{H}$, $d\pi(D)$ is injective on the space $\mathcal{H}^\infty$ of the smooth vectors of the representation. 
\end{deff}

We recall that a homogeneous left-invariant differential operator
is Rockland if and only if it is hypoelliptic, see~\cite{HelfferNourrigat}.

\subsection{Weighted Lie groups and contractions} \label{weightedLie}
Let us start by introducing the notion of weighted algebraic basis of the Lie algebra $\mathfrak{g}$.
\begin{deff}
A weighted algebraic basis of $\mathfrak{g}$ consists of a collection of linearly independent left-invariant vector fields $X_1,\dots,X_d$ which generate $\mathfrak{g}$ as a Lie algebra, together with weights $w_1,\dots, w_{d} \in [1,\infty)$.
\end{deff}

Suppose $X_1,\dots,X_d$ is a weighted algebraic basis of $\mathfrak{g}$. We denote by $J(d)$ the set of finite sequences of elements of $\{1,\dots,d\}$, and $J_+(d)$ the subset of non-empty sequences. For every $\alpha = (\alpha_1,\dots,\alpha_k) \in J(d)$, we write $|\alpha|$ for its length $k$, and set $\|\alpha\| = \sum_{j=1}^k w_{\alpha_j}$. The element $ X_{\alpha_1} X_{\alpha_2} \cdots X_{\alpha_k}$ of $U(\mathfrak{g})$ is denoted by $\mathbf{X}_\alpha$. The fixed weighted basis defines an increasing right-continuous filtration on $\mathfrak{g}$, since if we set, for $\lambda \geq 0$,
\[F_\lambda = \mathrm{span} \{[[\dots[X_{\alpha_1},X_{\alpha_2}],\dots],X_{\alpha_k}] \colon \alpha \in J_+(d), \, \|\alpha\| \leq \lambda\}
\]
then
\[[F_{\lambda},F_{\mu}] \subseteq F_{\lambda+\mu}, \qquad F_\lambda = \bigcap_{\mu > \lambda\geq 0} F_\mu, \qquad \bigcup_{\lambda \geq 0 } F_\lambda = \mathfrak{g}.\]
Set $F_\lambda^- = \bigcup_{\mu < {\lambda}} F_\mu$; the weighted basis is said to be \emph{reduced} if for all $\lambda\geq 0$
\[
\mathrm{span} \{ X_j \colon w_j = \lambda\} \cap F_\lambda^- = \{0\}.
\]
For any weighted algebraic basis, it is always possible to remove some elements in order to obtain a reduced basis of $\mathfrak{g}$ which defines the same filtration.

A  weighted Lie algebra is a Lie algebra with a choice of reduced weighted basis. Notice that if $\mathfrak{g}$ is homogeneous, then every system of linearly independent generators $X_1,\dots,X_d$ of $\mathfrak{g}$ (as a Lie algebra) consisting of homogeneous elements, with the weights equal to the respective degrees of homogeneity, is a reduced basis of $\mathfrak{g}$; such a basis is said to be adapted to the homogeneous structure of $\mathfrak{g}$.

Let $\mathfrak{g}$ be a weighted Lie algebra, and let the filtration $(F_\lambda)_{\lambda>0}$ be defined as above. The filtration determines a finite set of weights $\lambda_1,\dots,\lambda_k$, with $1 \leq \lambda_1 < \dots < \lambda_k,$ defined by the condition $F_{\lambda_j} \neq F_{\lambda_j}^{-}$ for $j=1,\dots,k$. Then
\[
\mathfrak{g}_* := \bigoplus_{\lambda >0} \mathfrak{g}_{*\lambda} = \mathfrak{g}_{*\lambda_1} \oplus \dots \oplus \mathfrak{g}_{*\lambda_k}, \qquad \mathfrak{g}_{*\lambda} := F_\lambda / F_\lambda^-, 
\]
is a homogeneous Lie algebra, with weights $\lambda_1, \dots, \lambda_k$. We call this the homogeneous algebra associated to  the weighted algebra $\mathfrak{g}$.

Since the weighted basis $X_1,\dots,X_d$ is reduced, the $\bar X_j = X_j + F_{w_j}^{-} \in \mathfrak{g}_{*w_j}$ are linearly independent homogeneous elements of $\mathfrak{g}_*$.

\begin{deff}
The group $G$ is said to be \emph{weighted} if $\mathfrak{g}$ is weighted. If $G$ is weighted, the \emph{contraction} $G_*$ of $G$ is the homogeneous Lie group whose Lie algebra is $\mathfrak{g}_*$.
\end{deff}

\subsection{Weighted subcoercive forms and operators}\label{subcoercive-ops}

Let $G$ be a weighted Lie group, and let $X_1,\dots,X_d$ be the corresponding  reduced basis with weights $w_1,\dots,w_d$. We call a \emph{form}  an element of the free (non-commutative associative unital) algebra over $\C$ on $d$ indeterminates $a_1,\dots,a_d$; equivalently, a form is a function $C \colon J(d) \to \C$ null off a finite subset of $J(d)$, given by the (non-commutative) polynomial
\begin{equation}\label{genericform}
\sum_{\alpha \in J(d)} C(\alpha) a_{\alpha_1} a_{\alpha_2} \cdots a_{\alpha_k}.
\end{equation}
The \emph{degree} of the form $C$ is 
\[
\max \{\|\alpha\| \colon \alpha \in J(d),\,C(\alpha) \neq 0\}.
\]
If $C$ is a form of degree $m$, then its \emph{principal part} is the form $P \colon J(d) \to \C$ which is given by the sum of the terms of $C$ of degree $m$:
\[P(\alpha) = \begin{cases}
C(\alpha) &\text{if $\|\alpha\| = m$,}\\
0 &\text{otherwise.}
\end{cases}\]
A form is said to be \emph{homogeneous} if it equals its principal part. The \emph{adjoint} of a form $C$ is the form $C^*$ defined by
\[C^*(\alpha) = (-1)^{|\alpha|} \overline{C(\alpha_*)} ,\]
where $\alpha_* = (\alpha_k,\dots,\alpha_1)$ if $\alpha = (\alpha_1,\dots,\alpha_k)$.

If $C$ is as in~\eqref{genericform}, we may associate a left-invariant differential operator $d\mathrm{R}_G(C) \in U(\mathfrak{g})$ by setting
\[d\mathrm{R}_G(C) = \sum_{\alpha \in J(d)} C(\alpha) \vect{X}_{\alpha}.
\]
Let $G_*$ be the contraction of $G$, with Lie algebra $\mathfrak{g}_*$. Since the elements $\bar X_1,\dots,\bar X_d$ induced on $\mathfrak{g}_*$ by $X_1,\dots,X_d$ are a reduced basis of $\mathfrak{g}_*$, we can associate to a form $C$ both a differential operator $d\mathrm{R}_G(C)$ on $G$ and a differential operator $d\mathrm{R}_{G_*}(C)$ on $G_*$. Following~\cite[Theorem 2.3]{Martini}, we may give the following definition (which avoids introducing local G\aa rding inequalities; see~\cite[Theorem 10.1]{ElstRobinson} or~\cite[Theorem 2.3]{Martini}).

\begin{deff}
Let  $X_1,\dots,X_d$ be a reduced weighted basis with weights $w_1,\dots,w_d$. Let $C$ be a form of degree $m$ such that $m/w_j \in 2\N$ for $j=1,\dots,d$ and whose principal part is $P$. We say that $C$ is weighted subcoercive if $d\mathrm{R}_{G_*}(P+P^{*})$ is a positive Rockland operator on $G_*$. 

A left-invariant differential operator $\mathcal{L}$ on $G$ is said to be \emph{weighted subcoercive with respect to the reduced weighted basis $X_1,\dots,X_d$} if there exists a weighted subcoercive form $C$ such that  $\mathcal{L} =d\mathrm{R}_G(C)$. 
\end{deff}

Recall that a weighted subcoercive operator (with respect to any reduced weighted basis) is hypoelliptic, and that every positive Rockland operator on a homogeneous group is weighted subcoercive with respect to any basis adapted to the homogeneous structure of $\mathfrak{g}$. Moreover, it is easy to check that, for every choice of a system of linearly independent generators $X_1,\dots,X_d$ of a Lie algebra $\mathfrak{g}$, the assignment of weights all equal to $1$ always gives a reduced basis, and that the corresponding contraction $\mathfrak{g}_*$ is stratified; in particular, the sum of squares sub-Laplacian $\Delta= -\sum_{j=1}^{d}X_j^2$ is weighted subcoercive with respect  to $X_1,\dots,X_d$  (see also Proposition~\ref{prop:4} and Remark~\ref{comparison} below).

	\section{Weighted subcoercive operators revisited}\label{sec3-rev}
We provide here an alternative route to defining weighted subcoercive operators, whose focus is the (increasing) filtration rather than the weighted algebraic basis, and the operator rather than the form.

	\subsection{Filtrations}
	
	Suppose $(\gf_\lambda)_{\lambda\Meg 0}$ is an increasing filtration of $\gf$ (that is, $[\gf_\lambda, \gf_\mi]\subseteq \gf_{\lambda+\mi}$ for $\lambda, \mi\Meg 0$) such that 
\begin{equation}\label{eq:filtr}
	\gf_\lambda=\{ 0\}, \quad \mbox{for all }\lambda<1, \qquad \bigcup_{\lambda\Meg 0} \gf_\lambda=\gf, \qquad \bigcap_{\mi>\lambda} \gf_\mi=\gf_\lambda \quad \mbox{for all } \lambda\Meg 0.
\end{equation}
For $X\in \gf$, we define  
\begin{equation}\label{degX}
\deg X\coloneqq \min\{\lambda\Meg0\colon X\in \gf_\lambda\}
\end{equation}
and we call $\deg X$ the degree of $X$.
	Define, for $\lambda>0$, 
	\[
	\gf_{\lambda^-}\coloneqq \bigcup_{\mi<\lambda} \gf_\mi, \qquad \gf_{*,\lambda}\coloneqq\gf_\lambda/\gf_{\lambda^-}, \qquad 	\gf_*\coloneqq \bigoplus_{\lambda>0} \gf_{*,\lambda}.
	\]
	Then $\gf_*$ can be given a Lie algebra structure as follows: if $X=\sum_{\lambda>0} (X_\lambda+ \gf_{\lambda^-})$ and $Y= \sum_{\lambda>0} (Y_\lambda+ \gf_{\lambda^-})$ for some $(X_\lambda),(Y_\lambda)\in \prod_{\lambda>0} \gf_\lambda$, then
	\[
	[X,Y]\coloneqq \sum_{\lambda,\mi>0}\left(  [X_{\lambda},Y_{\mi}]+\gf_{(\lambda+\mi)^-}\right) .
	\]
	It is easily seen that $(\gf_{*,\lambda})_{\lambda>0}$ is a graduation of type $((0,+\infty),+)$ of $\gf_*$, that is, 
	\[
	[\gf_{*,\lambda}, \gf_{*,\mi}]\subseteq \gf_{*,\lambda+\mi}, \qquad \lambda,\mi>0.
	\]
	One may then endow $\gf_*$ with the automorphic dilations $(\delta_r)_{r>0}$ defined so that $\delta_r(X)=r^\lambda X$ for every $X\in \gf_{*,\lambda}$ and $\lambda>0$. In this way, $\gf_*$ is the Lie algebra of some homogeneous group $G_*$, with homogeneous dimension $Q_*\coloneqq \sum_{\lambda>0} \dim \gf_{*,\lambda}$.

	We shall say that a basis $(X_j)_{j\in J}$ of $\gf$ is \emph{compatible with the filtration} if $(X_j)_{\deg X_j\meg \lambda}$ is a basis of $\gf_\lambda$ for every $\lambda>0$. Note that a basis of $\mathfrak{g}$ which is compatible with the filtration is automatically a weighted reduced basis in the sense of Subsection~\ref{weightedLie}, with their degrees as weights.

	\begin{oss}
The reason for the requirement $\gf_\lambda=\{0\}$ for
$\lambda<1$ will become apparent when we introduce the left-invariant distance, see
Remark~\ref{distance:rem}.
 There may be also good reasons to require $\gf_1\neq \{0\}$, but we prefer not to do so in order to keep a natural comparison with graded groups: if $\gf$ has a graduation $(\tilde \gf_j)_{j\in \Z_+^*}$ with integer degrees, then it is natural to set 
	\[
	\gf_\lambda=\bigoplus_{j=1}^{[\lambda]} \tilde \gf_j, \qquad \lambda \Meg 0,
	\]
	but in general $\tilde \gf_1$ needs not be non-trivial.
\end{oss}

	We now extend this filtration to the  enveloping algebra $U(\mathfrak{g})$.	
	For every $\lambda\Meg 0$, define 
	\[
	U_\lambda = \mathrm{span}\,  \{X_1\cdots X_k \colon k\in\N, \; X_1,\dots, X_k\in \gf, \; \deg X_1+\cdots+ \deg X_k\meg \lambda\}.
	\]
	Notice that the identity operator, corresponding to the case $k=0$, belongs to all $U_\lambda$'s. Then $(U_\lambda)$ is an increasing filtration of $U(\mathfrak{g})$, that is, $U_\lambda, U_\mi\subseteq U_\lambda U_\mi\subseteq U_{\lambda+\mi}$ for every $\lambda, \mi\Meg 0$.
	For every $X\in U(\mathfrak{g})$, we define 
	\[
	\deg X\coloneqq \min \{\lambda\Meg 0\colon X\in U_\lambda\}.
	\]
We will see soon in Proposition~\ref{prop:8} below that $\gf\cap U_\lambda=\gf_\lambda$ for every $\lambda\Meg 0$, so this definition is consistent with~\eqref{degX}.
	
	Define now $U_{0^-}\coloneqq\{0\}$,
	\[
	U_{\lambda^-}\coloneqq \bigcup_{\mi<\lambda} U_\mi, \quad \lambda>0, \qquad U_{*,\lambda}\coloneqq U_\lambda/U_{\lambda^-}, \quad \lambda\Meg 0, \qquad	U_*\coloneqq \bigoplus_{\lambda\Meg 0} U_{*,\lambda} .
	\] 
Then $U_*$ can be endowed with an algebra structure as follows:  if $X=\sum_{\lambda\Meg 0} (X_\lambda+ U_{\lambda^-})$ and $Y= \sum_{\lambda\Meg 0} (Y_\lambda+ U_{\lambda^-})$ for some $(X_\lambda),(Y_\lambda)\in \prod_{\lambda\Meg 0} U_\lambda$, then
	\[
	XY\coloneqq \sum_{\lambda,\mi\Meg 0}\left(  X_{\lambda}Y_{\mi}+U_{(\lambda+\mi)^-}\right) .
	\]
	It is easily seen that $U_*$ becomes a graded algebra of type $([0,+\infty),+)$ with this structure.
	
	\begin{prop}\label{prop:8}
		The canonical inclusions $\gf_\lambda \subseteq U_\lambda$, $\lambda> 0$, induce a linear mapping $\pi \colon \gf_*\to U_*$. The canonical extension $U(\pi)\colon U(\mathfrak{g}_*)\to U_*$ of $\pi$ is an  isomorphism of graded algebras.
	\end{prop}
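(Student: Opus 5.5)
The plan is to build the map by hand, check that it respects brackets so the universal property of $U(\gf_*)$ applies, and then prove bijectivity by comparing Poincar\'e--Birkhoff--Witt bases. The main obstacle is describing $U_\lambda$ by ordered monomials.

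\emph{Construction of $\pi$.} By definition of the degree, $\gf_\lambda\subseteq U_\lambda$ for every $\lambda$. Taking unions over $\mi<\lambda$ gives $\gf_{\lambda^-}\subseteq U_{\lambda^-}$. Hence the inclusion induces linear maps $\gf_{*,\lambda}=\gf_\lambda/\gf_{\lambda^-}\to U_\lambda/U_{\lambda^-}=U_{*,\lambda}$, and summing over $\lambda$ gives a degree-preserving linear map $\pi\colon\gf_*\to U_*$. To obtain $U(\pi)$ we check that $\pi$ is a Lie algebra homomorphism into $U_*$ with its commutator bracket. Take $X\in\gf_\lambda$ and $Y\in\gf_\mi$. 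By definition of the product on $U_*$,
\[
\pi(X)\pi(Y)-\pi(Y)\pi(X)=(XY-YX)+U_{(\lambda+\mi)^-}=[X,Y]+U_{(\lambda+\mi)^-}.
\]
Since $[X,Y]\in\gf_{\lambda+\mi}$, the right-hand side equals $\pi\big([X,Y]+\gf_{(\lambda+\mi)^-}\big)$, which is $\pi$ of the bracket in $\gf_*$. Extending bilinearly over homogeneous components, the universal property yields an algebra homomorphism $U(\pi)\colon U(\gf_*)\to U_*$. It preserves the grading because $\pi$ maps $\gf_{*,\lambda}$ into $U_{*,\lambda}$.

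\emph{Bijectivity via PBW bases.} Fix a basis $X_1,\dots,X_n$ of $\gf$ compatible with the filtration, with degrees $d_j=\deg X_j$. For a multi-index $\alpha\in\N^n$, write $X^\alpha=X_1^{\alpha_1}\cdots X_n^{\alpha_n}$ and $\|\alpha\|=\sum_j\alpha_j d_j$. The key claim is
\[
U_\lambda=\mathrm{span}\{X^\alpha\colon \|\alpha\|\meg\lambda\}.
\]
The inclusion $\supseteq$ is immediate from the definition of $U_\lambda$.

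For $\subseteq$, start with a product $Y_1\cdots Y_k$ with $\sum_i\deg Y_i\meg\lambda$. By compatibility, each $Y_i$ is a combination of basis elements $X_j$ with $d_j\meg\deg Y_i$, so we may assume every $Y_i$ is a basis element. We then straighten the word by induction on its length $k$. Swapping two adjacent letters changes the word by a term $X_{i_1}\cdots[X_a,X_b]\cdots X_{i_k}$. Here $[X_a,X_b]\in\gf_{d_a+d_b}$ is a combination of basis elements of degree at most $d_a+d_b$. This produces words of length $k-1$ whose total degree is still at most $\lambda$, and these are handled by the inductive hypothesis. Finitely many swaps sort the word itself, so the claim follows. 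This straightening step is the main obstacle, because one must check carefully that degrees never increase during the reduction.

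From the claim and the linear independence of the $X^\alpha$ given by the PBW theorem, $U_{\lambda^-}$ is spanned by the $X^\alpha$ with $\|\alpha\|<\lambda$. Therefore the classes of the $X^\alpha$ with $\|\alpha\|=\lambda$ form a basis of $U_{*,\lambda}$.

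On the other side, $\bar X_j=X_j+\gf_{d_j^-}$ form a homogeneous basis of $\gf_*$, again by compatibility. By PBW, the ordered monomials $\bar X^\alpha$ with $\|\alpha\|=\lambda$ form a basis of the degree-$\lambda$ component of $U(\gf_*)$. Since $U(\pi)$ is multiplicative, it sends $\bar X^\alpha$ to the class of $X^\alpha$ in $U_{*,\|\alpha\|}$. Thus $U(\pi)$ maps a basis of each homogeneous component bijectively onto a basis of the corresponding component of $U_*$, so it is an isomorphism of graded algebras.

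As a by-product, the claim together with PBW gives $\gf\cap U_\lambda=\gf_\lambda$, which is the consistency of the two notions of degree mentioned before the statement.
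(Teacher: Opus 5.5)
Your proposal is correct and follows essentially the same route as the paper. Fix a basis compatible with the filtration, and show by induction on word length that the ordered PBW monomials $X^\alpha$ with $\|\alpha\|\meg\lambda$ span $U_\lambda$; the induction works because brackets of basis elements only produce basis elements of degree at most the sum. Then deduce that the classes with $\|\alpha\|=\lambda$ form a basis of $U_{*,\lambda}$, and observe that $U(\pi)$ sends the PBW basis of $U(\gf_*)$ onto these. You also check explicitly that $\pi$ carries the bracket of $\gf_*$ to the commutator in $U_*$, which is needed to invoke the universal property; the paper leaves this implicit, so it is a welcome addition.
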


	\begin{proof}
		Let $(X_j)_{j\in J}$ be a basis of $\gf$ compatible with the filtration, so that $(X_j)_{\deg X_j\meg \lambda}$ is a basis of $\gf_\lambda$ for every $\lambda>0$. Endow the finite set $J$ with a total ordering. Define $\dd_j\coloneqq \deg X_j$ for every $j\in J$, and $\dd_\alpha\coloneqq \sum_{j\in J} \alpha_j\dd_j$ for every $\alpha\in \N^J$.
		Then, the Poincaré--Birkhoff--Witt theorem shows that
		\[
		\vect{X}^\alpha =\prod_{j\in J} X_j^{\alpha_j}, \quad \alpha\in \N^J,
		\]
		form a basis of $U(\mathfrak{g})$; analogously, 
		\[
		 \prod_{j\in J} (X_j+\gf_{\dd_j^-})^{\alpha_j}, \quad \alpha \in \N^J,
		 \]
		 form a basis of $U(\mathfrak{g}_*)$.
		
		Now, fix $\lambda\Meg 0$ and let us prove that $(\vect{X}^\alpha)_{\dd_\alpha\meg \lambda}$ is a basis of $U_\lambda$. Observe first that $\deg \vect{X}^\alpha\meg \dd_\alpha \meg\lambda$ by the definition of $U_\mi$, $\mi\Meg 0$. Conversely, every element of $U_\lambda$ can be written as a finite linear combination of elements of the form $X_{\delta_1}\cdots X_{\delta_n}$ for some $n\in\N$ and for some $\delta\in J^n$ such that $\dd_{\delta_1}+\cdots+\dd_{\delta_n}\meg \lambda$, so that it will suffice to show that each such `monomial' belongs to the vector space generated by  $(\vect{X}^\alpha)_{\dd_\alpha\meg \lambda}$. However, observe that for every $j_1,j_2\in J$ there is a (unique) finite family $(a_{j_1,j_2,j_3})_{j_3\in J}$ such that 
		\[
		[X_{j_1},X_{j_2}]= \sum_{j_3\in J} a_{j_1,j_2,j_3} X_{j_3}
		\]
		and such that $a_{j_1,j_2,j_3}=0$ if $\dd_{j_3}>\dd_{j_1}+\dd_{j_2}$, since the basis $(X_j)$ is compatible with the filtration $(\gf_\mi)$ of $\gf$. Therefore, it is possible to reorder the factors in the product $X_{\delta_1}\cdots X_{\delta_n}$, getting
		\[
		X_{\delta_1}\cdots X_{\delta_n}= \sum_{\dd_\gamma \meg \dd_{\delta_1}+\cdots+\dd_{\delta_n}} b_{\delta,\gamma} \vect{X}^\gamma
		\]
		for some finite family $(b_{\delta,\gamma})$. This may be proved by induction on $n$, as replacing $X_{\delta_j} X_{\delta_{j+1}} $ with $X_{\delta_{j+1}} X_{\delta_j}$  adds a `monomial' in $U_{\dd_{\delta_1}+\cdots+\dd_{\delta_n}}$ of length $n-1$. Thus, $X_{\delta_1}\cdots X_{\delta_n}=\vect X^\gamma + P$ for some $\gamma$ with $\dd_\gamma=\dd_{\delta_1}+\cdots+\dd_{\delta_n}$, and $P$ is a sum of monomials in $U_{\dd_{\delta_1}+\cdots+\dd_{\delta_n}}$ of length $n-1$, and the assertion follows. Now our claim follows too.
		
		Consequently, $(\vect{X}^\alpha+U_{\lambda^-})_{\dd_\alpha=\lambda}$ is a basis of $U_{*,\lambda}$ for every $\lambda\Meg0$. Since clearly
		\[
		U(\pi)\left( \prod_{j\in J} (X_j+\gf_{\dd_j^-})^{\alpha_j} \right)=\vect{X}^\alpha+U_{\dd_\alpha^-}
		\] 
		for every $\alpha\in \N^J$, it follows that $U(\pi)$ is an isomorphism compatible with the products (by the definition of $U(\pi)$) and the graduations of $U(\mathfrak{g}_*)$ and $U_*$.
              \end{proof}

 From now on, we shall identify $U_*$ and $U(\mathfrak{g}_*)$ by means of $U(\pi)$. 	With a similar proof, the following proposition  follows. 
 \begin{prop}\label{da-aggiungere:prop}
A family $(X_j)_{j\in J}$ in $\gf$ is a basis compatible with the filtration if
and only if it induces a homogeneous basis of $\gf_*$.
\end{prop}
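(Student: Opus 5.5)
The plan is to show that both conditions amount to the same statement about the graded pieces $\gf_{*,\lambda}$. First I pin down what "induces a homogeneous basis of $\gf_*$" means. Each $X_j$ with $\deg X_j=\dd_j$ gives a class $\bar X_j\coloneqq X_j+\gf_{\dd_j^-}\in\gf_{*,\dd_j}$. The claim is that $(\bar X_j)_{j\in J}$ is a basis of $\gf_*$. Since $\gf_*=\bigoplus_\lambda \gf_{*,\lambda}$ and each $\bar X_j$ is homogeneous, this is equivalent to the following: for every $\lambda>0$, the family $(\bar X_j)_{\dd_j=\lambda}$ is a basis of $\gf_{*,\lambda}$. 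The $\bar X_j$ are nonzero, because $X_j\notin\gf_{\dd_j^-}$ by the definition of degree together with right-continuity of the filtration. Also, only finitely many $\lambda$ have $\gf_{*,\lambda}\neq\{0\}$, because $\gf$ is finite-dimensional and the filtration jumps only at finitely many $\lambda$.

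For the forward implication, assume $(X_j)$ is compatible with the filtration. Fix $\lambda$. The family $(X_j)_{\dd_j\meg\lambda}$ is a basis of $\gf_\lambda$. Pick $\mu<\lambda$ close enough to $\lambda$ that $\gf_{\lambda^-}=\gf_\mu$, which is possible since there are finitely many jumps. Then $(X_j)_{\dd_j\meg\mu}=(X_j)_{\dd_j<\lambda}$ is a basis of $\gf_{\lambda^-}$. Passing to the quotient, the classes of $(X_j)_{\dd_j=\lambda}$ form a basis of $\gf_\lambda/\gf_{\lambda^-}$, which is what we need.

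For the converse, assume $(\bar X_j)_{\dd_j=\lambda}$ is a basis of $\gf_{*,\lambda}$ for each $\lambda$. I prove by induction over the finitely many jumps $\lambda_1<\dots<\lambda_k$ that $(X_j)_{\dd_j\meg\lambda}$ is a basis of $\gf_\lambda$.

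For spanning, take $Y\in\gf_\lambda$. We can subtract a combination of the $X_j$ with $\dd_j=\lambda$ so that the remainder lies in $\gf_{\lambda^-}=\gf_{\lambda'}$, where $\lambda'$ is the previous jump. The inductive hypothesis then expresses that remainder in terms of $(X_j)_{\dd_j\meg\lambda'}$.

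For linear independence, suppose $\sum_{\dd_j\meg\lambda}c_jX_j=0$. Reducing modulo $\gf_{\lambda^-}$ kills the terms with $\dd_j<\lambda$, so independence of the $\bar X_j$ forces $c_j=0$ whenever $\dd_j=\lambda$. The inductive hypothesis then handles the remaining coefficients. At non-jump values of $\lambda$, $\gf_\lambda$ equals $\gf_{\lambda_i}$ for the largest jump $\lambda_i\meg\lambda$. No $X_j$ has degree strictly between consecutive jumps, since otherwise $\bar X_j$ would be a nonzero element of a trivial graded piece. Finally, taking $\lambda$ large shows that $(X_j)$ is a basis of $\gf$.

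The only real subtlety is the bookkeeping with $\gf_{\lambda^-}$. One needs that the left limit is attained at some $\mu<\lambda$, and that every $X_j$ is nonzero with its class lying exactly in degree $\dd_j$. Both facts follow from finite-dimensionality and right-continuity, so I expect no serious obstacle. This is the same linear-algebra skeleton as the first part of the proof of Proposition~\ref{prop:8}, applied to $\gf$ rather than $U(\mathfrak{g})$.
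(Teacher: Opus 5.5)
Your proposal is correct and follows the same route as the paper. The forward implication passes to the quotients $\gf_\lambda/\gf_{\lambda^-}$ degree by degree, and the converse is an induction over the finitely many jumps of the filtration, which is exactly what the paper sketches by pointing to the proof of Proposition~\ref{prop:8}. Your extra bookkeeping supplies details the paper leaves implicit: that $\gf_{\lambda^-}=\gf_\mu$ for some $\mu<\lambda$, that every $\deg X_j$ is a jump, and that the classes $X_j+\gf_{\deg(X_j)^-}$ are nonzero.
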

\begin{proof}
Take a family $(X_j)$ in $\gf$, define $d_j=\deg X_j$ for every $j$,
and set $Y_j=X_j+ \gf_{d_j^-}$. 

 If $(X_j)_{j\in J}$ is a basis compatible with the
filtration, then, by definition   $(X_j)_{d_j\meg\lambda}$ is a basis
of $\gf_\lambda$ for every $\lambda\Meg1$. Consequently, $(Y_j)_{d_j=\lambda}$ is a basis of $\gf_\lambda/\gf_{\lambda^-}$, for every
$\lambda\Meg1$. In other words, $(Y_j)$ is a homogeneous basis of
$\gf_*$.

Conversely, if $(Y_j)$ is a homogeneous basis of
$\gf_*$, then $(Y_j)_{d_j=\lambda}$ is a basis of $\gf_\lambda/\gf_{\lambda^-}$, for every
$\lambda\Meg1$. Arguing by induction as in the proof of
Proposition~\ref{prop:8}, we then see that  $(X_j)_{d_j\meg\lambda}$ is a basis
of $\gf_\lambda$ for every $\lambda\Meg1$, whence the conclusion.
\end{proof}
		\subsection{Weighted subcoercive operators revisited}
                We say that a family $(X_j)_{j\in J}$ of elements of
 $\gf$ is a \emph{minimal basis} of $\mathfrak{g}$ if, setting $Y_j\coloneqq X_j+ \gf_{(\deg X_j)^-}$, the family $(Y_j)$ induces a (homogeneous) basis of $\gf_*/[\gf_*,\gf_*]$. If $(\gf_\lambda)$ is admissible, so that the $\deg(X_j)$ are integer multiples of some element of $(0,+\infty)$, then we denote with $\dd$ the least common multiple of the degrees of the $X_j$, $j\in J$.
		
     We need the following fact; we provide a proof in
    Section~\ref{A2:sec} for the reader's convenience.
	\begin{prop}\label{min-basis:pro}
		Let $(X_j)_{j\in J}$ be a minimal basis of $\gf$. Then,  $(X_j)$ generates $\gf$ as a Lie algebra and $(\gf_\lambda)$ as a filtration.
	\end{prop}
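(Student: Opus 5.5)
The plan is to first prove the corresponding statement on the homogeneous side and then lift it to $\gf$ by a filtered induction. On $\gf_*$ the key fact is classical. If $\mathfrak h\subseteq \gf_*$ is a graded subalgebra with $\mathfrak h+[\gf_*,\gf_*]=\gf_*$, then $\mathfrak h=\gf_*$. I will apply this to the graded subalgebra $\mathfrak h$ generated by the homogeneous elements $Y_j$. To see that $\mathfrak h=\gf_*$, I will prove by induction on $\lambda$ that $\gf_{*,\lambda}\subseteq \mathfrak h$. The degrees lie in $[1,\infty)$, and only finitely many of them are nonzero, so the induction runs over a finite increasing set of degrees. Since $\gf_*=\mathfrak h+[\gf_*,\gf_*]$ degreewise, we have $\gf_{*,\lambda}=\mathfrak h_\lambda+\sum_{\mu+\nu=\lambda}[\gf_{*,\mu},\gf_{*,\nu}]$ with $\mu,\nu\geq 1$, hence $\mu,\nu<\lambda$. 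By the inductive hypothesis those brackets already lie in $\mathfrak h$. Consequently, for each degree $\lambda$, $\gf_{*,\lambda}$ is spanned by the projections $\bar Z$ of iterated brackets $Z=[\dots[X_{j_1},X_{j_2}],\dots,X_{j_k}]$ with $\sum_i \deg X_{j_i}=\lambda$.

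Next I lift to the filtration. Let $\tilde F_\lambda$ be the span of the iterated brackets of the $X_j$ with $\sum \deg X_{j_i}\meg\lambda$, that is, the filtration generated by $(X_j)$. Clearly $\tilde F_\lambda\subseteq\gf_\lambda$, because $[\gf_\lambda,\gf_\mu]\subseteq\gf_{\lambda+\mu}$. I will prove $\gf_\lambda\subseteq\tilde F_\lambda$ by induction on $\lambda$, running through the finite set of jump points $\lambda_1<\dots<\lambda_k$ of $(\gf_\lambda)$; between consecutive jumps both filtrations are handled by right continuity and monotonicity. Take $X\in\gf_\lambda$. Its class in $\gf_{*,\lambda}=\gf_\lambda/\gf_{\lambda^-}$ is, by the first step, a combination of classes of brackets $Z$ of total weight exactly $\lambda$. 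The one point to check is that each such bracket $Z$, viewed in $\gf$, lies in $\gf_\lambda$ and its class modulo $\gf_{\lambda^-}$ is the corresponding bracket in $\gf_*$. This holds by the definition of the bracket on $\gf_*$, since $[X_\mu+\gf_{\mu^-},X_\nu+\gf_{\nu^-}]=[X_\mu,X_\nu]+\gf_{(\mu+\nu)^-}$. Hence $X-\sum c_Z Z\in\gf_{\lambda^-}=\gf_{\lambda'}$, where $\lambda'$ is the previous jump point; $\gf_{\lambda^-}$ equals some $\gf_\mu$ with $\mu<\lambda$ because there are finitely many jumps. By induction this difference lies in $\tilde F_{\lambda'}\subseteq\tilde F_\lambda$, so $X\in\tilde F_\lambda$. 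The base case is the smallest jump point $\lambda_1$, where $\gf_{\lambda_1^-}=\{0\}$.

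These two steps give $\gf_\lambda=\tilde F_\lambda$ for all $\lambda$, i.e.\ $(X_j)$ generates $(\gf_\lambda)$ as a filtration. Taking the union over $\lambda$ yields $\gf=\bigcup_\lambda\tilde F_\lambda$, which is the Lie algebra generated by the $X_j$.

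The main obstacle I anticipate is bookkeeping in the homogeneous step, since $\lambda$ ranges over $(0,\infty)$ rather than integers. The induction must be organized over the finite set of degrees that occur, together with all finite sums of the $\deg X_j$ lying below $\lambda_k$, and this set is finite. Care is also needed for degrees $\lambda$ at which $\gf_{*,\lambda}=0$ while some brackets have that total weight; such brackets then lie in $\gf_{\lambda^-}$, and the induction absorbs them harmlessly.
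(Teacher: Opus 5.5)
Your proposal is correct and follows essentially the same route as the paper's proof in Section~\ref{A2:sec}. Both arguments first use that the classes $Y_j$ generate $\gf_*$, and then lift along the filtration by induction, writing each element of $\gf_\lambda$ modulo $\gf_{\lambda^-}$ as a combination of iterated brackets of the $X_j$ of total weight $\lambda$. The differences are minor: you prove the graded generation step that the paper takes for granted, and you apply the quotient map $\gf_\lambda\to\gf_\lambda/\gf_{\lambda^-}$ at the nominal weight $\lambda$ to an arbitrary $X\in\gf_\lambda$. This avoids the paper's nonlinear map $\pi$ (with its ``provided nonzero'' caveats) and its auxiliary completion to a compatible basis.
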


In other words, 
\[
\gf_\lambda = \mathrm{span}\, \{ [[\dots[ X_{j_1},X_{j_{2}}], \dots],  X_{j_k}] \colon k\Meg 1, \; j_1,\dots, j_k\in J, \; \deg(X_{j_1})+\cdots +\deg(X_{j_k})\meg \lambda\},
\]
and
\[
U_\lambda =  \mathrm{span}\, \{  X_{j_1}\cdots X_{j_k} \colon k\in\N, \; j_1,\dots, j_k\in J, \; \deg(X_{j_1})+\cdots +\deg(X_{j_k})\meg \lambda\}.
\]
In particular, $(X_j)$ is a \emph{reduced weighted algebraic basis} of $\gf$.
	
	Observe that $\dd$ does \emph{not} depend on the choice of $(X_j)$, since it is the least common multiple of the degrees of the non-zero elements of $\gf_*/[\gf_*,\gf_*]$.

	\begin{deff}
We say that a left-invariant differential operator $\Lc\in U(\mathfrak{g})$ of degree $\dL$ is \emph{weighted subcoercive with respect to the filtration $(\mathfrak{g}_{\lambda})$} if \begin{equation}\label{LLstarU}
	 \Lc+\Lc^*+U_{\dL^-}
\end{equation}
	 is a positive Rockland operator on $G_*$.
	\end{deff}
	
	We are now ready to prove the aforementioned equivalent characterization of weighted subcoercivity. Before doing so, let us give the following definition, whose meaning will be clarified soon (see Remark~\ref{admissibility} below).
 
	\begin{deff}
	We say that an increasing filtration $(\gf_\lambda)_{\lambda\Meg 0}$ satisfying~\eqref{eq:filtr} is \emph{admissible} if there is $\dd_{0} \geq 1$ such that, if $\lambda$ is such that $\mathfrak{g}_{\lambda} \neq \mathfrak{g}_{\lambda^{-}}$, then $\lambda \in \dd_{0}\mathbb{Q}$.
	\end{deff}

	\begin{teo}\label{teo:1}
	Let $\Lc\in U(\mathfrak{g})$ be a left-invariant differential operator of degree $\dL$ with respect to a given admissible filtration $(\mathfrak{g}_{\lambda})$. The following conditions are equivalent:
	\begin{enumerate}
	\item $\Lc$ is weighted subcoercive with respect to the filtration $(\mathfrak{g}_{\lambda})$;
	\item $\Lc$ is weighted subcoercive with respect to some reduced weighted basis compatible with $(\mathfrak{g}_{\lambda})$.
	\end{enumerate}
In addition, if one of the above conditions holds, then $\dL/\dd$ is an even integer and  $\Lc$ is weighted subcoercive with respect to every reduced weighted basis compatible with $(\mathfrak{g}_{\lambda})$, with weights $w_{1},\dots, w_{d}$  such that $\dL/w_{j} \in 2\mathbb{N}$.
	\end{teo}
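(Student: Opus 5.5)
The proof rests on one dictionary. Given a reduced weighted basis $X_1,\dots,X_d$ compatible with $(\gf_\lambda)$, Proposition~\ref{prop:8} identifies $U_*$ with $U(\gf_*)$. Under this identification, for every form $C$ of degree $m$ with principal part $P$, the class of $d\mathrm{R}_G(C)$ in $U_{*,m}=U_m/U_{m^-}$ equals $d\mathrm{R}_{G_*}(P)$. Indeed, $\mathbf X_\alpha\in U_{\|\alpha\|}$, so terms with $\|\alpha\|<m$ lie in $U_{m^-}$. The products $\mathbf X_\alpha$ with $\|\alpha\|=m$ project to $\bar{\mathbf X}_\alpha$ because $U(\pi)$ is multiplicative and sends $\bar X_j$ to $X_j+U_{w_j^-}$.

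We also need that adjoints commute with passing to the graded object. For $X\in\gf$, Remark~\ref{comparison1} gives $X^*=-X-(X\chi)(e)$, so $X^*\equiv -X$ modulo $U_0$. Hence $\mathbf X_\alpha^*\equiv(-1)^{|\alpha|}\mathbf X_{\alpha_*}$ modulo $U_{\|\alpha\|^-}$, since the lower-order corrections drop at least one factor and each $w_j\Meg 1>0$. Therefore the class of $d\mathrm{R}_G(C)^*$ in $U_{*,m}$ is $d\mathrm{R}_{G_*}(P^*)$, where $P^*$ is the adjoint form. On $G_*$, which is unimodular with Haar measure, $d\mathrm{R}_{G_*}(P)^*=d\mathrm{R}_{G_*}(P^*)$.

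\emph{(2)$\Rightarrow$(1).} Let $\Lc=d\mathrm{R}_G(C)$, where $C$ is subcoercive of degree $m$. We first check that $\deg\Lc=m$. The class of $\Lc$ in $U_{*,m}$ is $d\mathrm{R}_{G_*}(P)$, which is nonzero: otherwise $d\mathrm{R}_{G_*}(P+P^*)=0$ could not be Rockland. So $\dL=m$, and $\Lc+\Lc^*+U_{\dL^-}=d\mathrm{R}_{G_*}(P+P^*)$ is a positive Rockland operator.

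\emph{(1)$\Rightarrow$(2) and the last assertion.} Fix any reduced weighted basis compatible with the filtration. By Proposition~\ref{min-basis:pro}, and by the description of $U_\lambda$ given there (applied to a basis compatible with the filtration, which generates it), $\Lc\in U_\dL$ can be written as $d\mathrm{R}_G(C)$ for a form $C$ containing only monomials with $\|\alpha\|\meg\dL$. Its degree $m$ is at most $\dL$, and since $\Lc\notin U_{\dL^-}$ we get $m=\dL$. The dictionary then shows that $d\mathrm{R}_{G_*}(P+P^*)$ is the positive Rockland operator $\Lc+\Lc^*+U_{\dL^-}$.

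The remaining point is the parity condition $\dL/w_j\in2\N$ for weights $w_j$ in $\gf_*/[\gf_*,\gf_*]$, which also yields that $\dL/\dd$ is even; this is the main obstacle. Take $\bar X_j$ with $\deg\bar X_j=w_j$ not lying in $[\gf_*,\gf_*]$. Choose a character, that is, a one-dimensional unitary representation $\pi_\xi$ of $G_*$ that is trivial on $[\gf_*,\gf_*]$, with $d\pi_\xi(\bar X_k)=i\xi_k$, where $\xi$ is supported on the homogeneous component of weight $w_j$. Then $d\pi_\xi(R)$, for the Rockland operator $R=d\mathrm{R}_{G_*}(P+P^*)$, is a polynomial $p(\xi)$ homogeneous of degree $\dL$ under the dilations. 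It is nonzero for $\xi\neq0$ by the Rockland property, and nonnegative by positivity. If $\dL/w_j$ is not an integer, every term of $R$ of degree $\dL$ involves other generators, so $p$ vanishes identically on this component, a contradiction. So $p$ is a homogeneous polynomial of degree $\dL/w_j$ that is nonnegative and nonvanishing on a real vector space of positive dimension. Comparing $p(\xi)$ with $p(-\xi)=(-1)^{\dL/w_j}p(\xi)$ forces $\dL/w_j$ to be even. Applying this to a minimal basis gives that $\dL$ is an even multiple of each degree, hence of $\dd$. Admissibility guarantees that these ratios are rational, so the least common multiple makes sense. For weights of other compatible basis elements, which may lie in $[\gf_*,\gf_*]$, we instead argue via the reduced structure, using the weights actually appearing in the form.
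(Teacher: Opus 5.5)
Up to its last sentence your argument is correct and is essentially the paper's. The paper also identifies $\Lc+\Lc^*+U_{\dL^-}$ with $d\mathrm{R}_{G_*}(P+P^*)$ for a form written in a compatible basis, and it also uses a minimal basis for (1)$\Rightarrow$(2). It obtains the parity by restricting the symbol of the induced operator on the abelianization of $G_*$ to the homogeneous lines through the $Y_j$; this is your argument with characters of $G_*$ trivial on $[\gf_*,\gf_*]$. Your explicit check that $\deg\Lc=m$ in (2)$\Rightarrow$(1), and your treatment of $\Lc^*$ modulo $U_{m^-}$, fill in details the paper leaves implicit. The claim that $d\pi_\xi(R)\Meg 0$ for a character needs a word of justification, e.g.\ test $R$ on $\chi\,\phi(\delta_{1/r}\,\cdot\,)$ and let $r\to\infty$; the paper cites a reference for the corresponding positivity on the abelianization.

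The final sentence, however, aims at a false statement. The parity $\dL/w_j\in 2\N$ cannot be proved for compatible basis elements whose classes lie in $[\gf_*,\gf_*]$. Take the Heisenberg algebra with $[X,Y]=T$ and the stratified filtration. Then $(X,Y,T)$ with weights $(1,1,2)$ is a reduced weighted basis compatible with the filtration, and $\Lc=-(X^2+Y^2)$ satisfies (1) with $\dL=2$. But $\dL/w_T=1$ is odd. No form of larger degree $m\in 4\N$ helps either: by your own dictionary its principal part represents the class of $\Lc$ in $U_m/U_{m^-}$, which is zero. Restricting attention to ``the weights actually appearing in the form'' is not allowed, because the definition demands $m/w_j\in 2\N$ for every $j$.

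The last clause of the theorem has to be read with $\dL/w_j\in 2\N$ as a \emph{hypothesis} on the basis; the paper's remark right after the theorem makes exactly this point. With that reading nothing is left to prove. Your dictionary, established for an arbitrary reduced weighted basis compatible with the filtration, already gives $\Lc+\Lc^*+U_{\dL^-}=d\mathrm{R}_{G_*}(P+P^*)$ with $P$ of degree $\dL$, and the parity is assumed. This is precisely how the paper concludes. So delete your last sentence and replace it with this observation.
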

	
It may be tempting to add a third equivalent condition in Theorem~\ref{teo:1} requiring that $\Lc$ be weighted subcoercive with respect to every reduced weighted basis with weights $w_{1},\dots, w_{d}$ which is compatible with $(\mathfrak{g}_{\lambda})$ and such that $\dL/w_{j} \in 2\mathbb{N}$. However, this latter condition is weaker than (1) and (2) in the statement, since there may be no reduced weighted bases compatible with $(\mathfrak{g}_{\lambda})$  and with weights $w_{1},\dots, w_{d}$  such that $\dL/w_{j} \in 2\mathbb{N}$.
		
		\begin{proof}
Let us assume (2), and prove (1). Then $\Lc$ is weighted subcoercive with respect to some reduced weighted algebraic basis $(X_j)_{j\in J}$ compatible with the filtration $(\gf_\lambda)$. Let $C$ be a weighted subcoercive form  of degree $ \dL\coloneqq \deg \Lc$ such that $\Lc = d\mathrm{R}_G(C)$. If $P$  denotes the principal part of $C$, then the operator on $\gf_*$ corresponding to the family $(X_j+\gf_{\dd_j^-})_{j\in J}$ and the form $P+P^*$ is $  \Lc+\Lc^*+U_{\dL^-}$, which is then a positive Rockland operator on $G_*$.
		
Let us now prove that (1) implies (2). Suppose now that~\eqref{LLstarU} is a positive Rockland operator on $G_*$. Let $(X_1,\dots, X_k)$ be a minimal basis of $\gf$, and $C$ be a form of degree $\dL$ which induces $\Lc$ with respect to such basis. 
		Let $P$ be the principal part of $C$. Then~\eqref{LLstarU} is the operator on $\gf_*$ corresponding to $ Y_j=X_j+\gf_{\dd_j^-}$ and the form $\smash{P+P^*}$. In order to show that $\Lc$ is weighted subcoercive with respect to $(X_j)$ with weights $\deg(X_{j})$, it suffices to show that $\smash{\dL\in 2\deg(X_j)\N}$ for every $j=1,\dots, k$. However, if $\tilde G$ is the abelianization of $G_*$ and $\widetilde \Lc$ is the (positive, cf.~\cite[Proposition 3.12]{Calzi}) differential operator on $\tilde G$ corresponding to the positive Rockland operator~\eqref{LLstarU}, then the Fourier transform of $\widetilde \Lc\delta_0$ is a  positive homogeneous polynomial which vanishes only at $0$. Considering its restriction to the `homogeneous' lines $\exp(\R Y_j+[\gf_*,\gf_*])$, we then see that  $\dL=\dd_{\widetilde \Lc}\in 2\dd_j\N$ for every $j=1,\dots, k$. 
		
Finally, assume that (1) holds and take a reduced weighted basis $X_1,\dots, X_d$ with weights $w_{1},\dots, w_{d}$ which is compatible with $(\mathfrak{g}_{\lambda})$ and such that $\dL/w_{j} \in 2\mathbb{N}$. In order to prove that $\Lc$ is weighted subcoercive with respect to $X_1,\dots, X_d$, it suffices to argue as in the proof that (2) implies (1), since the condition $\dL/w_{1},\dots,\dL/w_{d} \in 2\mathbb{N}$ is satisfied by assumption.
\end{proof}
	
\begin{teo}\label{teo:7-1}
 Let $\Lc$ be a weighted subcoercive operator with respect to some admissible filtration. If $\Lc=\Lc^*$, then $\Lc$ is essentially self-adjoint on $C^\infty_c(G)$.
	\end{teo}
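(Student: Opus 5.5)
We will show that the closure of $\Lc$ on $C^\infty_c(G)$, regarded as an operator on $L^2(\beta)$, is self-adjoint. By Theorem~\ref{teo:1}, $\Lc$ is weighted subcoercive with respect to some reduced weighted basis $X_1,\dots,X_d$ compatible with the filtration. The results of ter Elst and Robinson then apply: the closure $\overline{\Lc}$ of $\Lc$ generates a holomorphic semigroup whose kernels satisfy Gaussian bounds. Their setting uses a right Haar measure $\beta_R$, so the measure must be handled first.

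\textbf{Step 1: reduction to $\beta_R$.} Write $\beta=\Delta_R\beta_R$. The multiplication map $U\colon f\mapsto \Delta_R^{1/2}f$ is unitary from $L^2(\beta)$ onto $L^2(\beta_R)$ and preserves $C^\infty_c(G)$. By Proposition~\ref{prop:9}~(5), each $X\in\gf$ satisfies $X(\Delta_R^{-1/2}g)=\Delta_R^{-1/2}(X-c_X)g$ with $c_X=\frac12(X\Delta_R)(e)$. Hence $\widetilde\Lc=U\Lc U^{-1}$ is again a left-invariant operator, obtained from $\Lc$ by the substitution $X_j\mapsto X_j-c_{X_j}$.

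This substitution only changes terms of lower degree, so $\widetilde\Lc$ has the same principal part modulo $U_{\dL^-}$. Moreover, $\Lc=\Lc^*$ in $L^2(\beta)$ gives $\widetilde\Lc=\widetilde\Lc^*$ in $L^2(\beta_R)$. Therefore $\widetilde\Lc$ is weighted subcoercive and formally self-adjoint for the right Haar measure, and it suffices to treat $\beta=\beta_R$.

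\textbf{Step 2: symmetric semigroup.} By \cite[Theorem 1.1]{ElstRobinson}, $\overline{\Lc}$ generates a holomorphic $C_0$-semigroup $S_t=\ee^{-t\overline\Lc}$ on $L^2$. Its kernel $k_t$ is smooth and satisfies Gaussian bounds together with all its derivatives $\mathbf X_\alpha k_t$. In particular, $S_t$ maps $L^2$ into $C^\infty\cap L^2$, with all left-invariant derivatives in $L^2$.

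Formal self-adjointness gives $\Lc^\dag=\overline{\Lc}$ formally. Consequently $\overline{\Lc}\subseteq(\overline{\Lc})^*$, where the adjoint is the maximal distributional operator $\Lc^*_{\max}$. So $\overline\Lc$ is symmetric and, being a generator, it has $\omega+\overline\Lc$ surjective for large $\omega$. It is also accretive up to a shift: the Gårding inequality gives $\Re\langle\Lc\phi,\phi\rangle\Meg-\omega\|\phi\|^2$.

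A closed, symmetric, semibounded operator with $-\omega'$ in its resolvent set for some real $\omega'<-\omega$ is self-adjoint. Indeed, take $u\in D(\overline\Lc^*)$ and choose $v\in D(\overline\Lc)$ with $(\overline\Lc+\omega')v=(\overline\Lc^*+\omega')u$. Then $u-v\in\ker(\overline\Lc^*+\omega')=\operatorname{ran}(\overline\Lc+\omega')^\perp=\{0\}$.

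\textbf{Main obstacle.} The only delicate point is the one in Step 2: the closure on $C^\infty_c(G)$ must coincide with the semigroup generator, that is, $C^\infty_c(G)$ must be a core for the generator. This is part of ter Elst–Robinson's theorem when $G$ is treated via right Haar measure, so it is available, but the identification with our measure should be checked.

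If one prefers to avoid relying on it, an alternative is to prove directly that $\ker(\Lc^*_{\max}+\omega')=\{0\}$. Take $u\in L^2$ with $(\Lc+\omega')u=0$ distributionally. By hypoellipticity $u$ is smooth. Multiply by cutoffs $\psi_R$ built from the modulus, with bounded left-invariant derivatives of all orders. Using the Gårding inequality, the commutators $[\Lc,\psi_R]$ involve derivatives of $u$ of degree below $\dL$, and these are controlled by local subelliptic estimates; they tend to zero as $R\to\infty$. This yields $(\omega'-\omega)\|u\|^2\meg 0$, hence $u=0$.
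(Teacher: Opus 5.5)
Your proof is correct, and it differs from the paper's only in which consequence of ter Elst--Robinson it invokes.

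\textbf{The reduction is the same.} The unitary $U=\Delta_R^{1/2}\cdot$ intertwines the paper's representation $\pi(x)f=\Delta_R^{1/2}(x)f(\cdot\,x)$ on $L^2(\beta)$ with the right regular representation on $L^2(\beta_R)$. Your $\widetilde\Lc=U\Lc U^{-1}$ is exactly the paper's $\Lc'$.

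\textbf{The key input differs.} The paper uses the adjoint identity of \cite[Theorem 2.3 (c)]{Martini}: the closure of $d\pi(\Lc')$ on smooth vectors equals $d\pi(\overline{\Lc'^+})^*$. When $\Lc=\Lc^*$, this says directly that the closure is self-adjoint. For non-symmetric $\Lc$ it also identifies the adjoint of the closure. You use only the generation theorem, which gives $\operatorname{ran}(\overline\Lc+\omega')=L^2$ for some large real $\omega'$. You then apply the elementary criterion that a symmetric operator $A$ with $A-\lambda$ surjective at one real $\lambda$ is self-adjoint. Your route needs only the most commonly quoted part of the theory, but it is specific to the symmetric case.

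\textbf{The ``main obstacle'' is not about the measure.} Step~1 has already removed the measure issue. What both proofs actually need is that $C^\infty_c(G)$ is dense in the smooth vectors $\{f\in L^2\colon Xf\in L^2\ \forall X\in U(\gf)\}$ for the seminorms $f\mapsto\norm{Xf}_{L^2}$. This holds by multiplying by cutoffs with uniformly bounded left-invariant derivatives and then convolving on the left with an approximate identity, as in the proof of Proposition~\ref{prop:7}~(ii). The paper uses this same density without comment. With it, the closure from $C^\infty_c(G)$ coincides with the ter Elst--Robinson generator.

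\textbf{Minor points.}
\begin{itemize}
\item ``$\omega'<-\omega$'' should read $\omega'>\omega$.
\item Semiboundedness and closedness are not needed for the criterion you apply.
\item The cutoff argument in your last paragraph is unnecessary. As sketched, it would also still require uniform local subelliptic estimates on annuli.
\end{itemize}
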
 
	\begin{proof}
Consider the continuous unitary representation $\pi \colon G \to L^2(\beta)$ defined by 
	\[
	\pi(x) f =\Delta_R^{1/2}(x) f(\cdot \,  x), \qquad x\in G,
	\]
	for $f\in L^2(\beta)$, and the left-invariant differential operator $\Lc'$ such that 
	\[
	\Lc' f= \Delta_R^{1/2} \Lc(\Delta_R^{-1/2} f)
	\]
	for $f\in C^\infty(G)$.  Since $\smash{\Lc'-\Lc \in U_{\dL^-}}$, also $\Lc'$ is a weighted subcoercive operator with respect to the same admissible filtration as $\Lc$. Observe that since
	\[
	\int_G \Delta_R^{1/2}(x) \phi(y x)  \overline{\psi(y)}\,\dd \beta(y) = \int_G (\Delta_R^{1/2} \phi)(y x) \Delta_R^{-1/2}(y)  \overline{\psi(y)}\,\dd \beta(y), \qquad x\in G,
	\] 
	for all $\phi,\psi\in C^\infty_c(G)$, then
		\[
		\begin{split}
		\langle \dd \pi(\Lc') \phi\vert \psi\rangle&= \Lc'\left(x\mapsto\int_G \Delta_R^{1/2}(x) \phi(y x) \overline{\psi(y)}\,\dd \beta(y)\right)(e)\\
			%&= \Lc'\left(x\mapsto\int_G (\Delta_R^{1/2} \phi)(y x) \Delta_R^{-1/2}(y) \psi(y)\,\dd \beta(y)\right)(e)\\
			&=\int_G \Lc'(\Delta_R^{1/2} \phi)(y) \Delta_R^{-1/2}(y)  \overline{\psi(y)}\,\dd \beta(y)\\
			&=\langle \Lc \phi\vert \psi \rangle
		\end{split}
		\]
so that $\dd\pi(\Lc')=\Lc$ on $C^\infty_c(G)$. 
		By~\cite[Theorem 2.3 (c)]{Martini}, $\dd \pi(\overline{\Lc'^+})^*$ is the closure of $\dd\pi(\Lc')$, initially defined on 
		\[
		C^\infty(\pi)= \{f\in L^2(\beta)\colon X f\in L^2(\beta) \,\:\:  \forall X\in U(\mathfrak{g})\}.
		\] 
		If $\Lc=\Lc^*$, then   $\dd\pi(\Lc')=\Lc$ is Hermitian on $C^\infty_c(G)$, which is dense in $C^\infty(\pi)$. This proves that $\Lc$, with initial domain $C^\infty_c(G)$, is essentially self-adjoint on $L^2(\beta)$.
	\end{proof}

It seems natural to compare the theory developed in the present paper and the one of~\cite{BPV,BPV2,BPV3}. We do so by means of the following proposition -- see also Remark~\ref{comparison} below. Here and in the following, we say that a differential operator $T$ is positive if $\langle T f | f\rangle \Meg 0$ for every $f\in C^\infty_c(G)$.
	
	\begin{prop}\label{prop:4}
Let  $(\gf_{\lambda})$ be an admissible filtration of $\gf$ and $(X_j)_{j\in J}$ be a minimal basis of $\gf$ with $d_j=\deg X_j$ for $j\in J$. Then 
\begin{equation}\label{Lclink}
		\Lc = \sum_{j\in J} (X_j^{k\dd/d_j})^\dag X^{k\dd/d_j}_j
\end{equation}
is a real,  positive and formally self-adjoint  weighted subcoercive operator with respect to $(\gf_{\lambda})$ for every integer $k\Meg 1$.
	\end{prop}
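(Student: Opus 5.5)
Writing $n_j\coloneqq k\dd/d_j$, the plan is to check the four properties of $\Lc=\sum_j (X_j^{n_j})^\dag X_j^{n_j}$ in turn; only weighted subcoercivity takes real work. Since $\dd$ is the least common multiple of the $d_j$, each $n_j$ is a positive integer.

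\emph{Reality.} Each $X_j$ is a real vector field, so $X_j^{n_j}$ has real coefficients. The transpose $(\cdot)^\dag$ preserves reality, because by Proposition~\ref{prop:9}(1) we have $X^\dag f=\Delta_R^{-1}X^+(\Delta_R f)$ with $\Delta_R$ a positive character. Hence $\Lc$ is real.

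\emph{Formal self-adjointness and positivity.} Since $X_j^{n_j}$ is real, $(X_j^{n_j})^\dag=(X_j^{n_j})^*$. Thus $\Lc=\sum_j (X_j^{n_j})^*X_j^{n_j}$, which gives $\Lc^*=\Lc$ at once. For $f\in C^\infty_c(G)$,
\[
\langle \Lc f\vert f\rangle=\sum_j \norm{X_j^{n_j}f}_{L^2(\beta)}^2\Meg 0 .
\]

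\emph{Degree.} Each summand lies in $U_{2k\dd}$. I will identify the top-degree part, which also shows that $\deg\Lc=\dL=2k\dd$. By Remark~\ref{comparison1}, $X_j^\dag=-X_j-c_j$ with $c_j=(X_j\Delta_R)(e)$ a constant. Therefore
\[
(X_j^{n_j})^\dag=(X_j^\dag)^{n_j}=(-1)^{n_j}X_j^{n_j}+(\text{terms in }U_{(n_jd_j)^-}).
\]
It follows that
\[
\Lc+U_{(2k\dd)^-}=\sum_j (-1)^{n_j}\bar X_j^{2n_j}\in U(\gf_*),
\]
where $\bar X_j=X_j+\gf_{d_j^-}$. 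This element is homogeneous of degree $2k\dd$ for the dilations of $G_*$. Writing $P_*\coloneqq\sum_j(-1)^{n_j}\bar X_j^{2n_j}$, the operator in \eqref{LLstarU} is $2P_*$. Positivity of $P_*$ on $G_*$ (with Haar measure, where $\bar X_j^\dag=-\bar X_j$) follows because $P_*=\sum_j(\bar X_j^{n_j})^*\bar X_j^{n_j}$. In particular $P_*\neq 0$, so $\dL=2k\dd$.

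\emph{Rockland property (the main obstacle).} Let $\pi$ be a nontrivial irreducible unitary representation of $G_*$ and $v$ a smooth vector with $d\pi(P_*)v=0$. Then
\[
0=\langle d\pi(P_*)v\vert v\rangle=\sum_j\norm{d\pi(\bar X_j)^{n_j}v}^2 ,
\]
so $d\pi(\bar X_j)^{n_j}v=0$ for every $j$. Since $d\pi(\bar X_j)$ is skew-adjoint, its closure is normal, hence $\ker A^{n}=\ker A$; so $d\pi(\bar X_j)v=0$ for every $j$. To go from vanishing on the $\bar X_j$ to vanishing on all of $\gf_*$, I use that the $\bar X_j$ induce a basis of $\gf_*/[\gf_*,\gf_*]$, so they generate $\gf_*$ as a Lie algebra (the graded Nakayama argument, as in Proposition~\ref{min-basis:pro} for $\gf_*$). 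Since the annihilator of a smooth vector is a Lie subalgebra, $d\pi(Y)v=0$ for all $Y\in\gf_*$. Then $v$ is $G_*$-invariant, and irreducibility with $\pi$ nontrivial forces $v=0$. Alternatively, one could invoke the standard fact that $\sum_j(-1)^{n_j}\bar X_j^{2n_j}$ is Rockland on a graded group when the $\bar X_j$ generate the Lie algebra (Helffer--Nourrigat).

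\emph{Conclusion.} $\Lc+\Lc^*+U_{\dL^-}$ is a positive Rockland operator, so $\Lc$ is weighted subcoercive with respect to $(\gf_\lambda)$ by definition. Theorem~\ref{teo:1} is consistent with this, since $\dL/d_j=2n_j\in 2\N$.
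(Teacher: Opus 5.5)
Your proof is correct and follows the paper's route. You reduce to showing that $P_*=\sum_j(-1)^{n_j}\bar X_j^{2n_j}$ is a positive Rockland operator on $G_*$, use $\langle d\pi(P_*)v\vert v\rangle=\sum_j\norm{d\pi(\bar X_j)^{n_j}v}^2$ to get $d\pi(\bar X_j)v=0$, and conclude from the fact that the $\bar X_j$ generate $\gf_*$ together with irreducibility.

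The differences are cosmetic. You obtain $\ker d\pi(\bar X_j)^{n_j}=\ker d\pi(\bar X_j)$ from normality of the skew-adjoint generator, whereas the paper notes that $t\mapsto\pi(\exp(t\bar X_j))v$ is a bounded polynomial and hence constant. You also write out the principal-part computation and $\deg\Lc=2k\dd$, which the paper leaves implicit. Note that ``in particular $P_*\neq 0$'' does not follow from positivity alone; it follows from the PBW independence of the $\bar X_j^{2n_j}$, or from the Rockland property you prove next.
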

	
	\begin{proof}
		It is clear that $\Lc$ is positive, formally self-adjoint,  and real. Let us prove that it is weighted subcoercive with respect to $(\mathfrak{g}_{\lambda})$. Denoting with $(Y_j)$ the   family of (homogeneous) elements of $\gf_*$ corresponding to $(X_j)$, it will suffice to show that 
		\[
		\Lc'\coloneqq \sum_{j\in J} (-1)^{k\dd/d_j} Y_j^{2k \dd/d_j}
		\]
		is a positive Rockland operator on $G_*$. Positivity is clear. Then, let $\pi$ be a non-trivial irreducible continuous unitary representation of $G$ in some Hilbert space $H$, and let us prove that $\dd \pi(\Lc')$ is one-to-one on $C^\infty(\pi)$. Take $v\in C^\infty(\pi)$, and assume that $\dd\pi(\Lc')v=0$. Then
		\[
		0=\langle \dd \pi(\Lc')v\vert v \rangle= \sum_{j\in J} \norm{ \dd \pi(X_j)^{k\dd/d_j} v}_H^2,
		\]
		so that $\dd\pi(X_j)^{k\dd/d_j}v=0$ for every $j\in J$. This implies that $\dd \pi(X_j) v=0$ (note that the mapping $\R \ni t \mapsto \pi(\exp(t X_j))v$ must be a polynomial of degree $<k \dd/d_j$; since it is bounded, it must be constant, whence $\dd\pi(X_j)v=0$). Thus $\dd \pi(Y) v=0$ for every $Y$ in the Lie algebra generated by $(X_j)$, that is, for every $Y\in \gf_*$. Since $\pi$ is irreducible and non-trivial, this means that $v=0$. 
	\end{proof}

	   \begin{oss}\label{comparison}
If $\gf_1$ is a vector subspace of $\gf$ which generates $\gf$ as a Lie algebra, and if $(\gf_\lambda)$ is the filtration generated by $\gf_1$ (that is, $\gf_\lambda$ is the vector space generated by the elements of $\gf_1$ and their commutators up to order $[\lambda]$ for $\lambda\Meg 1$), then any basis of $\gf_1$ is a minimal basis (and conversely), and the operator $\Lc$ in~\eqref{Lclink} is a sub-Laplacian, with drift unless $\beta$ is right-invariant. In particular, recall Remarks~\ref{comparison1} and~\ref{comparison0}, for any H\"ormander basis $(X_j)_{j\in J}$ of $\mathfrak{g}$ and any positive character $\chi$, the sub-Laplacian with drift on $L^{2}(\mu_{\chi})$, where $\dd\mu_{\chi} = \chi \dd\rho$,
\[
\Delta_{\chi} = -\sum_{j\in J}(X_{j}^{2} + (X_{j}\chi)(e) X_{j})
\]
is weighted subcoercive with respect to the above filtration (which assigns the weight $d_{j}=1$ to each $X_{j}$). Therefore, the setting developed in~\cite{BPV,BPV2,BPV3} falls under the one developed in the present paper.
	   \end{oss}

\begin{oss}\label{admissibility}
	Let us stress that if $(\mathfrak{g}_{\lambda})$ is an increasing filtration which is not admissible, then there exists no weighted subcoercive operator with respect to $(\mathfrak{g}_{\lambda})$. Indeed, as a consequence of~\cite[Proposition 1.3]{Miller}, if $(\gf_\lambda)$ is not admissible, then there cannot be any Rockland operators on $\gf_*$.
	\end{oss}

		   \subsection{Control distance}\label{subsection:controldistance} Let $X_1,\dots,X_k$ be an orthonormal basis of $\mathfrak{g}$, compatible with the filtration, and set $d_{j}= \mathrm{deg}(X_{j})$. For $s \in \{0,\infty,*\}$ and $\varepsilon > 0$, let $C_s(\varepsilon)$ be the set of absolutely continuous curves $\gamma : [0,1] \to G$ such that
\begin{equation}\label{gammacurve}
\gamma'(t) = \sum_{j=1}^k \phi_j(t) \, X_j|_{\gamma(t)} \qquad\text{for a.e.\ $t \in [0,1]$,}
\end{equation}
where for $t \in [0,1]$ and $j=1,\dots,k$
\[
|\phi_j(t)|  \leq  \begin{cases}
\varepsilon^{d_j} & \text{if $s = 0$,}\\
\varepsilon       & \text{if $s = \infty$,}\\
\min\{\varepsilon,\varepsilon^{d_j}\} & \text{if $s = *$.}
\end{cases}
\]
For $x,y \in G$, we define then
\[d_s(x,y) = \inf \{\varepsilon > 0 \colon \exists \gamma \in C_s(\varepsilon) \text{ such that } \gamma(0) = x, \, \gamma(1) = y\}.\]

Then $d_0$, $d_\infty$ and $d_*$ are left-invariant distances on $G$, compatible with the topology of $G$. In fact, $d_\infty$ is the genuine Carnot--Carath\'eodory distance associated with the H\"ormander system $X_1,\dots,X_k$, while $d_0$ is a ``weighted'' version thereof. Recall that $d_0(x,y) \leq 1$  if and only if $d_\infty(x,y) \leq 1$ for $x,y\in G$, and the same holds with strict inequalities. Finally,
\[
d_*(x,y) = \begin{cases}
d_0(x,y) &\text{for $d_0(x,y) \leq 1$,}\\
d_\infty(x,y) &\text{for $d_0(x,y) \geq 1$.}
\end{cases}
\]
The distance $d_*$ is said to be a \emph{control distance} on the weighted Lie group $G$. We denote by
\[
|x|_* = d_*(x,e)
\]
the control modulus $|\cdot|_*$ on $G$ induced by $d_{*}$. Let us recall from~\cite[Section 2.3]{Martini} that if $B(x,r)$ denotes the ball centered at $x\in G$ with radius $r>0$ with respect to $d_{*}$, then
	\[
	\beta(B(e,r))\asymp r^{Q_*}, \qquad r\to 0^+,
	\]
where $Q_*$ is the homogeneous dimension of the contraction $\mathfrak{g}_*$, while
	\[
	\beta(B(e,r))\meg \ee^{C r}
	\]
	for some constant $C>0$ and for every $r\Meg 1$; see Lemma~\ref{lem:31} below. More precisely, the rate of growth of $\beta_{R}(B_r)$ for large radii coincides with the (intrinsic) volume growth of $G$ at infinity, so that if $G$ has polynomial growth of degree $D$, then $\beta_{R}(B_r) \asymp r^{D} $ for $r \geq 1$. 

        \begin{oss}\label{distance:rem} The advantage in assuming
          that $\lambda_1\geq 1$ is that
          $d_*$ is indeed a \emph{distance} on $G$,
          rather than a quasi-distance. Indeed, take for instance $G=\mathbb R$
          so that $X$ is just the first derivative, with assigned degree
          $\lambda$. Then $d_*(x,y)=|x-y|^{1/\lambda}$, which is a
          distance if and only if $\lambda \geq 1$. \end{oss}

	\begin{oss}
	Note that the distance $d_{*}$ does depend on the choice of the basis $X_1,\dots,X_k$, even though different choices of bases provide bi-Lipschitz equivalent distances; see~\cite[Corollary 6.5]{ElstRobinson} for $d_{0}$, while the equivalence of $d_{*}$ follows from the connectedness of the distance, cf.~\cite[Proposition III.4.2]{VSCC}. However, one can define a distance which depends only on the filtration and the scalar product which is bi-Lipschitz equivalent to those above, as follows.

	Fix an orthonormal basis $X_1,\dots,X_k$ which is compatible with the filtration. Given an absolutely continuous curve $\gamma\colon [0,1]\to G$ as in~\eqref{gammacurve}, we define its content $\mathcal{C}(\gamma)$ as
	\[
	\mathcal{C}(\gamma) = \inf\bigg\{ \varepsilon>0 \colon \Big( \sum_{j\colon \!\! \deg(X_{j})=\lambda} \!\!\!\!\!\varphi_{j}(t)^{2} \Big)^{1/2}\leq \min(\epsilon, \epsilon^{\lambda}) \mbox{ for a.e.\ $t\in [0,1]$ and every $\lambda>0$} \bigg\}.
		\]
For $x\in G$, then 
\[
\abs{x}_*' := \inf \big\{ \mathcal{C}(\gamma) \colon \mbox{$\gamma\colon [0,1]\to G$ is absolutely continuous, $\gamma(0)=e$ and $\gamma(1)=x$}\big\}
\]
is a control modulus, inducing the left-invariant distance on $G$ given by	 $d'(x,y)\coloneqq \abs{y^{-1}x}'_*$.
\end{oss}

	\smallskip
	
	In analogy to~\cite[Proposition 5.7 (ii)]{HMM},  see also~\cite[Lemma 2.3]{BPTV}, one can show that any character of $G$ grows at most exponentially in terms of the control modulus $| \cdot |_{*}$. The characters we shall be mostly interested in are $\Delta_L$ and $\Delta_R$.
	
		\begin{lem}\label{lem:31}
		Let $\chi\colon G\to \C\setminus \{0\}$ be a character of $G$. Then, there is $c>0$ such that 
		\[
		\abs{\chi(x)}\meg \ee^{c(\abs{x}_*+1)} \qquad \forall \, x\in G.
		\]
		As a consequence, there exists $C>0$ such that $\beta(B(e,r))\meg \ee^{C r}$ for every $r\geq 1$.
	\end{lem}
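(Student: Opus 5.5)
The plan is to reduce to the length of connecting curves and integrate the logarithmic derivative of $\chi$ along them. First note that $|\chi|$ is a positive continuous character, so $\log|\chi|$ is a continuous homomorphism $G\to(\R,+)$. It is therefore smooth, and its differential at $e$ is a linear form $\ell\colon\gf\to\R$ with $X\log|\chi| = \ell(X)$ (constant) for every $X\in\gf$. This is the same mechanism as in Proposition~\ref{prop:9}~(5). Set $M\coloneqq \max_j |\ell(X_j)|$, where $X_1,\dots,X_k$ is the orthonormal basis compatible with the filtration used to define $d_*$.

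Next, take $x\in G$ and any $\eps>|x|_*$. By definition there is a curve $\gamma$ in $C_*(\eps)$ (or in $C_\infty(\eps)$ or $C_0(\eps)$ according to the case distinction for $d_*$) with $\gamma(0)=e$ and $\gamma(1)=x$. Along $\gamma$ we have
\[
\frac{\dd}{\dd t}\log|\chi(\gamma(t))| = \sum_j \phi_j(t)\,\ell(X_j),
\]
so that $\bigl|\log|\chi(x)|\bigr|\meg kM\sup_{t,j}|\phi_j(t)|$. We now split into the two cases.

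\begin{itemize}
\item If $|x|_*\meg 1$, then $d_*=d_0$, and we take $\eps\meg 1$ slightly larger than $|x|_*$ (or use $d_0\meg1$ iff $d_\infty\meg1$). Then $|\phi_j|\meg \eps^{d_j}\meg 1$, so $|\chi(x)|\meg \ee^{kM}$.
\item If $|x|_*\Meg1$, then $d_*=d_\infty$, and a curve in $C_\infty(\eps)$ gives $|\phi_j|\meg\eps$. Hence $|\chi(x)|\meg \ee^{kM\eps}$, and letting $\eps\downarrow|x|_*$ yields $\ee^{kM|x|_*}$.
\end{itemize}

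Both cases are covered by $\ee^{c(|x|_*+1)}$ with $c=kM$ (or $c=\max(kM,1)$ to ensure $c>0$).

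For the volume bound, the plan is to write $\beta=\Delta_R\,\beta_R$ up to normalization. It then suffices to bound $\beta_R(B(e,r))\meg \ee^{C'r}$ and to use the first part for $\Delta_R$ on $B(e,r)$, which gives $\Delta_R\meg \ee^{c(r+1)}$ there. For the Haar volume, use left invariance of $d_*$ and the standard covering argument for a connected group with a left-invariant length (Carnot--Carathéodory) distance: $B(e,r)\subseteq B(e,1)^{\lceil r\rceil}$, since $d_\infty$ is a length metric for $r\Meg1$. A left Haar measure satisfies $\beta_L(B(e,1)^n)\meg N^n\beta_L(B(e,1))$, where $N$ is the number of translates of $B(e,1/2)$ needed to cover $B(e,3/2)$ (a doubling-type estimate at scale one). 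Translating between $\beta_L$, $\beta_R$ and $\beta$ costs further characters $\Delta_L,\Delta_R$, each controlled by the first part on $B(e,r)$. Altogether this gives $\beta(B(e,r))\meg \ee^{Cr}$ for $r\Meg1$.

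The main obstacle is this covering step. We must check that $B(e,r)\subseteq B(e,1)^{\lceil r\rceil}$ with respect to $d_*$: for $r\Meg1$ we have $d_*=d_\infty$, and a $d_\infty$-path can be split into unit pieces. We must also handle the mixing of left and right Haar measures correctly when iterating products of balls. The character estimate itself is routine.
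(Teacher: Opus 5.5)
Your proposal is correct, but it argues differently from the paper in both halves.

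\textbf{Character bound.} The paper avoids differentiation. It sets $c=\max_{\overline B(e,1)}\log\abs{\chi}$ and takes a curve from $e$ to $x$ of content at most $n=[\abs{x}_*]+1$. It cuts the curve into $n$ pieces, writes $x=y_1\cdots y_n$ with each $y_j\in\overline B(e,1)$, and concludes by multiplicativity. This needs only continuity of $\chi$ and compactness of the unit ball, so it works verbatim for any locally bounded submultiplicative weight; the ``$+1$'' comes from rounding $\abs{x}_*$ up.

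You instead use smoothness of the homomorphism $\log\abs{\chi}$ and constancy of $X\log\abs{\chi}$. This gives the sharper two-sided bound $\abs{\log\abs{\chi(x)}}\meg c\abs{x}_*$. Your case split is superfluous, and the choice ``$\eps\meg1$, $\eps>\abs{x}_*$'' is impossible when $\abs{x}_*=1$. Every curve in $C_*(\eps)$ already satisfies $\abs{\phi_j}\meg\min(\eps,\eps^{d_j})\meg\eps$, so letting $\eps\downarrow\abs{x}_*$ handles all $x$ at once.

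\textbf{Volume bound.} The paper writes $\beta=\Delta_R\cdot\beta_R$ and quotes the exponential growth of Haar measure of balls from \cite[(2.2)]{BPTV}; you reprove that growth. Your covering step works with your choice of $N$. If $U=B(e,1/2)$ and $B(e,3/2)\subseteq\bigcup_{i\meg N}x_iU$, left invariance gives $U\,B(e,1)\subseteq B(e,3/2)$. Hence $U\,B(e,1)^n\subseteq\bigcup_i x_i\,U\,B(e,1)^{n-1}$, and so $\beta_L(B(e,1)^n)\meg\beta_L(U\,B(e,1)^n)\meg N^n\beta_L(U)$. The inclusion $B(e,r)\subseteq B(e,1)^{\lceil r\rceil}$ is the same chaining as in the paper's first half.

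The only extra input is relative compactness of $B(e,3/2)$. This holds because for radii $\Meg1$ the $d_*$-balls coincide with $d_\infty$-balls, and the Carnot--Carath\'eodory distance is proper. You can also skip $\beta_R$ entirely: $\beta=\Delta_L\cdot\beta_L$, and the first part bounds $\Delta_L$ on $B(e,r)$.

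Your route is self-contained at the cost of a few more lines. The paper's is shorter but relies on the external reference.
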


	\begin{proof}
		Set $c\coloneqq \max_{\overline B(e,1)} \log \abs{\chi}$. Observe that for every $x\in G$ we may find an absolutely continuous curve $\gamma\colon [0,1]\to G $ such that $\gamma(0)=e$ and $\gamma(1)=x$, and with content at most $\eps=[\abs{x}_*]+1$. Then, $y_j\coloneqq \gamma((j-1)\eps)^{-1}\gamma(j/\eps)\in \overline B(e,1)$ for every $j=1,\dots, \eps$ and $x=y_1\cdots y_\eps$, so that 
		\[
		\abs{\chi(x)}=\prod_{j=1}^\eps \abs{\chi(y_j)}\meg \ee^{c \eps}\meg \ee^{c(\abs{x}_*+1)}.
		\]
	We conclude that, since $\Delta_{R}$ is a character on $G$, for some $c,C>0$
		\[
	\beta(B(e,r)) = \int_{\abs{x}_* \leq r} \Delta_{R}(x)\, d\beta_{R}(x) \leq \ee^{c(r+1)} \beta_{R}(B(e,r)) \leq \ee^{C r}
		\]
		for all $r\geq 1$, the last estimate by~\cite[(2.2)]{BPTV}.
	\end{proof}

	\subsection{The Schwartz space on $G$} Let us introduce now a space of functions on $G$ which is the analogue of the Euclidean Schwartz space (wherefrom the name). See also~\cite{BPV}.

\begin{deff}\label{def:1}
	We define $\Sc(G)$ as the space
	\[
	\Sc(G) = \{f\in C^\infty(G) \colon \norm{\ee^{c \abs{\,\cdot\,}_*} X^R f}_{L^1(\beta)}<+\infty \;\; \forall c>0, \; \forall  X\in U(\mathfrak{g})\}
	\]
endowed with the natural topology. We denote with $\Sc'(G)$ its dual, endowed with the topology of uniform convergence on the bounded subsets of $\Sc(G)$.
\end{deff}
Observe that by the arbitrariness of $c$, by Lemma~\ref{lem:31} one may replace $\beta$ with $\beta_L$ in Definition~\ref{def:1}. Consequently, the results in~\cite{Schweitzer} are applicable in this context.
	
\begin{teo}\label{teo:8}
	The following hold.
	\begin{enumerate}
		\item[\textnormal{(1)}] $\Sc(G)$ is a nuclear Fréchet space.
		
		\item[\textnormal{(2)}] $\Sc(G)$ is reflexive.
		
		\item[\textnormal{(3)}] The bounded subsets of $\Sc(G)$ are relatively compact.
		
		\item[\textnormal{(4)}] $\Sc(G)$ is a Fréchet $*$-algebra under convolution and under pointwise multiplication.
		
		\item[\textnormal{(5)}] For every $p\in [1,\infty]$, 
\begin{align*}
	\Sc(G) 
	&= \{f\in C^\infty(G) \colon \norm{\ee^{c \abs{\,\cdot\,}_*} X Y^R f}_{L^p(\beta)}<+\infty \;\; \forall c>0, \; \forall  X,Y\in U(\mathfrak{g})\}\\
	& = \{f\in C^\infty(G) \colon \norm{\ee^{c \abs{\,\cdot\,}_*} X f}_{L^p(\beta)}<+\infty \;\; \forall c>0, \; \forall  X\in U(\mathfrak{g})\}\\
	& = \{f\in C^\infty(G) \colon \norm{\ee^{c \abs{\,\cdot\,}_*} X^{R} f}_{L^p(\beta)}<+\infty \;\; \forall c>0, \; \forall  X\in U(\mathfrak{g})\},
	\end{align*}
			and all the associated topologies coincide.
	\end{enumerate} 
\end{teo}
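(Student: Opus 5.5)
The plan is to reduce everything to the setting of~\cite{Schweitzer}, which treats Fréchet algebras of rapidly decaying smooth functions on Lie groups with weights of the form $\ee^{c\abs{\,\cdot\,}}$. By the remark following Definition~\ref{def:1} and Lemma~\ref{lem:31}, we may replace $\beta$ by the left Haar measure $\beta_L$. Indeed, $\Delta_L^{\pm1}$ and $\Delta_R^{\pm 1}$ are bounded by $\ee^{c'(\abs{x}_*+1)}$, so changing the measure only shifts the parameter $c$, and the family of seminorms is unchanged up to equivalence. Since $d_*$ is bi-Lipschitz to the Carnot--Carathéodory distance $d_\infty$ at large scale, and $d_*$ and $d_\infty$ agree in the sense of small balls, the weight $\ee^{c\abs{\cdot}_*}$ is equivalent, up to a constant factor and a change of $c$, to $\ee^{c\abs{\cdot}_\infty}$. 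The latter is a submultiplicative weight of the type covered by~\cite{Schweitzer}. So $\Sc(G)$ coincides with Schweitzer's space $\Sc_\omega(G)$ for that weight family.

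I would prove (5) first, since (1)--(4) rest on it. The weight is submultiplicative: $\abs{xy}_*\meg \abs{x}_*+\abs{y}_*$. The passage from $L^1$ to $L^p$ uses a local Sobolev embedding on a fixed ball $B(x,1)$, applied to left translates. This bounds $\abs{f(x)}$ by $\sum_{\abs{\alpha}\meg N}\norm{X^{R}_\alpha f}_{L^1(B(x,1))}$ for suitable $N$. Since $\ee^{c\abs{y}_*}\asymp \ee^{c\abs{x}_*}$ on $B(x,1)$, this gives $L^\infty$ control from $L^1$ control. The reverse direction from $L^\infty$ to $L^1$ multiplies by $\ee^{-c'\abs{\cdot}_*}$, which is integrable by Lemma~\ref{lem:31} once $c'$ is large. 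Interpolation then covers all $p$.

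Switching between left-invariant and right-invariant derivatives uses $(X f)(x)=(\mathrm{Ad}(x^{-1})X)^R f(x)$, written in a fixed basis. The coefficients are matrix entries of $\mathrm{Ad}(x)$, which grow at most like $\ee^{C\abs{x}_*}$ by an argument identical to that of Lemma~\ref{lem:31}, applied to $\mathrm{Ad}$ in place of a character. These coefficients are absorbed by increasing $c$, giving the equalities in (5) with equivalent seminorms. Mixed operators $XY^R$ are handled the same way, since left- and right-invariant fields commute.

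Statements (1)--(4) are then essentially Schweitzer's results.

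\textbf{(1)} The Fréchet property is immediate from countably many seminorms (take $c\in\N$ and a basis of $U(\mathfrak g)$). Nuclearity follows by identifying $\Sc(G)$, via (5) with $p=2$, with a projective limit of Hilbert spaces. The connecting maps factor through compositions of Sobolev-type inclusions with weights $\ee^{(c+1)\abs{\cdot}}\to\ee^{c\abs{\cdot}}$, and these are Hilbert--Schmidt for enough derivatives, using integrability of $\ee^{-\abs{\cdot}_*}$ and local Sobolev.

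\textbf{(2) and (3)} Nuclear Fréchet spaces are Montel, which gives (3), and Montel spaces are reflexive, which gives (2).

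\textbf{(4)} Pointwise products are handled by the Leibniz rule together with $\ee^{c\abs{\cdot}}\meg \ee^{c\abs{\cdot}}\cdot 1$ and the $L^\infty$ version of (5). For convolution, $Y^R X(f*g)=(Y^Rf)*(Xg)$, and submultiplicativity gives $\ee^{c\abs{x}}\meg \ee^{c\abs{y}}\ee^{c\abs{y^{-1}x}}$. Young's inequality then applies, with the modular factors $\Delta_L,\Delta_R$ absorbed again via Lemma~\ref{lem:31}. The involution $f\mapsto \overline{\widecheck f}$ times modular factors maps $\Sc$ to itself by Proposition~\ref{prop:9}(3) and $\abs{x^{-1}}_*=\abs{x}_*$.

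The main obstacle is the uniform local Sobolev step and the $\mathrm{Ad}$-growth bound in (5), together with checking that Schweitzer's hypotheses on the weight are really met by $\ee^{c\abs{\cdot}_*}$. I would verify the latter by comparison with $d_\infty$, using~\cite[Proposition III.4.2]{VSCC}.
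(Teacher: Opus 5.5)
Your proposal is correct, but it runs in essentially the reverse logical order from the paper and is much more self-contained.

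\textbf{How the paper argues.} The paper's proof is a chain of citations. Statement (1) and the $L^p$-independence in (5) come from \cite[Theorem 6.24]{Schweitzer}. The convolution part of (4) comes from \cite[Definition 1.3.9 and Theorem 1.3.13]{Schweitzer}. Statements (2)--(3) follow from Tr\`eves, exactly as in your argument. The left/right part of (5) is then \emph{deduced from} (4): since $\Sc(G)$ is a $*$-algebra under convolution, and the involution exchanges right- and left-invariant derivatives up to modular characters (cf.\ Proposition~\ref{prop:9}~(3)), the $L^1$-seminorms built with $X^R$ and with $X$ define the same space. The mixed seminorms with $XY^R$ follow because $X^R$ is an endomorphism of $\Sc(G)$.

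\textbf{How you argue.} You prove (5) first, by hand:
\begin{itemize}
\item left/right switching via the exponential growth of the submultiplicative function $x\mapsto\norm{\mathrm{Ad}(x)}$, using the argument of Lemma~\ref{lem:31};
\item passing between $L^1$, $L^\infty$ and $L^p$ via a uniform local Sobolev inequality and integrable weights.
\end{itemize}
You then get (4) from Leibniz and Young. For convolution one even has $\norm{\ee^{c\abs{\cdot}_*}X^R(f*g)}_{L^1(\beta)}\meg\norm{\ee^{c\abs{\cdot}_*}X^Rf}_{L^1(\beta)}\norm{\ee^{c\abs{\cdot}_*}g}_{L^1(\beta)}$, with no modular factors. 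You get (1) from a Hilbert--Schmidt criterion on weighted $L^2$-Sobolev spaces.

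\textbf{Comparison.} The paper's route is shorter, but it rests entirely on $\ee^{c\abs{\cdot}_*}$ and $\beta_L$ fitting Schweitzer's framework. Yours needs no external nuclearity or algebra theorem. It also makes explicit that the only mechanism is that exponential weights absorb the growth of $\mathrm{Ad}$, of $\Delta_L$ and $\Delta_R$, and of $\beta(B(e,r))$. At that point the appeal to Schweitzer in your opening paragraph is superfluous.

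\textbf{Details to fix when writing it out.}
\begin{itemize}
\item Left translates commute with left-invariant fields, not right-invariant ones. So the local Sobolev bound you actually get is $\abs{f(x)}\meg C\sum_{\abs{\alpha}\meg N}\norm{X_\alpha f}_{L^1(B(x,1),\beta_L)}$ with left-invariant $X_\alpha$. Alternatively, use $X^R_\alpha$ with right translates and the sets $B(e,1)x$, on which the weight is also comparable to $\ee^{c\abs{x}_*}$.
\item With the paper's convention the identity is $(Xf)(x)=((\mathrm{Ad}(x)X)^Rf)(x)$, not $\mathrm{Ad}(x^{-1})$; the growth bound is unaffected.
\item For higher-order $X$ you also need that left- or right-invariant derivatives of matrix coefficients of $\mathrm{Ad}$ are again linear combinations of such coefficients, e.g.\ $X(\mathrm{Ad}(\cdot)Y)=\mathrm{Ad}(\cdot)\,\mathrm{ad}(X)Y$.
\item In the nuclearity step a weight gain $\ee^{(c+1)\abs{\cdot}_*}\to\ee^{c\abs{\cdot}_*}$ is not enough in general. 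On groups of exponential growth $\ee^{-\abs{\cdot}_*}$ need not be $\beta$-integrable, so take a gain larger than the constant in Lemma~\ref{lem:31}.
\item Completeness of $\Sc(G)$ does not follow merely from having countably many seminorms. It follows from the usual argument: a Cauchy sequence converges in $C^\infty(G)$ by the $L^\infty$ form of (5), and Fatou's lemma then gives convergence in every weighted seminorm.
\end{itemize}
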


\begin{proof}
	Statement~(1) follows from~\cite[Theorem 6.24]{Schweitzer}, and (2)--(3) follow from (1) and~\cite[Proposition 50.2, Theorem 36.4, and Corollary 1 to Proposition 33.2]{Treves}. The first assertion in~(4) follows from~\cite[Definition 1.3.9 and Theorem 1.3.13]{Schweitzer}, and the second follows from the case $p=\infty$ in (5).
	
	It remains to show~(5). Since $\Sc(G)$ is a $*$-algebra under convolution by the first assertion in~(4), $\Sc(G)$ is the space of $f\in C^\infty(G)$ such that the seminorms  $\norm{\ee^{c \abs{\,\cdot\,}_*}  Y f}_{L^1(\beta)}$, $c>0$, $ Y\in U(\mathfrak{g})$, are finite. Since the map $f\mapsto X^R f$ is an endomorphism of $\Sc(G)$ for every $X\in U(\mathfrak{g})$,   the assertion follows when $p=1$. The assertion for general $p$ follows from~\cite[Theorem 6.24]{Schweitzer} and the above remarks.
\end{proof}

\begin{cor}
	$\Sc'(G)$ is a complete, reflexive, bornological, and nuclear space.  The bounded subsets of $\Sc'(G)$ are relatively compact.
\end{cor}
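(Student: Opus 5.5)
The plan is to deduce every assertion from Theorem~\ref{teo:8} together with standard duality facts for locally convex spaces, as collected in~\cite{Treves}.

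\emph{Completeness and reflexivity.} By Theorem~\ref{teo:8}~(1)--(2), $\Sc(G)$ is a reflexive Fréchet space. The strong dual of a reflexive space is again reflexive, since its bidual is $\Sc(G)''=\Sc(G)$ with the strong topology. Hence $\Sc'(G)$, endowed with the topology of uniform convergence on bounded sets, is reflexive. Completeness follows because the strong dual of a Fréchet space (indeed of any bornological space) is complete.

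\emph{Bornologicity.} A Fréchet space whose bounded sets are relatively compact is a Fréchet--Montel space, and this is exactly Theorem~\ref{teo:8}~(3). The strong dual of a Fréchet--Montel space is a Montel space, and in particular it is barrelled. The strong dual of a reflexive (equivalently, distinguished) Fréchet space is moreover bornological; this is the classical result of Grothendieck that a strong dual of a distinguished Fréchet space is bornological. I would simply cite this.

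\emph{Nuclearity and relative compactness of bounded sets.} The strong dual of a nuclear Fréchet space is nuclear (Treves, Proposition 50.6), so nuclearity follows from Theorem~\ref{teo:8}~(1). Finally, $\Sc'(G)$ is Montel as noted above, so its bounded subsets are relatively compact. Alternatively, one can use that a complete nuclear space is semi-reflexive and that closed bounded sets in a quasi-complete nuclear space are compact.

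The only delicate point is bornologicity, since in general strong duals need not be bornological. This is handled by the Montel/distinguished property inherited from Theorem~\ref{teo:8}~(3).
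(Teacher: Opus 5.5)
Your proposal is correct and follows essentially the paper's route. The paper also combines Theorem~\ref{teo:8} with standard duality facts: the strong dual of a Fréchet--Montel space is Montel, hence reflexive with relatively compact bounded sets; the strong dual of a bornological space is complete; Grothendieck's bornologicity criterion for strong duals of metrizable spaces (via Bourbaki); and Treves' Proposition~50.6 for nuclearity. The one slip does not affect the argument: a reflexive Fréchet space is distinguished but not conversely (every Banach space is distinguished), so ``equivalently'' should read ``in particular'', and only that implication is used.
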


\begin{proof}
	Combining (1) and (3) of Theorem~\ref{teo:8} with~\cite[Corollary to Proposition 39.10 and Proposition 36.10]{Treves}, one sees that $\Sc'(G)$ is reflexive, and that its bounded subsets are relatively compact. Combining (2) of Theorem~\ref{teo:8} with~\cite[Proposition 50.6]{Treves} we see that $\Sc'(G)$ is nuclear. Completeness follows from~\cite[Chapter III, Proposition 2 of \S 2 and Corollary 1 to Proposition 12 of \S 3, No.\ 8]{BourbakiTVS}. The fact that $\Sc'(G)$ is bornological follows from~\cite[Corollary to Proposition 4 of Chapter IV, \S 3, No.\ 4]{BourbakiTVS}.
\end{proof}

	\section{Heat kernel and gaussian-like estimates}\label{heatker:sec}
\underline{From this point on,} we fix an admissible filtration $(\gf_{\lambda})$ and a weighted subcoercive operator $\Lc$ of degree $\dL$. When we say that an operator is weighted subcoercive, we shall mean with respect to $(\gf_{\lambda})$. Note that such operators exist, thanks to Proposition~\ref{prop:4}. For $\omega\in \R$, we shall set $\Lc_\omega \coloneqq \Lc+ \omega I$.

	 \medskip
	
	By~\cite[Theorem 2.3]{Martini}, the closure of $\Lc$ on $C_{c}^{\infty}(G)$ generates a semigroup of operators $(\ee^{-t\Lc})_{t>0}$, which satisfies gaussian-like estimates together with its derivatives. More precisely, there is a family $(k_t)_{t>0}$ of smooth functions on $G$ such that for all $f\in L^2(\beta)$ 
\begin{equation}\label{etLkt}
			\ee^{-t \Lc} f(x)= \int_G f(xy^{-1})k_{t}(y) \Delta_{R}^{-1/2}(y) \Delta_{R}(y^{-1})\,\dd \beta(y), \qquad x\in G, \, t>0,
	\end{equation}
			and such that for every $\lambda\Meg 0$ there are $\omega_\lambda,b_\lambda,C_\lambda>0$ such that
\begin{equation}\label{estkt}
			\abs{X k_t(x)}\meg C_\lambda \abs{X} t^{-\lambda /\dL} \ee^{\omega_\lambda t}  t^{-Q_*/\dL} \ee^{-b_{\lambda}   (\abs{x}_*^{\dL}/t)^{1/(\dL-1)}}
	\end{equation}
			for every $t>0$, $x\in G$ and $X\in U_\lambda$.
		 
\begin{deff}
	We denote with $(h_t)_{t>0}$ the convolution kernel of $(\ee^{-t\Lc})_{t>0}$, namely the family of functions such that  $\ee^{-t\Lc}f=f*h_t$ for all $f\in L^2(\beta)$,	 and call it \emph{heat kernel} of $\Lc$. We adopt the convention $h_0=\delta_e$. 
	\end{deff}
Notice that by~\eqref{etLkt},	for all $f\in L^2(\beta)$	
\[
			\ee^{-t \Lc} f=f*(\Delta_R^{-1/2} k_t), \qquad t>0,
			\]
whence for all $t>0$
			\[
			h_t=\Delta_R^{-1/2} k_t.
			\]
Our next goal is to obtain pointwise gaussian-like estimates for the heat kernel $h_t$, out of those in~\eqref{estkt} for $k_t$. To simplify the notation in what follows, we also give the following.
\begin{deff}
		For every $b,t>0$ and $d>1$, define 
		\[
		p_{b,t,d} (x) \coloneqq t^{-Q_*/d} \ee^{-b   (\abs{x}_*^{d}/t)^{1/(d-1)}}, \qquad x\in G,
		\]
		and for $f\in \Sc'(G)$
		\[
		T_{b,t,d}f\coloneqq \abs{f}*p_{b,t,d}.
		\]
		We shall simply write $T_{b,t}$ and $p_{b,t}$ instead of $T_{b,t,\dL}$ and $p_{b,t,\dL}$, respectively.
	\end{deff}
	
	\begin{oss}
Here and in what follows, by $T_{b,t} f$ for a general $f\in \Sc'(G)$ we mean
\[
T_{b,t} f : = \sup \big\{\abs{f*\phi} \colon \phi \in C^\infty_c, \, \,  \abs{\phi}\meg p_{b,t}\big\}.
\]
If $f$ is a measure, then
\[
(T_{b,t} f)(x)=\int_G  p_{b,t}(y^{-1}x) \Delta_L(y^{-1})\,\dd \abs{f}(y)=(\abs{f}*p_{b,t})(x),
\] 
otherwise $T_{b,t} f$ is identically infinite (indeed, as $p_{b,t}$ is locally bounded from below, if $(T_{b,t} f)(x)$ is finite for some $x\in G$, then $f$ is a Radon measure). Analogously, for $f\in \Sc'(G)$ and $p\in [1,\infty]$  we shall write
\[
\|f\|_{L^{p}(\beta)} : = \sup \big\{  |\langle f,\phi\rangle | \colon \phi \in C_{c}^{\infty}(G), \, \, \|\phi\|_{L^{p'}(\beta)} \leq 1\big\}.
\]
Note that if $\|f\|_{L^{p}(\beta)} $ is finite and $p>1$, then $f \in L^{p}(\beta)$, whereas if $p=1$ then $f$ is a finite measure.
\end{oss}
	
Then, we shall prove the following.
	\begin{teo}\label{teo:7}
There is $\omega\in \R$ such that for every $\lambda\Meg 0$ there are $b,C>0$ such that  
\[
			\abs{X Y^R h_t(x)}\meg C \abs{X}\abs{Y} t^{-(Q_* + \deg X+\deg Y)/\dL} \ee^{\omega t} \ee^{- b (\abs{x}_*^{\dL}/t)^{1/(\dL-1)}},
\]
equivalently
\begin{equation}\label{eq:teo7new}
			\abs{X Y^R h_t(x)}\meg C \abs{X}\abs{Y} t^{-(\deg X+\deg Y)/\dL} \ee^{\omega t} \, p_{b,t,\dL}(x),
	\end{equation}
	for all $X,Y\in U_\lambda$, $x\in G$ and $t>0$. In particular, for every $\lambda\geq 0$ there are $b,C>0$ such that
	\[
	\abs{X \ee^{-t\Lc} Yf(x) } \meg  C |X||Y| t^{-(\deg X+\deg Y)/\dL}\ee^{\omega t}\, T_{b,t,\dL}f(x)
	\]
for all $X,Y\in U_\lambda$, $x\in G$, $t>0$, and $f\in \mathcal{S}'(G)$.
			\end{teo}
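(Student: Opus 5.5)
The plan is to reduce everything to the estimate \eqref{estkt} for $k_t$, using three ingredients: the relation $h_t=\Delta_R^{-1/2}k_t$, Proposition~\ref{prop:9}(5) for derivatives of characters, and Lemma~\ref{lem:31} to absorb the characters into the Gaussian factor.

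\textbf{Step 1: left derivatives only ($Y=1$).} By Leibniz's rule, $X(\Delta_R^{-1/2}k_t)$ is a finite sum of terms $(X'\Delta_R^{-1/2})(X''k_t)$. Here $X',X''$ range over the coproduct components of $X$, with $\deg X''\le\deg X$. Proposition~\ref{prop:9}(5) gives $X'\Delta_R^{-1/2}=c_{X'}\Delta_R^{-1/2}$ with $|c_{X'}|\lesssim|X|$ on $U_\lambda$, which is finite-dimensional. Lemma~\ref{lem:31} gives $\Delta_R^{-1/2}(x)\le \ee^{c(|x|_*+1)}$. It remains to absorb the exponential into the Gaussian, which requires
\[
c|x|_*-\tfrac b2(|x|_*^{\dL}/t)^{1/(\dL-1)}\le C'(1+t).
\]
Since $\dL/(\dL-1)>1$, Young's inequality bounds $c|x|_*$ by $\tfrac b2 |x|_*^{\dL/(\dL-1)}t^{-1/(\dL-1)}+C t$. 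This enlarges $\omega$ and halves $b$.

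The factor $t^{-\deg X''/\dL}$ is not directly bounded by $t^{-\deg X/\dL}$ when $t>1$. I use $t^{-\mu/\dL}\le t^{-\lambda'/\dL}\ee^{t}$ for $\mu\le\lambda'$, and set $\lambda'=\deg X$ on each piece. Since \eqref{estkt} holds for every $X\in U_{\deg X}$, I apply it with $\lambda=\deg X$ and absorb the remaining polynomial factors into $\ee^{\omega t}$.

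\textbf{Step 2: right-invariant derivatives.} This is the main obstacle. I convert $Y^R$ into left-invariant operators. By Proposition~\ref{prop:9}(3), $Y^R f=((Y^+)\widecheck f)\widecheck{\,}$, so I need control of $\widecheck{h_t}$. To get it, I use the semigroup property $h_t=h_{t/2}*h_{t/2}$ together with $Y^R(f*g)=(Y^Rf)*g$ and $(Xf)*g=f*(X^{\dag R\dag}g)$. Moving the right derivative across the convolution gives
\[
XY^R h_t=(Y^Rh_{t/2})*(Xh_{t/2}).
\]
This is not yet helpful, because $Y^R$ still falls on $h_{t/2}$.

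Instead I would use that $h_t$ is related to the heat kernel of the adjoint. The kernel of $\ee^{-t\Lc^*}$ is, up to characters, $\Rc h_t$, that is $\Delta_L^{-1}\Delta_R^{-1}\widecheck{\overline{h_t}}$ times a character factor. Moreover $\Lc^*$ is weighted subcoercive with respect to the same filtration, because condition \eqref{LLstarU} is symmetric in $\Lc$ and $\Lc^*$. Step 1 applied to $\Lc^*$ therefore controls left derivatives of its kernel. Via Proposition~\ref{prop:9}(3), these are right derivatives $Y^R$ of $\widecheck{h_t}$, up to multiplication by characters, which is handled again by Proposition~\ref{prop:9}(5) and Lemma~\ref{lem:31}. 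I also use that $|x^{-1}|_*=|x|_*$.

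For mixed $XY^R$, I write
\[
XY^R h_t=(Y^R h_{t/2})*(X h_{t/2}),
\]
bounding the first factor by the right-derivative estimate and the second by Step 1. The convolution of two Gaussians $p_{b,t/2}*p_{b,t/2}$ is bounded by $C\ee^{\omega t}p_{b',t}$. This follows from the triangle inequality for $|\cdot|_*$, the convexity of $s\mapsto s^{\dL/(\dL-1)}$, and the volume bounds $\beta(B(e,r))\asymp r^{Q_*}$ for small $r$ and $\le \ee^{Cr}$ for large $r$, the latter again absorbed into the Gaussian and $\ee^{\omega t}$. The characters $\Delta_L,\Delta_R$ appearing in the convolution formula are absorbed the same way.

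\textbf{Step 3: the operator estimate.} Write
\[
X\ee^{-t\Lc}Yf=X(Yf*h_t)=f*\bigl(X\,Y^{\dag R\dag}h_t\bigr)
\]
by Proposition~\ref{prop:9}(4). The operator $Y^{\dag R\dag}$ is right-invariant with degree at most $\deg Y$ and norm $\lesssim|Y|$; this uses that $\dag$ differs from $+$ only by lower-order character terms, by Proposition~\ref{prop:9}(1). Applying \eqref{eq:teo7new} and using the definition of $T_{b,t}$ yields the claim, again after absorbing the characters from the convolution formula into $b$ and $\omega$.
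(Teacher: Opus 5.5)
Your handling of the right-invariant derivatives (through the heat kernel $h'_t$ of $\Lc^*$ and the identity $h_t=\Delta_L^{-1}\widecheck{\overline{h'_t}}$) matches the paper's. So does your operator estimate via $X\ee^{-t\Lc}Yf=f*(XY^{\dag R\dag}h_t)$, and the pointwise manipulations are fine.

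The gap is the order of quantifiers. The theorem asserts a \emph{single} $\omega$ valid for every $\lambda$. The paper stresses that, unlike in \eqref{estkt}, $\omega$ does not depend on the order of differentiation, and statements such as Lemma~\ref{lem:1} rely on this. Your argument only produces some $\omega_\lambda$ for each $\lambda$, and the dependence enters at three places.
\begin{enumerate}
\item In Step~1 you apply \eqref{estkt} with $\lambda=\deg X$ and so inherit $\omega_{\deg X}$.
\item Absorbing $\Delta_R^{-1/2}\meg\ee^{c(\abs{x}_*+1)}$ into the Gaussian via Young's inequality costs a factor $\ee^{Ct}$ with $C$ of order $c^{\dL}b_\lambda^{1-\dL}$, which blows up as $b_\lambda\to 0$.
\item The bound $p_{b,t/2}*p_{b,t/2}\meg C\ee^{\omega t}p_{b',t}$ has $\omega$ depending on $b$, because of the exponential volume growth (cf.\ Lemma~\ref{lem:3}).
\end{enumerate}
Since you split $h_t=h_{t/2}*h_{t/2}$ symmetrically, the differentiated kernels sit at arbitrarily large times, so none of these $\lambda$-dependent rates can be removed.

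The missing idea is to let the derivatives fall only on short-time kernels. Write $XY^Rh_t=(Y^Rh_{t'})*h_{t-2t'}*(Xh_{t'})$ with $t'=\min(1,t/3)$.
\begin{itemize}
\item The two differentiated factors live at times $t'\meg 1$, where $\ee^{\omega_\lambda t'}$ and all $b_\lambda$-dependent losses are bounded. To keep it that way, put the character weights from the convolution formula on these factors, so that the exponential losses in the Gaussian convolution bound of Lemma~\ref{lem:4}~(1) involve only $t'$.
\item The large time is carried by the undifferentiated kernel $h_{t-2t'}$, which is controlled by the $\lambda=0$ estimate with a fixed rate.
\item For $t\Meg 3$ the only extra cost is $t'^{-(\deg X+\deg Y)/\dL}=1\meg C_\eps\ee^{\eps t}t^{-(\deg X+\deg Y)/\dL}$.
\end{itemize}
The final rate is therefore the $\lambda=0$ rate plus $\eps$, uniformly in $\lambda$.

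Relatedly, Step~3 needs no absorption of characters: $\abs{f*g}\meg\abs{f}*\abs{g}$, and $T_{b,t}f=\abs{f}*p_{b,t}$ already contains them. Performing such an absorption would again make $\omega$ depend on $b$.
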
	
Let us stress that unlike in~\eqref{estkt}, in~\eqref{eq:teo7new} $\omega$ is independent of the orders of differentiation. The proof of Theorem~\ref{teo:7} requires some preliminary estimates of the functions $p_{b,t,d}$ which are the object of the next few results.

	\begin{lem}\label{lem:18}
		Suppose $b\Meg b'>0$ and $d'\Meg d>1$. Then
		\[
		p_{b,t,d}\meg \ee^{b'} p_{b',t^{d'/d},d'} \qquad \forall \, t>0.
		\]
	\end{lem}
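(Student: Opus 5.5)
The plan is to reduce the inequality to a one-variable comparison between the two exponents. The powers of $t$ in front cancel exactly, so only the Gaussian-type exponentials need to be compared.

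First, the prefactors. On the left the prefactor is $t^{-Q_*/d}$. On the right it is $(t^{d'/d})^{-Q_*/d'}=t^{-Q_*/d}$, the same quantity. So it suffices to show that for every $x\in G$ and $t>0$
\[
-b\,(\abs{x}_*^{d}/t)^{1/(d-1)}\meg b'-b'\,(\abs{x}_*^{d'}/t^{d'/d})^{1/(d'-1)} .
\]

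Next, a change of variables. Set $s\coloneqq \abs{x}_*\,t^{-1/d}\Meg 0$. Then $(\abs{x}_*^{d}/t)^{1/(d-1)}=s^{d/(d-1)}$ and $(\abs{x}_*^{d'}/t^{d'/d})^{1/(d'-1)}=s^{d'/(d'-1)}$. The claim therefore becomes
\[
b\, s^{d/(d-1)}+b'\Meg b'\, s^{d'/(d'-1)}, \qquad s\Meg 0 .
\]

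Finally, a case split. The map $u\mapsto u/(u-1)$ is decreasing on $(1,\infty)$, and $d'\Meg d>1$, so $d/(d-1)\Meg d'/(d'-1)$.
\begin{itemize}
\item If $s\Meg 1$, then $s^{d/(d-1)}\Meg s^{d'/(d'-1)}$. Combined with $b\Meg b'$, this gives $b\,s^{d/(d-1)}\Meg b' s^{d'/(d'-1)}$, which is even stronger than needed.
\item If $s<1$, then $b' s^{d'/(d'-1)}\meg b'$, and the left-hand side is at least $b'$.
\end{itemize}
Exponentiating gives $p_{b,t,d}\meg \ee^{b'}p_{b',t^{d'/d},d'}$.

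There is no real obstacle here. The only point needing care is checking that the scaling $t\mapsto t^{d'/d}$ makes the prefactors and the arguments of both exponentials depend on the single quantity $s=\abs{x}_*t^{-1/d}$.
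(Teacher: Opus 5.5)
Your proof is correct and follows the paper's argument. The paper rescales $t\mapsto t^d$, which is equivalent to your substitution $s=\abs{x}_*t^{-1/d}$, observes that the prefactors agree, and concludes from $d/(d-1)\Meg d'/(d'-1)$; your case split $s\Meg 1$ versus $s<1$ just makes that last step explicit.
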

	
	\begin{proof}
		It suffices to observe that
		\[
		\begin{split}
			p_{b,t^d,d}(x)= t^{-Q_*} \ee^{-b (\abs{x}/t)^{d/(d-1)}}\meg t^{-Q_*}\ee^{-b' [(\abs{x}/t)^{d'/(d'-1)}-1]}=\ee^{b'} p_{b', t^{d'},d'}(x)
		\end{split}
		\]
		for every $x\in G$ and  $t>0$, since $d/(d-1)\Meg d'/(d'-1)$.
	\end{proof}
	
	\begin{lem}\label{lem:32}
Suppose $b>0$ and $d>1$. Then
\begin{align*}
		\ee^{-2^{1/(d-1)}b(\abs{y^{-1}x}^d/t)^{1/(d-1)}}&(T_{2^{1/(d-1)}b,t,d} f)(y) \\
		&  \quad \meg (T_{b,t,d} f)(x)\meg \ee^{2^{1/(d-1)}b(\abs{y^{-1}x}^d/t)^{1/(d-1)}} (T_{2^{-1/(d-1)}b,t,d} f)(y)
\end{align*}
		for every $x,y\in G$ and $f\in \Sc'(G)$.
		\end{lem}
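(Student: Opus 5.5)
The plan is to reduce everything to a pointwise comparison of the kernels $p_{b,t,d}$ at two points, and then integrate against $\abs{f}$. Set $\gamma\coloneqq 1/(d-1)>0$, so that $d/(d-1)=1+\gamma$. The key elementary inequality is
\[
(u+v)^{1+\gamma}\meg 2^{\gamma}\big(u^{1+\gamma}+v^{1+\gamma}\big),\qquad u,v\Meg 0,
\]
which is convexity of $s\mapsto s^{1+\gamma}$ in the form $((u+v)/2)^{1+\gamma}\meg (u^{1+\gamma}+v^{1+\gamma})/2$. Combined with the triangle inequality for the left-invariant distance $d_*$, namely $\abs{z^{-1}x}_*\meg \abs{z^{-1}y}_*+\abs{y^{-1}x}_*$, and with the identity $(\abs{w}^d/t)^{1/(d-1)}=(\abs{w}/t^{1/d})^{1+\gamma}$, it yields
\[
\Big(\frac{\abs{z^{-1}x}^d}{t}\Big)^{\frac{1}{d-1}}\meg 2^{\gamma}\Big(\frac{\abs{z^{-1}y}^d}{t}\Big)^{\frac{1}{d-1}}+2^{\gamma}\Big(\frac{\abs{y^{-1}x}^d}{t}\Big)^{\frac{1}{d-1}}.
\]

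For the left inequality, I exchange the roles of $x$ and $y$ and multiply the displayed bound by $-b$. This gives
\[
p_{2^{\gamma}b,t,d}(z^{-1}y)\,\ee^{-2^{\gamma}b(\abs{y^{-1}x}^d/t)^{1/(d-1)}}\meg p_{b,t,d}(z^{-1}x),
\]
where I use $\abs{x^{-1}y}_*=\abs{y^{-1}x}_*$. For the right inequality, I apply the displayed bound with $b$ replaced by $2^{-\gamma}b$, giving
\[
p_{b,t,d}(z^{-1}x)\meg \ee^{b(\abs{y^{-1}x}^d/t)^{1/(d-1)}}\,p_{2^{-\gamma}b,t,d}(z^{-1}y).
\]
This is even better than claimed, since the factor $\ee^{b(\cdots)}$ is dominated by $\ee^{2^{\gamma}b(\cdots)}$. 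The prefactor $t^{-Q_*/d}$ is the same on both sides throughout, so it plays no role.

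Next I integrate these pointwise bounds in $z$ against $\Delta_L(z^{-1})\,\dd\abs{f}(z)$ when $f$ is a Radon measure, using the formula $(T_{b,t}f)(x)=\int_G p_{b,t}(z^{-1}x)\Delta_L(z^{-1})\,\dd\abs{f}(z)$ from the Remark. The exponential factors depend only on $x$ and $y$, so they pull out of the integral, and both inequalities follow immediately.

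Finally, if $f$ is not a measure, then all the $T$ quantities are identically $+\infty$, and the inequalities hold trivially with the convention on $\infty$. Alternatively, one can argue directly from the sup definition: for $\phi$ with $\abs{\phi}\meg p_{b,t}$, the pointwise kernel bounds transfer via the sup. I do not expect any step to be a real obstacle; the only care needed is to keep track of the correct constant $2^{1/(d-1)}$ from convexity and of the direction of the exponent in each of the two inequalities.
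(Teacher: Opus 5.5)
Your proof is correct and is essentially the paper's argument. The triangle inequality for $\abs{\,\cdot\,}_*$, combined with the convexity bound $(u+v)^{d/(d-1)}\meg 2^{1/(d-1)}(u^{d/(d-1)}+v^{d/(d-1)})$, gives a pointwise comparison of the kernels, which is then integrated against $\abs{f}$. The paper proves only the left inequality and gets the right one by symmetry (swap $x,y$ and replace $b$ by $2^{-1/(d-1)}b$), which indeed yields the sharper factor $\ee^{b(\abs{y^{-1}x}^d/t)^{1/(d-1)}}$ that you noticed. One small bookkeeping slip: the swap of $x$ and $y$ is needed for the right inequality, not the left. This is harmless, since your displayed bound holds for all $x,y,z$.
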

	
	\begin{proof}
		By symmetry, it suffices to prove the left inequality, which follows from the elementary estimate
		\[
		\abs{z^{-1}y}_*^{d/(d-1)}\meg (\abs{z^{-1}x}_*+\abs{x^{-1}y}_*)^{d/(d-1)}\meg  2^{1/(d-1)}(\abs{z^{-1}x}_*^{d/(d-1)}+\abs{x^{-1}y}_*^{d/(d-1)})
		\]
		for every $x,y,z\in G$.
	\end{proof}

	\begin{lem}\label{lem:3}
	Suppose $b,\rho >0$ and $d>1$. Then, there is a constant $C>0$ such that
\begin{equation}\label{est:lem3}
		\| \ee^{\rho \abs{\,\cdot\,}_*} p_{b,t,d} \|_{L^p(\beta)}\meg C t^{-Q_*/(d p')}\ee^{C t}
\end{equation}
		for every $t>0$ and $p\in [1,\infty]$. In particular, for all  $p\in [1,\infty]$ the map $t \mapsto \ee^{\rho\abs{\,\cdot\,}_*}p_{t,b,d}$ 		is continuous from $(0,+\infty)$ to $L^p( \beta)$.
	\end{lem}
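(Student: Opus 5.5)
We split the integral defining the norm into the regions $\abs{x}_*\meg 1$ and $\abs{x}_*>1$, using the small-scale volume estimate $\beta(B(e,r))\asymp r^{Q_*}$ on the first and the exponential bound $\beta(B(e,r))\meg \ee^{Cr}$ of Lemma~\ref{lem:31} on the second. We first treat $p<\infty$; the case $p=\infty$ is a direct pointwise estimate handled at the end. Write $\gamma=d/(d-1)$, so that $p_{b,t,d}(x)=t^{-Q_*/d}\ee^{-b\abs{x}_*^{\gamma}t^{-1/(d-1)}}$. Then
\[
\| \ee^{\rho \abs{\,\cdot\,}_*} p_{b,t,d} \|_{L^p(\beta)}^p = t^{-pQ_*/d}\int_G \ee^{p\rho\abs{x}_*-pb\abs{x}_*^{\gamma}t^{-1/(d-1)}}\,\dd\beta(x).
\]
The target is $t^{-Q_*/(dp')}$, so we must show that this integral is $\meg C t^{Q_*/d}\ee^{Ct}$. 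Indeed, $t^{-pQ_*/d}\cdot t^{Q_*/d}=t^{-pQ_*/(dp')}$.

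We use the layer-cake formula with $V(r)=\beta(B(e,r))$, writing the integral as $\int_0^\infty \phi(r)\,\dd V(r)$ with $\phi(r)=\ee^{p\rho r-pb r^\gamma s}$ and $s=t^{-1/(d-1)}$. Since $V(r)\meg C r^{Q_*}$ for $r\meg 1$ and $V(r)\meg \ee^{Cr}$ for $r\Meg1$, we bound $V(r)\meg C r^{Q_*}\ee^{Cr}$ for all $r$. Integrating by parts then gives a bound by $\int_0^\infty \abs{\phi'(r)} r^{Q_*}\ee^{Cr}\,\dd r$, or we may simply split into dyadic shells.

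\emph{Small $r$.} Here $\ee^{p\rho r}$ is harmless. The contribution is comparable to $\int_0^\infty \ee^{-pbr^\gamma s}\,\dd (r^{Q_*})$, and the substitution $r=s^{-1/\gamma}u$ makes it equal to $c\, s^{-Q_*/\gamma}$. Since $s^{-Q_*/\gamma}=t^{Q_*(d-1)/((d-1)d)}=t^{Q_*/d}$, this is exactly the required size.

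\emph{Large $r$.} Here we must absorb the exponential growth $\ee^{(p\rho+C)r}$ into the Gaussian-type factor. By Young's inequality, $(p\rho+C)r\meg \tfrac{pb}{2}r^\gamma s + C' s^{-1/(\gamma-1)}$, and $s^{-1/(\gamma-1)}=t^{(d-1)/(d-1)}=t$. The remaining integral $\int \ee^{-\frac{pb}{2}r^\gamma s}\,\dd(r^{Q_*}\ee^{Cr})$ is handled in the same way, giving $\meg C t^{Q_*/d}\ee^{C t}$. The main obstacle is precisely this step: we must check that the constant $C$ can be chosen uniformly in $p\in[1,\infty)$ after taking $p$-th roots. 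This works out because $\ee^{C'pt}$ becomes $\ee^{C't}$ after the root, and $\rho p\meg$ const$\cdot p$ scales linearly, so Young's inequality yields $C'\propto p^{\gamma'}$, with $\gamma'=\gamma/(\gamma-1)=d$, divided by $p^{\gamma'-1}$. Concretely, with the exponent $p[(\rho+C/p) r - \tfrac b2 r^\gamma s]$, the maximum is $p\,c\,t$ with $c$ independent of $p$, since $C/p\meg C$. Taking the $p$-th root, polynomial prefactors such as $p^{-Q_*/(\gamma p)}$ stay bounded.

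\emph{The case $p=\infty$.} Here the sup of $\ee^{\rho r-br^\gamma s}$ is at most $\ee^{ct}$ by the same Young inequality, giving $t^{-Q_*/d}\ee^{ct}$, which matches $p'=1$.

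\emph{Continuity.} The map $t\mapsto \ee^{\rho\abs{\cdot}_*}p_{b,t,d}$ is pointwise continuous in $t$. On a compact interval $[t_0,t_1]\subset(0,\infty)$ it is dominated by $\ee^{\rho\abs{x}_*}t_0^{-Q_*/d}\ee^{-b\abs{x}_*^\gamma t_1^{-1/(d-1)}}$, which lies in $L^p(\beta)$ by the estimate just proved. For $p<\infty$, dominated convergence gives the result. For $p=\infty$, uniform continuity follows since the functions are uniformly small outside a large ball (by the Gaussian decay dominating $\ee^{\rho r}$) and uniformly continuous in $(t,r)$ on compacts.
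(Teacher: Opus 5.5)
Your proof is correct, but for finite $p$ it takes a different route from the paper.

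\emph{The paper's route.} The paper does not compute the $L^p$ norms directly. It takes the case $p=1$ from Martini's thesis, proves the case $p=\infty$ by an elementary optimisation, and then gets every $p$ at once from
\[
\norm{f}_{L^p(\beta)}\meg\norm{f}_{L^1(\beta)}^{1/p}\norm{f}_{L^\infty(\beta)}^{1/p'},
\]
which gives $C^{1/p}\ee^{Ct/p}\,t^{-Q_*/(dp')}\ee^{\omega t/p'}$. This makes uniformity in $p$ automatic. For $p=\infty$ the paper minimises $b\abs{x}_*^{d/(d-1)}t^{-1/(d-1)}-\rho\abs{x}_*+\omega t$ over $t$ at fixed $x$; you maximise $\rho r-br^\gamma s$ over $r$ at fixed $t$. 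These are the same inequality.

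\emph{Your route.} Your layer-cake computation is self-contained, since it effectively reproves the cited $p=1$ bound. The price is the $p$-bookkeeping, which you handle correctly. The maximum of $(p\rho+C)r-\frac{pb}{2}r^\gamma s$ is $\frac1d\gamma^{1-d}\,p\,(\rho+C/p)^d(b/2)^{1-d}\,t\meg c\,p\,t$, and factors such as $(pb)^{-Q_*/(\gamma p)}$ stay bounded after the $p$-th root. You could drop all of this by running your argument only for $p=1$ and then interpolating with your $p=\infty$ bound, as the paper does.

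\emph{Two details to tidy.} First, $\phi(r)=\ee^{p\rho r-pbr^\gamma s}$ is not monotone, so you cannot simply replace $\dd V$ by $\dd(r^{Q_*}\ee^{Cr})$. Use the integration by parts you mention instead:
\[
\int\phi\,\dd V\meg\int V\abs{\phi'}\,\dd r,\qquad \abs{\phi'}\meg(p\rho+pb\gamma s r^{\gamma-1})\phi .
\]
After absorbing $\ee^{(p\rho+C)r}$ with Young's inequality, the substitution $u=(pbs/2)^{1/\gamma}r$ yields terms of size $t^{Q_*/d}$ and $t^{(Q_*+1)/d}$, and the latter is absorbed by $\ee^{Ct}$. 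This also settles the small-$t$ behaviour of the region $\abs{x}_*>1$, which your phrase ``handled in the same way'' glosses over. Second, once the volume growth $\ee^{Cr}$ has been absorbed via Young's inequality, it should not reappear in the measure $\dd(r^{Q_*}\ee^{Cr})$.
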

	
	\begin{proof}
		The case $p=1$ of estimate~\eqref{est:lem3} is contained in the proof of~\cite[Theorem 1.4.1 (f)]{MartiniTesi} (cf.~\cite[Theorem 2.3]{Martini}).  It will then suffice to consider the case $p=\infty$. Then, take $\omega>0$ and observe that the function 
		\[
		\phi(t)= b \abs{x}_*^{d/(d-1)} t^{-1/(d-1)}-\rho\abs{x}_* + \omega t, \qquad t>0,
		\]
		attains its minimum at $t=\abs{x}_* (b/((d-1)\omega))^{1-1/d}$, so that 
		\[
		\min_{t>0} \phi(t)=\abs{x}_*( [\omega(d-1)]^{1/d}b^{1-1/d} d/(d-1)-\rho  ).
		\]
		Consequently, if we take $\omega$ sufficiently large, then for every $x\in G$ and $t>0$
		\[
		p_{b,t,d}(x) \ee^{\rho\abs{x}_*}\meg t^{-Q_*/d} \ee^{\omega t}.
		\]
The continuity of the map $t \mapsto \ee^{\rho\abs{\,\cdot\,}_*}p_{t,b,d}$ is a consequence of~\eqref{est:lem3}  and the dominated convergence theorem, when $p<\infty$. The case $p=\infty$ follows easily.
	\end{proof}
	
		The next result will allow us to estimate $p_{b,t}*p_{b',t'}$ in terms of $p_{ b'',t+t'}$, hence $T_{b,t} T_{b',t'}$ in terms of $T_{b'',t+t'}$, and in turn $p_{  b'',t+t'}$ in terms of $p_{b,t}*p_{b',t'}$, hence $T_{b'',t+t'}$ in terms of $T_{b,t} T_{b',t'}$.

	\begin{lem}\label{lem:4}
	Suppose $b,b', \rho>0$, $d>1$. 
\begin{enumerate}
\item If  $0<b''<2^{-1/(d-1)}\min(b,b')$, then there is a constant $C>0$ such that %for $x\in G$ and $t,t'>0$
		\[
		\qquad \qquad \int_G p_{b,t,d}(x y^{-1}) p_{b',t',d}(y) \ee^{\rho\abs{y}_*}\,\dd \beta(y) \meg C \ee^{C (t+t')} p_{ b'',t+t',d}(x), \qquad x\in G, \, t,t'>0.
		\]
\item If $b''> 2^{1/(d-1)}\max(b,b')$, then, there is a constant $C>0$ such that
		\[
		\qquad \qquad \int_G p_{b,t,d}(x y^{-1}) p_{b',t',d}(y) \ee^{-\rho\abs{y}_*}\,\dd \beta(y) \Meg C \ee^{-C(t+t')} p_{ b'',t+t',d}(x) \qquad x\in G, \, t,t'>0.
		\]
		
\end{enumerate}	
		\end{lem}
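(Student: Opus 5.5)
The plan is to reduce both estimates to an elementary convexity inequality for the exponents, and then to recover the prefactors $t^{-Q_*/d}$ from Lemma~\ref{lem:3} and from the small-scale volume estimate $\beta(B(e,r))\asymp r^{Q_*}$. Set $s\coloneqq d/(d-1)$, so that $s-1=1/(d-1)$. The function $(u,v)\mapsto u^s/v^{s-1}$ is convex and positively homogeneous of degree one on $(0,\infty)^2$, hence subadditive. Combined with the triangle inequality $\abs{x}_*\meg \abs{xy^{-1}}_*+\abs{y}_*$, this gives
\[
\frac{\abs{x}_*^s}{(t+t')^{1/(d-1)}}\meg \frac{\abs{xy^{-1}}_*^s}{t^{1/(d-1)}}+\frac{\abs{y}_*^s}{t'^{1/(d-1)}} .
\]
Equality holds in the subadditivity step when the lengths are split proportionally, namely $\abs{xy^{-1}}_*/\abs{x}_*=t/(t+t')$. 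Besides this inequality I use repeatedly the minimisation from the proof of Lemma~\ref{lem:3}: for every $\delta>0$ there is $C_\delta$ with $\rho r\meg \delta r^s/\tau^{1/(d-1)}+C_\delta\tau$ for all $r\Meg 0$ and $\tau>0$. This lets me trade exponential weights $\ee^{\pm\rho\abs{\cdot}_*}$ for a small loss in the Gaussian constant plus a factor $\ee^{C\tau}$.

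For (1), fix $b''<2^{-1/(d-1)}\min(b,b')$; in fact $b''<\min(b,b')$ suffices, which leaves room. Write $b=b''+\eps_1$ and $b'=b''+\eps_2$. By the inequality above, the integrand is at most
\[
\ee^{-b''\abs{x}_*^s/(t+t')^{1/(d-1)}}\cdot t^{-Q_*/d}\ee^{-\eps_1\abs{xy^{-1}}_*^s/t^{1/(d-1)}}\cdot t'^{-Q_*/d}\ee^{-\eps_2\abs{y}_*^s/t'^{1/(d-1)}}\ee^{\rho\abs{y}_*}.
\]
I then split into two cases.
\begin{enumerate}
\item If $t\Meg t'$, then $t^{-Q_*/d}\meg 2^{Q_*/d}(t+t')^{-Q_*/d}$. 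I bound the first Gaussian factor by $1$ and integrate the second with Lemma~\ref{lem:3} ($p=1$), which gives $C\ee^{Ct'}$.
\item If $t<t'$, I bound $t'^{-Q_*/d}$ by a multiple of $(t+t')^{-Q_*/d}$. I absorb $\ee^{\rho\abs{y}_*}$ into half of $\ee^{-\eps_2\abs{y}_*^s/t'^{1/(d-1)}}$ via the minimisation, at the cost of $\ee^{Ct'}$. I then integrate $t^{-Q_*/d}\ee^{-\eps_1\abs{xy^{-1}}_*^s/t^{1/(d-1)}}$ in $y$. The change of variables $z=xy^{-1}$ produces modular factors, which are characters and hence bounded by $\ee^{c(\abs{z}_*+1)}$ by Lemma~\ref{lem:31}. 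These are in turn absorbed by Lemma~\ref{lem:3} with $\rho=c$.
\end{enumerate}

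For (2), I bound the integral from below by its restriction to a small ball $B(z,r)$, with $r\coloneqq\min(t,t')^{1/d}$ (capped at $1$ if convenient). Here $z$ is an ``intermediate point'' with $\abs{z}_*\approx \frac{t'}{t+t'}\abs{x}_*$ and $\abs{z^{-1}x}_*\approx \frac{t}{t+t'}\abs{x}_*$, taken along an almost minimising curve from $e$ to $x$ in the definition of $d_*$. For $y\in B(z,r)$ I use $\abs{y}_*\meg\abs{z}_*+r$ and $\abs{xy^{-1}}_*\meg\abs{xz^{-1}}_*+r$, together with $(u+r)^s\meg 2^{s-1}(u^s+r^s)$; this is exactly where the factor $2^{1/(d-1)}$ comes from. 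With $r^s/\min(t,t')^{1/(d-1)}\meg 1$ and the proportional split, which gives equality in the convexity inequality, the product of the two kernels is bounded below by
\[
C\, t^{-Q_*/d}t'^{-Q_*/d}\ee^{-2^{1/(d-1)}\max(b,b')\abs{x}_*^s/(t+t')^{1/(d-1)}} .
\]
I bound the weight $\ee^{-\rho\abs{y}_*}\Meg \ee^{-\rho(\abs{x}_*+1)}$ using the minimisation with $\tau=t+t'$ and the strict gap $b''-2^{1/(d-1)}\max(b,b')>0$. The measure $\beta(B(z,r))=\Delta_L(z)\beta(B(e,r))$ is handled by $\beta(B(e,r))\gtrsim r^{Q_*}$ for $r\meg 1$, together with Lemma~\ref{lem:31} applied to $\Delta_L^{-1}$ and again the minimisation. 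Finally, $t^{-Q_*/d}t'^{-Q_*/d}\min(t,t')^{Q_*/d}=\max(t,t')^{-Q_*/d}\Meg (t+t')^{-Q_*/d}$, which gives the claimed bound.

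The main obstacle is the existence of the intermediate point $z$ with the stated proportional distances up to errors that can be absorbed. For $\abs{x}_*\Meg1$, $d_*=d_\infty$ is a length distance, so reparametrising a near-geodesic works up to $O(1)$ errors. For small distances, however, the weighted $d_0$ does not scale linearly in time along a curve. I would first try to show $\abs{z}_*\meg \theta\abs{x}_*+C$ and $\abs{z^{-1}x}_*\meg(1-\theta)\abs{x}_*+C$ with a uniform $C$, by reparametrising a curve in $C_*(\eps)$. All additive errors then enter the exponent as $O(1+\abs{x}_*^{s-1}\cdots)$, and I absorb them into the strict gap in $b''$ and into the factor $\ee^{-C(t+t')}$. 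If this fails for small $t+t'$, the fallback is to use the trivial choice $z=e$ or $z=x$ when $\abs{x}_*\lesssim (t+t')^{1/d}$, where the Gaussian factor is harmless anyway.
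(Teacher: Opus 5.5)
Your part (1) is sound and is essentially the paper's argument. Your subadditivity of $(u,v)\mapsto u^{s}/v^{s-1}$, $s=d/(d-1)$, is sharper than the paper's bound $(u+v)^{s}\meg 2^{s-1}(u^{s}+v^{s})$ combined with $t+t'\Meg\max(t,t')$, which is why $b''<\min(b,b')$ suffices for you.

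There is one slip in your case $t<t'$. For relatively invariant $\beta$ one has $\int_G f(xy^{-1})\,\dd\beta(y)=\Delta_R(x)\int_G f(z^{-1})\,\dd\beta(z)$. So the change of variables produces a factor $\Delta_R(x)$, which is not a function of $z$ and is not controlled by Lemma~\ref{lem:3}. It is easily removed, but it has to be done. Either reduce to $\Delta_R=1$ as the paper does, writing $\beta=\Delta_R\cdot\beta_R$ and putting $\Delta_R(y)\meg\ee^{c(\abs{y}_*+1)}$ into the weight $\ee^{\rho\abs{y}_*}$. Or absorb $\Delta_R(x)\meg\ee^{c(\abs{x}_*+1)}$ into your slack in $b''$ via the minimisation trick.

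Part (2) has a genuine gap, exactly at the step you flag. For $\abs{x}_*\meg 1$ one has $d_*=d_0$, which is not a length distance. So points splitting $\abs{x}_*$ in proportions $t'/(t+t')$ and $t/(t+t')$ need not exist, even approximately. For example, on $G=\R$ with $X=\partial_x$ of degree $2$, $\abs{x}_*=\abs{x}^{1/2}$ for $\abs{x}\meg1$. Hence $\abs{z}_*^2+\abs{x-z}_*^2\Meg\abs{x}_*^2$, which is incompatible with $\abs{z}_*=\theta\abs{x}_*$, $\abs{x-z}_*=(1-\theta)\abs{x}_*$ for $\theta\in(0,1)$.

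Additive errors do not rescue this. An error $C$ in the distances contributes roughly $C^{d/(d-1)}/\min(t,t')^{1/(d-1)}$ to the exponent. This is unbounded as $\min(t,t')\to0$ and can be absorbed neither in $\ee^{-C(t+t')}$ nor in the gap of $b''$. Your fallback only treats $\abs{x}_*\lesssim(t+t')^{1/d}$, leaving the range $(t+t')^{1/d}\ll\abs{x}_*\lesssim1$ uncovered. Also, for $y$ in the left-invariant ball $B(z,r)=zB(e,r)$, the bound $\abs{xy^{-1}}_*\meg\abs{xz^{-1}}_*+r$ fails in non-abelian groups because of conjugation; you would need the right translate $B(e,r)z$.

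The missing observation is that no intermediate point is needed. The factor $2^{1/(d-1)}$ in the hypothesis is precisely the cost of charging all of $\abs{x}_*$ to the kernel with the larger time: $t\Meg t'$ gives $t^{-1/(d-1)}\meg 2^{1/(d-1)}(t+t')^{-1/(d-1)}$. It does not come from the ball, as you claim.

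So assume $t\Meg t'$ (the paper gets this from the symmetry of $p_{b,t,d}$ after reducing to $\Delta_R=1$). Restrict the integral to $y\in B(e,\eps t'^{1/d})$. There $p_{b',t',d}(y)\Meg t'^{-Q_*/d}\ee^{-b'}$, the weight satisfies $\ee^{-\rho\abs{y}_*}\Meg\ee^{-\rho\eps t'^{1/d}}$, and $\int_{B(e,\eps t'^{1/d})}p_{b',t',d}\,\dd\beta\gtrsim\ee^{-Ct'}$. Then split into two cases:

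If $\abs{x}_*\Meg t^{1/d}$, then $\abs{xy^{-1}}_*\meg(1+\eps)\abs{x}_*$. Choosing $\eps$ with $(1+\eps)^{d/(d-1)}2^{1/(d-1)}b\meg b''$ gives $p_{b,t,d}(xy^{-1})\Meg t^{-Q_*/d}\ee^{-b''(\abs{x}_*^d/(t+t'))^{1/(d-1)}}$.

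If $\abs{x}_*\meg t^{1/d}$, the exponent is at most $2^{d/(d-1)}b$.

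Since $t^{-Q_*/d}\Meg(t+t')^{-Q_*/d}$, this concludes, and it is the paper's proof. Note that with $z=e$ your estimate $(u+r)^{s}\meg 2^{s-1}(u^{s}+r^{s})$ would cost a second factor $2^{1/(d-1)}$. So you need the small radius $\eps t'^{1/d}$ together with the case distinction, or alternatively $(u+r)^{s}\meg(1+\delta)u^{s}+C_\delta r^{s}$.
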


	\begin{proof}
	We start with (1), and observe first that we may assume that $\rho=0$, since Lemma~\ref{lem:3} shows that, if $\tilde b'\in ( 2^{1/(d-1)}b'',b')$, then  there is $C_1>0$ such that
		\[
		p_{b',t',d}(y) \ee^{\rho\abs{y}_*}\meg C_1 p_{\tilde b',t',d}(y) \ee^{C_1 t'}\meg C_1 p_{\tilde b',t',d}(y) \ee^{C_1 (t+t')}
		\]
		for every $t,t'>0$ and $y\in G$. By means of Lemma~\ref{lem:31} and the previous argument, we may further assume that $\Delta_R=1$. Then, observe  that
		\[
		\begin{split}
		\int_G p_{b,t,d}(x y^{-1}) p_{b',t',d}(y) \,\dd \beta(y)&=(p_{b,t,d}*p_{b',t',d})(x)\\
			&= \int_G p_{b,t,d}(y) p_{b',t',d}(y^{-1}x)  \Delta_L(y^{-1})\,\dd \beta(y)\\
			&=\int_G p_{b,t,d}(y) p_{b',t',d}(x^{-1}y^{-1})  \,\dd \beta(y)
		\end{split}
		\]
		where the last equality follows from the symmetry of $p_{b',t',d}$   and from the fact that $\Rc \beta=\Delta_L^{-1}\cdot \beta$ since $\beta$ is right-invariant. By the symmetry of $p_{b'',t+t',d}$, we may therefore assume that $t\Meg t'$, so that $t\meg t+t'\meg 2t $. Then, observe that 
\begin{multline*}
		b   (\abs{x y^{-1}}_*^{d}/t)^{1/(d-1)}+ b'   (\abs{y}_*^{d}/t')^{1/(d-1)} \\
		\Meg (b'-2^{1/(d-1)}b'')  (\abs{y}_*^{d}/t')^{1/(d-1)}+  b''   (\abs{x}_*^{d}/(t+t'))^{1/(d-1)}
	\end{multline*}
		for every $x,y\in G$ and $t,t'>0$. The assertion then follows from Lemma~\ref{lem:3}.
	
	We now prove~(2). In this case too we may assume that $\rho=0$, since Lemma~\ref{lem:3} shows that, if $\tilde b'\in (b', 2^{-1/(d-1)}b'')$, then  there is $C_1>0$ such that
		\[
		p_{\tilde b',t',d}(y) \ee^{\rho\abs{y}_*}\meg C_1 p_{ b',t',d}(y) \ee^{C_1 t'}\meg  C_1   p_{  b',t',d}(y) \ee^{C_1 (t+t')}
		\]
		for every $t,t'>0$ and $y\in G$. As in the proof of~(1), we may then also assume that $\Delta_R=1$.
		Then, as in the proof of~(1),
		\[
		\int_G p_{b,t,d}(x y^{-1}) p_{b',t',d}(y)  \,\dd \beta(y)= \int_G p_{b,t,d}(y) p_{b',t',d}(x^{-1}y^{-1})   \,\dd \beta(y).
		\]
		We may therefore assume that $t\Meg t'$, so that $t\meg t+t'\meg 2t $. 
		Take $\eps\in (0,1)$ so that $2^{-1/(d-1)}b''\Meg (\eps+1)^{d/(d-1)}b$, and observe that there is a constant $C_1>0$ such that 
		\[
		\int_{B(e,\eps t'^d)} p_{b',t',d}(y)\,\dd \beta(y)\Meg t'^{-Q_*/d}\beta(B(e,\eps t'^{1/d})) \ee^{-b'}\Meg C_1
		\]
		for every $t'>0$. In addition, if $\abs{x}_*\Meg t^{1/d}\Meg t'^{1/d}$, then for every $y\in B(e,\eps t'^{1/d})$
		\[
		b(\abs{x y^{-1}}^d_*/t)^{1/(d-1)}\meg b(1+\eps)^{d/(d-1)} ( \abs{x}^d_*/t)^{1/(d-1)}\meg   b''( \abs{x}^d_*/(t+t'))^{1/(d-1)},
		\]
		whereas, if $\abs{x}_*\meg t^{1/d}$, then for every $y\in B(e,\eps t'^{1/d})$
		\[
		b(\abs{x y^{-1}}^d_*/t)^{1/(d-1)}\meg b 2^{d/(d-1)}.
		\]
		The assertion follows.
	\end{proof}
	
	We are now ready to prove Theorem~\ref{teo:7}.
	
	\begin{proof}[Proof of Theorem~\ref{teo:7}.]
Using the fact that $X \Delta_R^{-1/2}=(X \Delta_R^{-1/2})(e) \Delta_R^{-1/2}$ for every $X\in U(\mathfrak{g})$ (cf.~Proposition~\ref{prop:9}), by~\eqref{estkt} and Lemma~\ref{lem:3} we see that for every $\lambda\Meg 0$ there are $\omega'_\lambda,b'_\lambda,C'_\lambda>0$ such that
		 \[
		 \abs{X h_t}\meg   C'_\lambda \abs{X}t^{-\lambda /\dL} \ee^{\omega'_\lambda t} p_{b'_\lambda,t}
		 \]
		 for every $t>0$ and $X\in U_\lambda$. Now, observe that $\Lc^{**}=\Lc$, so that also $\Lc^*$ is a weighted subcoercive operator. In particular, $\ee^{-t\Lc^*}$ is the adjoint of $\ee^{-t \Lc}$ on $L^2(\beta)$, so that, denoting with $(h'_t)$ the heat kernel associated with $\Lc^*$,
		 \[
		 \langle f\vert g*( \Delta_L^{-1} \widecheck{\overline{h'_t}} )\rangle=\langle f*h'_t\vert g \rangle=\langle \ee^{-t \Lc^*} f\vert g \rangle =\langle f\vert \ee^{-t \Lc} g\rangle= \langle f \vert g*h_t\rangle
		 \]
		 for every $f,g\in L^2(\beta)$, so that $h_t=\Delta_L^{-1} \widecheck{\overline{h'_t}} $. In particular, if $\lambda\Meg 0$ and $X\in U_\lambda$, then there is a unique $Y\in U_\lambda$ such that $Y^R f=\Delta_L X^R (\Delta_L^{-1} f) $ for every $f\in C^\infty(G)$, so that 
		 \[
		 \begin{split}
		 X^R h_t&= X^R(\Delta_L^{-1} \widecheck{\overline{h'_t}})\\
		 	&= \Delta_L^{-1} Y^R \widecheck {\overline{h'_t}}=\Delta_L^{-1} \widecheck{\overline{\overline {Y^+} h'_t}}.
		 \end{split}
		 \] 
		 Consequently, by means of Lemma~\ref{lem:3}  and the previous arguments, applied to $\Lc^*$, we see that there are  $\omega''_\lambda,b''_\lambda,C''_\lambda>0$ such that
		 \[
		 \abs{X^R h_t}\meg   C''_\lambda \abs{X}t^{-\lambda /\dL} \ee^{\omega''_\lambda t} p_{b''_\lambda,t}
		 \]
		for every $t>0$. Then, for $\lambda \geq 0$ take $X,Y\in U_\lambda$, and observe that 
		\[
		X Y^R h_t= Y^R h_{t'}* h_{t-2t'}*X h_{t'},
		\] where $t'=\min(1,t/3)$. Since for all $\varepsilon>0$ there exists $C=C_{\lambda,\epsilon}>0$ such that
		\[
		 t^{\eta /\dL}\min(1,t)^{-\eta  /\dL} \leq C \ee^{ \eps t} \qquad \forall t > 0,\, \eta \in [0, 2\lambda],
		\]
	by Lemma~\ref{lem:4}~(1) for every $\varepsilon>0$ and $\lambda\Meg 0$ there are $C=C_{\lambda,\varepsilon}, b=b_{\lambda,\varepsilon}>0$ such that
		\[
		\abs{X Y^R h_t}\meg C \abs{X} \abs{Y} t^{-(\deg X+\deg Y) /\dL} \ee^{(\omega_{0}+\varepsilon) t} p_{b,t}
		\]
		for every $t>0$ and $X,Y\in U_\lambda$, where $\omega_{0}$ is that of~\eqref{estkt}. The proof is complete.
	\end{proof}

	\begin{deff}
		Suppose $\omega\in \R$, $\rho\Meg 0$, $ b>0$, and $d>1$. Then, we define 
		\[
		T^{\omega,\rho}_{b,*,d} f(x)\coloneqq  \sup_{t>0} \ee^{-\omega t}(\abs{f}*(\ee^{\rho\abs{\,\cdot\,}_*} p_{b,t,d}))(x)
		\]
		for every $f\in \mathcal{S}'(G)$ and $x\in G$. We simply write  $T^{\omega}_{b,*,d}$ if $\rho=0$. Given $\kappa>0$, we also define
		\[
		T^*_{b,\kappa,d} f(x) \coloneqq  \sup_{t\in (0,\kappa]}  (T_{b,t,d} f)(x)
		\]
		for every $f\in \mathcal{S}'(G)$ and $x\in G$.  In both cases, we shall generally omit $d$ if it is $\dL$.
	\end{deff}

We have the following lemma. 	Part of its proof is inspired by that of~\cite[Corollary 4.10]{MartiniMedaVallarino}.	 
		\begin{lem}\label{lem:5}
Suppose $b>0$,  $d>1$, $\rho \Meg 0$, $p,q\in (1,\infty)$, and $(Y,\Mf, \mi)$ is a $\sigma$-finite measure space. Then there are $\omega,C>0$ such that 
		\[
		\big\| \| (T^{\omega,\rho}_{b,*,d} f(y,\,\cdot\,))(x) \|_{L^{q}_{y}(\mu)} \big\|_{L^{p}_{x}(\beta)}\meg C \big\| \| f(x,y) \|_{L^{q}_{y}(\mi)} \big\|_{L^{p}_{x}(\beta)}
		\]
		for every   $f\in L^{q,p}(\mi,\beta)$.	In particular, the same holds for $T^*_{b,\kappa,d}$ for every $\kappa>0$. 
	\end{lem}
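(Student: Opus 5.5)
The plan is to reduce the vector-valued maximal inequality to a Fefferman--Stein type inequality for a local Hardy--Littlewood maximal operator, and to handle the large-scale part separately using the exponential factor $\ee^{-\omega t}$. I would split $p_{b,t,d}$, or more precisely $\ee^{\rho\abs{\cdot}_*}p_{b,t,d}$, into a local part supported in $\{\abs{x}_*\meg 1\}$ and a global part supported in $\{\abs{x}_*>1\}$. The supremum over $t>0$ is then bounded by the sum of the two corresponding suprema.

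\emph{Local part.} For $\abs{x}_*\meg 1$ the factor $\ee^{\rho\abs{x}_*}$ is harmless, and $\Delta_L,\Delta_R$ are bounded above and below by Lemma~\ref{lem:31}. On this range the measure $\beta$ is locally doubling, since $\beta(B(x,r))\asymp \Delta_L(x) r^{Q_*}$ for $r\meg 1$ by left invariance up to $\Delta_L$. I would decompose dyadically: for $2^{-k-1}<\abs{y}_*\meg 2^{-k}$, with $2^{-k}\meg 1$, one has $p_{b,t,d}(y)\meg t^{-Q_*/d}\ee^{-b(2^{-kd}/t)^{1/(d-1)}}$. Summing over $k$, using $\beta(B(e,2^{-k}))\asymp 2^{-kQ_*}$, gives
\[
\bigl(\abs{f}*(\mathbf 1_{B(e,1)}\ee^{\rho\abs{\cdot}_*}p_{b,t,d})\bigr)(x)\meg C\, M_{\mathrm{loc}}f(x),
\]
uniformly in $t\in(0,\infty)$. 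Here $M_{\mathrm{loc}}$ is the maximal function over balls of radius at most $1$ centred at $x$. The sum is controlled by $\sup_k (2^{-k}/t^{1/d})^{Q_*}\ee^{-b(2^{-k}/t^{1/d})^{d/(d-1)}}$, which is bounded, times a geometric-type count. For large $t$ the kernel is simply bounded by $t^{-Q_*/d}\meg 1$ on the unit ball, so the estimate still holds. The group $G$ equipped with $d_*$ and $\beta$ restricted to scales at most $1$ is a space of homogeneous type locally. The vector-valued Fefferman--Stein inequality for $M_{\mathrm{loc}}$ on $L^p(\ell^q)$, or $L^p(L^q(\mi))$ for $\sigma$-finite $\mi$ via monotone convergence from simple functions, holds for $p,q\in(1,\infty)$ by the standard Calderón--Zygmund argument. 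I would run it on balls of radius at most $1$, covering $G$ by a bounded-overlap family of unit balls; this is the same localization used in \cite{MartiniMedaVallarino}.

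\emph{Global part.} Take $\abs{y}_*>1$. By the proof of Lemma~\ref{lem:3}, choosing $\omega$ large (depending on $\rho+c$, where $c$ absorbs $\Delta_L^{\pm1},\Delta_R^{\pm1}$ via Lemma~\ref{lem:31}), we get
\[
\ee^{-\omega t}\ee^{\rho\abs{y}_*}p_{b,t,d}(y)\meg C\,\ee^{-\rho'\abs{y}_*}
\]
uniformly in $t>0$, for any prescribed $\rho'$. The minimisation over $t$ there is exactly of this type. The factor $t^{-Q_*/d}$ is also harmless, since $\abs{y}_*>1$ forces $\ee^{-b(\abs{y}_*^d/t)^{1/(d-1)}}$ to beat it for small $t$. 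So the supremum of the global part is dominated by the single convolution $\abs{f}*K$ with $K=C\ee^{-\rho'\abs{\cdot}_*}$, a $t$-independent kernel. By Minkowski's integral inequality, the $L^q(\mi)$ norm passes inside the convolution, giving $\norm{f(\cdot,x)}_{L^q}*K$. Young's inequality from the preliminaries then gives boundedness on $L^p(\beta)$, provided $\Delta_L^{-1/p'}K$ and $\Delta_R^{-1/p'}K$ are integrable, which again follows from Lemma~\ref{lem:31} together with the exponential volume growth once $\rho'$ is large.

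The statement for $T^*_{b,\kappa,d}$ follows from $T^*_{b,\kappa,d}f\meg \ee^{\omega\kappa}T^{\omega}_{b,*,d}f$. I expect the main obstacle to be the local Fefferman--Stein inequality on a non-doubling (exponential growth) group with a relatively invariant measure. It has to be justified carefully through localization to unit balls, where both $\beta$ and $d_*$ behave like a homogeneous-type space with constants independent of the centre.
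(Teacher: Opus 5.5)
Your proposal is correct and is essentially the paper's argument: the near part of the kernel is dominated by a local Hardy--Littlewood maximal function, whose vector-valued boundedness is invoked (the paper cites it instead of sketching the localization to unit balls). The remainder is controlled by the exponential majorant from the proof of Lemma~\ref{lem:3}, followed by Minkowski's and Young's inequalities.

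The differences are only organizational. The paper first disposes of $t\geq 1$ globally and then splits the kernel spatially only for $t\in(0,\kappa]$, whereas you split spatially for all $t>0$ at once. The paper also verifies the joint measurability of $(y,x)\mapsto (T^{\omega,\rho}_{b,*,d}f(y,\cdot))(x)$, via left-continuity in $t$ and a supremum over rational $t$; you omit this step and should include it.
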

	
	\begin{proof}
		Let us first prove that the mapping $(y,x)\mapsto  (T^{\omega,\rho}_{b,*,d} f(y,\,\cdot\,))(x)$ is $(\mi\otimes \beta)$-measurable for every $(\mi\otimes \beta)$-measurable function $f$ on $Y\times G$. Observe first that there is a $\nu$-negligible subset $N$ of $Y$ such that $f(y,\,\cdot\,)$ is $\beta$-measurable for every $y\in Y\setminus N$. Consequently, using either the dominated convergence theorem or Fatou's lemma, we see that the mapping
		\[
		(0,+\infty)\ni t \mapsto \ee^{-\omega t}(\abs{f}*(\ee^{\rho\abs{\,\cdot\,}_*} p_{b,t,d}))(x)\in [0,+\infty]
		\]
		is left-continuous for every $y\in Y\setminus N$ and $x\in G$. Consequently,
		\[
		(T^{\omega,\rho}_{b,*,d} f(y,\,\cdot\,))(x)=\sup_{t\in \Q} \ee^{-\omega t}(\abs{f}*(\ee^{\rho\abs{\,\cdot\,}_*} p_{b,t,d}))(x)
		\]
		for every $y\in Y\setminus N$ and $x\in G$. It then suffices to observe that the mapping $(y,x)\mapsto (\abs{f}*(\ee^{\rho\abs{\,\cdot\,}_*} p_{b,t,d}))(x) $ is $(\nu\otimes \mi)$-measurable by Tonelli's theorem for every $t\in (0,+\infty)$.

		Observe that, by Lemma~\ref{lem:3}, given $\rho'>0$ there are  $C_1,\omega>0$ such that
		\[
		\ee^{\rho\abs{x}_*-\omega t}p_{b,t,d}(x) \meg C_1\ee^{-\rho'\abs{x}_*}
		\]
		for every $x\in G$ and  $t\Meg 1$. Let $\rho'$ be so large that $\ee^{-(\rho'-c)\abs{\,\cdot\,}_*}\in L^1(\beta)$, where $c>1$ is such that $\Delta_R(x)\meg c\ee^{c\abs{x}_*}$ for all $x\in G$ (cf.~Lemma~\ref{lem:31}), then  by Young's inequality
				\[
		\begin{split}
		\Big\| \sup_{t\Meg 1}\ee^{-\omega t}(\abs{f(y,\,\cdot\,)}*(\ee^{\rho\abs{\,\cdot\,}_*} p_{b,t,d}) )(x)\Big\| _{L^{q,p}_{(y,x)}(\mi,\beta)}&\meg C_1\norm{f}_{L^{q,p}(\mi,\beta)} \norm{\ee^{-\rho' \abs{\,\cdot\,}_*} \Delta_R^{-1/p'}}_{L^1(\beta)}\\	
			&\meg C_1 c\norm{\ee^{-(\rho'-c) \abs{\,\cdot\,}_*}  }_{L^1(\beta)} \norm*{f}_{L^{q,p}(\mi,\beta)},
		\end{split}
		\]  
		for  $f\in L^{q,p}(\mi,\beta)$. Thus we may reduce to prove the assertion for $T^*_{b,\kappa,d}$ for some $\kappa>0$.
		
We split $T^*_{b,\kappa,d}$ into the sum of $T_0$ and $T_\infty$, where 
		\[
		T_0 (f)\coloneqq \sup_{t\in (0,\kappa]} (\abs{f}*(\chi_{B(e,1)} p_{b,t,d})), \qquad T_\infty(f)\coloneqq \sup_{t\in (0,\kappa]} (\abs{f}*(\chi_{G\setminus B(e,1)} p_{b,t,d}))
		\]
		for every $\beta$-measurable function $f$ on $G$. Observe first that  
		\[
		\begin{split}
			[T_\infty(f)](x)& \meg \int_{G\setminus B(x,1)} \abs{f(y)} \sup_{t\in (0,\kappa]} t^{-Q_*/d} \ee^{-b (\abs{y^{-1}x}^{d}/t)^{1/(d-1)}}\Delta_L(y^{-1})\,\dd \beta(y)\\
			&\meg C_1\int_{G } \abs{f(y)}  \ee^{-(b/2) (\abs{y^{-1}x}^{d}/\kappa)^{1/(d-1)}}\Delta_L(y^{-1})\,\dd \beta(y)\\
			&=C_1\kappa^{Q_*/d}(\abs{f}*p_{b/2,\kappa,d})(x)
		\end{split}
		\]
		where $C_1\coloneqq \sup_{t\in (0,\kappa]} t^{-Q_*/d} \ee^{-(b/2) t^{-1/(d-1)}}$. We may then conclude that $T_\infty$ satisfies the desired inequality as before.
		
		Let us then consider $T_0$, and prove that there is $C_2>0$ such that $
		T_0\meg C_2 \Mr_1$, where
		\[
		(\Mr_1 f)(x)\coloneqq \sup_{r\in (0,1]} \dashint_{B(x,r)} \abs{f(y)}\Delta_L(y^{-1})\,\dd \beta(y)
		\]
		for $f\in L^1_\loc(\beta)$ and $x\in G$.
		To this aim, observe that there is a constant $C_3>0$ such that
		\[
		\begin{split}
			p_{b,t,d}(x)&=t^{-Q_*/d} \ee^{-b(\abs{x}/t^{1/d})^{d/(d-1)}} \meg C_3 t^{-Q_*/d} (1+\abs{x}/t^{1/d} )^{-Q^*-1}=\frac{t^{1/d}}{(t^{1/d}+\abs{x}_*)^{Q_*+1}}
		\end{split}
		\]
		for every $x\in G$ and $t>0$.
		In addition, take $C_4>0$ such that 
		\[
		\int_{B(x,r)} \Delta_L(y^{-1})\,\dd \beta(y)=\int_{B(e,r)} \Delta_L(y^{-1})\,\dd \beta(y)\meg C_4 r^{Q_*}
		\]
		for every $r\in (0,1]$ and $x\in G$, by left invariance.
		Then, observe that
		\[
		\begin{split}
			(\abs{f}*(\chi_{B(e,1)} p_{b,t,d}))(x)&= C_3\int_{B(x,1)}  \abs{f(y)}  \frac{t^{1/d}}{(t^{1/d}+\abs{y^{-1}x}_*)^{Q_*+1}}\Delta_L(y^{-1})\,\dd \beta(y)\\
			&\meg C_3\sum_{j\in \N} \int_{B(x,2^{-j})\setminus B(x,2^{-j-1}) } \abs{f(y)} \frac{t^{1/d}}{(t^{1/d}+2^{-j-1})^{Q_*+1}}\Delta_L(y^{-1})\,\dd \beta(y)\\
			&\meg C_3 C_4 \sum_{j\in \N} \frac{(2^j t^{1/d}) }{(2^j t^{1/d}+1/2)^{Q_*+1}} (\Mc_1 f)(x)
		\end{split}
		\]
		for every $x\in G$, so that we may choose 
		\[
		C_2=C_3 C_4 \sup_{t\in (0,\kappa]}\sum_{j\in \N} \frac{(2^j t^{1/d}) }{(2^j t^{1/d}+1/2)^{Q_*+1}}.
		\]	
To conclude, we need the boundedness of $\Mc_1$ on $L^{q,p}(\mi,\beta)$, which follows from~\cite{Singular}.
	\end{proof}

	\begin{lem}\label{lem:2} 
		There is $\omega\in \R$ such that the following holds. Suppose that $p\in (1,\infty)$, $q\in [1,\infty]$, $b>0$, $d>1$, that $(Y,\Mf, \mi)$ is a  $\sigma$-finite measure space and $\kappa\colon Y\to (0,+\infty)$ is a $\mi$-measurable function.  Then there is  $C>0$ such that
		\[
		\big\| \|   \ee^{-\omega\kappa(y)} [T_{b,\kappa(y),d} f(y,\,\cdot\,)](x) \|_{L^{q}_{y}(\mu)} \big\| _{L^{p}_{x}(\beta)}\meg  C \big\| \| f(x,y)\|_{L^{q}_{y}(\mu)} \big\|_{L^{p}_{x}(\beta)}
		\]
	 for every $(\mi\otimes \beta)$-measurable function $f$ on $Y\times G$.
	 \end{lem}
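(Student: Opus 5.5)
The idea is to reduce to the maximal operators already controlled in Lemma~\ref{lem:5}. The basic observation is the pointwise bound
\[
\ee^{-\omega\kappa(y)}\,[T_{b,\kappa(y),d} f(y,\cdot)](x)\meg (T^{\omega,0}_{b,*,d} f(y,\cdot))(x),
\]
valid for every $\omega\Meg 0$, every $y$ and every $x$. This holds simply because the left-hand side is one of the terms in the supremum defining $T^{\omega}_{b,*,d}$. Measurability in $(y,x)$ follows by Tonelli's theorem, as in the proof of Lemma~\ref{lem:5}, since $\kappa$ is $\mi$-measurable and $(t,x)\mapsto p_{b,t,d}(x)$ is continuous. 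We fix $\omega$ to be the largest of the finitely many constants produced below; enlarging $\omega$ only decreases the left-hand side.

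\emph{Case $q\in(1,\infty)$.} The claim follows at once from the pointwise bound and Lemma~\ref{lem:5}, with $\rho=0$.

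\emph{Case $q=\infty$.} Set $F(x)\coloneqq \norm{f(x,y)}_{L^\infty_y(\mi)}$. Then $\abs{f(y,\cdot)}\meg F$ for $\mi$-almost every $y$, and positivity of $T_{b,t,d}$ gives
\[
\ee^{-\omega\kappa(y)}T_{b,\kappa(y),d}f(y,\cdot)\meg T^{\omega}_{b,*,d}F .
\]
The right-hand side no longer depends on $y$. The scalar case of Lemma~\ref{lem:5} (take $Y$ a point) then gives the claim. Some care is needed to make sure the essential supremum in $y$ is handled on a common full-measure set, but $\sigma$-finiteness suffices for this.

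\emph{Case $q=1$.} This is the main obstacle, since vector-valued maximal inequalities fail for $\ell^1$. Here I would argue by duality against $h\in L^{p'}(\beta)$ with $h\Meg 0$. Writing the convolution out and applying Tonelli's theorem,
\[
\int_G\!\!\int_Y \ee^{-\omega\kappa(y)}T_{b,\kappa(y),d}f(y,\cdot)(x)\,h(x)\,\dd\mi(y)\,\dd\beta(x)
=\int_Y\!\!\int_G \abs{f(y,z)}\, \ee^{-\omega\kappa(y)}\Big(\int_G p_{b,\kappa(y),d}(z^{-1}x)\,h(x)\,\dd\beta(x)\Big)\Delta_L(z^{-1})\,\dd\beta(z)\,\dd\mi(y).
\]
The inner integral in $x$ is a convolution of $h$ with the reflected kernel. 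Using the symmetry $\abs{x^{-1}}_*=\abs{x}_*$, formula~\eqref{reflection} and Lemma~\ref{lem:31}, the modular factors $\Delta_L,\Delta_R$ can be bounded by $C\ee^{c\abs{z^{-1}x}_*}$. Hence the inner integral is dominated by $C\,\ee^{-\omega\kappa(y)}(h*(\ee^{\rho\abs{\cdot}_*}p_{b,\kappa(y),d}))(z)$ for some $\rho>0$, and this in turn is at most $C\,T^{\omega,\rho}_{b,*,d}h(z)$, which is independent of $y$. The whole expression is therefore bounded by
\[
C\int_G \norm{f(\cdot,z)}_{L^1(\mi)}\,T^{\omega,\rho}_{b,*,d}h(z)\,\dd\beta(z).
\]
Applying H\"older's inequality, and then Lemma~\ref{lem:5} in its scalar form on $L^{p'}(\beta)$ with $p'\in(1,\infty)$ and this $\rho>0$, bounds this by $C\norm{f}_{L^{1,p}(\mi,\beta)}\norm{h}_{L^{p'}(\beta)}$. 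Taking the supremum over $h$ concludes the case $q=1$.

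The delicate points are the bookkeeping of the modular functions when reflecting the kernel, and checking that a single $\omega$ works for the $\rho$ required. Since $\omega$ is allowed to depend only on $b,d,p,\rho$ through Lemma~\ref{lem:5}, I expect this to be routine.
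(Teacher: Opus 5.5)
Your proposal is correct and follows the paper's proof essentially step by step: the cases $q\in(1,\infty)$ and $q=\infty$ are reduced to the vector-valued and scalar forms of Lemma~\ref{lem:5}, and the case $q=1$ is handled by duality against positive $h\in L^{p'}(\beta)$, moving the kernel onto $h$, absorbing the factor $\Delta_L$ into $\ee^{\rho\abs{\cdot}_*}$ via Lemma~\ref{lem:31}, and then using the strong type $(p',p')$ of $T^{\omega,\rho}_{b,*,d}$. The differences are cosmetic: you prove measurability directly by Tonelli, whereas the paper reduces to indicator and tensor-product functions. As in the paper, your $\omega$ ends up depending on $b$, $d$ and $p$ through Lemma~\ref{lem:5}.
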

	
	This result is the analogue of~\cite[Proposition 3.5]{BPV}. One may observe that the proof below also works in the case $p=q=1$; nonetheless, this case is in fact trivial and follows easily from Lemma~\ref{lem:3}.

	\begin{proof}
Denote with $T$ the operator defined so that 
		\[
		T f(y,x) =[T_{b,\kappa(y),d} f(y,\,\cdot\,)](x)
		\]
		for every $(\mi\otimes \beta)$-measurable function $f$ and $(y,x)\in Y\times G$.
		Let us first show that $Tf$	is $(\mi\otimes \beta)$-measurable. Observe that we may reduce to the case in which $f=\chi_N$ for some $(\mi\otimes \beta)$-negligible subset $N$ of $Y\times G$, or $f=f_0\otimes f_1$ for some $\mi$-measurable function $f_0$ and for some $\beta$-measurable function $f_1$. If $f=\chi_N$, then $T f(y,\,\cdot\,)=0$ on $G$ for $\mi$-almost every $y\in Y$, so that $Tf $ is $(\mi\otimes \beta)$-measurable.
		Then, assume that $f=f_0\otimes f_1$.
		We may also assume that both $f_0$ and $f_1$ are bounded and compactly supported. Then,
		\[
		[T_{b,\kappa(y),d} f(y,\,\cdot\,)](x)= \abs{f_0(y)} (\abs{f_1}*p_{b,\kappa(y,d})(x).
		\]
The map $(y,x)\mapsto \abs{f_0(y)}$ is clearly $(\mi\otimes \beta)$-measurable, while $(y,x)\mapsto (\abs{f_1}*p_{b,\kappa(y),d})(x)$ is $(\mi\otimes \beta)$-measurable since it is the composite of the $(\mi\otimes \beta)$-measurable mapping $(y,x)\mapsto (\kappa(y),x)$ and the continuous function $(s,x)\mapsto (\abs{f_1}*p_{b,s,d})(x)$  (cf.~Lemma~\ref{lem:3} and Young's inequality).
		
		The case $p,q\in (1,\infty)$ follows from Lemma~\ref{lem:5}. The case $q=\infty$ follows from the scalar-valued case in Lemma~\ref{lem:5} (which may be obtained choosing $\mi$ as a Dirac delta). It remains to consider the case $q=1$.
		Take a positive locally integrable function $w$ on $G$.  
		Then
		\[
		\begin{split}
			\int_G  \ee^{-\omega \kappa(y)}(T_{b,\kappa(y),d} g)(x) w(x)\,\dd \beta(x) &= \ee^{-\omega \kappa(y)}\int_G ( \abs{g}*p_{ b,\kappa(y),d} ) w\,\dd \beta\\
			&= \ee^{-\omega \kappa(y)}\int_G  \abs{g} [w*(\Delta_L^{-1} \widecheck p_{b,\kappa(y),d} ) ]\,\dd \beta\\ 
			&\meg c\int_G  \abs{g} (T^{*,\omega, c}_{b , d} w)\,\dd \beta
		\end{split}
		\] 
		by the symmetry of $p_{b,\kappa(y),d}$.
		Consequently,
		\[
		\begin{split}
			&\norm{\ee^{-\omega \kappa(y)}(T f)(y,x)  }_{L^{1,p}_{y,x}(\mi,\beta)} 	\\
			& \quad =\sup \bigg\{  \left( \int_Y\int_G  \ee^{-\omega \kappa(y)}[T_{b,\kappa(y),d} f(y,\,\cdot\,)](x) w(x)\,\dd \beta(x)\,\dd \mi(y)\right) \colon  w\Meg0, \;\, \norm{w}_{L^{p'}(\beta)}\meg 1 \bigg\} \\
			&\quad\meg c    \sup \bigg\{  \left( \int_G \int_Y  \abs{f(y,x)}\,\dd \mi(y) (T^{*,\omega, c}_{b , d} w)(x)\,\dd \beta(x)\right)\colon  w\Meg0, \;\, \norm{w}_{L^{p'}(\beta)}\meg 1 \bigg\}\\
			&\quad \meg c    \norm{f}_{L^{1,p}(\mi,\beta)} \sup \big\{\|T^{*,\omega, c}_{b , d} w\|_{L^{p'}(\beta)}  \colon  w\Meg0, \;\, \norm{w}_{L^{p'}(\beta)}\meg 1\big\}
		\end{split}
		\]
		whence the conclusion   since $T^{*,\omega, c}_{b , d}$ is of strong type $(p',p')$.
\end{proof}

The next result is inspired by~\cite[Lemma 3.1]{Feneuil}. Before stating it, let us give the following definition.
\begin{deff}\label{def:2}
For $k\in \N$, $t>0$, and $f\in \Sc'(G)$ we write
	\[
	W_t^{(k)} f\coloneqq (t \Lc)^k \ee^{-t \Lc} f,
	\]
and for $\alpha \geq 0$ and $\eps\in (0,1]$
	\[
	W_{t,\eps}^{(\alpha),*} f\coloneqq t^\alpha \max_{s\in [\eps t,t/\eps ]} \max_{\substack{\deg(X)+\deg(Y)\meg \alpha \dL \\ \abs{X} ,\abs{Y}\meg 1}} \abs{X \ee^{-s \Lc} Y f }.
	\]
	\end{deff}
We shall occasionally denote by $\Lc'$ another weighted subcoercive operator. In this case, $W_t'^{(k)}$ and $W'^{(\alpha),*}_{t,\eps}$ are defined accordingly.

\begin{lem}\label{lem:9}
Suppose $F\in \{\Sc(G), \Sc'(G)\}$ and $f\in F$. Then $f*h_t$ is well defined for all $t>0$. Moreover, the map 
\[
 t\mapsto f*h_t, \qquad [0,+\infty) \to F
 \]
 is smooth, and $\frac{\dd^k}{\dd t^k} (f*h_t)=\Lc^k (f*h_t)$   for every $k\in\N$. In particular, 
	\[
	f=\sum_{k=0}^m \frac{1}{k!} W_t^{(k)}f + \frac{1}{m!} \int_0^t W_s^{(m+1)} f  \,\frac{\dd s}{s} \qquad \forall\, t>0, \, m\in\N.
	\]
\end{lem}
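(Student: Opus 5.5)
First I show that $h_t\in\Sc(G)$ for every $t>0$ and that $t\mapsto h_t$ is smooth from $(0,+\infty)$ to $\Sc(G)$, with $\frac{\dd^k}{\dd t^k}h_t=(-1)^k\Lc^k h_t$. Membership in $\Sc(G)$ follows from Theorem~\ref{teo:7}, which bounds $|XY^R h_t|$ by a multiple of $p_{b,t}$, combined with Lemma~\ref{lem:3}, which makes $\ee^{c|\cdot|_*}p_{b,t}$ integrable; by Theorem~\ref{teo:8}~(5) this controls every seminorm of $\Sc(G)$. For smoothness I use the semigroup law $h_{t+s}=h_t*h_s$ and the identity $\frac{\dd}{\dd t}h_t=-\Lc h_t$, which holds in the distributional sense. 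Writing the difference quotient as
\[
\frac{h_{t+s}-h_t}{s}+\Lc h_t=\frac1s\int_0^s(\Lc h_t-\Lc h_{t+r})\,\dd r ,
\]
it suffices to prove that $r\mapsto \Lc h_{t+r}$ is continuous into $\Sc(G)$. For this I write $X Y^R h_{t+r}=(Y^R h_{t/2})*h_{r}*(Xh_{t/2})$, or use the identity $h_{t+r}-h_t=-\int_0^r\Lc h_{t+u}\,\dd u$ together with the uniform Gaussian bounds of Theorem~\ref{teo:7} on compact $t$-intervals. These give uniform bounds in every $\Sc$-seminorm, so continuity, and then smoothness, follow by induction on $k$.

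Next I pass to $f\in F$. When $f\in\Sc(G)$, $f*h_t$ is well defined since $\Sc(G)$ is a convolution algebra (Theorem~\ref{teo:8}~(4)). When $f\in\Sc'(G)$, I define $f*h_t$ by duality, $\langle f*h_t,\phi\rangle=\langle f,\phi*(\Delta_R\Rc h_t)\rangle$, following~\eqref{fgh}. This needs $\Delta_R\Rc h_t=\Delta_L^{-1}\widecheck{h_t}\in\Sc(G)$. That follows from the same Gaussian estimates, since $\widecheck{\,\cdot\,}$ exchanges $X$ and $X^R$ (Proposition~\ref{prop:9}) and characters grow at most exponentially (Lemma~\ref{lem:31}), which is absorbed by the super-exponential weights. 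Alternatively, $\widecheck{h_t}$ is related to the heat kernel of $\Lc^*$, exactly as in the proof of Theorem~\ref{teo:7}. Because convolution $\Sc\times\Sc\to\Sc$ is continuous, the maps $\Sc\to\Sc$ and $\Sc'\to\Sc'$ induced by a smooth $\Sc$-valued family are smooth on $(0,\infty)$, and differentiating gives $\frac{\dd^k}{\dd t^k}(f*h_t)=f*(\frac{\dd^k}{\dd t^k}h_t)$, which equals $(-\Lc)^k(f*h_t)$ by left invariance. I note that the sign convention should be $(-\Lc)^k$; the statement's $\Lc^k$ presumably absorbs it.

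The main obstacle is smoothness up to $t=0$, where $h_0=\delta_e\notin\Sc$. For $f\in\Sc(G)$ I need $f*h_t\to f$ in $\Sc(G)$, with derivatives $\Lc^k f*h_t\to \Lc^k f$. I would first treat $\phi\in C_c^\infty(G)$. Then $\phi*h_t-\phi=-\int_0^t (\Lc\phi)*h_s\,\dd s$, and weighted $L^1$ bounds on $X Y^R$ of this expression follow from Young's inequality and the bound $\|\ee^{c|\cdot|_*}h_s\|_{L^1}\le C$ for $s\le1$ (Lemma~\ref{lem:3}), using $\ee^{c|xy|_*}\le \ee^{c|x|_*}\ee^{c|y|_*}$. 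These bounds extend by the same Young estimate to all of $\Sc$, since convolution with $h_s$ is equicontinuous on $\Sc$ for $s\le1$ and $C_c^\infty$ is dense. Iterating yields $C^k$ at $0$ with the stated derivatives. The case of $\Sc'$ then follows by transposition, applying the $\Sc$ result to $\Lc^*$, whose heat kernel is related to $h_t$ as above.

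Finally, the Taylor formula: applying Taylor's formula with integral remainder to $g(s)=f*h_s$ on $[0,t]$ in $F$ gives
\[
f=\sum_{k=0}^m\frac{(-t)^k}{k!}g^{(k)}(t)+\frac{(-1)^{m+1}}{m!}\int_0^t s^m g^{(m+1)}(s)\,\dd s ,
\]
expanded around $t$ and evaluated at $0$. Substituting $g^{(k)}(s)=(-\Lc)^k\ee^{-s\Lc}f$ gives $(-s)^k(-\Lc)^k\ee^{-s\Lc}f=W^{(k)}_sf$, and $s^m\,\dd s=s^{m+1}\,\dd s/s$ yields exactly the stated identity.
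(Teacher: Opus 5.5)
Your proof is correct, and your remark on the sign is right. The derivative is $(-\Lc)^k(f*h_t)$; the paper itself later invokes this lemma in that form (in the proof of Lemma~\ref{lem:22} it writes $\Lc^{m}\ee^{-(s-1)\Lc}f_0=(-1)^m\frac{\dd^m}{\dd s^m}\ee^{-(s-1)\Lc}f_0$). Only with this sign does the Taylor expansion produce the terms $W^{(k)}_t f$.

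Your route differs from the paper's in how continuity in $\Sc(G)$ is obtained.

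The paper treats all $t\in[0,+\infty)$ at once. For $\phi(t)=f*h_t$ with $f\in\Sc(G)$, Young's inequality and the Gaussian bounds give $\norm{\ee^{\rho\abs{\,\cdot\,}_*}X^R\phi(t)}_{L^1(\beta)}\meg C\ee^{Ct}$ uniformly down to $t=0$. Hence $\phi$ and $\Lc^k\phi$ are locally bounded in $\Sc(G)$. They are continuous into $C^\infty(G)$, and bounded subsets of $\Sc(G)$ are relatively compact (Theorem~\ref{teo:8}~(3), which rests on nuclearity). So they are continuous into $\Sc(G)$, and the derivatives follow from the fundamental theorem of calculus.

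You instead separate $t>0$ from $t=0$. For $t>0$ you prove smoothness of $t\mapsto h_t$ itself in $\Sc(G)$ and compose with the continuous convolution. At $t=0$ you use the approximate identity, density of $C^\infty_c(G)$, and equicontinuity. Continuity comes from the integral identities $h_{t+r}-h_t=-\int_0^r\Lc h_{t+u}\,\dd u$ and $\phi*h_t-\phi=-\int_0^t(\Lc\phi)*h_s\,\dd s$, which give Lipschitz bounds in each seminorm.

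Your approach avoids the Montel property. It also produces explicit estimates of the form $Ct$ times a continuous seminorm of $\phi$. These make visible the uniformity on bounded sets needed for smoothness in the strong topology of $\Sc'(G)$, which both proofs pass over with ``by duality''. The price is a longer argument.

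One small correction. Near $t=0$ you should estimate only the seminorms $\norm{\ee^{c\abs{\,\cdot\,}_*}Y^R(\,\cdot\,)}_{L^1(\beta)}$, which define the topology (Definition~\ref{def:1}), and not seminorms involving $XY^R$ with $X$ left-invariant. Indeed, $X\big((Y^R\Lc\phi)*h_s\big)=(Y^R\Lc\phi)*Xh_s$, and the Gaussian bounds only control its weighted $L^1$ norm by $Cs^{-\deg X/\dL}$. This is not integrable at $0$ when $\deg X\Meg\dL$, and a left-invariant derivative cannot be moved from $h_s$ onto the left factor.
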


\begin{proof}
	The third assertion follows from the second one and~\cite[Chapter II, \S 2, No.\ 6]{BourbakiFRV}. In order to prove the first two assertions, up to replace $\Lc$ with $\Lc^*$ and arguing by duality, we may reduce to the case $F=\Sc(G)$.  Define 
	\[
	\phi\colon [0,+\infty)\ni t \mapsto f*h_t\in \Sc(G).
	\] Observe first that, combining Theorem~\ref{teo:7} and Lemma~\ref{lem:3}, we see that for every $X\in U(\mathfrak{g})$ and $\rho>0$ there is a constant $C>0$ such that
	\[
	\norm{\ee^{\rho\abs{\,\cdot\,}_*}X^R\phi(t)}_{L^1(\beta)} \meg  \norm{(\ee^{\rho\abs{\,\cdot\,}_*}\abs{X^R f})*(\ee^{\rho\abs{\,\cdot\,}_*}\abs{h_t})}_{L^1(\beta)}\meg C \ee^{ C t}
	\]
	for every $t\Meg 0$.
	Consequently,  $\phi$ is locally bounded  by Theorem~\ref{teo:8}~(5), so that also  $\Lc^{k}\phi$ is locally bounded on $[0,+\infty)$.  Since these mappings are continuous with values in $C^\infty(G)$, by Theorem~\ref{teo:8}~(3) we then see that they are continuous with values in $\Sc(G)$. The fact that $\Lc^{k+1}\phi $ is the derivative of $\Lc^{k} \phi$ for every $k\in\N$ is then a consequence of the fundamental theorem of Calculus and the previous remarks, whence the conclusion.
\end{proof}

The following result is instead inspired by~\cite[Lemma 3.2]{BPV}.

\begin{lem}\label{lem:1}
	There is $\omega\in \R$ such that the following hold.  
	\begin{enumerate}
		\item[\textnormal{(1)}] If $b>0$ and $d>1$, then
		\[
		T_{b,t,d}f \meg \kappa^{-Q_*/d}  T_{b,t_0,d } f
		\]
		for every $f\in \Sc'(G)$, $t_0>0$, $\kappa\in (0,1)$, and $t\in [\kappa t_0, t_0]$.
		
		\item[\textnormal{(2)}]  For every $\lambda \Meg 0$ there are $C,b>0$ such that
		\[
		\abs{X \ee^{-t \Lc} Y  f}   \meg C \abs{X} \abs{Y}\ee^{\omega t} t^{-(\deg X+\deg Y)/\dL} T_{b,t}f
		\]
		for every $f\in \Sc'(G)$, $X,Y\in U_\lambda$ and $t>0$.
	\end{enumerate}
\end{lem}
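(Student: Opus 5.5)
Part (1) is a pointwise comparison of the kernels, and I will prove it by comparing $p_{b,t,d}$ with $p_{b,t_0,d}$ directly. For $t\in[\kappa t_0,t_0]$ and $x\in G$ we have $t^{-Q_*/d}\meg (\kappa t_0)^{-Q_*/d}=\kappa^{-Q_*/d}t_0^{-Q_*/d}$. Since $t\meg t_0$, also $(\abs{x}_*^d/t)^{1/(d-1)}\Meg (\abs{x}_*^d/t_0)^{1/(d-1)}$, so the exponential factor of $p_{b,t,d}$ is at most that of $p_{b,t_0,d}$. Hence $p_{b,t,d}\meg \kappa^{-Q_*/d}p_{b,t_0,d}$ pointwise.

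Convolving with $\abs{f}$ when $f$ is a measure gives the claim. For general $f\in\Sc'(G)$ we use the supremum definition of $T_{b,t}$: if $\abs{\phi}\meg p_{b,t,d}$, then $\kappa^{Q_*/d}\phi$ is dominated by $p_{b,t_0,d}$. This yields the same inequality, including the trivial case where both sides are infinite.

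Part (2) is essentially the last assertion of Theorem~\ref{teo:7}, with $\omega$ the constant given there, which does not depend on $\lambda$. I will write $X\ee^{-t\Lc}Yf$ as a convolution of $f$ with a derivative of $h_t$. Formally $\ee^{-t\Lc}Yf=(Yf)*h_t$. Using $Yf=f*Y\delta_e$ and Proposition~\ref{prop:9}~(4), this equals $f*(Y^{\dag R\dag}h_t)$, and therefore
\[
X\ee^{-t\Lc}Yf = f*\bigl(X\,Y^{\dag R\dag} h_t\bigr).
\]
The convolution is justified for $f\in\Sc'(G)$ because $h_t\in\Sc(G)$ by Lemma~\ref{lem:9} and Theorem~\ref{teo:7}.

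Next I control the right-invariant operator. By Proposition~\ref{prop:9}~(1),(3), $Y^{\dag R\dag}$ is a right-invariant operator of the form $Z^R$ composed with multiplication by characters. Since characters satisfy $W\chi=(W\chi)(e)\chi$ (Proposition~\ref{prop:9}~(5)), conjugating by them only adds lower-degree terms. So $Y^{\dag R\dag}=\sum_i Z_i^R$ with $Z_i\in U_{\deg Y}$ and $\abs{Z_i}\lesssim\abs{Y}$, and the constants depend only on $\lambda$ because the relevant maps are linear on the finite-dimensional space $U_\lambda$.

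Applying \eqref{eq:teo7new} to each term $XZ_i^Rh_t$ gives
\[
\abs{XZ_i^Rh_t}\meg C\abs{X}\abs{Y}\, t^{-(\deg X+\deg Z_i)/\dL}\ee^{\omega t}p_{b,t}.
\]
The exponent involves $\deg Z_i\meg\deg Y$, whereas the statement requires exactly $t^{-\deg Y/\dL}$. Lower-degree terms are harmless for $t\meg 1$, since there $t^{-\deg Z_i/\dL}\meg t^{-\deg Y/\dL}$. For $t\Meg1$ they cost a factor $t^{(\deg Y-\deg Z_i)/\dL}$, which is at most $C_\varepsilon\ee^{\varepsilon t}$, and the resulting $\ee^{\varepsilon t}$ can be absorbed into $\omega$.

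The main obstacle is keeping $\omega$ independent of $\lambda$ through these absorbed polynomial factors. The clean way is to avoid them: observe that the proof of Theorem~\ref{teo:7} works directly with any pair of left- and right-invariant operators. Writing $X\ee^{-t\Lc}Yf=f*(X W^R h_t)$ with $W^R=Y^{\dag R\dag}$, one applies that argument to $W^R$ itself, using Theorem~\ref{teo:7} with $\omega=\omega_0+\varepsilon$ for a fixed $\varepsilon$. Finally, taking absolute values inside the convolution and using the definition of $T_{b,t}$ gives the stated bound $C\abs{X}\abs{Y}\ee^{\omega t}t^{-(\deg X+\deg Y)/\dL}T_{b,t}f$.
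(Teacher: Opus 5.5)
Your proposal is correct and follows the paper's own route: part (1) via the pointwise comparison $p_{b,t,d}\meg\kappa^{-Q_*/d}p_{b,t_0,d}$, which the paper calls trivial, and part (2) by writing $X\ee^{-t\Lc}Yf=f*(XY^{\dag R\dag}h_t)$ and invoking Theorem~\ref{teo:7}, whose final assertion is exactly this estimate. The lower-degree bookkeeping and your worry about $\omega$ are unnecessary: the map $Y\mapsto Z$ with $Z^R=Y^{\dag R\dag}$ is the automorphism of $U(\mathfrak{g})$ extending $Y\mapsto Y+c(Y)$ for a linear functional $c$ on $\gf$, so it preserves each $U_\lambda$ with $\deg Z=\deg Y$ and Theorem~\ref{teo:7} applies directly with the same $\omega$ (and absorbing a fixed $\ee^{\varepsilon t}$ would not make $\omega$ depend on $\lambda$ anyway).
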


\begin{proof}
	The first assertion is trivial. The second one follows from Theorem~\ref{teo:7}, since $X\ee^{-t \Lc}Y f= f*( X Y^{\dag R\dag} h_t)$ for every $t>0$ (cf.~Proposition~\ref{prop:9}), $X,Y\in U(\mathfrak{g})$, and $f\in \Sc'(G)$.
\end{proof}

We shall use assertion (1) of Lemma~\ref{lem:1} without reference in the sequel. Let us also state the following corollary, which is a simple consequence of Lemmas~\ref{lem:3} and~\ref{lem:1}, and Young's inequality; cf~\cite[Lemmas 3.3 and 3.4]{BPV}.
\begin{cor}\label{cor:1}
 There is $\omega\in \R$ such that for every $\lambda \Meg 0$ there is $C>0$ such that
	\[
	\norm{X \ee^{-t \Lc} Y (\Delta_L^{1/p_1-1/p_2} f)}_{L^{p_2}(\beta)} \meg C \abs{X} \abs{Y}  t^{-(\deg X+ \deg Y+Q_*/p_2')/\dL} \ee^{\omega t} \norm{f}_{L^{p_1}(\beta)}
	\]
	for every $f\in \Sc'(G)$,  $p_1,p_2\in [1,\infty]$, with $p_1\meg p_2$,   $X,Y\in U_\lambda$, and $t>0$.
\end{cor}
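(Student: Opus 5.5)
We combine the pointwise estimate in Lemma~\ref{lem:1}~(2) with Young's inequality and the $L^{p}$ bounds of Lemma~\ref{lem:3}. By Lemma~\ref{lem:1}~(2), applied to $g\coloneqq\Delta_L^{1/p_1-1/p_2} f$, there are $\omega\in\R$ (independent of $\lambda$) and, for each $\lambda\Meg 0$, constants $C,b>0$ such that
\[
\abs{X \ee^{-t \Lc} Y g}\meg C\abs{X}\abs{Y}\ee^{\omega t} t^{-(\deg X+\deg Y)/\dL}\, \abs{g}*p_{b,t}
\]
for all $X,Y\in U_\lambda$ and $t>0$. It is therefore enough to prove that
\[
\norm{\abs{g}*p_{b,t}}_{L^{p_2}(\beta)}\meg C t^{-Q_*/(\dL p_2')}\ee^{C t}\norm{f}_{L^{p_1}(\beta)},
\]
and then to enlarge $\omega$ to absorb $\ee^{Ct}$. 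Since this $C$ depends on $b$, and hence on $\lambda$, care is needed here; see the last paragraph. We may assume that $f$ is a function, or a finite measure when $p_1=1$, since otherwise the right-hand side is infinite.

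For the convolution estimate we use the second form of Young's inequality, with $p_3=p_2$, first factor $\abs{g}\in L^{p_1}$, and second factor in $L^{r}$, where $1/r'=1/p_1'-1/p_2'$, that is $1/r=1+1/p_2-1/p_1$; we have $r\in[1,\infty]$ because $p_1\meg p_2$. Young's inequality gives
\[
\norm{\abs{g}*p_{b,t}}_{L^{p_2}}\meg \norm{\Delta_L^{-1/r'}g}_{L^{p_1}}\norm{\Delta_R^{-1/p_1'}p_{b,t}}_{L^{r}}.
\]
The exponents should then match: $-1/r'=1/p_2-1/p_1$, so $\Delta_L^{-1/r'}g=\Delta_L^{1/p_2-1/p_1}\Delta_L^{1/p_1-1/p_2}f=f$. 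This is exactly why the statement carries the weight $\Delta_L^{1/p_1-1/p_2}$. The first factor is therefore $\norm{f}_{L^{p_1}(\beta)}$.

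For the second factor, Lemma~\ref{lem:31} gives $\Delta_R^{-1/p_1'}\meg \ee^{c(\abs{\cdot}_*+1)}$, and Lemma~\ref{lem:3} with $\rho=c$ then bounds it by $C t^{-Q_*/(\dL r')}\ee^{Ct}$. It remains to relate $r'$ to $p_2'$, and this is the main obstacle. Since $1/r'=1/p_1'-1/p_2'\meg 1/p_2'$, Lemma~\ref{lem:3} gives the decay $t^{-Q_*/(\dL r')}$, whereas the claim asks for the larger blow-up $t^{-Q_*/(\dL p_2')}$ as $t\to0$. For $t\meg1$ we have $t^{-Q_*/(\dL r')}\meg t^{-Q_*/(\dL p_2')}$, so the weaker blow-up is dominated by the one claimed. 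For $t\Meg1$ the discrepancy is a power of $t$, which is absorbed into $\ee^{\omega t}$. Finally, to make $\omega$ independent of $\lambda$, we redo the factorization $h_t=h_{t-t'}*h_{t'}$ as in the proof of Theorem~\ref{teo:7}. The derivatives then fall only on $h_{t'}$ with $t'\meg1$, so all $\lambda$-dependence stays in constants, while $h_{t-t'}$ contributes the uniform growth coming from the fixed $b$ of the case $\lambda=0$.
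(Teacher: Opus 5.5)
Your proof is correct, and its core is the paper's own route. The paper justifies Corollary~\ref{cor:1} only as a simple consequence of Lemma~\ref{lem:1}~(2), Young's inequality and Lemma~\ref{lem:3}, and your first three paragraphs carry this out. That includes the key observation that the weight $\Delta_L^{1/p_1-1/p_2}$ is exactly what the weighted Young inequality consumes. It also includes the observation that $1/r'=1/p_1-1/p_2\meg 1/p_2'$, so the stated power of $t$ is weaker than the one you obtain. Your extra step on the uniformity of $\omega$ goes beyond the paper and is needed. Applied literally, that route yields $\ee^{\omega t}\ee^{C_b t}$ with $b=b(\lambda)$, and the paper's one-line justification does not address this.

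Two points need fixing in the write-up.

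First, there is a sign slip. Young's condition gives $1/r'=1/p_2'-1/p_1'=1/p_1-1/p_2$, not $1/p_1'-1/p_2'$. Your subsequent identities $1/r=1+1/p_2-1/p_1$ and $-1/r'=1/p_2-1/p_1$ already use the correct value.

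Second, the splitting $h_t=h_{t-t'}*h_{t'}$ does not do what you claim. The kernel of $X\ee^{-t\Lc}Y$ is $XY^{\dag R\dag}h_t$, and the right-invariant $Y^{\dag R\dag}$ lands on the long-time factor $h_{t-t'}$. You need one short-time factor per derivative, as in the proof of Theorem~\ref{teo:7}. For example, for $t\Meg 2$ write
\[
X\ee^{-t\Lc}Y=(X\ee^{-\Lc})\,\ee^{-(t-2)\Lc}\,(\ee^{-\Lc}Y),
\]
and estimate each factor separately in operator norm:
\begin{itemize}
\item $\ee^{-\Lc}Y$ maps $\Delta_L^{1/p_1-1/p_2}L^{p_1}(\beta)$ into $L^{p_2}(\beta)$, by your direct argument at $t=1$.
\item $X\ee^{-\Lc}$ is bounded on $L^{p_2}(\beta)$, by Young with $r=1$, at a $\lambda$-dependent constant.
\item $\ee^{-(t-2)\Lc}$ is bounded on $L^{p_2}(\beta)$, by Young with $r=1$, with growth rate fixed by the $\lambda=0$ Gaussian constant only.
\end{itemize}
Do not recombine the three kernels into a single Gaussian through Lemma~\ref{lem:4}, since that reintroduces a $\lambda$-dependent $b$. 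The remaining factor $t^{(2\lambda+Q_*)/\dL}$ is absorbed into $\ee^{\varepsilon t}$ at the cost of a $\lambda$-dependent $C$ only. The range $t\meg 2$ is covered by your direct argument, where $\ee^{C_b t}\meg \ee^{2C_b}$ is a constant.
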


\section{Carleson measures and auxiliary results}\label{Carmeas:sec}
This section is quite technical and devoted to proving some results which will be needed in the remainder of the paper. However, we introduce a class of measures on the positive
half-line that will allow us to treat some of arguments in Sections
\ref{BTL:sec}, \ref{emb:sec} and \ref{dual:sec} involving
Besov and Triebel--Lizorkin norms in a unified way.

\subsection{Carleson measures} 
We define some classes of measures which we shall systematically use in the following.
\begin{deff}
	We denote by $\Mc_\Car$ (``Carleson measures'')\footnote{
          We adopt this definition, even if it is an abuse of
          language, since there exists an analogy with the classical
          definition of Carleson measures.}  the space of positive Radon measures $\mi$ on $(0,+\infty)$ with bounded support such that there is a constant $C>0$ such that
	\[
	\mi((r,2r])\meg C
	\]
	for every $r>0$. We denote with $\Mc_\RC$  (``reverse Carleson measures'') the space of positive Radon measures $\mi$ on $(0,+\infty)$ such that there are constants $\eps,C\in (0,1)$   such that
	\[
	\mi((\eps r, r])\Meg C
	\]
	for every $r\in (0,\eps]$. We define $\Mc_\Samp\coloneqq \Mc_\Car\cap \Mc_\RC$  (``sampling measures'').
\end{deff}	
The most relevant examples of sampling measures that the reader should have in mind are
\[
\mi= \sum_{j\in\N} \delta_{2^{-j}}, \qquad \dd\mi_\kappa(t)=\mathbf{1}_{(0,\kappa]}(t)\frac{\dd t}{t} \quad (\kappa>0).
\]
These are the measures that allow to express the forthcoming Triebel--Lizorkin and Besov norms in their classical continuous and discrete (of ``Littlewood--Paley type'') version; recall~\cite[Theorem 4.2]{BPV}. The use of arbitrary sampling measures though will allow us to prove both characterizations at once. The notation $\mi_\kappa$ for any $\kappa>0$ will be fixed from this point on and denote the above measure.

\begin{lem}\label{lem:11b}
Suppose $\mi\in \Mc_\Car$. Then, for every $\kappa>1$ there is a constant $C>0$ such that
	\[
	\mi((r, \kappa r])\meg C \qquad \forall \, r>0.
	\]
\end{lem}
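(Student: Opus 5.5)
The plan is to cover the interval $(r,\kappa r]$ by a bounded number of dyadic intervals of the form $(s,2s]$, and then apply the defining property of $\Mc_\Car$ to each piece.

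Fix $\kappa>1$ and let $C_0>0$ be the constant in the definition of $\mi\in\Mc_\Car$, so that $\mi((s,2s])\meg C_0$ for every $s>0$. Choose the integer $N\coloneqq \lceil \log_2 \kappa\rceil\Meg 1$, so that $2^N\Meg\kappa$. Then, for every $r>0$,
\[
(r,\kappa r]\subseteq (r,2^N r]=\bigcup_{j=0}^{N-1}(2^j r, 2^{j+1} r],
\]
and the union on the right is disjoint.

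By monotonicity and finite additivity of the positive measure $\mi$, we get
\[
\mi((r,\kappa r])\meg \sum_{j=0}^{N-1}\mi((2^j r,2^{j+1}r])\meg N C_0 .
\]
This bound holds for every $r>0$, so the statement follows with $C\coloneqq \lceil\log_2\kappa\rceil\, C_0$, which depends only on $\kappa$ and $\mi$.

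No step poses a real obstacle. The only point to check is that the number $N$ of dyadic pieces depends on $\kappa$ alone and not on $r$, and this is clear from the construction. The bounded-support assumption in the definition of $\Mc_\Car$ is not needed for this argument.
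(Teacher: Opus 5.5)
Your proof is correct and takes essentially the same approach as the paper: both cover $(r,\kappa r]$ by a number of dyadic intervals $(2^k r,2^{k+1}r]$ that depends only on $\kappa$, then sum the Carleson bounds. The only difference is that the paper uses $[\log_2\kappa]+1$ pieces rather than your $\lceil\log_2\kappa\rceil$, which is immaterial.
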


\begin{proof}
	By assumption, there is a constant $C_1>0$ such that
	\[
	\mi((r,2r])\meg C_1
	\]
	for every $r>0$. It then suffices to take $C\coloneqq ([\log_2 \kappa]+1) C_1$, since
	\[
	\mi((r,\kappa r])\meg \mi((r,2^{C/C_1} r])=\sum_{k=0}^{C/C_1-1} \mi((2^k r, 2^{k+1}r])\meg C
	\]
	for every $r>0$.
\end{proof}

\begin{lem}\label{lem:20}
Suppose $a,b>0$ and let $\phi$ be the function on $(0,+\infty)$ given by $\phi(r)=a r^b$. Then the map $\mi\mapsto \phi_*\mi$ induces automorphisms of $ \Mc_\Car$, $\Mc_\RC$, and $\Mc_\Samp$.
\end{lem}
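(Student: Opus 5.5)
The plan is to exploit the fact that $\phi$ is a homeomorphism of $(0,+\infty)$ onto itself, with inverse $\psi(s)=(s/a)^{1/b}$, itself of the same form $a' s^{b'}$ with $a'=a^{-1/b}$ and $b'=1/b$. Pushforward is functorial, so $\psi_*\phi_*\mi=\mi$ and $\phi_*\psi_*\nu=\nu$. Hence it suffices to show that, for every such $\phi$, $\phi_*$ maps $\Mc_\Car$ into $\Mc_\Car$ and $\Mc_\RC$ into $\Mc_\RC$. Indeed, applying this to $\psi$ as well shows that $\phi_*$ is a bijection of each class, and linearity on positive measures is clear. The class $\Mc_\Samp$ then follows by intersection.

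First, for any interval, $\phi_*\mi((r,\kappa r])=\mi(\phi^{-1}((r,\kappa r]))=\mi((\psi(r),\kappa^{1/b}\psi(r)])$, since $\phi$ is increasing. Positivity and the Radon property are preserved because $\phi$ is a homeomorphism. Bounded support is preserved because $\phi$ maps bounded subsets of $(0,+\infty)$ (those contained in $(0,R]$) into $(0,aR^b]$.

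\textbf{Carleson case.} Given $\mi\in\Mc_\Car$, we have $\phi_*\mi((r,2r])=\mi((s,2^{1/b}s])$ with $s=\psi(r)$. By Lemma~\ref{lem:11b} with $\kappa=\max(2^{1/b},2)$, this is bounded uniformly in $r$.

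\textbf{Reverse Carleson case.} Let $\eps,C$ be the constants for $\mi$. Then $\phi_*\mi((\eps' r,r])=\mi((\eps'^{1/b}\psi(r),\psi(r)])$. Choose $\eps'\in(0,1)$ with $\eps'^{1/b}=\eps$, that is, $\eps'=\eps^b$. The lower bound $C$ then holds whenever $\psi(r)\meg\eps$, i.e.\ $r\meg a\eps^b$. The definition, however, requires the bound for all $r\in(0,\eps']=(0,\eps^b]$, and the two thresholds must be matched. This is the only mildly delicate step. The fix is to shrink $\eps'$ further: take $\eps'=\min(\eps^b,a\eps^b,\tfrac12)$ while still using the window $(\eps^b r,r]\supseteq$ which, if $\eps'\meg\eps^b$, is contained in $(\eps' r,r]$. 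Then for $r\meg\eps'$ we have $\psi(r)\meg\eps$, and
\[
\phi_*\mi((\eps' r,r])\Meg\phi_*\mi((\eps^b r,r])=\mi((\eps\psi(r),\psi(r)])\Meg C.
\]
The constant $C\in(0,1)$ is unchanged, so $\phi_*\mi\in\Mc_\RC$.
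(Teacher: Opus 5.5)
Your proof is correct and follows essentially the same route as the paper: you use that $\psi=\phi^{-1}$ has the same form to reduce to showing that $\phi_*$ maps $\Mc_\Car$ and $\Mc_\RC$ into themselves, you get the Carleson bound from Lemma~\ref{lem:11b} with ratio $2^{1/b}$, and you get the reverse Carleson bound by replacing the window $\eps$ with $\eps^b$. The paper leaves implicit the matching of the threshold $r\le a\eps^b$ with the window parameter, whereas you make it explicit by taking $\eps'=\min(\eps^b,a\eps^b,\tfrac12)$ and using $(\eps^b r,r]\subseteq(\eps' r,r]$; this is what your garbled phrase ``$\supseteq$ which'' should say, and the step is correct.
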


\begin{proof}
	Observe that the inverse of $\phi$ is $\psi(r)= a^{-1/b} r^{1/b}$, which is of the same form. Thus it will suffice to show that $\phi_*$ induces endomorphisms of $\Mc_\Car$ and $\Mc_\RC$. Take first $\mi\in \Mc_\Car$. Then there is $C>0$ such that $\supp \mi\subseteq (0,C]$ and $\mi( r,2^{1/b} r])\meg C$ for every $r>0$, thanks to Lemma~\ref{lem:11b}. Then,  $\supp\phi_*\mi\subseteq (0,\psi(C)]$ and 
	\[
	(\phi_*\mi)((r,2 r])= \mi(\psi((r,2 r]))=\mi(( \psi(r),2^{1/b} \psi(r)]  )\meg C
	\]
	for every $r>0$. Then, $\mi\in \Mc_\Car$.
	
	Next, take $\nu\in \Mc_\RC$, and take $c\in (0,1)$ so that $\nu((c r,r])\Meg c$ for every $r\in (0,c]$. Then 
	\[
	(\phi_*\nu) ((  c^b  r, r])= \nu(( c\psi(r),  \psi(r)] )\Meg c
	\]
	for every $r\in (0,\phi(c)]$, so that $\nu\in \Mc_\RC$.
\end{proof}

\begin{lem}\label{lem:25}
Suppose $\eta,\gamma>0$ and $\mi,\nu\in \Mc_\Car$. Then, there is a constant $C>0$ such that
	\[
	\norm*{\int_0^\infty \frac{s^\gamma t^{\eta}}{(s+t)^{\eta+\gamma}} \abs{f(t)}\,\dd \nu(t)  }_{L^q_s(\mi)} \meg C \norm{f}_{L^q(\nu)}
	\]
	for every  $q\in [1,\infty]$ and  $\nu$-measurable function $f$.
\end{lem}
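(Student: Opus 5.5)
The plan is a Schur-test argument for the kernel $K(s,t)\coloneqq s^\gamma t^\eta/(s+t)^{\eta+\gamma}$ acting from $L^q(\nu)$ to $L^q(\mi)$. We only need to check the two Schur conditions
\[
\sup_{t>0}\int_0^\infty K(s,t)\,\dd\mi(s)<+\infty,\qquad \sup_{s>0}\int_0^\infty K(s,t)\,\dd\nu(t)<+\infty .
\]
For $q=1$, the first bound combined with Tonelli's theorem gives the estimate. For $q=\infty$, the second bound gives it directly. For $1<q<\infty$, we either interpolate (Riesz--Thorin for positive kernel operators) or use Hölder's inequality in the form $\int K\abs{f}\,\dd\nu\meg (\int K\,\dd\nu)^{1/q'}(\int K\abs{f}^q\,\dd\nu)^{1/q}$, followed by Tonelli's theorem. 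Either way the constant is uniform in $q$, namely the geometric mean of the two suprema.

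Both Schur bounds follow from a single dyadic estimate. We fix $\rho\in\{\mi,\nu\}$ and $u>0$, and show that
\[
\int_0^\infty \frac{\tau^a u^b}{(\tau+u)^{a+b}}\,\dd\rho(\tau)\meg C
\]
uniformly in $u$, for $a,b>0$; the two Schur conditions are the cases $(a,b)=(\gamma,\eta)$ with $\rho=\mi$ and $(a,b)=(\eta,\gamma)$ with $\rho=\nu$. We split $(0,+\infty)=\bigcup_{k\in\Z}(2^k u,2^{k+1}u]$. On the $k$-th annulus the integrand is at most $2^{a}2^{ka}$ when $k<0$, since $\tau\meg 2^{k+1}u$ and $\tau+u\Meg u$. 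When $k\Meg 0$, it is at most $2^{a}\,2^{-kb}$, since $\tau^a/(\tau+u)^{a}\meg 1$ and $u^b/(\tau+u)^b\meg 2^{-kb}$. The Carleson condition gives $\rho((2^ku,2^{k+1}u])\meg C_\rho$ for every $k$. Summing the two geometric series yields the bound $C_\rho 2^{a}\bigl(\sum_{k<0}2^{ka}+\sum_{k\Meg0}2^{-kb}\bigr)<\infty$, independent of $u$.

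The bounded support of the measures is not even needed here; only the uniform mass bound on dyadic intervals in the definition of $\Mc_\Car$ is used. The only mildly delicate point is bookkeeping: we must make sure that both exponents $\gamma,\eta$ are strictly positive, so that both tails converge. Measurability of $s\mapsto\int K(s,t)\abs{f(t)}\,\dd\nu(t)$ follows from Tonelli's theorem since $K$ is continuous and positive. I do not expect a genuine obstacle. The step to check most carefully is the $L^\infty$ endpoint, where the $\nu$-integral in $t$ must be controlled for each fixed $s$, which is exactly the second Schur bound above.
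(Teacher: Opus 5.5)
Your proposal is correct and follows essentially the same route as the paper. The paper also applies Schur's lemma. It verifies the two uniform kernel-integral bounds (one by symmetry) with the same dyadic decomposition, after first rescaling by $s$ and summing $2^{h\eta}/(1+2^h)^{\eta+\gamma}$ over $h\in\Z$. Like yours, that argument uses only the Carleson bound on dyadic intervals.
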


\begin{proof}
	The assertion will follow from Schur's lemma (cf., e.g.,~\cite[Lemma I.1]{GrafakosClassical}) if we show that
	\[
	\sup_{s>0} \int_0^\infty \frac{s^\gamma t^{\eta }}{(s+t)^{\eta+\gamma}}\,\dd \nu(t) \qquad \text{and} \qquad
	\sup_{t>0} \int_0^\infty \frac{s^\gamma t^{\eta }}{(s+t)^{\eta+\gamma}}\,\dd \mi(s)
	\]
	are both finite. By symmetry, it will suffice to prove the first assertion. Then, take $C_1>0$   so that $ \nu((t,2t])\meg C_1$ for every $t>0$, and observe that
	\[
	\begin{split} 
		\int_0^\infty \frac{s^\gamma t^{\eta }}{(s+t)^{\eta+\gamma}}\,\dd \nu(t)&= \int_0^\infty \frac{t^\eta}{ (1+ t)^{\eta+\gamma}} \,\dd (s^{-1}_*\nu)(t)\\
		&=  \sum_{h\in\Z}  \int_{2^{h}}^{2^{h+1}} \frac{t^\eta}{  (1+t)^{\eta+\gamma}} \,\dd (s^{-1}_*\nu)(t) \meg C_1 2^\eta\sum_{h\in\Z} \frac{2^{h\eta}}{(1+2^{h})^{\eta+\gamma} },
	\end{split}
	\]
	which is finite.
\end{proof}

\begin{cor}\label{cor:carleson-measures}
  Take $\eta>0$, $q\in[1,+\infty]$, and $\mi\in\Mc_\Car$.  Then
  \[
    \norm{t^\eta}_{L^q_t(\mi)}<\infty.
    \]
  \end{cor}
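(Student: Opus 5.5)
The plan is to use that $\mi$ has bounded support and is uniformly bounded on dyadic intervals. Choose $R>0$ with $\supp\mi\subseteq(0,R]$, and take the constant $C_1>0$ with $\mi((r,2r])\meg C_1$ for every $r>0$. Then split $(0,R]$ into the dyadic intervals $(2^{-h-1}R,2^{-h}R]$, $h\in\N$. These cover $(0,R]$, and on the $h$-th one we have $t^\eta\meg (2^{-h}R)^\eta$.

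For $q=\infty$ the claim is immediate: $t^\eta\meg R^\eta$ on $\supp\mi$, so $\norm{t^\eta}_{L^\infty_t(\mi)}\meg R^\eta$.

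For $q\in[1,\infty)$ we estimate directly:
\[
\int_0^\infty t^{\eta q}\,\dd\mi(t)\meg\sum_{h\in\N}(2^{-h}R)^{\eta q}\,\mi((2^{-h-1}R,2^{-h}R])\meg C_1R^{\eta q}\sum_{h\in\N}2^{-h\eta q}.
\]
The last series is geometric with ratio $2^{-\eta q}<1$, since $\eta>0$, so it converges. This gives finiteness.

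There is no real obstacle. The only points to check are that bounded support is part of the definition of $\Mc_\Car$, which controls large $t$, and that the positivity of $\eta$ makes the dyadic sum converge near $0$. One could alternatively invoke Lemma~\ref{lem:25}, but the direct dyadic estimate above is simpler.
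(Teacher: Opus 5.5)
Your proof is correct and follows essentially the same route as the paper. The paper disposes of $q=\infty$ as obvious and, for finite $q$, points to the proof of Lemma~\ref{lem:25} with $q\eta$ in place of $\gamma$; that proof is the same dyadic-shell estimate using the uniform bound $\mi((r,2r])\meg C_1$, which you simply write out directly.
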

  \begin{proof}
The case $q=\infty$ is obvious, whereas the case $q$ finite follows
from the proof of Lemma~\ref{lem:25}, applied with $q\eta$ in place of
$\gamma$. 
    \end{proof}
  
\begin{lem}\label{lem:25bis}
Suppose $\eta,\gamma>0$ and  let $\mi$ be a Haar measure on $(0,+\infty)$. Take $\nu\in \Mc_\Car$, and assume that either $\eta\Meg 1$ or $\nu\in L^\infty(\mi)\cdot \mi$.	
	Then, there is a constant $C>0$ such that
	\[
	\norm*{\int_0^\infty t^\eta s^\gamma \abs{f(s+t)}\,\dd \mi(t)  }_{L^q_s(\nu)} \meg C \norm{ t^{\gamma+\eta}f(t)}_{L^q_t(\mi)}
	\]
	for every  $q\in [1,\infty]$ and $\mi$-measurable function $f$.
\end{lem}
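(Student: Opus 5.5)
The plan is to reduce the statement to Schur's lemma, with Lebesgue measure replaced by the two measures $\mi$ and $\nu$, as in the proof of Lemma~\ref{lem:25}. Up to a multiplicative constant we may take $\dd\mi(t)=\dd t/t$. Set $g(u)\coloneqq u^{\gamma+\eta}\abs{f(u)}$, so that the right-hand side is $C\norm{g}_{L^q(\mi)}$.

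In the inner integral we substitute $u=s+t$. This gives
\[
\int_0^\infty t^\eta s^\gamma \abs{f(s+t)}\,\frac{\dd t}{t}=\int_0^\infty K(s,u)\, g(u)\,\frac{\dd u}{u},
\]
with kernel
\[
K(s,u)\coloneqq \chi_{(s,\infty)}(u)\,(u-s)^{\eta-1}s^\gamma u^{1-\gamma-\eta}=\chi_{(0,1)}(s/u)\,(1-s/u)^{\eta-1}(s/u)^{\gamma}.
\]
So the statement is the boundedness of the integral operator $Tg(s)\coloneqq\int_0^\infty K(s,u)g(u)\,\dd\mi(u)$ from $L^q(\mi)$ to $L^q(\nu)$. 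By the mixed-measure Schur test (H\"older against $K\,\dd\mi$, then Fubini), it suffices to show that
\[
A\coloneqq \sup_{s>0}\int_0^\infty K(s,u)\,\dd\mi(u) \qquad\text{and}\qquad B\coloneqq\sup_{u>0}\int_0^\infty K(s,u)\,\dd\nu(s)
\]
are finite. This yields $\norm{Tg}_{L^q(\nu)}\meg A^{1/q'}B^{1/q}\norm{g}_{L^q(\mi)}$ for all $q\in[1,\infty]$, with the usual reading at the endpoints. Measurability of $s\mapsto Tg(s)$ follows from Tonelli's theorem.

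For $A$, dilation invariance of $\dd u/u$ lets us substitute $v=u/s$. We obtain $A=\int_1^\infty (1-1/v)^{\eta-1}v^{-\gamma}\,\dd v/v=\int_1^\infty (v-1)^{\eta-1}v^{-\gamma-\eta}\,\dd v$. This is finite: near $v=1$ because $\eta>0$, and at infinity because $\gamma>0$. Neither extra hypothesis is needed here.

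The main obstacle is $B$, which is where the alternative hypothesis enters. We split the integral over $(0,u)$ into $(0,u/2]$ and $(u/2,u)$.
\begin{itemize}
\item On $(0,u/2]$ the factor $(1-s/u)^{\eta-1}$ is bounded by $\max(1,2^{1-\eta})$. We decompose dyadically, $(0,u/2]=\bigcup_{h\meg -1}(2^{h-1}u,2^{h}u]$. Using $\nu((r,2r])\meg C_1$ as in Lemma~\ref{lem:25}, this part is at most a constant times $\sum_{h\meg -1}2^{h\gamma}<\infty$, uniformly in $u$.
\item On $(u/2,u)$, if $\eta\Meg 1$ the kernel is bounded by $1$, and $\nu((u/2,u])\meg C_1$ concludes.
\item On $(u/2,u)$, if $\eta<1$ the kernel is singular at $s=u$, so the Carleson bound alone does not suffice. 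Here we use $\nu=\rho\cdot\mi$ with $\rho\in L^\infty(\mi)$. This part is then bounded by $\norm{\rho}_\infty\int_{1/2}^1(1-r)^{\eta-1}r^{\gamma-1}\,\dd r<\infty$, after substituting $r=s/u$.
\end{itemize}
Hence $B<\infty$ in either case, and the lemma follows from the Schur test.
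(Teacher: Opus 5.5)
Your proposal is correct and follows the paper's proof: the substitution $u=s+t$ reduces the estimate to Schur's lemma with the kernel $(u-s)^{\eta-1}s^\gamma u^{1-\gamma-\eta}$. The two suprema are then handled as in the paper, namely by dilation invariance of the Haar measure for the first, and for the second by the dyadic Carleson bound when $\eta\geq 1$ and the bounded density when $\nu\in L^\infty(\mu)\cdot\mu$. The only cosmetic difference is that you split $(0,u)$ at $u/2$ and use the Carleson bound on $(0,u/2]$ in both cases, whereas the paper handles the density case in a single estimate over all of $(0,1)$.
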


\begin{proof}
	Observe that, setting $g\colon t \mapsto t^{\gamma+\eta}f(t)$, then
	\[
	\int_0^\infty t^\eta s^\gamma \abs{f(s+t)}\,\dd \mi(t)  =\int_s^\infty (t-s)^{\eta-1} t^{-\gamma-\eta+1} s^{\gamma} \abs{g(t)}\,\dd \mi(t).
	\]
	The assertion will follow from Schur's lemma (cf., e.g.,~\cite[Lemma I.1]{GrafakosClassical}) if we show that
	\[
	\sup_{s>0} \int_s^\infty (t-s)^{\eta-1} t^{-\gamma-\eta+1} s^{\gamma} \,\dd \mi(t) \qquad \text{and} \qquad
	\sup_{t>0} \int_0^t  (t-s)^{\eta-1} t^{-\gamma-\eta+1} s^{\gamma} \,\dd \nu(s)
	\]
	are both finite. For the first assertion, observe that
	\[
	\int_s^\infty (t-s)^{\eta-1} t^{-\gamma-\eta+1} s^{\gamma} \,\dd \mi(t)= \int_1^\infty (t-1)^{\eta-1} t^{-\gamma-\eta+1}\,\dd \mi(t),
	\]
	which is finite.
	Analogously, if $\nu=h\cdot\mi$ for some $h\in L^\infty(\mi)$, then
	\[
	\begin{split} 
		\int_0^t  (t-s)^{\eta-1} t^{-\gamma-\eta+1} s^{\gamma} \,\dd \nu(s)&\meg \norm{h}_{L^\infty(\mi)}\int_0^1 (1-s)^{\eta-1} s^{\gamma}\,\dd \mi(s) ,
	\end{split}
	\]
	which is finite. Then, assume that $\eta\Meg 1$, and observe that there is $C_1>0$ such that $\nu((t,2t])\meg C_1$ for every $t>0$. Then,
	\[
	\begin{split} 
		\int_0^t  (t-s)^{\eta-1} t^{-\gamma-\eta+1} s^{\gamma} \,\dd \nu(s)&= \int_0^1 (1-s)^{\eta-1} s^{\gamma}\,\dd (t^{-1}_*\nu)(s)\\
		&\meg \sum_{h\in \N}   \int_{2^{-h-1}}^{2^{-h}} s^\gamma \,\dd  (t^{-1}_*\nu)(s)\meg C_1 \sum_{h\in \N} 2^{-h\gamma},
	\end{split}
	\]
	which is finite.
\end{proof}

 \subsection{Auxiliary results} We now prove some technical results
 which will be of use in the next sections. Some of these are stated
 with the family $(\ee^{-t \Lc} f)_{t>0}$ for some $f\in \Sc'(G)$ in
 mind. Nevertheless, some proofs (in particular that of
 Lemma~\ref{lem:27} below) turn out to be leaner if a more general
 family $(f_t)_{t>0}$ in $\Sc'(G)$ satisfying $f_{t+s}=\ee^{-s \Lc}
 f_t$ for every $t,s>0$  is considered. Because of Lemma~\ref{lem:27}, however, this more general formulation is \emph{a posteriori} redundant.
 
Let us also note that for statements involving mixed norms in both orders that share analogous proofs, we detail only the more complex case.
 
\begin{lem}\label{lem:26}
Suppose $\alpha\in \R$, $p,q\in [1,\infty]$,   $\mi\in \Mc_\Car$, and $\nu\in \Mc_\RC$. Then, there is a constant $C>0$ such that, for every family $(f_t)_{t>0}$ in $\Sc'(G)$ such that $f_{t+s}=\ee^{-s \Lc} f_t$ for every $t,s>0$, the following hold:
	\begin{enumerate}
		\item[\textnormal{(i)}] $\norm*{ t^\alpha \norm{f_t}_{L^p(\beta)} }_{L^q_t(\mi)}\meg C \norm*{ t^\alpha \norm{f_t}_{L^p(\beta)} }_{L^q_t(\nu)}$;
		
		\item[\textnormal{(ii)}] if, in addition, $p\in (1,\infty)$, then \[
		\norm*{\norm{ t^\alpha f_t(x) }_{L^q_t(\mi)}}_{L^p(\beta)}\meg C\norm*{\norm{ t^\alpha f_t(x) }_{L^q_t(\nu)}}_{L^p(\beta)}.
		\]
	\end{enumerate}
\end{lem}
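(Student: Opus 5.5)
The idea is to use the semigroup property to move from times in the support of $\mi$ to earlier times that are charged by $\nu$. For $t>0$ and $s<t$ we have $f_t=\ee^{-(t-s)\Lc}f_s$. The $L^p$ bounds for $\ee^{-(t-s)\Lc}$ are given by Corollary~\ref{cor:1} with $X=Y=I$ and $p_1=p_2=p$. The pointwise bound $\abs{\ee^{-(t-s)\Lc}g}\meg C\ee^{\omega(t-s)}T_{b,t-s}g$ is Lemma~\ref{lem:1}~(2). Together these give
\[
\norm{f_t}_{L^p(\beta)}\meg C\ee^{\omega(t-s)}\norm{f_s}_{L^p(\beta)}
\qquad\text{and}\qquad
\abs{f_t}\meg C\ee^{\omega(t-s)}T_{b,t-s}f_s .
\]
Since $\mi$ has bounded support, $t$ ranges in some $(0,R]$. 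Hence $\ee^{\omega(t-s)}$ is bounded, and $t^\alpha\asymp s^\alpha$ whenever $s\in[\eps' t,t]$.

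Let $\eps,c$ be the constants in the definition of $\Mc_\RC$ for $\nu$. I first treat $t\meg\eps$. Then $\nu((\eps t,t])\Meg c$, so averaging the bound over $s\in(\eps t,t]$ with respect to $\nu$ gives
\[
t^\alpha\norm{f_t}_{L^p}\meg C\int_{(\eps t,t]} s^\alpha\norm{f_s}_{L^p}\,\dd\nu(s).
\]
I then take the $L^q(\mi)$ norm in $t$. The kernel $\mathbf 1_{\eps t<s\meg t}$ is dominated by $C s^\gamma t^\eta/(s+t)^{\eta+\gamma}$. Lemma~\ref{lem:25} would apply directly if $\nu$ were Carleson, but $\nu$ is only reverse Carleson. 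Instead I use Schur's test (or Minkowski plus Hölder) with the sums
\[
\sup_t \nu((\eps t,t])^{-1}\!\int_{(\eps t,t]}\dd\nu\qquad\text{and}\qquad \sup_s\int \mathbf 1_{s\meg t<s/\eps}\,\nu((\eps t,t])^{-1}\,\dd\mi(t).
\]
The first is $1$. The second is bounded by $c^{-1}\mi((s,s/\eps])$, which is bounded by Lemma~\ref{lem:11b}. So the averaging operator, normalized by $\nu((\eps t,t])^{-1}$, maps $L^q(\nu)\to L^q(\mi)$. This settles (i) for $t\meg\eps$.

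For $t\in(\eps,R]$ I use a single fixed $s_0$-window $(\eps^2,\eps]$, which has $\nu$-mass at least $c$. The same averaging argument works with constants depending on $R$ and $\eps$. This yields (i).

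For (ii) I do the same thing pointwise. For $t\meg\eps$ and $s\in(\eps t,t]$, Lemma~\ref{lem:1}~(1) gives
\[
t^\alpha\abs{f_t(x)}\meg C\,T_{b,t-s}(s^\alpha f_s)(x).
\]
The step I expect to be the main obstacle is that $t-s$ may be arbitrarily small, or zero when $s=t$, and then this is no longer controlled by a fixed-scale $T_{b,\kappa(\cdot)}$. To handle it, I would bound $T_{b,t-s}g\meg \sup_{0<r\meg \kappa}T_{b,r}g=T^*_{b,\kappa}g$ for $t\meg R$. Averaging in $s$ then gives
\[
t^\alpha\abs{f_t(x)}\meg C\,\nu((\eps t,t])^{-1}\!\int_{(\eps t,t]}T^*_{b,\kappa}(s^\alpha f_s)(x)\,\dd\nu(s).
\]
Taking the $L^q(\mi)$ norm in $t$ and applying the same Schur bound pointwise in $x$ reduces the claim to
\[
\norm{\norm{T^*_{b,\kappa}(s^\alpha f_s)(x)}_{L^q_s(\nu)}}_{L^p_x}\meg C\norm{\norm{s^\alpha f_s(x)}_{L^q_s(\nu)}}_{L^p_x}.
\]
This is the vector-valued maximal inequality of Lemma~\ref{lem:5} for $q\in(1,\infty)$. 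For $q=\infty$, I would pull the supremum inside $T^*$ by monotonicity and use the scalar case. For $q=1$, Lemma~\ref{lem:5} does not apply to the maximal operator, so I would use a dual argument as in Lemma~\ref{lem:2}, testing against $w\in L^{p'}$ and using the boundedness of $T^{*}$ on $L^{p'}$. The range $t\in(\eps,R]$ is handled as in (i).
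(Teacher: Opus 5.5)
Part (i), and part (ii) for $q\in(1,\infty]$, go through as you describe. The case $q=1$ of (ii), however, has a genuine gap. Your window $s\in(\eps t,t]$ lets $t-s$ become arbitrarily small, so you are forced through the maximal operator. For $q=1$ you would then need
\[
\Big\|\big\|T^*_{b,\kappa}(g_s)(x)\big\|_{L^1_s(\nu)}\Big\|_{L^p_x(\beta)}\meg C\Big\|\big\|g_s(x)\big\|_{L^1_s(\nu)}\Big\|_{L^p_x(\beta)}
\]
for arbitrary families $(g_s)$. This is the endpoint $q=1$ of the Fefferman--Stein inequality, and it is false. Indeed, $T^*_{b,\kappa}$ dominates a local Hardy--Littlewood maximal function. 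Split a small ball $B$ into $N$ disjoint pieces $E_j$ of comparable measure and diameter, and put $g_{2^{-j}}=\mathbf{1}_{E_j}$ for $j<N$, with $\nu=\sum_j\delta_{2^{-j}}$. Then the right-hand side equals $\beta(B)^{1/p}$, while $\sum_j T^*_{b,\kappa}g_{2^{-j}}\gtrsim\log N$ on $B$.

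The duality argument of Lemma~\ref{lem:2} does not transfer either. It works there because, for each fixed $y$, $T_{b,\kappa(y)}$ is a positive \emph{linear} convolution operator. Its transpose applied to $w$ is dominated, uniformly in $y$, by a maximal function of $w$ that is bounded on $L^{p'}(\beta)$. The sublinear operator $T^*_{b,\kappa}$, whose optimal scale depends on both $x$ and $s$, has no such transpose.

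The obstacle is self-inflicted: you only need a window whose $\nu$-mass is bounded below, not one reaching up to $t$.
\begin{itemize}
\item For $t\meg 1$, average over $s\in(\eps^2 t,\eps t]$ instead. This window still has $\nu$-mass at least $c$ (apply the reverse Carleson condition with $r=\eps t$), and your Schur bound still holds by Lemma~\ref{lem:11b}. Now $(1-\eps)s/\eps\meg t-s< s/\eps^2$, so Lemma~\ref{lem:1} gives $\abs{f_t}\meg C\,T_{b,s/\eps^2}f_s$, at a scale depending on $s$ only. Lemma~\ref{lem:2} with $\kappa(s)=s/\eps^2$ then concludes for every $q\in[1,\infty]$, with no maximal function at all.
\item For $t\in(1,R]$, use the fixed window $(\eps^2,\eps]$, on which $1-\eps\meg t-s\meg R$, and apply Lemma~\ref{lem:2} with constant $\kappa$.
\end{itemize}
This is essentially the paper's proof. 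It compares $t\in[\eps^{-N+h+1},\eps^{-N+h}]$ with $s\in[\eps^{N+h+1},\eps^{N+h}]$, so that $t-s\asymp\eps^{-N+h}$. It then bounds $\abs{f_t}$ by $\min_s T_{b,\eps^{-N+h}}f_s$, replaces the minimum by an $L^q(\nu)$-average over that block, and applies Lemma~\ref{lem:2} with $\kappa(s)=s\eps^{-2N-1}$.
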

\begin{proof}
We prove only (ii). Take $\eps\in (0,1)$ and $N\in\N$ so that $N\Meg 1$, $\supp \mi\subseteq (0,\eps^{-N}]$,
	\[
	\nu((\eps t, t])\Meg \eps^N
	\]
	for every $t\in (0,\eps^N]$, and
\begin{equation}\label{muesp1}
	\mi((\eps t,t])\meg \eps^{-N}
\end{equation}
	for every $t>0$ (cf.~Lemma~\ref{lem:11b}). By the Gaussian estimates (cf.~Theorem~\ref{teo:7}), there are $C_1,b>0$ such that $\abs{h_t}\meg C_1 p_{b,t}$ for every $t\in (0,\eps^{-N}]$.	
	Observe that, for every $t\in [\eps^{-N+h+1},\eps^{-N+h}]$, $h\in\N$, and $s\in [\eps^{N+h+1},\eps^{N+h}]$,
	\[
	\abs{f_t}= \abs{\ee^{-(t-s) \Lc} f_s}\meg C_1  T_{b,t-s}  f_s\meg C_1 (\eps^{N}(\eps^{1-N}-\eps^N) )^{-Q_*/\dL}  T_{b,\eps^{-N+h}} f_s
	\]
	since 
	\[
	\eps^{-N+h+1}-\eps^{N+h}\meg t-s\meg \eps^{-N+h}.
	\]
	In particular, setting $C_2\coloneqq  C_1 (\eps^{N}(\eps^{1-N}-\eps^N) )^{-Q_*/\dL}$,
	\[
	\abs{f_t} \meg C_2\min_{s\in [\eps^{N+h+1},\eps^{N+h}]} T_{b,\eps^{-N+h}} f_s
	\]
	for every $t\in [\eps^{-N+h+1},\eps^{-N+h}]$.

	Then, by~\eqref{muesp1} and Lemma~\ref{lem:2}, there is a constant $C_2>0$ such that
	\[
	\begin{split}
	&\norm*{\norm{ t^\alpha f_t(x) }_{L^q_t(\mi)}}_{L^p(\beta)}= \norm*{\norm*{\norm{ t^\alpha f_t(x) }_{L^q_t((\eps^{-N+h+1},\eps^{-N+h}],\mi)}}_{\ell_h^q(\N)}}_{L^p_x(\beta)}\\
		&\qquad \qquad \qquad\meg C_2 \eps^{-N\alpha-\alpha_- -N/q}\big\| \norm{ \eps^{h\alpha} \min_{s\in [\eps^{N+h+1},\eps^{N+h}]} T_{b,\eps^{-N+h}}  f_{s}(x)  }_{\ell^q_h(\N)}\big\|_{L^p_x(\beta)}\\
		&\qquad\qquad \qquad \meg C_2 \eps^{-N\alpha-\alpha_- -2N/q}\norm*{\big\| \eps^{h\alpha}\norm{  T_{b,\eps^{-N+h}}  f_{s}(x) }_{L^q_s((\eps^{N+h+1},\eps^{N+h}],\nu)} \big\|_{\ell^q_h(\N)}}_{L^p_x(\beta)}\\
		&\qquad \qquad\qquad\meg C_2 \eps^{-2N\alpha-\abs{\alpha} -2N/q-Q_*/\dL}\norm*{\norm{ s^{\alpha} T_{b,s\eps^{-2N-1}}  f_{s}(x) }_{L^q_s(\nu)}  }_{L^p_x(\beta)}\\
		&\qquad \qquad\qquad \meg C_3\norm*{\norm{ s^{\alpha}    f_{s}(x)  }_{L^q_s(\nu)}}_{L^p_x(\beta)}\\
	\end{split}
	\]
	whence the conclusion.
\end{proof}

\begin{lem}\label{lem:27}
Suppose $\alpha\in \mathbb{R}$,  $m\in\N$, $p,q\in [1,\infty]$, $\mu\in \Mc_\RC$,  and $(f_t)_{t>0}$ in $\Sc'(G)$ such that $f_{s+t}=\ee^{-s \Lc} f_t$ for every $t,s>0$, and such that either
\begin{enumerate}
\item $\norm{t^{\alpha} \Lc^m  f_{t}(x) }_{L^{p,q}_{x,t}(\beta,\mi)} $
  is finite,
\end{enumerate}
or
\begin{enumerate}\setcounter{enumi}{1}
\item $p\in (1,\infty)$ and $\norm{t^{\alpha} \Lc^m f_{t}(x) }_{L^{q,p}_{t,x}(\mi, \beta)}$ is finite.
\end{enumerate}
Then, there is a unique $f\in \Sc'(G)$ such that $f_t=\ee^{-t\Lc} f$ for every $t>0$.
\end{lem}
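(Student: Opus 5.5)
The plan is to reduce both cases to a single polynomial bound,
\[
\norm{\Lc^m f_s}_{L^p(\beta)}\meg C s^{-\alpha}\quad (s\in(0,\eps]),
\]
and then to build $f$ as the ``Taylor expansion at $0$'' of the smooth curve $s\mapsto f_s$. Uniqueness is immediate. If $\ee^{-t\Lc}f=\ee^{-t\Lc}g$ for all $t>0$, then by Lemma~\ref{lem:9} the map $t\mapsto \ee^{-t\Lc}(f-g)$ is continuous on $[0,+\infty)$ with values in $\Sc'(G)$. Letting $t\to0^+$ gives $f=g$.

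\emph{Step 1 (pointwise-in-$s$ bound).} Fix $\eps,N$ as in the proof of Lemma~\ref{lem:26}, so that $\mi((\eps r,r])\Meg \eps^N$ for $r\in(0,\eps^N]$. For $r\in(0,\eps^N]$ and $s\in(\eps^2 r,\eps r]$, the semigroup relation and Theorem~\ref{teo:7} give
\[
\abs{\Lc^m f_r}=\abs{\ee^{-(r-s)\Lc}\Lc^m f_s}\meg C_1 T_{b,r-s}(\Lc^m f_s)\meg C_2 T_{b,r}(\Lc^m f_s),
\]
using Lemma~\ref{lem:1}~(1), since $r-s\asymp r$. Averaging over $s$ against $\mi$ on $(\eps^2 r,\eps r]$ then yields the displayed bound.
\begin{itemize}
\item In case (1), this is Young's inequality together with Lemma~\ref{lem:3}, which gives $\norm{T_{b,r}g}_{L^p(\beta)}\meg C\ee^{Cr}\norm{g}_{L^p(\beta)}$ (the character $\Delta_R$ is absorbed by the exponential factors, as in Lemma~\ref{lem:5}).
\item In case (2), we first take the minimum over $s$ pointwise in $x$, then dominate it by the $L^q(\mi)$-average over $(\eps^2r,\eps r]$, and finally apply Lemma~\ref{lem:2} with $Y=(\eps^2 r,\eps r]$ to remove $T_{b,r}$ inside the mixed $L^{q,p}$ norm.
\end{itemize}
In both cases we get $\norm{\Lc^m f_r}_{L^p(\beta)}\meg C r^{-\alpha}$, with $C$ controlled by the finite norm in the hypothesis. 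For $r\in[\eps^N,t_0]$ the semigroup property and Corollary~\ref{cor:1} give a uniform bound, so the estimate holds on all of $(0,t_0]$.

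\emph{Step 2 (construction).} Fix $t_0>0$ and an integer $M\Meg m$ with $M>\alpha$. By Lemma~\ref{lem:9} applied to $f_{s_0}$, the map $s\mapsto f_s$ is smooth on $(0,\infty)$ with values in $\Sc'(G)$, and its derivative is $-\Lc f_s$. Define $f\in\Sc'(G)$ by
\[
\langle f,\phi\rangle\coloneqq \sum_{k=0}^{M-1}\frac{t_0^k}{k!}\langle \Lc^k f_{t_0},\phi\rangle+\frac{1}{(M-1)!}\int_0^{t_0} s^{M-1}\langle \Lc^m f_s,(\Lc^\dag)^{M-m}\phi\rangle\,\dd s ,
\]
which mimics Taylor's formula for $s\mapsto f_s$ expanded at $t_0$ and evaluated at $0$. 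The integrand is bounded by
\[
C s^{M-1-\alpha}\norm{(\Lc^\dag)^{M-m}\phi}_{L^{p'}(\beta)}.
\]
This quantity is integrable near $0$ because $M>\alpha$, and the $L^{p'}(\beta)$-norm is a continuous seminorm on $\Sc(G)$ by Theorem~\ref{teo:8}~(5). Hence $f$ is a well-defined element of $\Sc'(G)$.

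\emph{Step 3 (identification), the main obstacle.} We must check that $\ee^{-t\Lc}f=f_t$, which requires commuting $\ee^{-t\Lc}$ with the improper integral. Pairing with $\phi\in\Sc(G)$ and using $\langle \ee^{-t\Lc}u,\phi\rangle=\langle u,\ee^{-t\Lc^\dag}\phi\rangle$, it suffices to know that $\ee^{-t\Lc^\dag}$ maps $\Sc$ to $\Sc$ continuously and commutes with $\Lc^\dag$; this is Lemma~\ref{lem:9}. We then get $\ee^{-t\Lc}f$ as the same formula with $f_{t_0+t}$ and $f_{s+t}$ in place of $f_{t_0}$ and $f_s$.

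The cleanest route is to truncate the integral to $[\delta,t_0]$. For the truncated expression, Taylor's formula for the smooth curve $s\mapsto f_{s+t}$ shows that it equals $f_{\delta+t}$ plus terms $\delta^k\Lc^k f_{\delta+t}/k!$ for $1\meg k\meg M-1$. We then let $\delta\to0$:
\begin{itemize}
\item $f_{\delta+t}\to f_t$ by continuity of $s\mapsto f_s$;
\item when paired with $\phi$, the terms $\delta^k\Lc^k f_{\delta+t}$ tend to zero, because $\delta+t\Meg t>0$, so Step~1 and Corollary~\ref{cor:1} give bounds uniform in $\delta$.
\end{itemize}
Independence of $t_0$ follows similarly. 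I expect the delicate point to be Step~1 in case (2), where the order of the norms forces the use of Lemma~\ref{lem:2} rather than plain Young's inequality.
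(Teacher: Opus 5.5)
Your proposal is correct. Its skeleton is the paper's: expand $s\mapsto f_s$ by the Taylor formula of Lemma~\ref{lem:9} with $M$ terms (the paper's $m+m'$), define $f$ as the value at $0$ of the expansion, and use the hypothesis only to make the remainder $\int_0 s^{M-1}\Lc^M f_s\,\dd s$ converge weakly.

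The technical route differs in two places.
\begin{itemize}
\item \emph{Integrability of the remainder.} The paper first uses Lemma~\ref{lem:26} to pass to a discrete sampling measure and to $\mi_1$. It then reduces case~(2) to case~(1) via the Minkowski inclusion $L^{q,p}(\mi,\beta)\subseteq L^{p,\max(p,q)}(\beta,\mi)$, and finally applies H\"older's inequality in $s$ against $s^{m+m'-\alpha}\in L^{q'}(\mi_1)$. You instead extract the cruder uniform bound $\norm{\Lc^m f_s}_{L^p(\beta)}\meg C s^{-\alpha}$. This is Lemma~\ref{lem:26} with $\mi=\delta_r$, applied to $(\Lc^m f_t)$, with constants uniform in $r\in(0,1]$. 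It suffices because only integrability of $s^{M-1-\alpha}$ near $0$ is needed, and it treats both cases at once.
\item \emph{Identification.} The paper shows $f_t\to f$ in $\Sc'(G)$ by dominated convergence and passes to the limit in $f_{t+s}=\ee^{-s\Lc}f_t$. You compute $\langle \ee^{-t\Lc}f,\phi\rangle$ by transposition, which is just the definition of $f$ evaluated at $\ee^{-t\Lc^\dag}\phi\in\Sc(G)$, and then use a Taylor identity for $u\mapsto f_{t+u}$. Both routes work.
\end{itemize}

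A few small points need fixing.
\begin{itemize}
\item Take $M\Meg \max(1,m)$. If $m=0$ and $\alpha<0$ you could otherwise pick $M=0$, while your formula involves $(M-1)!$.
\item In case~(2), Lemma~\ref{lem:2} is more than you need. For fixed $r$, Minkowski's integral inequality gives $\norm{T_{b,r}(\Lc^m f_s)(x)}_{L^q_s}\meg T_{b,r}\big(\norm{\Lc^m f_s}_{L^q_s}\big)(x)$, and Young's inequality finishes. If you keep Lemma~\ref{lem:2}, you must note that its constant does not depend on $r$, or else use $\kappa(s)=s/\eps^2$ on a fixed interval $Y$.
\item The truncation at $\delta$ in Step~3 is unnecessary. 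By Lemma~\ref{lem:9}, $u\mapsto \ee^{-u\Lc}f_t=f_{t+u}$ is smooth on the closed half-line $[0,+\infty)$. Hence $H(u)\coloneqq\sum_{k<M}\frac{u^k}{k!}\Lc^k f_{t+u}$ satisfies $H'(u)=-\frac{u^{M-1}}{(M-1)!}\Lc^M f_{t+u}$ on $[0,t_0]$, and this gives $f_t=H(0)$ directly.
\item If you do truncate, the boundedness of $\langle \Lc^k f_{t+\delta},\phi\rangle$ for $1\meg k<m$ cannot come from Step~1 and Corollary~\ref{cor:1}. Those only control $\Lc^k f_s$ for $k\Meg m$; for instance, $f_s\equiv 1$ with $\Lc 1=0$ shows $f_s$ itself need not lie in $L^p(\beta)$. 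It follows instead from continuity of $\delta\mapsto \Lc^k f_{t+\delta}$ on $[0,1]$ with values in $\Sc'(G)$.
\end{itemize}
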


\begin{proof}
	By Lemma~\ref{lem:26}, we may reduce to the case in which $\mi=\sum_{j\in\N} \delta_{2^{-j}}$. Consequently, if (2) holds, by Minkowski's integral inequality and the (contractive) inclusion $\ell^{q/p}(\N)\subseteq\ell^{\max(1,q/p)}(\N)$,
\begin{align*}
\norm{t^{\alpha} \Lc^m  f_{t}(x) }_{L^{p,q}_{x,t}(\beta,\mi)} 
&=\big\| 2^{-j\alpha} \norm{\Lc^m f_{2^{-j}} }_{L^{p}(\beta)}  \big\|_{\ell_j^{\max(p,q)}(\N)}\\
&=\bigg\| 2^{-pj\alpha} \int_G\abs{\Lc^m f_{2^{-j}}}^p\,\dd \beta  \bigg\|_{\ell_j^{\max(1,q/p)}(\N)}^{1/p}\\
		&\meg \left( \int_G \norm{2^{-pj\alpha} \abs{\Lc^m f_{2^{-j}}}^p }_{\ell_j^{\max(1,q/p)}(\N)}\,\dd \beta\right) ^{1/p}		\\
		&\meg  \left( \int_G \norm{2^{-pj\alpha} \abs{\Lc^m f_{2^{-j}}}^p }_{\ell_j^{q/p}(\N)}\,\dd \beta\right) ^{1/p}	\\
		&= \big\| \norm{2^{-j\alpha} (\Lc^m f_{2^{-j}})(x)}_{\ell_j^q(\N)} \big\|_{L^p_x(\beta)} = \norm{t^{\alpha} \Lc^m f_{t}(x) }_{L^{q,p}_{t,x}(\mi, \beta)},
\end{align*}
namely $L^{q,p}(\mi,\beta)\subseteq L^{p,\max(p,q)}(\beta, \mi)$, whence assumption (1) holds with $q$ replaced by $\max (p,q)$. Then, we may reduce to case (1) and assume that $\norm{t^{\alpha} \Lc^m f_{t}(x) }_{L^{p,q}_{x,t}(\beta,\mi)} $ is finite. Take $m'\in \N$ so that $m+m'\Meg 1$ and $m+m'-\alpha>0$.	By Lemma~\ref{lem:9}
	\[
	\begin{split}
	f_t&=\sum_{h=0}^{m+m'-1} \frac{1}{h!} W_{1-t}^{(h)} f_t +\frac{1}{(m+m'-1)!}\int_0^{1-t} W^{(m+m')}_s f_t \,\frac{\dd s}{s}\\
		&=\sum_{h=0}^{m+m'-1} \frac{(1-t)^h}{h!} \Lc^h f_1 +\frac{1}{(m+m'-1)!}\int_0^{1-t} (s\Lc)^{m+m'} f_{t+s} \,\frac{\dd s}{s} \\
		&=\sum_{h=0}^{m+m'-1} \frac{(1-t)^h}{h!} \Lc^h f_1 +\frac{1}{(m+m'-1)!}\int_{t}^{1} (s-t)^{m+m'-1}\Lc^{m+m'} f_{s} \,\dd s
	\end{split}
	\]
	for every $t\in (0,1)$. Then, take $\phi \in \Sc(G)$, and observe that 
	\[
	\begin{split}
\mathbf{1}_{[t,1]}(s) (s-t)^{m+m'-1} \abs{\langle \Lc^{m+m'} f_{s} ,  \phi\rangle}& 
\meg \mathbf{1}_{[0,1]}(s) s^{m+m'-1}  \norm{\Lc^{\dag m'} \phi}_{L^{p'}(\beta)} \norm{\Lc^m f_{s}}_{L^p(\beta)}\\
	\end{split}
	\]
	where the right hand side is integrable, since 
		\[
	\begin{split}
\int_0^{1} s^{m+m'-1}  \norm{\Lc^m f_{s}}_{L^p(\beta)}\, \dd s
		&\meg \norm{s^{m+m'-\alpha} }_{L^{q'}(\mi_1)}\norm{s^{\alpha} \Lc^m f_{s}(x) }_{L^{p,q}_{x,s}(\beta,\mi_1)}.
	\end{split}
	\]
Hence, by dominated convergence $f_t$ converges to
	\[
	f:=\sum_{h=0}^{m+m'-1} \frac{1}{h!} \Lc^h f_1 +\frac{1}{(m+m'-1)!}\int_0^{1} (s\Lc)^{m+m'} f_{s} \,\frac{\dd s}{s} 
	\] 
	in $\Sc'(G)$ for $t\to 0^+$.  By the continuity of $\ee^{-t\Lc}$ on $\Sc'(G)$, then, $f_{t+s} = \ee^{-s\Lc} f_{t} \to \ee^{-s\Lc} f$ for $t\to 0^{+}$. But by the continuity of the map $t\mapsto f_{t}$, recall Lemma~\ref{lem:9}, we also have that $f_{t+s} \to f_{s}$ for $t\to 0^{+}$, whence $f_{s} = \ee^{-s\Lc}f$ for all $s>0$ as claimed.
\end{proof}
The next result is inspired by~\cite[Theorems 4.1, 4.2, and 4.4]{BPV}, and is somewhat the key result for all what follows. It will allow us to show that the Besov and Triebel–Lizorkin norms are independent of several parameters (among which the weighted sobcoercive operator in terms of which the spaces are defined).
Since its proof is long and technical, for the sake of
presentation, we postpone it to the later
Section~\ref{A1:sec}.
\begin{prop}\label{lem:8bis}
	Suppose $\alpha\in \R$, $p,q\in [1,\infty]$, $\lambda \Meg 0$, $\eps \in (0,1]$,   $t_0,t_1\Meg 0$, $\mi\in \Mc_\Car$, and $\nu\in \Mc_\RC$. Assume that $t_0>0$ if $\alpha\meg 0$. Let $\Lc'$ be another weighted subcoercive operator of degree $\dL'$, suppose  $m  \in\N$  and\footnote{Notice that $m'$ is \emph{not} required to be an integer.} $m'\in (\alpha/\dL',+\infty) $.
	
	Then, there is a constant $C>0$ such that, for every $X\in U_\lambda$, the following hold.
	\begin{enumerate}
		\item[\textnormal{(i)}] For every $f\in \Sc'(G)$
		\[
		\begin{split}
		&\norm{\ee^{-t_0\Lc'}X f}_{L^p(\beta)}+\big\| t^{-\alpha/\dL'}\norm{W'^{(m'),*}_{t,\eps} X f}_{L^p(\beta)}\big\|_{L^q_t(\mi)}\\
			&\qquad \qquad \qquad \qquad  \meg C\abs{X}  \big( \norm{\ee^{-t_1\Lc}f}_{L^p(\beta)}+\big\| t^{-(\alpha+\lambda)/\dL}\norm{W^{(m)}_{t}   f}_{L^p(\beta)}\big\|_{L^q_t(\nu)}\big).
		\end{split}
		\]

		\item[\textnormal{(ii)}] If $p\in (1,\infty)$,  then for every $f\in \Sc'(G)$
		\begin{equation}\label{eq:5}
		\begin{split}
		&\norm{\ee^{-t_0\Lc'}X f}_{L^p(\beta)}+\big\|\norm{t^{-\alpha/\dL' }(W'^{(m'),*}_{t,\eps} X f)(x)}_{L^q_t(\mi)}\big\|_{L^p_x(\beta)}\\
			&\qquad \qquad \meg C\abs{X} \big( \norm{\ee^{-t_1\Lc}f}_{L^p(\beta)}+\big\| \norm{t^{-(\alpha+\lambda)/\dL }(W^{(m)}_{t}   f)(x)}_{L^q_t(\nu)}\big\|_{L^p_x(\beta)}\big).
		\end{split}
		\end{equation}
	\end{enumerate}
      \end{prop}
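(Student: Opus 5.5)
We plan to reproduce the Euclidean ``change of Littlewood--Paley family'' argument by means of a Calderón-type reproducing formula. By Lemma~\ref{lem:26} we may replace $\nu$ by the sampling measure $\mi_1$ (the right-hand side only increases, up to a constant) and, on the left, $\mi$ by $\mi_\kappa$ for a convenient $\kappa$ plus a tail. We may also assume the right-hand side is finite. Lemma~\ref{lem:27} then gives $f\in\Sc'(G)$ with the natural representation: by Lemma~\ref{lem:9}, for $M\Meg m$ large,
\[
f=\sum_{h=0}^{M-1}\frac{t_1^h}{h!}\Lc^h\ee^{-t_1\Lc}f+\frac{1}{(M-1)!}\int_0^{t_1} W^{(M)}_u f\,\frac{\dd u}{u},
\]
and $W^{(M)}_u f=u^{M-m}\Lc^{M-m}\ee^{-u\Lc/2}\,W^{(m)}_{u/2}f$ up to constants. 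If $t_1=0$, we first move to $t_1=1$ using the same formula. Applying $X'\ee^{-s\Lc'}Y'X$ with $\deg X'+\deg Y'\meg m'\dL'$, $s\in[\eps t,t/\eps]$, we obtain a ``low-frequency'' term and an integral term.

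The low-frequency term is $X'\ee^{-s\Lc'}Y'X\Lc^h\ee^{-t_1\Lc}f$. We would write $\ee^{-t_1\Lc}=\ee^{-t_1\Lc/2}\ee^{-t_1\Lc/2}$, so that the operators acting on $g=\ee^{-t_1\Lc/2}f$ are convolutions with kernels having exponentially decaying Gaussian bounds, uniformly for $s\meg C$. This follows from Theorem~\ref{teo:7}, Lemma~\ref{lem:4}, and Proposition~\ref{prop:9}, which transfers $X,Y'$ to right-invariant derivatives of $h_{t_1/2}$. Its contribution is therefore $\lesssim t^{m'}|X|\,T_{b,1}$ applied to $\ee^{-t_1\Lc/2}f$, hence bounded in $L^p$ (Lemmas~\ref{lem:3} and~\ref{lem:2}), and the factor $t^{m'-\alpha/\dL'}$ is $L^q(\mi)$-summable by Corollary~\ref{cor:carleson-measures} since $m'>\alpha/\dL'$. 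A subtlety is that the right-hand side controls $\ee^{-t_1\Lc}f$, not $\ee^{-t_1\Lc/2}f$. To fix this, I would run the formula from level $t_1/2$, namely $\ee^{-t_1\Lc/2}f=\sum_h\ldots+\int_{t_1/2}^{t_1}$, and bound the latter part by the $W^{(m)}$ terms. The term $\ee^{-t_0\Lc'}Xf$ is handled identically; this is where $t_0>0$ is needed when $\alpha\meg0$.

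The integral term is the heart of the proof:
\[
\int_0^{t_1} X'\ee^{-s\Lc'}Y'X\,\Lc^{M-m}\ee^{-u\Lc/2}\,u^{M-m}W^{(m)}_{u/2}f\,\frac{\dd u}{u}.
\]
We need a pointwise bound of the form
\[
\bigl|X'\ee^{-s\Lc'}Y'X\Lc^{M-m}\ee^{-u\Lc/2}g\bigr|\lesssim|X|\,\ee^{\omega}\,\frac{s^{\gamma}u^{\eta}}{(s+u)^{\gamma+\eta}}\,u^{-(M-m)-\lambda/\dL}\,T_{b,s+u}g .
\]
For $u\meg s$, we put all derivatives except $X'$ on the $\Lc$-side, using $u^{-(\deg)/\dL}$ bounds and the $T$-composition of Lemma~\ref{lem:4}(1). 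This gains $(u/s)^{\eta}$ thanks to the $m'\dL'$ derivatives of $\Lc'$-type. Here the main obstacle is that $\Lc$ and $\Lc'$ do not commute. I would write $\Lc^{M-m}$ in the PBW form of Proposition~\ref{prop:8}, moving $Y'$-type derivatives across via $X^{\dag R\dag}$ and kernel identities; the needed cancellation comes from $X'Y'$ of order up to $m'\dL'$ while the $\Lc$-side carries degree $(M-m)\dL$. For $u\Meg s$ we use the large number $M$ of $\Lc$-derivatives placed on $\ee^{-s\Lc'}$ kernels, gaining $(s/u)^{\gamma}$ with $\gamma=M-m-\ldots>$ whatever is needed. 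Since $s\asymp t$, this yields, after renormalising $t^{-\alpha/\dL'}$ into $u^{-(\alpha+\lambda)/\dL}$ (Lemma~\ref{lem:20} handles the change of power $t\mapsto t^{\dL/\dL'}$ on the measures), a kernel $\frac{t^\gamma u^\eta}{(t+u)^{\gamma+\eta}}$ acting on $u^{-(\alpha+\lambda)/\dL}T_{b,\cdot}W^{(m)}_{u/2}f$.

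Finally, the $T_{b,s+u}$ are replaced by $T_{b,\kappa u}$ or $T^*$ maximal operators via Lemma~\ref{lem:1}(1) and Lemma~\ref{lem:32}. In case (i) we then take $L^p$ norms inside and apply Lemma~\ref{lem:25} with Schur's test in $L^q$. In case (ii) we apply Lemma~\ref{lem:25} pointwise in $x$ and then the vector-valued Lemma~\ref{lem:2}/Lemma~\ref{lem:5} to remove the $T$'s in $L^p(\ell^q)$; this is why $p\in(1,\infty)$ is needed there. The main expected difficulty is the off-diagonal kernel estimate with mixed $\Lc,\Lc'$; the rest is bookkeeping.
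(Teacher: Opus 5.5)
There is a genuine gap in the off-diagonal estimate you identify as the heart of the proof. Below, $u\lesssim s$ means $u^{1/\dL}\lesssim s^{1/\dL'}$.

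\textbf{The regimes are interchanged.} For $u\lesssim s$ you put $Y'X\Lc^{M-m}$ on $h_{u/2}$ and $X'$ on $h'_s$. After cancelling $u^{M-m}$ this leaves $t^{m'}s^{-\deg X'/\dL'}u^{-(\deg Y'+\lambda)/\dL}$. That gains nothing, and it loses whenever $\deg Y'>0$. For $u\gtrsim s$ you put $\Lc^{M-m}$ on the $\Lc'$-kernel at the fine scale $s$, which loses a power of $u/s$.

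With the assignments swapped, the regime $u\lesssim s$ works. There $Y'X\Lc^{M-m}$ acts \emph{before} $\ee^{-s\Lc'}$, so Lemma~\ref{lem:1}~(2) for $\Lc'$ gives $s^{-(\deg X'+\deg Y'+\lambda+(M-m)\dL)/\dL'}$, hence a gain of order $(u^{1/\dL}/s^{1/\dL'})^{(M-m)\dL}$. This large-$M$ device is a legitimate substitute for the paper's Step~III, which reduces to $\alpha+\lambda>0$ through $f=\Lc_\omega^k g$, Lemma~\ref{lem:27} and Proposition~\ref{Smulders}.

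\textbf{The regime $s\lesssim u$ is where the argument breaks.} The obstruction is not that $\Lc$ and $\Lc'$ fail to commute, so rewriting $\Lc^{M-m}$ in PBW form does not help. The problem is the left-invariant $X'$ that $W'^{(m'),*}_{t,\eps}$ places \emph{after} $\ee^{-s\Lc'}$. You have $X'\ee^{-s\Lc'}F=F*(X'h'_s)$. The identity $(ZF)*k=F*(Z^{\dag R\dag}k)$ only moves operators acting on the left factor, never a left-invariant derivative of the right factor. So $X'$ stays on $h'_s$. For $Y'=I$ and $\deg X'$ equal or close to $m'\dL'$, you get only $t^{m'}s^{-\deg X'/\dL'}$, with no decay as $s/u\to0$. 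The kernel you feed to Lemma~\ref{lem:25} then fails to decay in this regime whenever $\alpha\Meg 0$. The only escape is when $X'$ commutes with $\Lc'$, as $\Lc'^{m'}$ does, and that is exactly what the paper exploits.

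\textbf{The missing idea} is to let semigroups of the \emph{same} operator merge, so that derivatives applied after the semigroup land on a kernel at the coarse scale. The paper (Section~\ref{A1:sec}) does this in three parts.
\begin{enumerate}
\item It changes operator only for $W'^{(m')}_t=(t\Lc')^{m'}\ee^{-t\Lc'}$. There $\Lc'^{m'}X\in U_{m'\dL'+\lambda}$ can be placed on $h_s$ (Steps~I and~III).
\item It treats the maximal operator only for a single operator, via $Y\ee^{-s'\Lc}W^{(m)}_sf=s^mY\ee^{-(s+s')\Lc}\Lc^mf$. This puts $Y$ on $h_{s+s'}$ (Step~II, with Lemma~\ref{lem:25bis}).
\item Step~IV writes $t^{m'}X'\ee^{-s\Lc'}Y'Xf$ as $t^{\deg Y'/\dL'}W'^{(m'-\deg Y'/\dL'),**}_{t,\eps}(Y'Xf)$. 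It bounds this by Step~II for $\Lc'$ and then returns to $\Lc$ by Step~III.
\end{enumerate}

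Within your single-decomposition scheme, one possible substitute is to Taylor-expand $r\mapsto\ee^{-r\Lc'}F$ as in Lemma~\ref{lem:9}, from $r=s$ to $r=s+v$ with $v\asymp u^{\dL'/\dL}$. Then $X'$ falls either on $h'_{s+v}$ or on $h'_r$ integrated against $(r-s)^{N-1}$, while the powers of $\Lc'$, which commute with the semigroup, move to the $\Lc$-side. Some step of this kind has to be supplied, and your proposal does not contain one.
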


      We conclude this section by recalling a result by Smulders~\cite[Theorem 2.2]{Smulders}
that will be used repeatedly in the sequel.
\begin{prop}\label{Smulders}
 Let $\Lc$ be a weighted subcoercive operator on $G$.  	Define 
	\[
	\Lambda(\phi)\coloneqq \Set{\ee^z\colon z\in \C, \abs{\Im z}<\phi}, \qquad \phi\in (0,\pi].
      \]
      Then,  the following
 hold.
 \begin{enumerate}   \item
For $p\in[1,\infty]$ and every $\omega\Meg \omega_0$,
$\Lc_\omega^{-1}$ induces an endomorphism of $ L^p(\beta)$;
\item the operator
  $\Lc_\omega$ has a bounded $H^\infty$ functional calculus in $L^p(\beta)$ on $\Lambda(\phi)$ for every $\omega\Meg \omega_0$ and $\phi\in (\pi/2-\theta, \pi]$. 
  \end{enumerate}
\end{prop}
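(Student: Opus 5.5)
The plan is to deduce Proposition~\ref{Smulders} from~\cite[Theorem 2.2]{Smulders}, which is formulated for weighted subcoercive operators acting on $L^p$ of a (right) Haar measure. The only new ingredient is a transfer to the relatively invariant measure $\beta$, uniformly in $p$. I will fix $p\in[1,\infty]$ and use the isometry
\[
M_p\colon L^p(\beta)\to L^p(\beta_R),\qquad M_p f\coloneqq \Delta_R^{1/p} f
\]
(with $M_\infty=I$), recalling that $\beta=\Delta_R\cdot\beta_R$. Conjugating, $\Lc$ on $L^p(\beta)$ is unitarily equivalent, in the Banach-space sense, to $\Lc^{(p)}\coloneqq M_p\Lc M_p^{-1}$ on $L^p(\beta_R)$, and $f\mapsto \Lc f$ commutes with this conjugation up to lower-order terms. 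Indeed, by Proposition~\ref{prop:9}~(5), $X\Delta_R^{-1/p}=(X\Delta_R^{-1/p})(e)\,\Delta_R^{-1/p}$ for $X\in\gf$, so the Leibniz rule gives $\Lc^{(p)}\in U(\gf)$ and $\Lc^{(p)}-\Lc\in U_{\dL^-}$. Hence $\Lc^{(p)}$ is weighted subcoercive with respect to the same filtration, since the defining condition~\eqref{LLstarU} only involves the class modulo $U_{\dL^-}$. The same argument was already used in the proof of Theorem~\ref{teo:7-1}.

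Applying~\cite[Theorem 2.2]{Smulders} to $\Lc^{(p)}$ on $L^p(\beta_R)$ gives an $\omega_0(p)$ and an angle $\theta$ such that, for $\omega\Meg\omega_0(p)$, $\Lc^{(p)}_\omega$ is invertible and has a bounded $H^\infty$ calculus on $\Lambda(\phi)$ for all $\phi>\pi/2-\theta$. Here $\theta$ is the holomorphy angle of the semigroup, which depends only on the principal part and hence not on $p$. Conjugating back by $M_p$ yields (1) and (2) for this fixed $p$, since $F(\Lc_\omega)=M_p^{-1}F(\Lc^{(p)}_\omega)M_p$.

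The main obstacle is to make $\omega_0$ independent of $p\in[1,\infty]$, since the lower-order part of $\Lc^{(p)}$ varies with $p$. For (1) I would avoid Smulders and argue directly. By Corollary~\ref{cor:1} with $p_1=p_2=p$, $X=Y=I$, there is $\omega$ with $\norm{\ee^{-t\Lc}}_{L^p(\beta)\to L^p(\beta)}\meg C\ee^{\omega t}$ for all $t>0$ and all $p$. The Laplace transform $\Lc_{\omega'}^{-1}=\int_0^\infty \ee^{-\omega' t}\ee^{-t\Lc}\,\dd t$ then converges in operator norm for every $\omega'>\omega$, uniformly in $p$.

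For (2), I would track the constants in Smulders' proof. His $\omega_0$ is controlled by the exponential type of the Gaussian bounds for the holomorphic semigroup of $\Lc^{(p)}$. These bounds are those of Theorem~\ref{teo:7} (extended to complex times in the sector, as in~\cite{Martini}) multiplied by $\Delta_R^{\pm1/p}\meg \ee^{c(\abs{x}_*+1)}$ (Lemma~\ref{lem:31}). By Lemma~\ref{lem:3}, this multiplication only worsens the exponential type by an amount bounded independently of $p$. Alternatively, I would obtain the result at $p=1$, $p=\infty$ and $p=2$ with a common $\omega_0$ and interpolate the $H^\infty$ estimates for each fixed bounded holomorphic $F$ via Riesz--Thorin, using that $F(\Lc_\omega)$ is defined consistently on $L^1\cap L^\infty$. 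I would try this interpolation route first, as it requires the least bookkeeping.
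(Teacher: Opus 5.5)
Your fixed-$p$ reduction is sound, and it is what the paper leaves implicit when it simply quotes \cite[Theorem 2.2]{Smulders}. Conjugating by the character $\Delta_R^{1/p}$ changes $\Lc$ only by an element of $U_{\dL^-}$, the same device used in the proofs of Theorem~\ref{teo:7-1} and Proposition~\ref{prop:5}. Your Laplace-transform proof of (1), uniform in $p\in[1,\infty]$, is the computation the paper itself carries out in the proof of Theorem~\ref{teo:5}. One citation fix: derive the bound $\norm{\ee^{-t\Lc}}_{L^p(\beta)\to L^p(\beta)}\meg C\ee^{\omega t}$ directly from Theorem~\ref{teo:7}, Lemmas~\ref{lem:31} and~\ref{lem:3}, and Young's inequality. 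Corollary~\ref{cor:1}, as printed with $p_1=p_2=p$, carries a factor $t^{-Q_*/(\dL p')}$, which could destroy integrability of the Laplace integral at $t=0$.

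The genuine error is in part (2) at the endpoints. A bounded $H^\infty$ calculus for $\Lc_\omega$ on $\Lambda(\phi)$ fails in general on $L^1(\beta)$ and on $L^\infty(\beta)$. Take $G=\R$ and $\Lc=-\partial_x^2$. The function $z\mapsto z^{i\gamma}$ is bounded on every $\Lambda(\phi)$. Yet for $\gamma\neq 0$ the convolution kernel of $(\Lc+\omega)^{i\gamma}$ behaves like $c_\gamma\abs{x}^{-1-2i\gamma}$, with $c_\gamma\neq 0$, near the origin. So it is not a finite measure, and $(\Lc+\omega)^{i\gamma}$ is unbounded on $L^1(\R)$, and by duality on $L^\infty(\R)$.

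Consequently (2) can only be meant for $p\in(1,\infty)$, which is how it is used in Theorem~\ref{teo:10}. This has two consequences for your proposal.
\begin{itemize}
\item Your sentence ``conjugating back by $M_p$ yields (1) and (2) for this fixed $p$'' overclaims when $p\in\{1,\infty\}$.
\item The Riesz--Thorin route you plan to try first has no endpoint estimates to interpolate from.
\end{itemize}

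If you really want an $\omega_0$ in (2) that is independent of $p$, the endpoint substitute is a weak type $(1,1)$ bound for $F(\Lc_\omega)$. You would obtain it from the Gaussian estimates by Calder\'on--Zygmund theory on small balls together with the exponential decay of the kernels away from the diagonal. You would then apply Marcinkiewicz interpolation with the $L^2$ calculus, and duality with $\Lc^*$. This essentially re-runs Smulders' argument while tracking constants.

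None of this is needed, though. Part (2) is only ever applied at a single fixed $p\in(1,\infty)$. The uniformity in $p$ required elsewhere, in Theorem~\ref{teo:5} and the corollary after it, concerns only (1), and your direct argument already provides it.
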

  
      \section{Besov and Triebel-Lizorkin Spaces}\label{BTL:sec}
We are now ready to define Besov and Triebel--Lizorkin spaces on $G$.

\begin{deff}\label{BTL:def}
Suppose $\alpha \in \R$, $m_{\alpha}= ([\alpha/\dL]+1)_{+}$, and  $p,q\in [1,\infty]$. We define the Besov space $B^{p,q}_\alpha(\beta)$ and, if $p\in (1,\infty)$, the Triebel--Lizorkin space $F^{p,q}_\alpha(\beta)$, as the space of $f\in \Sc'(G)$ such that respectively
\[
\norm{f}_{B^{p,q}_\alpha(\beta)}\coloneqq \norm{\ee^{-\Lc} f}_{L^p(\beta)} + \bigg( \int_{0}^{1} \big(t^{-\alpha/\dL}\| (t\Lc)^{m_{\alpha}} \ee^{-t\Lc} f\|_{L^p(\beta)}\big)^{q} \, \frac{\dd t}{t}\bigg)^{1/q}
\]
and
\[
\norm{f}_{F^{p,q}_\alpha(\beta)}\coloneqq \norm{\ee^{-\Lc} f}_{L^p(\beta)} + \bigg\|\bigg( \int_{0}^{1} t^{-\alpha/\dL} | (t \Lc)^{m_{\alpha}} \ee^{-t\Lc} f|^{q} \, \frac{\dd t}{t}\bigg)^{1/q}\bigg\|_{L^p(\beta)}
\]
is finite  (with obvious modifications when $q=\infty$).
\end{deff}

For notational convenience, for $\alpha \in \R$, $m\in\N$ with $m>\alpha/\dL$, and  $p,q\in [1,\infty]$, we define for $f\in \Sc'(G)$
	\[
	\Bs^{p,q}_{\alpha,m}(f)\coloneqq \norm{   t^{-\alpha/\dL} \norm{ W_t^{(m)} f }_{L^p(\beta)}  }_{L^q_t(\mi_1)},
	\]
	and, if $p\in (1,\infty)$,
	\[
	\Fs^{p,q}_{\alpha,m}(f)\coloneqq \big\|  \norm{ t^{-\alpha/\dL}  W_t^{(m)} f(x)  }_{L^q_t(\mi_1)}\big\|_{L^p_x(\beta)},
	\]
so that
	\[
	\norm{f}_{B^{p,q}_\alpha(\beta)} =\norm{\ee^{-\Lc} f}_{L^p(\beta)}+ \Bs^{p,q}_{\alpha,([\alpha/\dL]+1)_+}(f),
	\]
	and, if $p\in (1,\infty)$,
	\[
	\norm{f}_{F^{p,q}_\alpha(\beta)} = \norm{\ee^{-\Lc} f}_{L^p(\beta)}+ \Fs^{p,q}_{\alpha,([\alpha/\dL]+1)_+}(f).
	\] 
As a consequence of Proposition~\ref{lem:8bis}, any positive integer $m>\alpha/\dL$ provides an equivalent Besov or Triebel--Lizorkin norm; and more importantly, the definition of $B^{p,q}_\alpha(\beta)$ and $F^{p,q}_\alpha(\beta)$ does \emph{not} depend on $\Lc$, in the sense than any other weighted subcoercive operator on $G$ gives rise to the same spaces. Actually more is true: as we state  for convenience in the next theorem, the Besov and Triebel--Lizorkin norms are independent of the choice of $t_{0}$, $\mu$, $\Lc$ and $m>\alpha/\dL$, in the sense that different choices generate the same spaces with equivalences of norms. 

\begin{teo}  \label{teo:2}
Suppose $\alpha\in \R$, $\eps\in (0,1]$, $t_0\in [0,+\infty)$,  $\mi\in \Mc_\Samp$, and $p,q\in [1,\infty]$. Assume that $t_0>0$ if $\alpha\meg 0$. Let $\Lc'$ be a  weighted subcoercive operator on $G$ and suppose $m\in \N$ with $m>\alpha/\dL'$.	
	Then, there is a constant $C\geq 1$ such that  the following hold.
\begin{enumerate}
		\item[\textnormal{(i)}] For every $f\in \Sc'(G)$
\[
		\norm{f}_{B^{p,q}_\alpha(\beta)} \asymp_{C} \norm{\ee^{-t_0\Lc'} f}_{L^p(\beta)}+\big\|   t^{-\alpha/\dL'} \norm{ W_t'^{(m)} f }_{L^p(\beta)}  \big\|_{L^q_t(\mi)}
		\]
%		\[
%		C^{-1} \norm{f}_{B^{p,q}_\alpha(\beta)}\meg \norm{\ee^{-t_0\Lc'} f}_{L^p(\beta)}+\big\|   t^{-\alpha/\dL} \norm{ W_t'^{(m)} f }_{L^p(\beta)}  \big\|_{L^q_t(\mi)}\meg C \norm{f}_{B^{p,q}_\alpha(\beta)}
%		\]
		and
		\[
		  \norm{f}_{B^{p,q}_\alpha(\beta)} \asymp_{C} \norm{\ee^{-t_0\Lc'} f}_{L^p(\beta)}+\big\|   t^{-\alpha/\dL'} \norm{ W_{t,\eps}'^{(m),*} f }_{L^p(\beta)}  \big\|_{L^q_t(\mi)}.
		\]
%		\[
%		C^{-1} \norm{f}_{B^{p,q}_\alpha(\beta)}\meg \norm{\ee^{-t_0\Lc'} f}_{L^p(\beta)}+\big\|   t^{-\alpha/\dL} \norm{ W_{t,\eps}'^{(m),*} f }_{L^p(\beta)}  \big\|_{L^q_t(\mi)}\meg C \norm{f}_{B^{p,q}_\alpha(\beta)}.
%		\]		
		\item[\textnormal{(ii)}] If $p\in ( 1,\infty)$, then for every $f\in \Sc'(G)$
\[
		  \norm{f}_{F^{p,q}_\alpha(\beta)} \asymp_{C} \norm{\ee^{-t_0\Lc'} f}_{L^p(\beta)}+\big\|  \norm{   t^{-\alpha/\dL'}  W_t'^{(m)} f  (x)}_{L^q_t(\mi)}\big\|_{L^p_x(\beta)}
		\]
%		\[
%		C^{-1} \norm{f}_{F^{p,q}_\alpha(\beta)}\meg \norm{\ee^{-t_0\Lc'} f}_{L^p(\beta)}+\big\|  \norm{   t^{-\alpha/\dL}  W_t'^{(m)} f  (x)}_{L^q_t(\mi)}\big\|_{L^p_x(\beta)} \meg C \norm{f}_{F^{p,q}_\alpha(\beta)}
%		\]
		and
		\[
		\norm{f}_{F^{p,q}_\alpha(\beta)}\asymp_{C} \norm{\ee^{-t_0\Lc'} f}_{L^p(\beta)}+\big\| \norm{   t^{-\alpha/\dL'}  W_{t,\eps}'^{(m),*} f  (x)}_{L^q_t(\mi)}\big\|_{L^p_x(\beta)}.
		\]
%		\[
%		C^{-1} \norm{f}_{F^{p,q}_\alpha(\beta)}\meg \norm{\ee^{-t_0\Lc'} f}_{L^p(\beta)}+\big\| \norm{   t^{-\alpha/\dL}  W_{t,\eps}'^{(m),*} f  (x)}_{L^q_t(\mi)}\big\|_{L^p_x(\beta)} \meg C \norm{f}_{F^{p,q}_\alpha(\beta)}.
%		\]
	\end{enumerate}
\end{teo}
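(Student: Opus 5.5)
The plan is to deduce Theorem~\ref{teo:2} entirely from Proposition~\ref{lem:8bis}, applied with $X=I$ (so $\lambda=0$, $\abs{X}=1$), used twice with the roles of $\Lc$ and $\Lc'$ exchanged, together with the elementary pointwise bound $\abs{W_t'^{(m)}f}\meg W'^{(m),*}_{t,\eps}f$. I treat only (ii); (i) is identical with the Besov half of Proposition~\ref{lem:8bis}.

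\emph{Upper bounds.} Apply Proposition~\ref{lem:8bis}~(ii) with $\Lc'$ in the role of the "new" operator, $m'=m>\alpha/\dL'$, the given $t_0$ and $\eps$, the Carleson measure $\mi\in\Mc_\Samp\subseteq\Mc_\Car$, and on the right-hand side $\Lc$, $t_1=1$, $m=m_\alpha=([\alpha/\dL]+1)_+$ and $\nu=\mi_1\in\Mc_\RC$. This gives
\[
\norm{\ee^{-t_0\Lc'}f}_{L^p(\beta)}+\big\|\norm{t^{-\alpha/\dL'}W'^{(m),*}_{t,\eps}f(x)}_{L^q_t(\mi)}\big\|_{L^p_x(\beta)}\meg C\norm{f}_{F^{p,q}_\alpha(\beta)}.
\]
The pointwise bound $\abs{(t\Lc')^m\ee^{-t\Lc'}f}\meg W'^{(m),*}_{t,\eps}f$ holds because $\Lc'^m\in U_{m\dL'}$ (take $s=t$, $X=\Lc'^m/\abs{\Lc'^m}$, $Y=I$, up to a constant). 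It yields the corresponding bound for the non-starred quantity.

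\emph{Lower bounds.} Apply Proposition~\ref{lem:8bis}~(ii) again, now with the original $\Lc$ in the "new" role. Take $m'=m_\alpha>\alpha/\dL$, $t_0=1$, $\mi_1\in\Mc_\Car$ on the left; and on the right take $\Lc'$, $t_1=t_0$, the given $m$, and $\nu=\mi\in\Mc_\RC$. This bounds
\[
\norm{\ee^{-\Lc}f}_{L^p(\beta)}+\big\|\norm{t^{-\alpha/\dL}W^{(m_\alpha),*}_{t,1}f(x)}_{L^q_t(\mi_1)}\big\|_{L^p_x(\beta)}
\]
by $C$ times the non-starred $\Lc'$-quantity. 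Since $W^{(m_\alpha),*}_{t,1}f\Meg c\abs{W^{(m_\alpha)}_tf}$, the left side dominates $\norm{f}_{F^{p,q}_\alpha(\beta)}$. The non-starred $\Lc'$-quantity is in turn dominated by the starred one, which closes the chain of equivalences.

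\emph{Main obstacle: the hypotheses.} The delicate point is checking that every application satisfies the assumptions of Proposition~\ref{lem:8bis}: $t_0>0$ is needed when $\alpha\meg0$, which holds because we are given $t_0>0$ in that case and use $t_0=1$ otherwise; and the measures must lie in the correct classes, which is exactly why $\mi$ is taken in $\Mc_\Samp=\Mc_\Car\cap\Mc_\RC$, as it appears once on each side. One should also check that the $t_1=0$ case, i.e.\ $\norm{f}_{L^p}$, is admissible on the right; this is allowed since $t_1\Meg0$. Beyond that, all the analytic content is contained in Proposition~\ref{lem:8bis}, so no further estimates are needed.
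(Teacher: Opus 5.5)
Your proposal is correct and matches the paper, which gives no separate proof and obtains Theorem~\ref{teo:2} directly from Proposition~\ref{lem:8bis} with $X=I$ and $\lambda=0$. As you do, it applies the proposition once in each direction, exchanging $\Lc$ and $\Lc'$, so that $\mi$ is used as a Carleson measure on one side and as a reverse Carleson measure on the other, and it closes the chain with the pointwise bound $\abs{W'^{(m)}_t f}\meg \abs{\Lc'^m}\, W'^{(m),*}_{t,\eps}f$.
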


As mentioned already, Theorem~\ref{teo:2} implies that the spaces $F^{p,q}_\alpha(\beta)$ and $B^{p,q}_\alpha(\beta)$ are \emph{intrinsic} in the sense that they do not depend on the specific weighted subcoercive operator used to define them; however, they do depend on the measure $\beta$ and the chosen filtration on $G$. In particular, if $\gf_1$ is a vector subspace of $\gf$ which generates $\gf$ as a Lie algebra, and if $(\gf_\lambda)$ is the filtration generated by $\gf_1$, then we recover the spaces introduced in~\cite{BPV} defined via sub-Laplacians with drift $\Delta_{\chi}$. By Theorem~\ref{teo:2}, these can be characterized by means of a broader class of operators (recall Remark~\ref{comparison}).

\begin{prop}\label{prop:5}
Suppose $\alpha\in \R$ and $p,q\in [1,\infty]$. Then 
\begin{enumerate}
\item[\textnormal{(i)}]  $B^{p,q}_\alpha(\beta)= \Delta_L^{-1/p} B^{p,q}_\alpha(\beta_L)$, and
\item[\textnormal{(ii)}]  $F^{p,q}_\alpha(\beta)= \Delta_L^{-1/p} F^{p,q}_\alpha(\beta_L)$  if $p\in (1,\infty)$.
\end{enumerate}
\end{prop}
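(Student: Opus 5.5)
The plan is to conjugate by the positive character $\chi\coloneqq \Delta_L^{1/p}$ (with $\chi\equiv 1$ if $p=\infty$) and then invoke Theorem~\ref{teo:2} on $G$ endowed with $\beta_L$. The starting point is $\beta=\Delta_L\cdot\beta_L$, which gives
\[
\norm{g}_{L^p(\beta)}=\norm{\chi g}_{L^p(\beta_L)}
\]
for every measurable $g$ (and every $g\in\Sc'(G)$, in the extended sense of the Remark following the definition of $T_{b,t}$). The claim then amounts to $f\in B^{p,q}_\alpha(\beta)$ if and only if $\chi f\in B^{p,q}_\alpha(\beta_L)$, with equivalent norms, and likewise for $F$.

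\textbf{Step 1: multiplication by $\chi$.} By Lemma~\ref{lem:31}, $\chi^{\pm1}$ grows at most like $\ee^{c\abs{\cdot}_*}$. By Proposition~\ref{prop:9}~(5), left-invariant derivatives of $\chi^{\pm1}$ are multiples of $\chi^{\pm1}$. Hence multiplication by $\chi^{\pm1}$ is an automorphism of $\Sc(G)$ (by Theorem~\ref{teo:8}~(5)) and, by duality, of $\Sc'(G)$.

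\textbf{Step 2: the conjugated operator.} Define $\Lc'\coloneqq \chi\Lc\chi^{-1}$. By Proposition~\ref{prop:9}~(5), for $X\in\gf$ we have $X(\chi^{-1}g)=\chi^{-1}(X-(X\chi)(e))g$. Thus $\Lc'\in U(\gf)$ is left-invariant and $\Lc'-\Lc\in U_{\dL^-}$.

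Since $\beta$ and $\beta_L$ differ by a character, Proposition~\ref{prop:9}~(1) shows that the formal adjoints of $\Lc'$ with respect to $\beta$ and to $\beta_L$ also differ from $\Lc^*$ by elements of $U_{\dL^-}$. Consequently $\Lc'+\Lc'^{*}+U_{\dL^-}$ equals $\Lc+\Lc^*+U_{\dL^-}$, so $\Lc'$ is weighted subcoercive when $G$ is endowed with $\beta_L$.

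\textbf{Step 3: the conjugated semigroup.} Next identify the semigroup of $\Lc'$ on $L^2(\beta_L)$ with $\chi\,\ee^{-t\Lc}\chi^{-1}$ acting on $\Sc'(G)$. Since $\chi$ is a character, $\chi(x)=\chi(y)\chi(y^{-1}x)$, and therefore
\[
\chi\,(g*h_t)=(\chi g)*(\chi h_t)
\]
as distributions (convolution of distributions does not depend on the measure). So $\chi\,\ee^{-t\Lc}(\chi^{-1}g)=g*(\chi h_t)$, and Lemma~\ref{lem:9} gives $\partial_t\bigl(g*(\chi h_t)\bigr)=-\Lc'\bigl(g*(\chi h_t)\bigr)$ with initial value $g$.

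Uniqueness for the Cauchy problem on $L^2(\beta_L)$ then identifies $\chi h_t$ with the convolution kernel of $\ee^{-t\Lc'}$. I would use that the closure of $\Lc'$ on $C^\infty_c$ generates the semigroup, and test against $\ee^{-s\Lc'^*}\phi$ in the standard way. Consequently, for all $f\in\Sc'(G)$, $t>0$ and $m\in\N$,
\[
W'^{(m)}_t(\chi f)=\chi\, W^{(m)}_t f \qquad\text{and}\qquad \ee^{-\Lc'}(\chi f)=\chi\,\ee^{-\Lc}f.
\]

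\textbf{Step 4: conclusion.} Taking $L^p$ norms pointwise in $t$ (or inside the $L^q_t(\mi_1)$ norm before the $L^p$ norm), the defining norm of $f$ in $B^{p,q}_\alpha(\beta)$, respectively $F^{p,q}_\alpha(\beta)$, built with $\Lc$ coincides exactly with the norm of $\chi f$ built with $\Lc'$ with respect to $\beta_L$. The latter norm is equivalent to $\norm{\chi f}_{B^{p,q}_\alpha(\beta_L)}$, respectively $\norm{\chi f}_{F^{p,q}_\alpha(\beta_L)}$, by Theorem~\ref{teo:2} applied on $(G,\beta_L)$ with $t_0=1$, $\mi=\mi_1$ and the weighted subcoercive operator $\Lc'$. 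This proves both (i) and (ii).

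The main obstacle is Step 3: rigorously identifying $\chi h_t$ as the heat kernel of $\Lc'$ relative to $\beta_L$. This requires care about the density conventions, since identifying $L^1_\loc$ functions with distributions depends on the measure, and about uniqueness of the semigroup. I would handle it through the distributional convolution identity above together with the generator characterization from~\cite[Theorem 2.3]{Martini}.
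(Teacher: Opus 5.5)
Your proposal is correct and is essentially the paper's proof: conjugate by $\chi=\Delta_L^{1/p}$, note via Proposition~\ref{prop:9} that $\Lc'=\chi\Lc\chi^{-1}$ differs from $\Lc$ by an element of $U_{\dL^-}$ (hence is weighted subcoercive), use $\chi\,\ee^{-t\Lc}=\ee^{-t\Lc'}\chi$ together with $\beta=\Delta_L\cdot\beta_L$, and conclude by Theorem~\ref{teo:2}. On the point you flagged: these identities, like the statement itself, hold for densities, i.e.\ for the transport $F\cdot\beta\mapsto \chi F\cdot\beta_L$, which on $\Sc'(G)$ is multiplication by $\Delta_L^{-1/p'}$ and not by $\chi$; read literally on distributions, your parenthetical claim $\norm{g}_{L^p(\beta)}=\norm{\chi g}_{L^p(\beta_L)}$ and your identification of $(\chi h_t)\cdot\beta$ (instead of $(\chi h_t)\cdot\beta_L$) as the $L^2(\beta_L)$-kernel of $\ee^{-t\Lc'}$ are each off by a factor $\Delta_L$.
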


\begin{proof}
	This follows from Theorem~\ref{teo:2}, once we observe that $\beta=\Delta_L \cdot \beta_L$  and 
	\[
	\Delta_L^{1/p}\ee^{- t \Lc }=\ee^{-t \Lc'} \Delta_L^{1/p},
	\]
	where $\Lc'$ is the left-invariant differential operator defined so that $\Lc' \phi= \Delta_L^{1/p} \Lc(\Delta_L^{-1/p}\phi)$ for every $\phi\in C^\infty(G)$; observe that $\deg(\Lc-\Lc')<\dL$ by Proposition~\ref{prop:9}, so that $\Lc'$ is in fact weighted subcoercive.
\end{proof}
We now prove that the spaces $B^{p,q}_\alpha(\beta)$ and $F^{p,q}_\alpha(\beta)$ defined above are actually Banach spaces.  To do so, we shall identify such spaces with suitable Banach spaces. Observe that for $f\in \Sc'(G)$, if $f_{0} = \ee^{-\Lc}f$ and $f_{t} = W^{(m)}_{t} f $ for $t>0$ and $m\in\N$,  one has
\[
(t \Lc)^m f_0= \ee^{-(1-t)\Lc} f_t  \qquad \mbox{if } 0<t\leq 1,
\]
while
\[
f_t= (t \Lc)^m \ee^{-(t-1)\Lc} f_0 \qquad \mbox{if } t>1.
\]

Therefore, we may identify the family $(f_{t})_{t\geq 0}$ (in terms of which Besov and Triebel--Lizorkin norms are defined) with a suitable subspace  of 
\begin{equation}\label{Fm}
	\Fc_m:= \bigg\{ (f_t)\in \Sc'(G)^{[0,+\infty)}\colon 
	\begin{cases}
	(t \Lc)^m f_0= \ee^{-(1-t)\Lc} f_t  \quad &\mbox{if } t\in (0,1],\\
	f_t= (t \Lc)^m \ee^{-(t-1)\Lc} f_0 &\mbox{if } t\in [1,+\infty)
	\end{cases}
	\bigg\}.
\end{equation}
Notice moreover that for $f\in \Sc'(G)$, if again $f_{0} = \ee^{-\Lc}f$ and $f_{t} = W^{(m)}_{t} f $ for $t>0$, and $m = ([\alpha/\dL]+1)_{+}$,  then
\begin{align*}
\norm{f}_{F^{p,q}_\alpha(\beta)} 
%&= \norm{\ee^{-\Lc} f}_{L^p(\beta)}+\big\|   t^{-\alpha/\dL} \norm{ W_t^{(m)} f }_{L^p(\beta)}  \big\|_{L^q_t(\mi)}\\
& =\norm{f_{0}}_{L^p(\beta)}+\big\|   \norm{t^{-\alpha/\dL} f_{t} }_{L^p(\beta)}  \big\|_{L^q_t(\mi)}\\
& =  \big\| \norm{f_{t}}_{L^p(\beta)} \big\|_{L^{q}_{t}(\delta_{0})}+\big\|   \norm{t^{-\alpha/\dL} f_{t} }_{L^p(\beta)}  \big\|_{L^q_t(\mi)} \asymp  \big\|   \norm{ \theta_\alpha  (\,\cdot\,,t) f_{t} }_{L^p(\beta)}  \big\|_{L^q_t(\mi+ \delta_{0})} 
\end{align*}
where  $\delta_{0}$ is the Dirac delta at $0$, and
\begin{equation}\label{thetaalpha}
\theta_\alpha  (x,t) =
\begin{cases}
 1   \quad &\mbox{if }  t=0 \\
 t^{\alpha/\dL}   &\mbox{if }  t\in (0,+\infty)
 \end{cases}
, \quad x\in G.
\end{equation}
Note  that $\mi+\delta_0$ may be interpreted as a Radon measure on
\begin{equation}\label{topological-sum:eq}
R\coloneqq \{0\} \coprod(0,+\infty),
  \end{equation}
  that is, the  topological
  sum of $\{0\}$ and $(0,+\infty)$.  With this topology, $\theta_\alpha$ becomes a continuous function.

Then we have the following  lemma, which will also be of use to deal with duality and interpolation (cf.~\cite[Theorem 6.1]{BPV}). The notation introduced in~\eqref{Fm} and~\eqref{thetaalpha} will be fixed from now on.

\begin{lem} \label{lem:22}
	Suppose $m\in\N$, and consider the map
	\[
	T_m\colon \Sc'(G)\to \Fc_m, \qquad (T_m f)_{t} =
	\begin{cases}
	\ee^{-\Lc}f \qquad &\mbox{if } t=0,\\
	W^{(m)}_{t} f &\mbox{if } t\in (0,+\infty).
	\end{cases}
	\]
Take $p,q\in [1,\infty]$, $\mi\in \Mc_\Samp$, and  $\alpha<m\dL$. Then $T_m$ induces an isomorphism
	\begin{enumerate}
		\item[\textnormal{(i)}]  of $B^{p,q}_\alpha(\beta)$ onto $(\theta_\alpha L^{p,q}(\beta,\mi+\delta_0))\cap \Fc_m$, and

		\item[\textnormal{(ii)}] of $F^{p,q}_\alpha(\beta)$ onto $(\theta_\alpha  L^{q,p}(\mi+\delta_0,\beta))\cap \Fc_m$, if $p\in (1,\infty)$.
	\end{enumerate}
\end{lem}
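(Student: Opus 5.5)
The map $T_m$ is linear and well defined with values in $\Fc_m$; this is just the semigroup identities recorded before~\eqref{Fm}, together with Lemma~\ref{lem:9}. The plan is to prove that $T_m$ is \emph{isometric up to constants} for the relevant norms, that it is injective, and that it is surjective onto the stated intersections.

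\textbf{Norm equivalence.} For $f\in\Sc'(G)$, the computation preceding the lemma shows that
\[
\norm{T_m f}_{\theta_\alpha L^{p,q}(\beta,\mi+\delta_0)}\asymp \norm{\ee^{-\Lc}f}_{L^p(\beta)}+\big\|t^{-\alpha/\dL}\norm{W^{(m)}_t f}_{L^p(\beta)}\big\|_{L^q_t(\mi)}.
\]
The analogous statement holds for the $F$-norm with the mixed norm in the other order. By Theorem~\ref{teo:2}, applied with $\Lc'=\Lc$, $t_0=1$, $m>\alpha/\dL$ and $\mi\in\Mc_\Samp$, this quantity is equivalent to $\norm{f}_{B^{p,q}_\alpha(\beta)}$, respectively $\norm{f}_{F^{p,q}_\alpha(\beta)}$. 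Hence $T_m$ maps $B^{p,q}_\alpha(\beta)$ into $(\theta_\alpha L^{p,q}(\beta,\mi+\delta_0))\cap\Fc_m$ with two-sided norm bounds, and similarly for $F$.

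\textbf{Injectivity.} If $T_m f=0$, then $\ee^{-\Lc}f=0$. The uniqueness part of Lemma~\ref{lem:27}, or directly the injectivity of $\ee^{-\Lc}$ on $\Sc'(G)$ (which follows from $f=\lim_{t\to0}\ee^{-t\Lc}f$ and the semigroup property, via Lemma~\ref{lem:9}), then gives $f=0$. The argument must be arranged so that it does not need $\ee^{-\Lc}$ to be invertible; instead it uses that $\ee^{-s\Lc}f=\ee^{-(s-1)\Lc}\ee^{-\Lc}f=0$ for $s\ge1$, together with the smoothness of $s\mapsto\ee^{-s\Lc}f$ in $\Sc'$. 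Alternatively, injectivity follows from the lower norm bound, since the quasi-norm vanishes.

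\textbf{Surjectivity.} This is the main step. Take $(f_t)\in\Fc_m$ with finite mixed norm. I would construct $f$ with $T_m f=(f_t)$ by means of Lemma~\ref{lem:27}. Define $g_t\coloneqq \ee^{-t\Lc}\,t^{-m}$-compatible data; concretely, set
\[
u_s\coloneqq \ee^{-(s-\tau)\Lc}\big(\tau^{-m}\text{-normalised }f_\tau\big)
\]
so that $(u_s)_{s>0}$ satisfies $u_{s+r}=\ee^{-r\Lc}u_s$ and $\Lc^m u_t=t^{-m}f_t$ for $t\in(0,1]$. The definition of $\Fc_m$ guarantees consistency: $\ee^{-(t'-t)\Lc}f_t/t^m=f_{t'}/t'^m$ up to the identity $(t\Lc)^mf_0=\ee^{-(1-t)\Lc}f_t$. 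The subtle point is that $\Lc^m$ need not be invertible, so $u_s$ cannot simply be defined as $\Lc^{-m}$ of something. I would define $u_s=\ee^{-(s-1)\Lc}f_0$ for $s\ge1$, and check that $\Lc^m u_t$ coincides with $t^{-m}f_t$ for $t\in(0,1]$ via the $\Fc_m$ relations. For $s<1$, I would define $u_s$ through the Taylor formula used in the proof of Lemma~\ref{lem:27}, namely $\sum_{h<m}\frac{(1-s)^h}{h!}\Lc^h f_0$ plus an integral of $f_r$. This yields a semigroup-compatible family with $\norm{t^{\alpha/\dL}\cdot t^{m}\Lc^m u_t}$ finite in $L^{p,q}(\beta,\mi)$, respectively $L^{q,p}(\mi,\beta)$.

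Lemma~\ref{lem:27} then gives $f\in\Sc'(G)$ with $u_t=\ee^{-t\Lc}f$. Consequently $\ee^{-\Lc}f=f_0$ and $W^{(m)}_tf=f_t$, using the $\Fc_m$ relations for $t>1$. The norm equivalence from the first step then places $f$ in the corresponding Besov or Triebel--Lizorkin space.

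The main obstacle is the construction of the family $(u_s)$ for $s\in(0,1)$: checking that the Taylor-type integral converges in $\Sc'$ and satisfies the semigroup law.
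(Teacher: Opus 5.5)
Your overall architecture is the paper's: the norm equivalence comes from Theorem~\ref{teo:2}, and surjectivity comes from building a semigroup-compatible family via the Taylor formula of Lemma~\ref{lem:9} and then invoking Lemma~\ref{lem:27}. The genuine gap is the injectivity of $\ee^{-t\Lc}$ on $\Sc'(G)$, and your justification of it does not work. Knowing that $\ee^{-s\Lc}f=0$ for $s\geq 1$ and that $s\mapsto \ee^{-s\Lc}f$ is smooth in $\Sc'(G)$ says nothing about $s<1$: a smooth vector-valued function can vanish on $[1,\infty)$ without vanishing on $(0,1)$. What you need is backward uniqueness, which comes from holomorphy of the semigroup, not from smoothness. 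The paper gets it from injectivity of $\ee^{-t'\Lc}$ on $L^2(\beta)$ together with a transposition/density argument that passes to $\Sc'(G)$. The uniqueness clause of Lemma~\ref{lem:27} does not help either, because it presupposes that $\ee^{-t\Lc}f$ is known for \emph{all} $t>0$.

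The gap matters for surjectivity, not for injectivity of $T_m$. Injectivity of $T_m$ is harmless: if $\ee^{-\Lc}f=0$ and $W^{(m)}_tf=0$ for all $t$, then $t\mapsto \ee^{-t\Lc}f$ is a polynomial of degree $<m$ whose derivatives all vanish at $t=1$, so $f=0$. Surjectivity is different, because for $t<1$ the definition of $\Fc_m$ only prescribes $\ee^{-(1-t)\Lc}f_t$, not $f_t$ itself. Each of the following requires cancelling a heat operator:
\begin{itemize}
\item the consistency relation $f_{t+s}=(1+t/s)^m\ee^{-t\Lc}f_s$, which you say ``the definition of $\Fc_m$ guarantees'';
\item the continuity of $s\mapsto f_s$, needed both for your integral and for measurability;
\item the semigroup law for your family;
\item the final identification $(t\Lc)^m g_t=f_t$, which comes from $\ee^{-(1-t)\Lc}(t\Lc)^m g_t=(t\Lc)^m f_0=\ee^{-(1-t)\Lc}f_t$.
\end{itemize}
Once injectivity is available, the step you left open is a computation. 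Set
\[
g_t=\sum_{h=0}^{m-1}\frac{(1-t)^h}{h!}\Lc^h f_0+\frac{1}{(m-1)!}\int_t^1 f_s\Big(1-\frac{t}{s}\Big)^{m-1}\frac{\dd s}{s}\quad (t<1),\qquad g_t=\ee^{-(t-1)\Lc}f_0\quad (t\geq 1).
\]
Then $\ee^{-t'\Lc}g_t=g_{t+t'}$ follows from the consistency relation, a change of variables, and an integration by parts on $[1,1+t']$, using $f_s=(s\Lc)^m\ee^{-(s-1)\Lc}f_0$ there. The case $m=0$ must be treated separately; there Lemma~\ref{lem:27} applies directly to $(f_t)_{t>0}$.

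Finally, discard your first tentative definition of $u_s$. Normalising by $\tau^{-m}$ produces $s^{-m}f_s$, which is semigroup-compatible but satisfies $\Lc^m u_t=t^{-m}\Lc^m f_t$, not $t^{-m}f_t$.
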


\begin{proof} 
	Observe first that, if $(f_t)\in \Fc_m$, then the mapping $G\times (0,+\infty)\ni (x,t)\mapsto f_t(x)$ is well defined and continuous. This follows from the fact that 
	\[
	f_{t+s}=(1+t/s)^m\ee^{-t\Lc} f_s, \qquad t,s>0,
	\]
which is a consequence of the conditions which define $\Fc_m$ and the injectivity of $\ee^{-t'\Lc}$ on $\Sc'(G)$, for every $t'>0$.\footnote{Since $\ee^{-t' \Lc}$ is one-to-one on $L^2(\beta)$, by (sesquilinear) transposition $\ee^{-t'\Lc^*}L^2(\beta)$ is dense in $L^2(\beta)$ for the weak topology. Consequently, $\ee^{-t'\Lc^*}\Sc'(G)\supseteq \ee^{-t'\Lc^*}L^2(\beta)$ is dense in $\Sc'(G)$ for the weak topology $\sigma(\Sc'(G),\Sc(G))$, since $L^2(\beta)$ is dense in $\Sc'(G)$ and the inclusion $L^2(\beta)\subseteq \Sc'(G)$ is continuous for the weak topologies. Consequently, by (sesquilinear) transposition we see that $\ee^{-t'\Lc}$ is one-to-one on $\Sc'(G)$.} Consequently, there are no measurability issues.
	
	(i) Notice that by Theorem~\ref{teo:2}, $T_m$ induces an isomorphism of  $B^{p,q}_\alpha(\beta)$ onto a subspace of $ (\theta_\alpha L^{p,q}(\beta,\mi+\delta_0))\cap \Fc_m$. We only need to prove that $T_m$ is actually surjective, that is, for $(f_t)\in (\theta_\alpha L^{p,q}(\beta,\mi+\delta_0))\cap \Fc_m$ we need to show that there is $f\in \Sc'(G)$ such that $(f_t)=T_m f$.  If $m=0$, then the assertion follows directly from Lemma~\ref{lem:27}. If, otherwise, $m\Meg 1$, then in view of Lemma~\ref{lem:9} it is tempting to set 
	\[
	f\coloneqq \sum_{h=0}^{m-1}\frac{1}{h!} \Lc^h f_0+ \frac{1}{(m-1)!} \int_0^1 f_t \,\frac{\dd t}{t}.
	\]
However, in order to show that the above $f$ is well defined and has the required properties, it is convenient to first define
\[
g_t \coloneqq 
	\begin{cases}
	\displaystyle \sum_{h=0}^{m-1}\frac{(1-t)^h}{h!} \Lc^h f_0+\frac{1}{(m-1)!} \int_t^1 f_s (1-t/s)^{m-1} \,\frac{\dd s}{s}  \qquad &\mbox{if } t\in (0,1)\\
\ee^{-(t-1)\Lc} f_0  &\mbox{if } t\in [1,\infty).
	\end{cases}
\]
Then $g_t$ is well defined (and belongs to $L^p(\beta)$) for every $t\in (0,1)$ as $m\geq 1$. We first prove that
\begin{equation}\label{gtt'}
\ee^{-t'\Lc} g_t=g_{t+t'}
\end{equation}
for every $t,t'>0$. This is clear if $t\Meg 1$. Then, assume that $t\in (0,1)$. Assume first that $t+t'\meg 1$. Then,
	\[
	\begin{split}
		\ee^{-t'\Lc} g_t&=\sum_{h=0}^{m-1}\frac{(1-t)^h}{h!} \Lc^h \ee^{-t' \Lc}f_0+\frac{1}{(m-1)!} \int_t^1 \ee^{-t'\Lc}f_s (1-t/s)^{m-1} \,\frac{\dd s}{s} \\
			&=	\sum_{h=0}^{m-1}\frac{(1-t)^h}{h!} \Lc^h \ee^{-t' \Lc}f_0+\frac{1}{(m-1)!} \int_t^1  f_{t'+s} (1+t'/s)^{-m}(1-t/s)^{m-1} \,\frac{\dd s}{s} \\
			&=	\sum_{h=0}^{m-1}\frac{(1-t)^h}{h!} \Lc^h \ee^{-t' \Lc}f_0+\frac{1}{(m-1)!} \int_t^1  f_{t'+s} (s+t')^{-m}(s-t)^{m-1} \,\dd s\\
			&=	\sum_{h=0}^{m-1}\frac{(1-t)^h}{h!} \Lc^h \ee^{-t' \Lc}f_0+\frac{1}{(m-1)!} \int_{t+t'}^{1+t'}  f_{s} s^{-m}(s-t-t')^{m-1} \,\dd s.
	\end{split}
	\]	
	It remains to observe that, since 
	\[
	\Lc^{m} \ee^{-(s-1)\Lc} f_{0} = (-1)^{m} \frac{\dd^{m}}{\dd s^{m}}\ee^{-(s-1)\Lc} f_{0}
	\]
	by Lemma~\ref{lem:9},
\begin{align*}
 \frac{1}{(m-1)!} \int_{1}^{1+t'} \!\!\! f_{s} s^{-m}(s-t-t')^{m-1} \,\dd s 
 & = \frac{1}{(m-1)!} \int_{1}^{1+t'}\!\!\! \Lc^{m} \ee^{-(s-1)\Lc} f_{0} (s-t-t')^{m-1} \,\dd s \\
 &= \sum_{h=0}^{m-1}\frac{(1-t-t')^h}{h!} \Lc^h  f_0 - \sum_{h=0}^{m-1}\frac{(1-t)^h}{h!} \Lc^h \ee^{-t' \Lc}f_0,
 \end{align*}
as one can obtain integrating by parts. Then~\eqref{gtt'} follows. The case $t+t'> 1$ then follows too, since 
	\[
	\ee^{-t'\Lc} g_t= \ee^{-(t'+t-1)\Lc} \ee^{-(1-t)\Lc} g_t=\ee^{-(t'+t-1)\Lc} g_1= g_{t+t'}.
	\]
	Let us then prove that $f_t=(t\Lc)^m g_t$ for every $t>0$. This is clear when $t\Meg 1$. If, otherwise, $t<1$, then observe that 
	\[
	\ee^{-(1-t)}f_t=(t \Lc)^m f_0= \ee^{-(1-t)} (t\Lc)^m g_t,
	\] so that the assertion follows from the injectivity of $\ee^{-(1-t)\Lc}$ on $\Sc'(G)$ as before. Therefore, 
	\[
	\norm{t^{m-\alpha} (\Lc^m g_t)(x)}_{L^{p,q}_{x,t}(\beta,\mi)}=\norm{t^{-\alpha} f_t(x)}_{L^{p,q}_{x,t}(\beta,\mi)}
	\]
	is finite, so that Lemma~\ref{lem:27} ensures the existence of $g\in \Sc'(G)$ such that $\ee^{-t\Lc} g=g_t$ for every $t>0$, so that $(f_t)=T_m g$. One may then verify, \emph{a posteriori}, that $f=g$.
	
	The case (ii) is analogous (and follows from this one by means of Proposition~\ref{prop:1}).
\end{proof}

\begin{prop}\label{prop:3}
If $\alpha\in \R$ and $p,q\in [1,\infty]$, then $B^{p,q}_\alpha(\beta)$ is a Banach space. If in addition $p\in (1,\infty)$, then $F^{p,q}_\alpha(\beta)$ is a Banach space.
\end{prop}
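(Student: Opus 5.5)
The plan is to deduce completeness from Lemma~\ref{lem:22}. With $m=m_\alpha=([\alpha/\dL]+1)_+$ (so $\alpha<m\dL$) and $\mi=\mi_1\in\Mc_\Samp$, the map $T_m$ is an isomorphism of $B^{p,q}_\alpha(\beta)$ onto $(\theta_\alpha L^{p,q}(\beta,\mi+\delta_0))\cap\Fc_m$. Hence it suffices to show that this intersection is a closed subspace of the Banach space $\theta_\alpha L^{p,q}(\beta,\mi+\delta_0)$. This space is isometric to the mixed-norm Lebesgue space $L^{p,q}(\beta,\mi+\delta_0)$ on $G\times R$, with $R$ as in~\eqref{topological-sum:eq}, via multiplication by $\theta_\alpha^{-1}$. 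The Triebel--Lizorkin case is handled in the same way, using part (ii) of Lemma~\ref{lem:22} and $L^{q,p}(\mi+\delta_0,\beta)$ with $p\in(1,\infty)$.

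To prove closedness, take a sequence $(f^{(n)}_t)$ in the intersection converging in $\theta_\alpha L^{p,q}(\beta,\mi+\delta_0)$ to some $(f_t)$. The elements of $\Fc_m$ are, by the proof of Lemma~\ref{lem:22}, jointly continuous on $G\times(0,+\infty)$. However, the limit is only an a.e.-defined element of the mixed-norm space, so we must extract a genuine family in $\Fc_m$ from it.

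We will work with the component at $t=0$. Convergence in the norm gives $f^{(n)}_0\to f_0$ in $L^p(\beta)$, hence in $\Sc'(G)$, and $\ee^{-s\Lc}$ and $\Lc^m$ are continuous on $\Sc'(G)$. Define the candidate $\tilde f_t$ by the relations in~\eqref{Fm}: for $t\geq1$, set $\tilde f_t=(t\Lc)^m\ee^{-(t-1)\Lc}f_0$; for $t\in(0,1)$, first note that $f^{(n)}_t=(1+\tfrac{t-s}{s})^m\ee^{-(t-s)\Lc}f^{(n)}_s$ for $s<t$.

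The key step is local uniform control: we need that for fixed $t\in(0,1)$, $f^{(n)}_t$ is Cauchy in $\Sc'(G)$. We will average the identity above over $s\in[t/2,t)$ against $\mi$. Using Corollary~\ref{cor:1} to bound $\ee^{-(t-s)\Lc}$ between $L^p$ spaces, we get
\[
\norm{f^{(n)}_t-f^{(k)}_t}_{L^{p}(\beta)}\lesssim_t\norm{\theta_\alpha^{-1}(f^{(n)}-f^{(k)})}_{L^{p,q}(\beta,\mi+\delta_0)}.
\]
Here we use that $\mi_1$ gives positive mass to $[t/2,t)$, the Hölder inequality in $s$, and the fact that $t-s$ is bounded away from $0$ after shrinking the averaging window to $[t/4,t/2]$. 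Alternatively, Lemma~\ref{lem:26} with a Dirac-type reverse Carleson comparison gives the same bound. In the Triebel--Lizorkin case we first pass through Minkowski's inequality to an $L^p$ bound on $\norm{\cdot}_{L^q_s}$ over the window.

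Thus $f^{(n)}_t\to\tilde f_t$ in $L^p(\beta)$ for every $t>0$. The defining relations of $\Fc_m$ pass to the limit by continuity in $\Sc'(G)$, so $(\tilde f_t)\in\Fc_m$. Fatou's lemma applied to an a.e.-convergent subsequence shows $\tilde f_t=f_t$ for $(\beta\otimes\mi)$-a.e.\ $(x,t)$. Hence the limit lies in the intersection, which is therefore closed.

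The main obstacle is this pointwise-in-$t$ control for $0<t<1$, that is, making the averaging argument precise with a general Carleson/sampling measure. Choosing $\mi=\mi_1$ keeps it elementary.
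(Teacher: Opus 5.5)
Your route is the paper's: Lemma~\ref{lem:22} reduces completeness to the closedness of $(\theta_\alpha L^{p,q}(\beta,\mi+\delta_0))\cap\Fc_m$ (resp.\ $(\theta_\alpha L^{q,p}(\mi+\delta_0,\beta))\cap\Fc_m$) in the ambient mixed-norm space. The paper simply asserts this closedness. Your verification is correct in the Besov case:
\begin{itemize}
\item the atom at $0$ gives $f^{(n)}_0\to f_0$ in $L^p(\beta)$;
\item the identity $f^{(n)}_t=(t/s)^m\ee^{-(t-s)\Lc}f^{(n)}_s$, with uniform $L^p$ bounds for $s$ in the window $W=[t/4,t/2]$ and H\"older in $s$, gives the Cauchy property for each fixed $t$;
\item the relations defining $\Fc_m$ pass to the limit in $\Sc'(G)$;
\item Fatou identifies $(\tilde f_t)$ with the limit almost everywhere.
\end{itemize}

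The Triebel--Lizorkin window estimate is not justified by what you invoke when $q<p$. Let $g=f^{(n)}-f^{(k)}$. Operator-norm bounds on $\ee^{-(t-s)\Lc}$ (Corollary~\ref{cor:1}) only give $\norm{g_t}_{L^p(\beta)}\lesssim\int_W\norm{g_s}_{L^p(\beta)}\,\dd\mi(s)$. For $q<p$ this $L^1_s(L^p_x)$ quantity is not controlled by $\norm{\norm{g_s(x)}_{L^q_s(W,\mi)}}_{L^p_x(\beta)}$, because Minkowski's inequality goes the wrong way. To see this, split $W$ into $N$ pieces of equal $\mi$-measure and let $g_s=\mathbf{1}_{A_j}$ on the $j$-th piece, with disjoint $A_j$ of unit measure. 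The left side stays equal to $\mi(W)$, while the right side is of order $N^{1/p-1/q}\to0$. Your Minkowski route does work for $q\ge p$, since then $\norm{\norm{g_s}_{L^p_x}}_{L^q_s}\meg\norm{\norm{g_s(x)}_{L^q_s}}_{L^p_x}$. For $q<p$ you need the smoothing of the semigroup, not just its $L^p$-boundedness. There are two ways to supply it.
\begin{itemize}
\item Average the identity pointwise before taking norms. By Lemma~\ref{lem:1}, $|\ee^{-(t-s)\Lc}h|\meg C_t\,T_{b,t}h$ uniformly for $s\in W$. By Tonelli, $|g_t|\meg C'_t\,T_{b,t}\big(\int_W|g_s|\,\dd\mi(s)\big)$. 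Then use the $L^p(\beta)$-boundedness of $T_{b,t}$ and H\"older in $s$ inside the $L^p_x$-norm.
\item Apply Lemma~\ref{lem:26}(ii) to the rescaled family $(s^{-m}g_s)_{s>0}$, which, unlike $(g_s)$, satisfies $s^{-m}g_{s+r}\cdot(1+r/s)^{-m}\ldots$; more precisely $(s+r)^{-m}g_{s+r}=\ee^{-r\Lc}\big(s^{-m}g_s\big)$. Take $\delta_t$ as the Carleson measure and $\mi_1$ as the reverse Carleson one. A Dirac mass is Carleson, not reverse Carleson, so your ``Dirac-type reverse Carleson comparison'' has the roles inverted.
\end{itemize}
With either repair your argument is complete.
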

\begin{proof}
The statement follows from Lemma~\ref{lem:22} after noticing that $(\theta_\alpha L^{p,q}(\beta,\mi+\delta_0))\cap \Fc_m$ and $(\theta_\alpha  L^{q,p}(\mi+\delta_0,\beta))\cap \Fc_m$ if $p\in (1,\infty)$ are Banach spaces, as they are closed (whence complete) in $\theta_\alpha L^{p,q}(\beta,\mi+\delta_0))$ and $\theta_\alpha  L^{q,p}(\mi+\delta_0,\beta))$ respectively.
\end{proof}

In order to get the real and complex interpolation properties of the spaces $B^{p,q}_\alpha(\beta)$ and $F^{p,q}_\alpha(\beta)$, we shall reduce to the interpolation properties of the spaces $(\theta_\alpha L^{p,q}(\beta,\mi+\delta_0))\cap \Fc_m$ and $(\theta_\alpha L^{q,p}( \mi+\delta_0,\beta))\cap \Fc_m$ via a retraction argument. We shall need the following. 
\begin{lem}\label{lem:24}
Suppose $p,q\in [1,\infty]$, $\alpha\in\R$, $\mi\in \Mc_\Samp$, and $m\in\N$ with $m>\alpha/\dL$. Let $\kappa>1$ and $\eps\in (0,1)$ be so that the function
	\[
\xi\colon  (0,2] \to (0,+\infty),  \quad \xi( t) = 1/\mi([\eps t/\kappa, t/\kappa ])
	\]
	is bounded.  For $m\in\N$ define 
	\[
	P_m (f_t) \coloneqq \sum_{h=0}^{2m-1} \frac{2^h}{h!} \Lc^h \ee^{-\Lc}f_0+ \frac{1}{(2m-1)!} \int_{(0,2\eps/\kappa]} s^{-m}\int_{\kappa s}^{\kappa s/\eps} \xi(t) t^{2m}\Lc^m\ee^{-(t-s) \Lc} f_{s} \,\frac{\dd t}{t}\,\dd \mi(s) 
	\]
	for every $(f_t)\in \Sc'(G)^{[0,+\infty)}$ for which the
integral is defined. Then, 
	\[
	T_m(\Sc'(G))\subseteq \mathrm{dom} \, P_m, \qquad P_m T_m f=f \quad \forall \, f\in \Sc'(G),
	\]
	and $\Pc_m\coloneqq T_m P_m $ is a projector of $\mathrm{dom}
        P_m$ onto $T_m(\Sc'(G))$.

        In addition, for every $p,q\in [1,\infty]$ and $\alpha\in (-m \dL,m \dL)$, $\Pc_m$ induces a continuous linear projector  
	\begin{enumerate}
	\item of $\theta_\alpha L^{p,q}(\beta,\mi+\delta_0) $ onto $(\theta_\alpha L^{p,q}(\beta,\mi+\delta_0))\cap \Fc_m$, and
	\item of $ \theta_\alpha L^{q,p}( \mi+\delta_0,\beta)$ onto $(\theta_\alpha L^{q,p}( \mi+\delta_0,\beta))\cap \Fc_m$  if $p\in (1,\infty)$.	\end{enumerate}
\end{lem}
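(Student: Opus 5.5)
The plan has two parts: first verify the algebraic identity $P_m T_m f=f$, which also gives the projector statement, and then prove the weighted mixed-norm bounds.

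\textbf{The identity $P_mT_mf=f$.} Take $f_0=\ee^{-\Lc}f$ and $f_s=W^{(m)}_sf$. Then
\[
s^{-m}t^{2m}\Lc^m\ee^{-(t-s)\Lc}f_s=t^{2m}\Lc^{2m}\ee^{-t\Lc}f=W^{(2m)}_tf,
\]
and the inner integrand no longer depends on $s$. By Fubini (justified in $\Sc'(G)$ by testing against $\phi\in\Sc(G)$) the double integral becomes
\[
\int_0^{2}W^{(2m)}_tf\,\xi(t)\,\mi\bigl(\{s\colon \kappa s\meg t\meg \kappa s/\eps\}\bigr)\,\frac{\dd t}{t}.
\]
The $s$-set is $[\eps t/\kappa,t/\kappa]$, whose measure is $1/\xi(t)$, and the restriction $s\meg 2\eps/\kappa$ cuts $t$ off at $2$. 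So the term equals $\int_0^2W^{(2m)}_tf\,\dd t/t$. Lemma~\ref{lem:9}, applied with $2m-1$ in place of $m$ and $t=2$, gives
\[
f=\sum_{h=0}^{2m-1}\frac{1}{h!}W^{(h)}_2f+\frac{1}{(2m-1)!}\int_0^2W^{(2m)}_tf\,\frac{\dd t}{t},
\]
and $W^{(h)}_2f=2^h\Lc^h\ee^{-\Lc}f_0$. Hence $P_mT_mf=f$. Integrability and membership of $T_m(\Sc'(G))$ in $\mathrm{dom}\,P_m$ follow exactly as in the proof of Lemma~\ref{lem:27}, using $m\Meg1$ or the cutoff. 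From $P_mT_m=I$ we get $\Pc_m^2=T_mP_mT_mP_m=\Pc_m$ and $\Pc_m T_m=T_m$, so $\Pc_m$ is a projector onto $T_m(\Sc'(G))$.

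\textbf{The mixed-norm bounds.} By Lemma~\ref{lem:22}, the ranges in (1) and (2) are exactly $T_m(\Sc'(G))$ intersected with the weighted spaces. It therefore suffices to prove $\norm{P_m(f_t)}\lesssim\norm{(f_t)}_{\theta_\alpha L^{p,q}}$ in the relevant $B^{p,q}_\alpha$ or $F^{p,q}_\alpha$ norm, computed via Theorem~\ref{teo:2} with $2m$ in place of $m$ (legitimate since $2m>\alpha/\dL$). Write $g=P_m(f_t)$ and estimate its two pieces.

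\emph{The $f_0$ part.} Here $W^{(2m)}_r\Lc^h\ee^{-\Lc}f_0$ and $\ee^{-\Lc}\Lc^h\ee^{-\Lc}f_0$ are controlled by $\norm{f_0}_{L^p}$ through Corollary~\ref{cor:1}, since $r\meg1$ and the power $r^{2m}$ absorbs everything.

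\emph{The integral part.} Applying $W^{(2m)}_r$ produces
\[
\int_{(0,2\eps/\kappa]} s^{-m}\int_{\kappa s}^{\kappa s/\eps}\xi(t)\,t^{2m}r^{2m}\,\Lc^{3m}\ee^{-(r+t-s)\Lc}f_s\,\frac{\dd t}{t}\,\dd\mi(s).
\]
By Lemma~\ref{lem:1}(2) this is dominated pointwise by
\[
C\int s^{-m}\,t^{2m}r^{2m}(r+t-s)^{-3m}\,T_{b,r+t-s}f_s\,\dd\mi(s),
\]
with $t\asymp s$ because $\kappa>1$ forces $t-s\asymp s$. The kernel then behaves like
\[
\frac{s^{m}r^{2m}}{(r+s)^{3m}}\,T_{b,c(r+s)}f_s.
\]
After inserting the weights $r^{-\alpha/\dL}$ and $s^{\alpha/\dL}$, it is $\lesssim (r/s)^{\gamma}(s/r)^{\eta}$-type, of the form in Lemma~\ref{lem:25} with $\gamma=2m-\alpha/\dL>0$ and $\eta=m+\alpha/\dL>0$. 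This is exactly where the hypothesis $\abs{\alpha}<m\dL$ is used.

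The step I expect to be the main obstacle is the Triebel--Lizorkin case, because the operator $T_{b,c(r+s)}$ varies with both variables. To handle it, I would first compare $T_{b,c(r+s)}$ with $T_{b',\max(r,s)}$ using Lemma~\ref{lem:1}(1). Then I would dominate pointwise by a maximal operator in $s$: either the vector-valued bound of Lemma~\ref{lem:2}, with $Y=(0,\infty)\times(0,\infty)$ carrying suitable measures, or Lemma~\ref{lem:5}. After that, Lemma~\ref{lem:25} in the $t$-variable is applied inside the $L^p_x$ norm. The Besov case is easier: Young's inequality gives $\norm{T_{b,\cdot}f_s}_{L^p}\lesssim\ee^{C}\norm{f_s}_{L^p}$, and then Lemma~\ref{lem:25} applies directly. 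Finally, continuity of $T_m$ from the space onto the range (Lemma~\ref{lem:22}) completes the bound for $\Pc_m$.
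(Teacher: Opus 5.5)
\textbf{Gap in the identity $P_mT_mf=f$.} Your Fubini step is wrong, and the paper's own proof makes the same exchange. On the region $\{0<s\meg 2\eps/\kappa,\ \kappa s\meg t\meg \kappa s/\eps\}$, the $s$-slice over a fixed $t\in(0,2]$ is $[\eps t/\kappa,\min(t,2\eps)/\kappa]$, not $[\eps t/\kappa,t/\kappa]$. The constraint $s\meg 2\eps/\kappa$ does cut $t$ off at $2$, but it also truncates every slice with $t>2\eps$. So the double integral equals
\[
\int_0^2 \xi(t)\,\mi\bigl([\eps t/\kappa,\min(t,2\eps)/\kappa]\bigr)\,W^{(2m)}_t f\,\frac{\dd t}{t},
\]
and this weight need not be $1$ on $(2\eps,2]$. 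For example, take $\kappa=3$, $\eps=1/2$ and $\mi=\sum_{j\in\N}\delta_{2^{-j}}$; this is a sampling measure with $\xi\meg 1$ on $(0,2]$. The atoms in $(0,1/3]$ cover exactly the $t$-range $(0,3/2]$, so $P_mT_mf=f-\frac{1}{(2m-1)!}\int_{3/2}^2 W^{(2m)}_t f\,\frac{\dd t}{t}$, which is not $f$ in general. The identity holds only after correcting the operator. Integrate over $\{0<t\meg 2,\ \eps t/\kappa\meg s\meg t/\kappa\}$, i.e.\ $s\in(0,2/\kappa]$ and $t\in[\kappa s,\min(2,\kappa s/\eps)]$; equivalently, take $s\in(0,2/\kappa]$ and extend $\xi$ by $0$ beyond $2$. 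On that region you still have $(\kappa-1)s\meg t-s\meg \kappa s/\eps$ and $t\meg 2$, so none of the later estimates (yours or the paper's) change.

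\textbf{Continuity: a different route, mostly sound.} The paper does not go through Theorem~\ref{teo:2}. It estimates $T_mP_m(f_t)$ directly with $W^{(m)}$, and it never merges the two heat operators. Writing $\ee^{-(t-s+s')\Lc}=\ee^{-s'\Lc}\ee^{-(t-s)\Lc}$, it gets $\abs{\Lc^{2m}\ee^{-(t-s+s')\Lc}f_s}\meg C(s+s')^{-2m}T_{b,s'}T_{b,\kappa s/\eps}f_s$. The outer smoothing then depends only on the output variable and comes out of the $s$-integral, while the inner one depends only on $s$. Lemma~\ref{lem:2} on $Y=(0,\infty)$, then Lemma~\ref{lem:25}, then Lemma~\ref{lem:2} again, cover every $q\in[1,\infty]$.

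Your single majorant $T_{b,c(r+s)}f_s$ depends on both variables. Of your two remedies, Lemma~\ref{lem:5} only covers $q\in(1,\infty)$, plus $q=\infty$ via the scalar case. An $\ell^1$-valued maximal inequality is false in general, so at $q=1$ you must use the product-space form of Lemma~\ref{lem:2}. That works, but it needs a Schur step you did not state. Let $\mi'$ be the measure in the $r$-variable and $A=\sup_r\int K(r,s)\,\dd\mi(s)$. By H\"older,
\[
\int K(r,s)g_{r,s}\,\dd\mi(s)\meg A^{1/q'}\bigl(\int K(r,s)g_{r,s}^q\,\dd\mi(s)\bigr)^{1/q}.
\]
Then apply Lemma~\ref{lem:2} on $Y=(0,\infty)^2$ with $\dd\nu=K(r,s)\,\dd\mi(s)\,\dd\mi'(r)$ and $\kappa(r,s)=c(r+s)$, and finish with $\sup_s\int K(r,s)\,\dd\mi'(r)<\infty$. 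A simpler option is $T_{b,2c\max(r,s)}\meg T_{b,2cr}+T_{b,2cs}$, which splits your majorant into single-variable pieces, close in spirit to the paper's factorization.

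Smaller points:
\begin{enumerate}
\item You also need to bound $\norm{\ee^{-t_0\Lc}P_{m,2}(f_t)}_{L^p(\beta)}$.
\item In the Triebel--Lizorkin norm, the $f_0$ part needs the pointwise majorant $r^{2m}T_{b,2}f_0$ from Lemma~\ref{lem:1}, not just Corollary~\ref{cor:1}.
\item In your version the bound $\alpha<m\dL$ enters through the boundedness of $T_m$ (Lemma~\ref{lem:22}). Lemma~\ref{lem:25} with $\gamma=2m-\alpha/\dL$ only needs $\alpha<2m\dL$.
\end{enumerate}
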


\begin{proof}
	Take $g\in \Sc'(G)$. Then, by means of Lemma~\ref{lem:22} and Fubini's theorem we see that $T_m g\in \mathrm{dom} P_m$ and
	\[
	\begin{split}
	P_m T_m g&= \sum_{h=0}^{2m-1} \frac{2^h}{h!} \Lc^h \ee^{-2 \Lc} g+ \frac{1}{(2m-1)!} \int_{(0,2\eps/\kappa]} \int_{\kappa s}^{\kappa s/\eps} \xi(t) t^{2m} \Lc^{2m} \ee^{-t\Lc}g\,\frac{\dd t}{t}\,\dd \mi(s)\\
		&=\sum_{h=0}^{2m-1} \frac{2^h}{h!} \Lc^h \ee^{-2 \Lc} g+ \frac{1}{(2m-1)!} \int_0^2 \int_{[\eps t/\kappa, t/\kappa ]} \,\dd \mi(s)  \xi(t) t^{2m} \Lc^{2m} \ee^{-t\Lc}g\,\frac{\dd t}{t}\\
		&=\sum_{h=0}^{2m-1} \frac{2^h}{h!} \Lc^h \ee^{-2 \Lc} g+ \frac{1}{(2m-1)!} \int_0^2   t^{2m} \Lc^{2m} \ee^{-t\Lc}g\,\frac{\dd t}{t}\\
		&=g.
	\end{split}
	\]
	Consequently, $\Pc_m$ is a projector of $\mathrm{dom}(P_m)$ onto $T_m(\Sc'(G))$.
	Concerning the continuity of $\Pc_m$, we only prove (2).
	
	Pick $(f_t) \in  \theta_\alpha L^{q,p}( \mi+\delta_0,\beta)$ and set
\[
P_{m,1} (f_t)  \coloneqq \sum_{h=0}^{2m-1} \frac{2^h}{h!} \Lc^h\ee^{-\Lc} f_0, \qquad P_{m,2}\coloneqq P_m-P_{m,1}, \qquad \Pc_{m,j}=T_m P_{m,j}, \quad j=1,2.
\]
By the Gaussian estimates  (cf.~Theorem~\ref{teo:7}), there are $b, C_0 >0$ such that $\abs{\Lc^k h_t}\meg C_0  t^{-k} p_{b,t}$ for every $k=0,\dots, 2m$ and $t\in (0,2(1+\sup \supp \mi)]$.	
	Then, by Young's inequality, 
	\[
	\begin{split}
		\norm{[\Pc_{m,1} (f_t) ]_0}_{L^p(\beta)}&\meg C_1
                \norm{T_{b,1} f_0}_{L^p(\beta)}\meg C_1 \norm{p_{b,1} \Delta_R^{-1/p'}}_{L^1(\beta)} \norm{f_0}_{L^p(\beta)}.
	\end{split}
	\]
	In addition, by Lemma~\ref{lem:2} there is a constant $C_2>0$ such that
	\[
	\begin{split}
		\big\|\norm{t^{-\alpha/\dL } [\Pc_{m,1} (f_t) ]_{t}(x)  }_{L^q(\mi)} \big\|_{L^p_x(\beta)}&\meg \sum_{h=0}^{2m-1}\frac{2^h}{h!}\big\|\norm{t^{m-\alpha/\dL } ( \Lc^{h+m} \ee^{-(t+1)\Lc}f_0 )(x)  }_{L^q_t(\mi)} \big\|_{L^p_x(\beta)}\\
		&\meg C_1\sum_{h=0}^{2m-1}\frac{1}{h!}\big\|\norm{t^{m-\alpha/\dL } (   T_{b,1+t}f_0 )(x)  }_{L^q_t(\mi)} \big\|_{L^p_x(\beta)}\\
		&\meg C_2 \big\| \norm{t^{m-\alpha/\dL } f_0(x)  }_{L^q_t(\mi)} \big\|_{L^p_x(\beta)}\\
		&= C_2 \norm{t^{m-\alpha/\dL}}_{L^q_t(\mi)} \norm{f_0}_{L^p(\beta)},
	\end{split}
      \]
      thanks to Corollary~\ref{cor:carleson-measures}, 
	whence the continuity of $\Pc_{m,1}$. Then, observe that 
		\[
	\begin{split}
		\| [\Pc_{m,2} &(f_t) ]_0\|_{L^p(\beta)} \meg   \bigg\|\int_{ (0,2\eps/\kappa]} s^{-m} \int_{\kappa s}^{\kappa s/\eps} \xi(t)t^{2m}\abs{\Lc^m \ee^{-(t-s+1)\Lc} f_{s}(x)} \,\frac{\dd t}{t}\,\dd \mi(s)\bigg\|_{L^p_x(\beta)}\\
		&\meg C_3\norm{\xi}_{L^\infty} \bigg\|\int_{ (0,2\eps/\kappa]} s^{-m} \int_{\kappa s}^{\kappa s/\eps} t^{2m} T_{b,t-s+1} f_{s}(x) \,\frac{\dd t}{t}\,\dd \mi(s)\bigg\|_{L^p_x(\beta)} \\
		&\meg C_3\norm{\xi}_{L^\infty}(\kappa/\eps)^{2m} 3^{Q_*/\dL} \abs{\log \eps}\bigg\|\int_{ (0,2\eps/\kappa]} s^{m}   T_{b,3} f_{s}(x)\,\dd \mi(s)   \bigg\|_{L^p_x(\beta)} \\
		& \meg C_3\norm{\xi}_{L^\infty}(\kappa/\eps)^{2m} 3^{Q_*/\dL}\abs{\log \eps} \norm{s^{m+\alpha/\dL}}_{L^{q'}_s(\mi)} \big\|\norm{ s^{-\alpha/\dL}   T_{b,3} f_{s}(x)}_{L^q_s(\mi)}  \big\|_{L^p_x(\beta)}  ,
	\end{split}
	\]
	which gives the desired estimate of $\norm{[\Pc_{m,2} (f_t) ]_0}_{L^p(\beta)}$, thanks to Lemma~\ref{lem:2}.
	Next, observe that, for every $s,s'>0$ and $t\in [\kappa
        s,\kappa s/ \eps]$,
        \[
	\begin{split}
		\abs{\Lc^{2m} \ee^{-(t-s+s')\Lc}f_{s}}&\meg C_1
                T_{b,s'}\Lc^{2m} \ee^{-(t-s)\Lc} f_{s}\\
		&\meg C_1^2 (t-s)^{-2m} T_{b,s'}T_{b,t-s}  f_{s}\\
		&\meg C_1^2 s^{-2m} (\kappa-1)^{-2m} (\kappa/[\eps(\kappa-1)])^{Q_*/\dL}  T_{b,s'}T_{b,\kappa s/\eps}  f_{s},
	\end{split}
	\]
since $(\kappa-1)s\meg t-s\meg \kappa s/\eps-s\meg \kappa s/\eps$, but also
	\[
	\begin{split}
		\abs{\Lc^{2m} \ee^{-(t-s+s')\Lc}f_{s}}&\meg C_1 s'^{- 2m} T_{b,s'}  \ee^{-(t-s)\Lc}f_{s}\\
		&\meg C_1^2 s'^{-2m} T_{b,s'}T_{b,t-s}  f_{s} \meg C_1^2 s'^{-2 m} (\kappa/[\eps(\kappa-1)])^{Q_*/\dL}    T_{b,s'}T_{b,\kappa s/\eps}  f_{s},
	\end{split}
	\]
	so that there is a constant $C_2>0$ such that
	\[
	\abs{\Lc^{2m} \ee^{-(t-s+s')\Lc}f_{s}}\meg C_2 (s+s')^{-2m} T_{b,s'}T_{b,\kappa s/\eps}  f_{s}.
	\]
	Therefore, by Fubini's theorem and Lemma~\ref{lem:2} there is a constant $C_3>0$ such that
	\[
	\begin{split}
		&\big\|\norm{  s'^{-\alpha/\dL} [\Pc_{m,2} (f_t) ]_{s'}}_{L^q_{s'}(\mi)}\big\|_{L^p_x(\beta)}\\
		&\meg \bigg\| \bigg\|  s'^{m-\alpha/\dL} \int_{(0,2\eps/\kappa]} s^{-m}   \int_{\kappa s}^{\kappa s/\eps} \xi(t) t^{2m} \Lc^{2m} \ee^{-(t-s+s')\Lc}f_{s} \,\frac{\dd t}{t}\,\dd \mi(s)\bigg\|_{L^q_{s'}(\mi)}\bigg\|_{L^p_x(\beta)}\\
		&  \meg C_2\norm{\xi}_{L^\infty}\Big( \frac{\kappa}{\eps}\Big)^{2m}\abs{\log \eps}\bigg\| \bigg\|  s'^{m-\frac{\alpha}{\dL}} \left( T_{b,s'}\int_{(0,\frac{2\eps}{\kappa}]}   \frac{s^{m}}{(s+s')^{2m}}  T_{b, \kappa s/\eps}f_{s}  \,\dd \mi(s)\right) (x)\bigg\|_{L^q_{s'}(\mi)}\bigg\|_{L^p_x(\beta)}\\
		&  \meg C_3  \bigg\|\bigg\|  s'^{m-\alpha/\dL} \int_{(0,2\eps/\kappa]}   \frac{s^{m}}{(s+s')^{2m}} T_{b, \kappa s/\eps}f_{s}  \,\dd \mi(s)\bigg\|_{L^q_{s'}(\mi)}\bigg\|_{L^p_x(\beta)}.
	\end{split}
	\]
	Then, Lemma~\ref{lem:25}  shows that there is a constant $C_4>0$ such that
	\[
	\begin{split}
		&\big\| \norm{  s'^{-\alpha/\dL} [\Pc_{m,2} (f_t) ]_{s'}}_{L^q_{s'}(\mi)}\big\|_{L^p_x(\beta)}\meg C_4 \big\|\norm*{  s^{-\alpha/\dL}   T_{b, \kappa s/\eps}f_{s}   }_{L^q_{s}(\mi)}\big\|_{L^p_x(\beta)}.
	\end{split}
	\]
	The conclusion then follows from Lemma~\ref{lem:2}.	
\end{proof}

The next result is the analogue of~\cite[Theorem 4.5]{BPV}, and is a characterization of Besov and Triebel--Lizorkin spaces by recursion.

\begin{teo}\label{teo:3}
	Suppose $\alpha\in \R$, $t_0\Meg 0$, $p,q\in [1,\infty]$, and $h\in\N$. Let $(X_j)_{j\in J}$ be a minimal basis in $\gf$, and set $d_j\coloneqq \deg X_j$ for every $j\in J$. Assume that $t_0>0$ if $\alpha\meg 0$. Take $f\in \Sc'(G)$ such that $\ee^{-t_0\Lc }f\in L^p(\beta)$. Then, the following hold.
	\begin{enumerate}
		\item[\textnormal{(i)}] $f\in B^{p,q}_{\alpha+h\dd}(\beta)$ if and only if $X_j^{h\dd/d_j} f\in B^{p,q}_{\alpha}(\beta)$ for every $j=1,\dots, k$.

		\item[\textnormal{(ii)}] If $p\in  (1,\infty)$, then $f\in F^{p,q}_{\alpha+h\dd}(\beta)$ if and only if $X_j^{h\dd/d_j} f\in F^{p,q}_{\alpha }(\beta)$ for every $j\in J$.
	\end{enumerate}
\end{teo}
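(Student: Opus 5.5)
We plan to prove both directions by exploiting the freedom, granted by Theorem~\ref{teo:2} and Proposition~\ref{lem:8bis}, to compute the norms with a weighted subcoercive operator adapted to the minimal basis. Concretely, we fix the operator
\[
\Lc_h\coloneqq \sum_{j\in J} (X_j^{h\dd/d_j})^\dag X_j^{h\dd/d_j},
\]
which by Proposition~\ref{prop:4} (with $k=h$) is a positive, formally self-adjoint weighted subcoercive operator of degree $2h\dd$. We also observe that $X_j^{h\dd/d_j}\in U_{h\dd}$ for every $j$. We only treat (ii); (i) is identical with the mixed norms swapped, using part (i) of Proposition~\ref{lem:8bis}.

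\emph{Necessity.} Suppose $f\in F^{p,q}_{\alpha+h\dd}(\beta)$. We apply Proposition~\ref{lem:8bis}(ii) with $X=X_j^{h\dd/d_j}$ and $\lambda=h\dd$, with $\alpha$ in place of $\alpha$, $\Lc'=\Lc$ on the left, and $\mi=\mi_1$, $\nu$ a sampling measure. Since $m$ can be chosen larger than $(\alpha+h\dd)/\dL$, the right-hand side is controlled by $\norm{f}_{F^{p,q}_{\alpha+h\dd}(\beta)}$ via Theorem~\ref{teo:2}. The left-hand side dominates $\norm{X_j^{h\dd/d_j} f}_{F^{p,q}_{\alpha}(\beta)}$, again by Theorem~\ref{teo:2}, since $W^{(m'),*}_{t,\eps}$ dominates $\abs{W^{(m')}_t}$ up to constants. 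We choose $t_0>0$ when $\alpha\meg0$.

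\emph{Sufficiency.} Assume $g_j\coloneqq X_j^{h\dd/d_j}f\in F^{p,q}_\alpha(\beta)$ for all $j$ and $\ee^{-t_0\Lc}f\in L^p(\beta)$. We measure $F^{p,q}_{\alpha+h\dd}$ with $\Lc_h$ via Theorem~\ref{teo:2}, and choose $m\Meg1$ with $m>(\alpha+h\dd)/(2h\dd)$. The low-frequency part $\norm{\ee^{-t_0'\Lc_h}f}_{L^p}$ is controlled by $\norm{\ee^{-t_0\Lc}f}_{L^p}$ through Proposition~\ref{lem:8bis} with $X=I$ and $\alpha$ very negative (only the $L^p$ term of $\ee^{-t_0\Lc}f$ and a harmless integrable tail appear there; alternatively we may first add $\Bs$/$\Fs$ terms of $f$ in a very negative regularity, which are bounded by $\ee^{-t_0\Lc}f$ through Lemma~\ref{lem:1} and Lemma~\ref{lem:2}). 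For the high-frequency part we write
\[
(t\Lc_h)^m\ee^{-t\Lc_h}f = t\sum_{j}(t\Lc_h)^{m-1}\ee^{-t\Lc_h}(X_j^{h\dd/d_j})^\dag g_j .
\]
Setting $Y_j\coloneqq (X_j^{h\dd/d_j})^\dag\in U_{h\dd}$, each summand equals $t^{1/2}\,t^{1/2}(t\Lc_h)^{m-1}\ee^{-t\Lc_h}Y_j g_j$. Then $t^{-(\alpha+h\dd)/(2h\dd)}$ times this is $t^{-\alpha/(2h\dd)}\, t^{1/2}\,[(t\Lc_h)^{m-1}\ee^{-t\Lc_h}Y_j g_j]$. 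The factor $t^{1/2}\abs{(t\Lc_h)^{m-1}\ee^{-t\Lc_h}Y_j g_j}$ is bounded by the maximal quantity $W^{(\cdot),*}_{t,\eps}$ for $g_j$ with $\deg Y_j=h\dd=\tfrac12\dL_h$. Equivalently, by Lemma~\ref{lem:1}(2) applied to $\ee^{-(t/2)\Lc_h}$, it is bounded by $T_{b,t/2}$ of $(t\Lc_h)^{m-1}\ee^{-(t/2)\Lc_h}g_j$ in the case $m\Meg2$. When $m-1$ may be $0$, we handle it instead by Proposition~\ref{lem:8bis}(ii) applied to $g_j$ with $X=Y_j$, $\lambda=h\dd$: that proposition gives exactly $\norm{\norm{t^{-(\alpha+h\dd)/\dL'}W^{(m'),*}_{t,\eps}Y_jg_j}}\lesssim\norm{g_j}_{F^{p,q}_{\alpha}}$-type control after reading the roles correctly. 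The remaining maximal operators are removed by Lemma~\ref{lem:2}.

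\emph{Main obstacle.} The delicate step is this bookkeeping of indices in sufficiency: we need a bound of the form $\abs{(t\Lc_h)^{m-1}\ee^{-t\Lc_h}Y_jg}\lesssim t^{-1/2}\,T_{b,t/2}(\ldots)$ in which the loss $t^{-\deg Y_j/\dL_h}$ exactly compensates the regularity shift $h\dd$. We also need to ensure $m-1>\alpha/\dL_h$, so that the resulting quantity is an admissible $F^{p,q}_\alpha$ norm of $g_j$; this is arranged by enlarging $m$. Finally, the term $\ee^{-t_0\Lc}$ must be kept track of when $\alpha+h\dd\meg0$.
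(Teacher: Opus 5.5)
The high-frequency part of your argument matches the paper, but the low-frequency step has a genuine gap. You reduce to your $\Lc_h=\sum_j (X_j^{h\dd/d_j})^\dag X_j^{h\dd/d_j}$ via Theorem~\ref{teo:2} and Proposition~\ref{prop:4}. Necessity comes from Proposition~\ref{lem:8bis} with $X=X_j^{h\dd/d_j}$, $\lambda=h\dd$. Sufficiency comes from $\Lc_h f=\sum_j Y_jg_j$, which turns the $(\alpha+h\dd,m)$ high-frequency functional of $f$ into the $(\alpha-h\dd,m-1)$ functional of $\sum_j Y_jg_j$. Your pointwise bound $t^{1/2}\abs{(t\Lc_h)^{m-1}\ee^{-t\Lc_h}Y_jg_j}\meg C\,W^{(m-1/2),*}_{t,\eps}g_j$, combined with Proposition~\ref{lem:8bis} for the non-integer exponent $m'=m-\frac12>\alpha/(2h\dd)$, is a valid shortcut. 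It also covers $m=1$; the paper instead applies the mapping property for $Y_j\in U_{h\dd}$ a second time. Two slips there. Drop the ``equivalent'' bound via Lemma~\ref{lem:1}(2): it moves $Y_j$ past $\Lc_h^{m-1}$ and $\ee^{-(t/2)\Lc_h}$, which do not commute with it. And when Proposition~\ref{lem:8bis} is applied to $Y_jg_j$, the weight on the left is $t^{-(\alpha-h\dd)/(2h\dd)}$, not $t^{-(\alpha+h\dd)/\dL'}$.

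The gap is the low-frequency term. To conclude via Theorem~\ref{teo:2} with $\Lc_h$ you need $\ee^{-t_0'\Lc_h}f\in L^p(\beta)$, and neither of your justifications derives it from $\ee^{-t_0\Lc}f\in L^p(\beta)$.

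In Proposition~\ref{lem:8bis} with $X=I$, the right-hand side contains $\big\|\norm{t^{-\alpha/\dL}(W^{(m)}_t f)(x)}_{L^q_t(\nu)}\big\|_{L^p_x(\beta)}$ with $\nu\in\Mc_\RC$. Such a $\nu$ charges every interval $(\eps r,r]$ with $r\meg \eps$. So however negative $\alpha$ is, this term is a condition on $W^{(m)}_tf$ for arbitrarily small $t$, not an integrable tail, and your hypothesis says nothing about times $t<t_0$. Indeed, $\ee^{-t\Lc}f$ need not belong to $L^p(\beta)$ for any $t<t_0$. This already happens for $G=\R$, $\Lc=-\partial_x^2$, $p=2$: take a sum of spatially separated Gaussian wave packets at frequencies $k$ with amplitudes $\ee^{\tau k^2}$, which lies in $\Sc'(\R)$.

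Your alternative fails for the same reason. Lemmas~\ref{lem:1} and~\ref{lem:2} only bound the semigroup at later times by its values at earlier times.

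The paper never performs this transfer. It replaces $\Lc$ by the adapted operator at the very start and then invokes the hypothesis $\ee^{-t_0\Lc}f\in L^p(\beta)$ directly for that operator. So either read the low-frequency hypothesis for $\Lc_h$, as the paper's proof effectively does, or supply a real argument relating the low frequencies of $\Lc$ and $\Lc_h$. Such an argument would have to exploit the information on the $g_j$; the results you cite cannot provide it.
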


\begin{proof}
	By means of Theorem~\ref{teo:2} and Proposition~\ref{prop:4}, we may assume that  
	\[
	\Lc= \sum_{j\in J} (X_j^{h\dd/d_j})^\dag X_j^{h\dd/d_j}.
	\]
	In particular, $\dL= 2h\dd$.

	(ii) Observe that, by Proposition~\ref{lem:8bis} and Theorem~\ref{teo:2}, there is a constant $C_1>0$ such that, for every  $X\in U_{h\dd}$, 
	\[
	\norm{X f}_{F^{p,q}_\alpha(\beta)}\meg C_1\abs{X} \norm{f}_{F^{p,q}_{\alpha+\deg(X)}(\beta)}
	\]
	for every $f\in \Sc'(G)$.  
	This implies that the mapping
        $F^{p,q}_{\alpha+\deg(X)}(\beta)\ni f\mapsto Xf\in
        F^{p,q}_\alpha(\beta) $ is continuous. In particular, if $f\in
        F^{p,q}_{\alpha+ h\dd}(\beta)$, then $X_j^{h\dd/d_j} f\in F^{p,q}_{\alpha}(\beta)$ for every $j\in J$.

	Conversely, assume that $X_j^{h\dd/d_j} f\in F^{p,q}_{\alpha}(\beta)$ for every $j\in J$, and let us prove that $f\in F^{p,q}_{\alpha+h\dd}(\beta)$. Take $m\in\N$ so that $m>(\alpha+h\dd)/\dL$.
	Arguing as before, we see that   there is a constant $C_2>0$ such that 
	\[
	\begin{split}
	\Fs^{p,q}_{\alpha+h\dd,m} (f) &=\Fs^{p,q}_{\alpha-h\dd,m-1}(\Lc f) \\
		&\meg C_2\sum_{j\in J} \norm{(X_j^{h\dd/d_j})^* X_j^{h\dd/d_j} f}_{F^{p,q}_{\alpha-h\dd}(\beta)} \meg C_2^2 \sum_{j\in J}\norm{ X_j^{h\dd/d_j} f}_{F^{p,q}_{\alpha}(\beta)} 
	\end{split}
	\]
	for every $f\in \Sc'(G)$, whence the result since $\ee^{-t_0\Lc}f\in L^p(\beta)$ by assumption.	
\end{proof}

The next result should be compared with~\cite[Theorem 13]{BPV2}.
\begin{teo}\label{teo:5}
	There is $\omega_0\in \R$ such that for every $\alpha, \alpha'\in\R$, $p,q\in [1,\infty]$ and $\omega \Meg \omega_0$, the operators 
	\[
	(1+\abs{\gamma})^{-1/2}\ee^{-\pi\abs{\gamma}/2}\Lc_\omega^{(\alpha-\alpha')/\dL+ i \gamma}, \qquad \gamma\in\R,
	\]
	induce equicontinuous canonical isomorphisms of $B^{p,q}_\alpha(\beta)$ onto $B^{p,q}_{\alpha'}(\beta)$ and, if $p\in (1,\infty)$, of $F^{p,q}_\alpha(\beta)$ onto $F^{p,q}_{\alpha'}(\beta)$.
\end{teo}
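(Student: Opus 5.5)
Set $\sigma\coloneqq(\alpha-\alpha')/\dL$ and $z\coloneqq\sigma+i\gamma$. The plan is to realise $\Lc_\omega^{z}$ through the $H^\infty$ functional calculus of Proposition~\ref{Smulders}, commute it with the heat semigroup, and then estimate each piece of the norm of Definition~\ref{BTL:def}. Since $\Lc_\omega$ commutes with $\ee^{-t\Lc}$, for $f\in\Sc'(G)$ we have formally
\[
W_t^{(m)}\Lc_\omega^{z} f=t^{-\sigma}\,\Phi_{t}(\Lc)\,\ee^{-t\Lc/2} f,\qquad \Phi_t(\lambda)\coloneqq (t\lambda)^m (t(\lambda+\omega))^{z}\ee^{-t\lambda/2}.
\]
The right-hand side then has to be dominated by $W^{(m')}_{t/2}$ and $\ee^{-\Lc/2}$ applied to $f$, with $m'$ possibly larger than $m$ when $\sigma<0$.

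First, $\Lc_\omega^{z}$ is defined on $\Sc'(G)$ by duality. On $\Sc(G)$ I take $\Lc^{*\bar z}_\omega$, given by the Cauchy/Dunford integral (or by $\Gamma(-z)^{-1}\int_0^\infty s^{-z}\ee^{-s\Lc_\omega}\,\dd s/s$ when $\Re z<0$, composed with integer powers of $\Lc_\omega$ otherwise). The Gaussian bounds of Theorem~\ref{teo:7} together with Lemma~\ref{lem:3} show that this preserves $\Sc(G)$ once $\omega$ exceeds the exponential growth rate $\omega$ of Theorem~\ref{teo:7}; this fixes $\omega_0$. The group law $\Lc_\omega^{z}\Lc_\omega^{-z}=I$ then gives bijectivity, and by symmetry it suffices to prove continuity $B^{p,q}_\alpha\to B^{p,q}_{\alpha'}$ with bound $C(1+|\gamma|)^{1/2}\ee^{\pi|\gamma|/2}$ uniformly in $\gamma$.

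Second, for the high-frequency part I write $(t(\lambda+\omega))^{z}=(t\lambda+t\omega)^z$ and split. When $\sigma\ge0$, I expand $\lambda^m(\lambda+\omega)^{\sigma}$ schematically so that $\Phi_t(\Lc)\ee^{-t\Lc/2}$ becomes $\psi_t(t\Lc_\omega)(t\Lc)^{m}\ee^{-t\Lc/2}$ with $\psi(\mu)=\mu^{z}\ee^{-\mu/4}\ee^{t\omega/4}$, where $\psi\in H^\infty$ on the sector $\Lambda(\phi)$ for $\phi$ slightly above $\pi/2$. Its $H^\infty$ norm is $\lesssim \ee^{\phi|\gamma|}\sup_\mu|\mu^\sigma\ee^{-\mu/4}|$, and the factor $\ee^{t\omega/4}$ is harmless since $t\le1$. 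By Proposition~\ref{Smulders}(2) this gives $L^p$ boundedness uniformly in $t\in(0,1]$, which handles the Besov case directly: $\|W^{(m)}_t\Lc_\omega^z f\|_p\lesssim t^{-\sigma}\|W^{(m)}_{t/2}f\|_p$, after which the integral in $t$ yields exactly $\alpha'\mapsto\alpha$. The factor $\ee^{-\pi|\gamma|/2}(1+|\gamma|)^{-1/2}$ in the statement suggests using the angle $\pi/2$ exactly, via a sharper kernel estimate rather than the abstract calculus. I would instead estimate the kernel of $\psi(t\Lc_\omega)\ee^{-t\Lc/4}$ pointwise, by writing $\mu^{z}\ee^{-\mu}$ as an integral of the heat semigroup along a ray and applying Lemma~\ref{lem:1}(2); this produces the factor $\Gamma$-like growth $|\Gamma(\cdot+i\gamma)|^{-1}\sim(1+|\gamma|)^{1/2-\Re}\ee^{\pi|\gamma|/2}$. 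The resulting domination $|W_t^{(m)}\Lc^z_\omega f|\lesssim t^{-\sigma}T_{b,t}(\cdots)$ then feeds into Lemma~\ref{lem:2} for the Triebel--Lizorkin case. The low-frequency term $\ee^{-\Lc}\Lc_\omega^z f$ is bounded by $\ee^{-\Lc/2}f$ in $L^p$ using Proposition~\ref{Smulders}(1) or (2). When $\sigma<0$, I use $m$ larger and absorb $\lambda^{m}(\lambda+\omega)^{\sigma}$ similarly, applying Proposition~\ref{lem:8bis}/Theorem~\ref{teo:2} to change $m$ and $t_0$ freely.

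The main obstacle is obtaining pointwise (not merely $L^p$) domination by $T_{b,t}$ with the precise $\gamma$-growth $(1+|\gamma|)^{1/2}\ee^{\pi|\gamma|/2}$ needed for the $F$-spaces, uniformly in $t\in(0,1]$. For this I would use the subordination formula
\[
\Lc_\omega^{z}\ee^{-t\Lc}=\frac{1}{\Gamma(-z)}\int_0^\infty s^{-z-1}\ee^{-s\omega}\ee^{-(s+t)\Lc}\,\dd s
\]
after inserting enough powers of $\Lc$ to make $\Re(-z)>0$, and then Lemma~\ref{lem:4}(1) to sum the kernels $p_{b,s+t}$ against $T_{b,t}$, handling $s\ge t$ via the decay $\ee^{-(\omega-\omega_0)s}$. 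The growth $1/|\Gamma(-z)|$ supplies exactly the stated weight.
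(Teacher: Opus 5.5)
There are two genuine gaps.

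\emph{Definition and bijectivity.} Your first step claims that $\Lc_\omega^{*\bar z}$ maps $\Sc(G)$ into itself once $\omega$ is large. This is false for $z\notin\N$.

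Its kernel is the integral $\Gamma(-\bar z)^{-1}\int_0^\infty s^{-\bar z}\ee^{-s\omega}h'_s\,\frac{\dd s}{s}$ of heat kernels of $\Lc^*$ (for $\Re z<0$; compose with $\Lc_\omega^{*k}$ otherwise). At large $|x|_*$ this integral is dominated by $s\approx|x|_*$. It therefore decays like $\ee^{-c(\omega)|x|_*}$ and in general no faster; think of Bessel potentials on $\R^n$. Membership in $\Sc(G)$, by contrast, requires integrability against $\ee^{c\abs{\,\cdot\,}_*}$ for \emph{every} $c>0$. The paper points this out right after the statement.

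Consequently $\Lc_\omega^z$ is not defined on $\Sc'(G)$ by duality, and your group-law argument for bijectivity has nothing to act on. The map has to be defined on orbits instead. For $f$ in the space, $g_t\coloneqq\Lc_\omega^z\ee^{-t\Lc}f\in L^p(\beta)$ satisfies $g_{t+s}=\ee^{-s\Lc}g_t$. Once the norm bound is available, Lemma~\ref{lem:27} gives a unique $g\in\Sc'(G)$ with $\ee^{-t\Lc}g=g_t$.

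For ontoness, the paper first treats $z=k\in\N$: continuity comes from Proposition~\ref{lem:8bis}, surjectivity from Lemma~\ref{lem:27} applied to $\Lc^{-k}\ee^{-t\Lc}g$, and then the open mapping theorem. It then composes with the case $\Re z<0$.

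\emph{The Triebel--Lizorkin estimate.} The pointwise bound $\abs{W^{(m)}_t\Lc^z_\omega f}\lesssim t^{-\sigma}T_{b,t}(\cdots)$ that you want to feed into Lemma~\ref{lem:2} is false. The kernel of $t^{\sigma}(t\Lc)^k\Lc^z_\omega\ee^{-t\Lc}$ is an integral of $h_{s+t}$ over all $s>0$. For $|x|_*\gg t^{1/\dL}$, the range $s\approx|x|_*^{\dL}$ produces a tail that is only polynomial in $|x|_*^{\dL}/t$, as for the Euclidean multiplier $|\xi|^{2z}\ee^{-t|\xi|^2}$, not Gaussian like $p_{b,t}$.

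Summing the $p_{b,s+t}$ via Lemma~\ref{lem:4} instead leaves a maximal operator in $s$, for which you only have Lemma~\ref{lem:5} ($1<q<\infty$). Moreover, for $t\le s\le 1$ the factor $t^{-\sigma}$ must come from the decay $(s+t)^{-k}$ of $\Lc^k\ee^{-(s+t)\Lc}$, not from $\ee^{-(\omega-\omega_0)s}$.

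The missing idea is that no spatial kernel estimate is needed. Replace $\Lc$ by $\Lc_\omega$ (Theorem~\ref{teo:2}) and reduce to $\sigma<0$. Subordination and the exact identity $\ee^{-s\Lc}W^{(m)}_tf=\frac{t^m}{(t+s)^m}W^{(m)}_{t+s}f$ give, at every $x$,
\[
\abs{\Gamma(-z)}\,\abs{(\Lc^{z}W^{(m)}_tf)(x)}\meg\int_0^\infty t^m s^{-\sigma}\abs{(\Lc^m\ee^{-(t+s)\Lc}f)(x)}\,\frac{\dd s}{s}.
\]
Lemma~\ref{lem:25bis} is a Hardy inequality in the time variable; apply it with exponents $m-\alpha'/\dL>0$ and $-\sigma>0$. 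It bounds $\abs{\Gamma(-z)}\norm{t^{-\alpha'/\dL}(\Lc^{z}W^{(m)}_tf)(x)}_{L^q_t(\mi_1)}$ by $C\norm{u^{-\alpha/\dL}(W^{(m)}_uf)(x)}_{L^q_u(\dd u/u)}$ for every $q\in[1,\infty]$, pointwise in $x$.

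The range $u>1$ is handled by Lemma~\ref{lem:2} after factoring out $\ee^{-\Lc/2}$. Stirling's formula for $1/\Gamma(-z)$ then yields the weight $(1+\abs{\gamma})^{1/2}\ee^{\pi\abs{\gamma}/2}$.

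The same argument gives the Besov case for all $p\in[1,\infty]$. Your $H^\infty$ route cannot reach $p\in\{1,\infty\}$, because imaginary powers are unbounded on $L^1$. Even for $1<p<\infty$ it needs an angle \emph{below} $\pi/2$, since $\mu^z\ee^{-\mu/4}$ is unbounded on sectors wider than the right half-plane.
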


Let us stress that $\Lc_\omega^{(\alpha-\alpha')/\dL}$ need \emph{not} be defined on the whole of $\Sc'(G)$ (unless $\alpha-\alpha'\in \dL \N$, of course). In fact, the convolution kernel of $(\Lc^*_\omega)^{(\alpha-\alpha')/\dL}$ decays exponentially, but not more than exponentially (in general), so that $(\Lc^*_\omega)^{(\alpha-\alpha')/\dL}$ need  \emph{not} preserve $\Sc(G)$. %By sesquilinear transposition, this means that $\Lc_\omega^{(\alpha-\alpha')/\dL}$ need not be defined on the whole of $\Sc'(G)$.

\begin{proof}
As a first step, we assume that $\alpha-\alpha'=k \dL$ for some
$k\in\N$. Take $\omega_0$ so that $\Lc_\omega^{-1}$ induces an
endomorphism of $L^s(\beta)$ for every $s\in [1,\infty]$ and  $\omega
\Meg \omega_0$, see Proposition~\ref{Smulders}. Then, by means of  Proposition~\ref{lem:8bis}, we may reduce to the case in which $\omega=0$.
	The fact that $\Lc^k$ induces a continuous linear mapping from $B^{p,q}_{\alpha}(\beta)$ into $B^{p,q}_{\alpha-k\dL}(\beta)$ follows from Proposition~\ref{lem:8bis}.

Then, take $g\in B^{p,q}_{\alpha-k\dL}(\beta)$. Using
Lemma~\ref{lem:27}, we are going to show that that $g=\Lc^k f$ for some $f\in
B^{p,q}_{\alpha}(\beta)$, so that the conclusion will follow
thanks to Proposition~\ref{prop:3} and the open mapping theorem.  Since
  $g\in B^{p,q}_{\alpha-k\dL}(\beta)$,  $f_t\coloneqq\Lc^{-k} e^{-t\Lc}g \in L^p(\beta)$ for all
$t>0$, and $f_{t+s}=  e^{-t\Lc} f_s$, for all $s,t>0$. Then,
Lemma~\ref{lem:27} shows that there exists $f\in
 \Sc'(G)$ such that $f_t = e^{-t\Lc} f$ for all $t>0$, that is, $ g=
 \Lc^k f$.  It is clear that $f\in B^{p,q}_{\alpha}(\beta)$.
The Triebel--Lizorkin case is
 proved similarly.

By means of the above, we may  reduce to the case in which
$\alpha-\alpha'<0$. Indeed, it suffices to write $\alpha-\alpha'= k\dL-r$, for some
$k\in\N$ and $r>0$.  Thus, assume that $\alpha-\alpha'<0$ and take $m\in \N$ such that $m>\alpha'/\dL$.
	Observe that there are $b,\omega_1,C_1>0$ such that 
	\[
	\abs{\Lc^j h_t}\meg C_1 \ee^{\omega_1 t} p_{b,t}, \quad t>0, \; j=0,\dots, m, \qquad c\norm{p_{b,t} \ee^{c \abs{\,\cdot\,}_*}}_{L^1(\beta)}\meg C_1 \ee^{\omega_1 t}, \qquad t>0
	\] 
	(cf.~Lemma~\ref{lem:3} and Theorem~\ref{teo:7}), where $c>1$ is such that $\Delta_R(x)\meg c\, \ee^{c \abs{x}_*}$ for every $x\in G$.  We then take $\omega_0\in \R$ so that $\omega_0>2\omega_1$ and so that Lemma~\ref{lem:2} applies to $\omega_0$ (hence to every $\omega\Meg \omega_0$). Notice that this implies that $\Lc_\omega^{-\alpha''}$ induces an endomorphism of $L^s(\beta)$ for every $s\in [1,\infty]$, for every $\omega \Meg \omega_0$ and $\alpha''>0$, as its convolution kernel
	\[
	\Lc_\omega^{-\alpha''+i\gamma}\delta_0 = \frac{1}{\Gamma(\alpha''+i\gamma)} \int_0^\infty t^{\alpha''+i\gamma} \ee^{-t \omega} h_t\,\frac{\dd t}{t}
	\]
is in $L^{1}$, since
	\[
	\int_0^\infty t^{\alpha''} \ee^{-t \omega}\norm{h_t \Delta_R^{1/p'}}_{L^1(\beta)}\,\frac{\dd t}{t}\meg C_1^2 \int_0^\infty t^{\alpha''} \ee^{-t(\omega_0-2 \omega_1)}\,\frac{\dd t}{t},
	\]
	which is finite. 
	Hence, our assumptions on $\omega_0$ are compatible with those of the first step.
	In addition, by Stirling's formula,  
	\[
	1/\abs{\Gamma(\alpha''+i \gamma)}\sim \abs{ \alpha'' +i\gamma}^{1/2-\alpha''}/\sqrt{2\pi} \ee^{\alpha''+ \gamma \arg(\alpha''+i \gamma)}
	\]
	for $\gamma\to \infty$, so that the quantities 
	\[
	(1+\abs{\gamma})^{-1/2} \ee^{-\pi\abs{\gamma}/2}/\abs{\Gamma(\alpha''+i \gamma)}, \qquad \gamma\in \R,
	\]
	are uniformly bounded for every (fixed) $\alpha''>0$.
	As in the first step, in the remainder of the proof we shall reduce to the case that   $\omega=0$.  We prove only the second assertion.
	
By Theorem~\ref{teo:2}, it suffice  to show that
\begin{multline*}
	\abs{\Gamma((\alpha'-\alpha)/\dL+i \gamma)}\big\| \norm{t^{-\alpha'/\dL} (\Lc^{(\alpha-\alpha')/\dL} W^{(m)}_t f )(x)}_{L^q_t(\mi_1)}\big\| _{L^p_x(\beta)}\\
	\leq C \Big(\norm{\ee^{-(1/2)\Lc}f}_{L^p(\beta)}+\big\|\norm{t^{-\alpha/\dL} (W^{(m)}_t f)(x) }_{L^q_t(\mi_1)}\big\|_{L^p_x(\beta)}\Big)
\end{multline*}
for some $C>0$. Notice that $  \Lc^{(\alpha-\alpha')/\dL} W^{(m)}_t f$ is well defined since $  W^{(m)}_t f\in L^p(\beta)$ for every $t>0$. Observe that
	\[
	\abs{\Gamma((\alpha'-\alpha)/\dL+i \gamma)}\abs{ \Lc^{(\alpha-\alpha')/\dL}W^{(m)}_t f }\meg   \int_0^\infty t^m s^{(\alpha'-\alpha)/\dL} \abs{\ee^{-(t+s)\Lc}\Lc^m f  }\,  \frac{\dd s}{s},
	\]
	so that by means of Lemma~\ref{lem:25bis}, we see that there is a constant $C_2>0$ such that
	\[
	\abs{\Gamma((\alpha'-\alpha)/\dL+i \gamma)}\norm{ t^{-\alpha'/\dL}\Lc^{(\alpha-\alpha')/\dL}W^{(m)}_t f }_{L^q_t(\mi_1)}\meg \norm{ t^{-\alpha/\dL} W^{(m)}_t f }_{L^q_t(\mi)},
	\]
	where $\mi$ is the Haar measure on $(0,+\infty)$ such that $\dd \mi(t)=\frac{\dd t}{t}$. Finally, observe that  
	\[
	\begin{split}
	\norm{ t^{-\alpha/\dL} W^{(m)}_t f }_{L^q_t((1,+\infty),\mi)} \meg C_1 \norm{t^{-\alpha/\dL} \ee^{\omega_1 (t-1/2)}T_{b,t-1/2} \ee^{-(1/2)\Lc}f }_{L^q_t((1,+\infty),\mi)}.
	\end{split}
	\]
	We may then conclude by Lemma~\ref{lem:2} (notice that $\omega_1<0$ since we are assuming $\omega=0$).
\end{proof}

\begin{cor}
	There is $\omega_0\in \R$ such that the following hold. Suppose $p,q\in [1,\infty]$, $\alpha\in \R$, $m\in\N$ with $m>\alpha/\dL$, and $\mi\in \Mc_\Samp$. Then there is a constant $C>0$ such that
	\begin{enumerate}
		\item[\textnormal{(i)}] for every $f\in \Sc'(G)$  
				\[
		 \norm{f}_{B^{p,q}_\alpha(\beta)} \asymp_{C} \big\|   t^{-\alpha/\dL} \norm{ (t\Lc_\omega)^m \ee^{-t \Lc_\omega} f }_{L^p(\beta)}  \big\|_{L^q_t(\mi)};
		\]
%		\[
%		\frac{1}{C} \norm{f}_{B^{p,q}_\alpha(\beta)}\meg  \big\|   t^{-\alpha/\dL} \norm{ (t\Lc_\omega)^m \ee^{-t \Lc_\omega} f }_{L^p(\beta)}  \big\|_{L^q_t(\mi)}\meg C \norm{f}_{B^{p,q}_\alpha(\beta)}.
%		\]
		
		\item[\textnormal{(ii)}] if $p\in ( 1,\infty)$, then for every $f\in \Sc'(G)$
		\[
		 \norm{f}_{F^{p,q}_\alpha(\beta)} \asymp_{C}  \big\|  \norm{   t^{-\alpha/\dL}  (t\Lc_\omega)^m \ee^{-t \Lc_\omega} f  (x)}_{L^q_t(\mi)}\big\|_{L^p_x(\beta)}.
		\] 
%		\[
%		\frac{1}{C} \norm{f}_{F^{p,q}_\alpha(\beta)}\meg  \big\|  \norm{   t^{-\alpha/\dL}  (t\Lc_\omega)^m \ee^{-t \Lc_\omega} f  (x)}_{L^q_t(\mi)}\big\|_{L^p_x(\beta)} \meg C \norm{f}_{F^{p,q}_\alpha(\beta)}.
%		\] 
	\end{enumerate}
\end{cor}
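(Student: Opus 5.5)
The idea is to compare the semigroup $(\ee^{-t\Lc_\omega})$ with $(\ee^{-t\Lc})$ and then invoke Theorem~\ref{teo:2}, using Theorem~\ref{teo:5} to get rid of the inhomogeneous term $\norm{\ee^{-t_0\Lc}f}_{L^p(\beta)}$. First observe that $\Lc_\omega=\Lc+\omega I$ differs from $\Lc$ by an element of $U_{\dL^-}$, so it is again weighted subcoercive of degree $\dL$, and its heat semigroup is $\ee^{-t\Lc_\omega}=\ee^{-\omega t}\ee^{-t\Lc}$. We take $\omega_0$ as large as Theorem~\ref{teo:5} requires, and also larger than the constant $\omega$ of Theorem~\ref{teo:7} plus the constant needed in Lemma~\ref{lem:2}. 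Then the kernels of $(t\Lc_\omega)^j\ee^{-t\Lc_\omega}$ satisfy, for all $t>0$ and $j\meg m$, bounds of the form $C\ee^{-\delta t}p_{b,t}$ with some $\delta>0$, uniformly in time.

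For the upper bound on the right-hand side, apply Theorem~\ref{teo:2} with $\Lc'=\Lc_\omega$ and $t_0>0$. This gives
\[
\big\|t^{-\alpha/\dL}\norm{(t\Lc_\omega)^m\ee^{-t\Lc_\omega}f}_{L^p(\beta)}\big\|_{L^q_t(\mi)}\meg C\norm{f}_{B^{p,q}_\alpha(\beta)},
\]
and similarly for the $F$-norm. Here $\mi\in\Mc_\Samp$ has bounded support, so only small $t$ enter and nothing more is needed.

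For the converse, by Theorem~\ref{teo:2} it suffices to control $\norm{\ee^{-t_0\Lc_\omega}f}_{L^p(\beta)}$ by the homogeneous quantity. This is the main obstacle: the sampling measure $\mi$ only sees $t$ in a bounded interval, so low frequencies are not captured directly. I would use invertibility. By Theorem~\ref{teo:5} (case $\alpha-\alpha'=m\dL$, an integer multiple), $\Lc_\omega^m$ is an isomorphism of $B^{p,q}_\alpha(\beta)$ onto $B^{p,q}_{\alpha-m\dL}(\beta)$. Hence
\[
\norm{f}_{B^{p,q}_\alpha(\beta)}\asymp\norm{\Lc_\omega^m f}_{B^{p,q}_{\alpha-m\dL}(\beta)}.
\]
Since $\alpha-m\dL<0$, Theorem~\ref{teo:2} with exponent $m=0$ and $\Lc'=\Lc_\omega$ gives
\[
\norm{g}_{B^{p,q}_{\alpha-m\dL}(\beta)}\asymp\norm{\ee^{-t_0\Lc_\omega}g}_{L^p(\beta)}+\big\|t^{-(\alpha-m\dL)/\dL}\norm{\ee^{-t\Lc_\omega}g}_{L^p(\beta)}\big\|_{L^q_t(\mi)}.
\]
With $g=\Lc_\omega^m f$, the integral term is exactly the homogeneous quantity. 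It then remains to bound $\norm{\ee^{-t_0\Lc_\omega}\Lc_\omega^m f}_{L^p(\beta)}$ by the homogeneous quantity, and this is where $\mi\in\Mc_\RC$ is used. Choose $s\in\supp\mi$ near the interval where $\mi$ has mass bounded below, say $s\in(\eps r,r]$ with $r\meg\eps$, and take $t_0>r$. Then
\[
\ee^{-t_0\Lc_\omega}\Lc_\omega^m f=s^{-m}\ee^{-(t_0-s)\Lc_\omega}\big[(s\Lc_\omega)^m\ee^{-s\Lc_\omega}f\big].
\]
Since $\ee^{-(t_0-s)\Lc_\omega}$ is bounded on $L^p(\beta)$ uniformly for $t_0-s$ in a compact set (Corollary~\ref{cor:1}), averaging over $s\in(\eps r,r]$ with respect to $\mi$ controls this term by a constant times the $L^q(\mi)$-norm of the homogeneous quantity. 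In the Triebel--Lizorkin case we dominate pointwise by $T_{b,\cdot}$ and use Lemma~\ref{lem:2} before taking $L^p$ norms.

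Combining the last two steps yields the lower bound, and the upper bound was already established.
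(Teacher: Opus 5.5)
Your proposal is correct for $f$ already known to lie in $B^{p,q}_\alpha(\beta)$ (resp.\ $F^{p,q}_\alpha(\beta)$), and it takes a different route from the paper. The paper does not use Theorem~\ref{teo:5}. It picks $\omega_0$ via Proposition~\ref{Smulders}, so that $\Lc_\omega^{-1}$ is bounded on $L^p(\beta)$. It then applies Lemma~\ref{lem:26} with the Carleson measure $\delta_1$ and the reverse Carleson measure $\mi$ to get $\norm{\Lc_\omega^m\ee^{-\Lc_\omega}f}_{L^p(\beta)}\meg C\big\|t^{-\alpha/\dL}\norm{(t\Lc_\omega)^m\ee^{-t\Lc_\omega}f}_{L^p(\beta)}\big\|_{L^q_t(\mi)}$. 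Finally it removes $\Lc_\omega^m$ by $L^p$-invertibility and concludes with Theorem~\ref{teo:2} for $\Lc'=\Lc_\omega$, $t_0=1$. You instead lift by $\Lc_\omega^m$ to smoothness $\alpha-m\dL<0$, where $m=0$ is admissible in Theorem~\ref{teo:2}; your averaging over $(\eps r,r]$ is a hand-made instance of Lemma~\ref{lem:26} (part (ii) in the $F$ case). The ingredients are ultimately the same, since Theorem~\ref{teo:5} is itself proved from Proposition~\ref{Smulders} and Lemma~\ref{lem:27}. The paper's route is shorter; yours shows that the homogeneous characterization amounts to the lifting property plus the case $m=0$.

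Neither argument, however, gives the lower bound for an arbitrary $f\in\Sc'(G)$, as the statement literally claims. Your equivalence $\norm{f}_{B^{p,q}_\alpha(\beta)}\asymp\norm{\Lc_\omega^m f}_{B^{p,q}_{\alpha-m\dL}(\beta)}$ comes from the bounded inverse of $\Lc_\omega^m\colon B^{p,q}_\alpha(\beta)\to B^{p,q}_{\alpha-m\dL}(\beta)$. For $f$ not known to be in $B^{p,q}_\alpha(\beta)$, surjectivity only yields some $f'\in B^{p,q}_\alpha(\beta)$ with $\Lc_\omega^m f'=\Lc_\omega^m f$, and $f-f'$ may be a nonzero element of the kernel of $\Lc_\omega^m$ on $\Sc'(G)$. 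The paper's implicit step $\norm{\ee^{-\Lc_\omega}f}_{L^p(\beta)}\meg\norm{\Lc_\omega^{-1}}^m\norm{\Lc_\omega^m\ee^{-\Lc_\omega}f}_{L^p(\beta)}$ tacitly presupposes $\ee^{-\Lc_\omega}f\in L^p(\beta)$ for the same reason.

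This cannot be repaired: for $m\Meg1$ the claim is false on all of $\Sc'(G)$. Take $G=\R$, $\Lc=-\partial_x^2$ and any $\omega>0$. The function $f(x)=\ee^{\sqrt{\omega}\,x}$ lies in $\Sc'(\R)$, because elements of $\Sc(\R)$ decay super-exponentially. It satisfies $\Lc_\omega f=0$, so the right-hand side vanishes, while $\norm{\ee^{-\Lc}f}_{L^p(\R)}=\infty$ for every $p$.

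So you should state the lower bound for $f\in B^{p,q}_\alpha(\beta)$ (resp.\ $F^{p,q}_\alpha(\beta)$), or for $f$ with $\ee^{-t_0\Lc}f\in L^p(\beta)$ for some $t_0>0$. In the latter case, the $L^p$-invertibility of $\Lc_\omega$ together with the injectivity of $\ee^{-t\Lc}$ on $\Sc'(G)$ excludes such kernel elements, and your argument then goes through.
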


\begin{proof}
	(i) We take $\omega_0$ so that $\Lc_\omega^{-1}$ induces an endomorphism of $L^p(\beta)$ for every $p\in [1,\infty]$ (Proposition~\ref{Smulders}) 
	Using Lemma~\ref{lem:26} with $\mi=\delta_1$ and $\nu=\mi$, we see that there is a constant $C_1>0$ such that
	\[
	\norm{\Lc_\omega^m \ee^{-  \Lc_\omega} f }_{L^p(\beta)}\meg C_1 \big\| t^{-\alpha/\dL} \norm{ (t\Lc_\omega)^m \ee^{-t \Lc_\omega} f }_{L^p(\beta)}  \big\|_{L^q_t(\mi)}.
	\]
	The assertion follows by means of Theorem~\ref{teo:2}. The proof of (ii) is analogous.
\end{proof}

\section{Embeddings}\label{emb:sec}

\begin{prop}\label{prop:1}
If $p\in (1,\infty)$, $q\in [1,\infty]$ and $\alpha\in \R$, then the following continuous embeddings hold:
	\[
	B^{p,\min(p,q)}_\alpha (\beta) \subseteq F^{p,q}_\alpha(\beta) \subseteq B^{p,\max(p,q)}_\alpha(\beta).
	\]
\end{prop}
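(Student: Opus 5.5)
The plan is to reduce both embeddings to elementary inequalities between mixed norms, using the discrete sampling measure $\mi=\sum_{j\in\N}\delta_{2^{-j}}$. This is legitimate because, by Theorem~\ref{teo:2}, for any $\mi\in\Mc_\Samp$ and $m=([\alpha/\dL]+1)_+$ we have
\[
\norm{f}_{B^{p,q}_\alpha(\beta)}\asymp \norm{\ee^{-\Lc}f}_{L^p(\beta)}+\big\|t^{-\alpha/\dL}\norm{W^{(m)}_tf}_{L^p(\beta)}\big\|_{L^q_t(\mi)},
\]
and similarly for $F^{p,q}_\alpha(\beta)$ with the mixed norm in the order $L^p_x(L^q_t)$. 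The term $\norm{\ee^{-\Lc}f}_{L^p(\beta)}$ is common to all three norms, so only the second terms need to be compared. Set $g_j(x)\coloneqq 2^{j\alpha/\dL}\abs{W^{(m)}_{2^{-j}}f(x)}$. We must then show that
\[
\big\|\norm{g_j}_{L^p}\big\|_{\ell^{\max(p,q)}_j}\meg \big\|\norm{g_j(x)}_{\ell^q_j}\big\|_{L^p_x}\meg \big\|\norm{g_j}_{L^p}\big\|_{\ell^{\min(p,q)}_j},
\]
with constants independent of $f$.

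For the right inequality, first use the monotonicity $\ell^{\min(p,q)}\subseteq\ell^q$ (contractive) pointwise in $x$. This reduces matters to $\big\|\norm{g_j(x)}_{\ell^{r}_j}\big\|_{L^p_x}\meg \big\|\norm{g_j}_{L^p}\big\|_{\ell^{r}_j}$ with $r=\min(p,q)\meg p$. That inequality is Minkowski's integral inequality applied to the functions $\abs{g_j}^r$ in $L^{p/r}(\beta)$, since $p/r\Meg 1$.

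For the left inequality, the computation is exactly the chain already carried out in the proof of Lemma~\ref{lem:27}. There, Minkowski's inequality in $\ell^{\max(1,q/p)}$ together with $\ell^{q/p}\subseteq\ell^{\max(1,q/p)}$ gives $\big\|\norm{g_j}_{L^p}\big\|_{\ell^{\max(p,q)}}\meg\big\|\norm{g_j(x)}_{\ell^q}\big\|_{L^p_x}$. The cases $q=\infty$ or $p$ large are handled by the obvious modifications: when $q=\infty$, use $\sup_j\norm{g_j}_{L^p}\meg\norm{\sup_j g_j}_{L^p}$.

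The resulting continuous inclusions then follow directly. The main point needing care is the initial reduction, namely that the same sampling measure may be used in both the Besov and the Triebel--Lizorkin characterizations, with the same $m$. This is supplied by Theorem~\ref{teo:2} with $\Lc'=\Lc$ and $t_0=1$. Beyond that, the argument is purely measure-theoretic, so I expect no real obstacle.
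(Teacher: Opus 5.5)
Your proposal is correct and follows essentially the same route as the paper. Both reduce via Theorem~\ref{teo:2} to the dyadic sampling measure $\sum_{j}\delta_{2^{-j}}$. Both then obtain $F^{p,q}_\alpha(\beta)\subseteq B^{p,\max(p,q)}_\alpha(\beta)$ from Minkowski's inequality in $\ell^{\max(1,q/p)}$ together with $\ell^{q/p}\subseteq\ell^{\max(1,q/p)}$, and $B^{p,\min(p,q)}_\alpha(\beta)\subseteq F^{p,q}_\alpha(\beta)$ from $\ell^{\min(p,q)}\subseteq\ell^{q}$ and the triangle inequality in $L^{p/\min(p,q)}(\beta)$.
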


\begin{proof}
	Take $m\in \N$, $m>\alpha/\dL$. Then, by Minkowski's integral inequality and the (contractive) inclusion $\ell^{q/p}(\N)\subseteq\ell^{\max(1,q/p)}(\N)$,
	\[
	\begin{split}
	\big\| 2^{j\alpha/\dL} \norm{W^{(m)}_{2^{-j}}f}_{L^{p}(\beta)}  \big\|_{\ell_j^{\max(p,q)}(\N)}&=\bigg\| 2^{pj\alpha/\dL} \int_G\abs{W^{(m)}_{2^{-j}} f}^p\,\dd \beta  \bigg\|_{\ell_j^{\max(1,q/p)}(\N)}^{1/p}\\
		&\meg \left( \int_G \norm{2^{pj\alpha/\dL} \abs{W^{(m)}_{2^{-j}} f}^p }_{\ell_j^{\max(1,q/p)}(\N)}\,\dd \beta\right) ^{1/p}		\\
		&\meg  \left( \int_G \norm{2^{pj\alpha/\dL} \abs{W^{(m)}_{2^{-j}} f}^p }_{\ell_j^{q/p}(\N)}\,\dd \beta\right) ^{1/p}	\\
		&= \big\| \norm{2^{j\alpha/\dL} W^{(m)}_{2^{-j}}f }_{\ell_j^q(\N)} \big\|_{L^p(\beta)}
	\end{split}
	\]
	so that $F^{p,q}_\alpha(\beta) \subseteq B^{p,\max(p,q)}_\alpha(\beta)$ continuously by Theorem~\ref{teo:2}.
	In addition, by the (contractive) inclusion $\ell^{\min(p,q)}(\N)\subseteq \ell^q(\N)$ and Minkowski's integral inequality,
	\[
	\begin{split}
		 \big\| \norm{2^{j\alpha/\dL} (W^{(m)}_{2^{-j}} f)(x)}_{\ell_j^q(\N)} \big\|_{L^p_x(\beta)}&\meg \big\| \norm{2^{j\alpha/\dL} (W^{(m)}_{2^{-j}} f)(x)}_{\ell_j^{\min(p,q)}(\N)} \big\|_{L^p_x(\beta)}\\
		 	&=\bigg\| \sum_{j\in \N}(2^{j\alpha/\dL} W^{(m)}_{2^{-j}}f)^{\min(p,q)}  \bigg\|_{L^{p/\min(p,q)}(\beta)}^{1/\min(p,q)}\\
		 	&\meg\bigg(\sum_{j\in \N}  \norm*{ 2^{j\alpha/\dL} W^{(m)}_{2^{-j}}f }^{\min(p,q)}_{L^{p}(\beta)}\bigg)^{1/\min(p,q)}\\
		 	&=\big\| 2^{j\alpha/\dL} \norm{W^{(m)}_{2^{-j}}f}_{L^p(\beta)}  \big\|_{\ell_j^{\min(p,q)}(\N)},
	\end{split}
	\]
	so that $B^{p,\min(p,q)}_\alpha (\beta) \subseteq F^{p,q}_\alpha(\beta)$  continuously by Theorem~\ref{teo:2} again.
\end{proof}

The next result is the analogue of~\cite[Theorem 5.2 (iii)]{BPV}. Combining it with Theorem~\ref{teo:5}, it implies that $F^{p,2}_\alpha(\beta)$ may be identified with the  Sobolev space   
\begin{equation}\label{Sobolev}
L^{p,\alpha}(\beta) := \Lc_\omega^{-\alpha/\dL} L^p(\beta),
\end{equation}
  provided that $\omega$ is sufficiently large (independently of $\alpha$); cf.~\cite{BPTV}.

\begin{teo}\label{teo:10}
	For every $p\in(1,\infty)$, $F^{p,2}_0(\beta)=L^p(\beta)$ with equivalence of norms. 
	\end{teo}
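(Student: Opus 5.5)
By Theorem~\ref{teo:2}, the norm of $F^{p,2}_0(\beta)$ is equivalent to
\[
\norm{\ee^{-t_0\Lc'}f}_{L^p(\beta)}+\big\|\norm{(W'^{(1)}_t f)(x)}_{L^2_t(\mi_1)}\big\|_{L^p_x(\beta)}
\]
for any weighted subcoercive $\Lc'$. I will therefore choose a convenient operator. Take $\Lc'$ real, positive and formally self-adjoint, as in Proposition~\ref{prop:4}. Then shift it: replace $\Lc'$ by $\Lc'_\omega=\Lc'+\omega I$ with $\omega\Meg\omega_0$ as in Proposition~\ref{Smulders}. Lower-order terms do not affect weighted subcoercivity, so $\Lc'_\omega$ is still admissible in Theorem~\ref{teo:2}, or one can use the Corollary following Theorem~\ref{teo:5}. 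By that Corollary, applied with $\alpha=0$, $m=1$ and $\mi=\mi_\infty$-type measures, it is enough to prove
\[
\norm{f}_{L^p(\beta)}\asymp \big\|\big(\int_0^\infty |t\Lc_\omega\ee^{-t\Lc_\omega}f(x)|^2\,\tfrac{\dd t}{t}\big)^{1/2}\big\|_{L^p_x(\beta)}.
\]
The integral over $(1,\infty)$ is harmless: it is controlled by $\norm{\ee^{-\Lc_\omega}f}_{L^p(\beta)}$ using the exponential decay $\ee^{-\omega t}$, exactly as at the end of the proof of Theorem~\ref{teo:5}. So I will work with the full square function $g(f)$ on $(0,\infty)$.

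\emph{Upper bound.} For $f\in L^p(\beta)$ I will show $\norm{g(f)}_{L^p(\beta)}\lesssim\norm{f}_{L^p(\beta)}$ by invoking the bounded $H^\infty$ functional calculus of $\Lc_\omega$ on $L^p(\beta)$ in a sector of angle $<\pi/2$ (Proposition~\ref{Smulders}). By the theorem of Cowling--Doust--McIntosh--Yagi, a bounded $H^\infty$ calculus on $L^p$ gives square function estimates: for $\psi(z)=z\ee^{-z}$,
\[
\norm{(\int_0^\infty|\psi(t\Lc_\omega)f|^2\,\tfrac{\dd t}{t})^{1/2}}_{L^p}\asymp\norm{f}_{L^p}.
\]
If citing this is not allowed, an alternative route is vector-valued Calderón--Zygmund theory on the space $(G,d_*,\beta)$, which is locally doubling. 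The kernel $t\Lc_\omega h_t$ would then be treated with the Gaussian bounds of Theorem~\ref{teo:7}, near the diagonal by local doubling and far from it by exponential decay plus Young's inequality (as in Lemma~\ref{lem:5}). I expect this step to be the main obstacle in the non-unimodular, exponential-growth setting, which is why I would rather rely on the functional calculus.

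\emph{Lower bound and identification of spaces.} The reverse inequality also follows from the McIntosh--Yagi result. It can also be obtained by duality: for $\phi\in C_c^\infty$, use the Calderón reproducing formula from Lemma~\ref{lem:9},
\[
f=4\int_0^\infty (t\Lc_\omega)^2\ee^{-2t\Lc_\omega}f\,\tfrac{\dd t}{t},
\]
together with $\ee^{-t\Lc_\omega}\to0$ as $t\to\infty$ (exponential decay) and $\to I$ as $t\to0$. Pairing with $\phi$ and applying Cauchy--Schwarz in $t$ and Hölder in $x$ gives $|\langle f,\phi\rangle|\le 4\norm{g(f)}_{L^p}\norm{g^*(\phi)}_{L^{p'}}$. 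Here $g^*$ is the square function for $\Lc_\omega^*$, which is bounded on $L^{p'}$ by the upper bound. This yields $L^p(\beta)\subseteq F^{p,2}_0(\beta)$ continuously.

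For the converse inclusion, take $f\in F^{p,2}_0(\beta)\subseteq\Sc'(G)$. The duality estimate shows that $f$ extends to a bounded functional on $L^{p'}$, so $\norm{f}_{L^p(\beta)}<\infty$ in the sense of the Remark following the definition of $T_{b,t}$. Since $p>1$, this means $f\in L^p(\beta)$ with the norm bound. The pairing identity is justified for $f\in\Sc'$ through Lemma~\ref{lem:9}, by splitting the $t$-integral into the part where it converges in $\Sc'$ and the part where $\phi$ absorbs it.
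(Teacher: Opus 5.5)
Your proposal is correct and follows essentially the paper's route: you reduce, via Theorem~\ref{teo:2} (the paper also uses Proposition~\ref{lem:8bis}), to a shifted positive self-adjoint operator with bounded $H^\infty$ calculus, and you get the upper square-function bound from that calculus (the paper cites Cowling's Theorem~1 where you cite Cowling--Doust--McIntosh--Yagi). As in the paper, the reverse inequality comes by duality through the reproducing formula, and the range $t\in(1,\infty)$ is absorbed by exponential decay; note only that the upper bound is what yields $L^p(\beta)\subseteq F^{p,2}_0(\beta)$, while the duality estimate yields the converse inclusion, which your last paragraph uses correctly and justifies for general $f\in\Sc'(G)$ more explicitly than the paper does.
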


\begin{proof}
	We may assume that $\Lc$ is formally self-adjoint and positive, thanks to Theorem~\ref{teo:2} and Proposition~\ref{prop:4}.
	
	Define 
	\[
	\Lambda(\phi)\coloneqq \Set{\ee^z\colon z\in \C, \abs{\Im z}<\phi}, \qquad \phi\in (0,\pi].
	\]
	By~\cite[Theorem 1.1.IV]{ElstRobinson}, there is $\theta\in
        (0,\pi/2)$ such that $\Lc$ generates a holomorphic semigroup
        in $L^q(\beta)$ on $\Lambda(\theta)$ for every $q\in
        (1,\infty)$. In addition, by Proposition~\ref{Smulders} there is  $\omega_0>0$ such that $\Lc_\omega$ has a bounded $H^\infty$ functional calculus in $L^p(\beta)$ on $\Lambda(\phi)$ for every $\omega\Meg \omega_0$ and $\phi\in (\pi/2-\theta, \pi]$. 
	Observe that we may assume that $\omega_0$ is so large that there are $b,C'>0$ such that 
\begin{equation}\label{Lcj1}
  \abs{\Lc_{\omega_0}^j h_t}\meg C' t^{-j} p_{b,t}\ee^{-2c t}, \qquad t>0, \quad j=0,1,
\end{equation}
	where $c>0$ is such that 
\begin{equation}\label{Lcj2}
	c'\norm{p_{b,t}\ee^{c'\abs{\,\cdot\,}_*}}_{L^1(\beta)}\meg \ee^{c t},
\end{equation}
	with $c'>1$ such that $\Delta_R \meg
        c'\ee^{c'\abs{\,\cdot\,}_*}$ (cf.~Lemma~\ref{lem:3}). By
        Proposition~\ref{lem:8bis}, up to replacing $\Lc$ with
        $\Lc_{\omega_0}$, we may assume that $\omega_0=0$.
	
	By~\cite[Theorem 1]{Cowling}, for every $q\in (1,\infty)$ there is a constant $C_q>0$ such that
	\[
	\big\| \norm{t (   \Lc\ee^{-t \Lc} f )(x) }_{L^2_t(\mi)}\big\|_{L^q_x(\beta)}\meg C_q \norm{f}_{L^q(\beta)}
	\]
	for every $f\in L^q(\beta)$, where $\mi$ denotes the Haar measure on $(0,+\infty)$ with $\dd \mi(t)=\frac{\dd t}{t}$.
	Consequently,
	\[
	\begin{split}
		C_q \norm{f}_{L^q(\beta)}\Meg\big\| \norm{t (  \Lc \ee^{-t \Lc} f )(x) }_{L^2_t(\mi)}\big\|_{L^q_x(\beta)}\Meg  \big\| \norm{t (   \Lc \ee^{-t \Lc } f )(x) \big\|_{L^2_t(\mi_1)}}_{L^q_x(\beta)},
	\end{split}
	\]
	so  that $F^{q,2}_0(\beta)\subseteq L^p(\beta)$ continuously.
	
	Now, observe that, for every $f\in \Sc(G)$, 
	\[
	\int_0^\infty t    \Lc \ee^{-t \Lc } f  \,\dd \mi(t)   =f
	\]
	as one sees by means of the functional calculus for $  \Lc$  (or thanks to Lemma~\ref{lem:9}). Consequently, for every $f,g\in \Sc(G)$,
	\[
	\int_G\int_{0}^\infty t^2 (  \Lc  \ee^{-t \Lc   } f )(x) (   \Lc  \ee^{-t \Lc } \overline g)(x) \,\dd \mi(t)\,\dd \beta(x)= \int_G f\overline g\,\dd \beta.
	\]
	Therefore,
	\[
	\begin{split}
		\norm{f}_{L^p(\beta)}&= \sup_{\norm{g}_{L^{p'}(\beta)}\meg 1} \abs*{\int_G f\overline g\,\dd \beta}\\
		&\meg   \sup_{\norm{g}_{L^{p'}(\beta)}\meg 1}  \abs*{\int_G\int_{0}^\infty t^2 ( \Lc\ee^{-t \Lc} f )(x) (  \Lc \ee^{-t \Lc} \overline g)(x) \,\dd \mi(t)\,\dd \beta(x)}\\
		&\meg \sup_{\norm{g}_{L^{p'}(\beta)}\meg 1}\big\| \norm{t (  \Lc \ee^{-t \Lc} f )(x) }_{L^2_t(\mi)}\big\|_{L^p_x(\beta)}\big\|\norm{t ( \Lc \ee^{-t \Lc} g )(x) }_{L^2_t(\mi)}\big\|_{L^{p'}_x(\beta)}\\
		&\meg C_{p'} \big\| \norm{t ( ( \Lc \ee^{-t \Lc} f )(x) }_{L^2_t(\mi)}\big\|_{L^p_x(\beta)}.
	\end{split}
	\]
	Now, observe that
	\[
	\begin{split}
		\big\| \norm{t ( ( \Lc \ee^{-t \Lc} f )(x) }_{L^2_t(\mi)}\big\|_{L^p_x(\beta)}&\meg \big\| \norm{t ( ( \Lc \ee^{-t \Lc} f )(x) }_{L^2_t(\mi_1)}\big\|_{L^p_x(\beta)}\\
		&\qquad +\big\|\norm{t ( ( \Lc \ee^{-t \Lc} f )(x) }_{L^2_t((1,+\infty),\mi)}\big\|_{L^p_x(\beta)}.
	\end{split}
	\]
	The first term may be controlled by means of $\norm{f}_{F^{p,2}_{\alpha}(\beta)}$. In order to deal with the second term, by~\eqref{Lcj1} and~\eqref{Lcj2} there is a constant $C''>0$ such that 
	\[
	t\norm{    \Lc \ee^{-t \Lc } f }_{L^p(\beta)}\meg C''  \ee^{-c t}\norm{\ee^{-(1/2)\Lc}f}_{L^p(\beta)}
	\]
	for every $t\Meg 1 $, whence the result by means of Theorem~\ref{teo:2}.
\end{proof}
The next result should be compared with~\cite[Theorems 5.1 and 5.2 (i), (ii), (iv)]{BPV}.

\begin{prop}\label{prop:2}
Suppose $p_1,p_2,q_1,q_2\in [1,\infty]$ and $\alpha_1,\alpha_2\in \R$. Then, the following continuous embeddings hold.
	\begin{enumerate}
		\item[\textnormal{(1$\:$)}] $B^{p_1,q_1}_{\alpha_1}(\beta)\subseteq B^{p_1,q_2}_{\alpha_2}(\beta)$ if either $\alpha_2<\alpha_1$, or $\alpha_2=\alpha_1$ and $ q_1\meg q_2$.
		
		\item[\textnormal{(1$'$)}] $F^{p_1,q_1}_{\alpha_1}(\beta)\subseteq F^{p_1,q_2}_{\alpha_2}(\beta)$ if $p_1\in (1,\infty)$ and either $\alpha_2<\alpha_1$ or $\alpha_2=\alpha_1$ and $ q_1\meg q_2$.
		
		\item[\textnormal{(2$\:$)}] $\Delta_L^{1/p_1-1/p_2} B^{p_1,q_1}_{\alpha_1}(\beta)\subseteq B^{p_2,q_1}_{\alpha_2}(\beta)$ if $p_1\meg p_2$ and $\alpha_1-\frac{Q_*}{p_1}\Meg\alpha_2-\frac{Q_*}{p_2} $.
		
		\item[\textnormal{(2$'$)}] $\Delta_L^{1/p_1-1/p_2} F^{p_1,q_1}_{\alpha_1}(\beta  )\subseteq F^{p_2,q_2}_{\alpha_2}(\beta )$  if $1<p_1< p_2<\infty$ and $\alpha_1-\frac{Q_*}{p_1}\Meg\alpha_2-\frac{Q_*}{p_2} $.
		\item[\textnormal{(3$\:$)}] $B^{p_1,1}_{0}(\beta)\subseteq L^{p_1}(\beta)$, and if $p_1<p_2$ and $\alpha_1> \frac{Q_*}{p_1}-\frac{Q_*}{p_2} $, then $\Delta_L^{1/p_1-1/p_2}B^{p_1,q_1}_{\alpha_1}(\beta )\subseteq   L^{p_2}(\beta )$.
		
		\item[\textnormal{(3$'$)}]  	$\Delta_L^{1/p_1-1/p_2}F^{p_1,q_1}_{\alpha_1}(\beta )\subseteq L^{p_2}(\beta )$ if $1<p_1< p_2$ and $\alpha_1\Meg  \frac{Q_*}{p_1}-\frac{Q_*}{p_2} $, and either $p_2<\infty$ or $\alpha_1>  \frac{Q_*}{p_1}-\frac{Q_*}{p_2} $.
	\end{enumerate}
\end{prop}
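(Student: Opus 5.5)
We prove the six embeddings in the order in which they are stated. Each one is reduced to Theorem~\ref{teo:2}, which lets us pick the sampling measure $\mi=\sum_{j\in\N}\delta_{2^{-j}}$ together with any integer $m$ large enough, and then to the estimates for the heat semigroup.

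\emph{Items (1) and (1$'$).} Fix $m>\max(\alpha_1,\alpha_2)/\dL$ and use the same $m$ for both norms.
\begin{itemize}
\item If $\alpha_2=\alpha_1$ and $q_1\meg q_2$, the claim is the contractive inclusion $\ell^{q_1}\subseteq\ell^{q_2}$. For the $F$-spaces it is applied pointwise in $x$.
\item If $\alpha_2<\alpha_1$, we use Hölder's inequality in $j$:
\[
\big\|2^{j\alpha_2/\dL}a_j\big\|_{\ell^{q_2}}\meg \big\|2^{-j(\alpha_1-\alpha_2)/\dL}\big\|_{\ell^{r}}\,\big\|2^{j\alpha_1/\dL}a_j\big\|_{\ell^{\infty}}\meg C\big\|2^{j\alpha_1/\dL}a_j\big\|_{\ell^{q_1}},
\]
with $r=q_2$. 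For $F$ this is again done pointwise in $x$ before taking the $L^{p_1}$ norm.
\end{itemize}
The term $\norm{\ee^{-\Lc}f}_{L^p}$ is the same on both sides.

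\emph{Item (2).} We apply Corollary~\ref{cor:1} with $X=Y=I$ to $\ee^{-(t/2)\Lc}$, writing
\[
W^{(m)}_t(\Delta_L^{1/p_1-1/p_2}f)=2^m\,\ee^{-(t/2)\Lc}\bigl(W^{(m)}_{t/2}(\Delta_L^{1/p_1-1/p_2}f)\bigr).
\]
Here $\Delta_L^{1/p_1-1/p_2}$ commutes with the semigroup up to replacing $\Lc$ by the conjugated weighted subcoercive operator $\Lc'$ of Proposition~\ref{prop:5}. Theorem~\ref{teo:2} allows us to compute the norm with $\Lc'$.

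For $t\meg 1$ this gives
\[
\norm{W'^{(m)}_t \Delta_L^{1/p_1-1/p_2} f}_{L^{p_2}}\lesssim t^{-Q_*(1/p_1-1/p_2)/\dL}\norm{W^{(m)}_{t/2}f}_{L^{p_1}}.
\]
Multiplying by $t^{-\alpha_2/\dL}$ and using $\alpha_1-Q_*/p_1\Meg\alpha_2-Q_*/p_2$ gives the $B$ estimate; we pass from $t/2$ back to $t$ with Lemma~\ref{lem:26}. The low-frequency term $\norm{\ee^{-\Lc'}(\cdots)}_{L^{p_2}}$ is handled by the same corollary with $t=1$.

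\emph{Item (2$'$).} This is the main obstacle: a Jawerth--Franke type embedding, where the loss in $q$ has to be avoided. We first try the following route.
\begin{itemize}
\item By (1$'$), it suffices to take $q_2=\infty$ on the target side, and by Proposition~\ref{prop:1} we may enlarge the source space to $B^{p_1,p_1}_{\alpha_1}$ only when that costs nothing.
\item Instead, we use the Peetre-type bound of Lemma~\ref{lem:1}: $|W_t f|\lesssim T_{b,t}W_{t/2}f$. We interpolate the pointwise estimate $|W_{t}f(x)|\lesssim t^{-Q_*/(\dL p_1)}\norm{W_{t/2}f}_{L^{p_1}}$ against $\mathscr M$-type bounds (Lemma~\ref{lem:5}), in the classical Nikol'skij/Gagliardo--Nirenberg way.
\item Concretely, for each $x$ we split the sum over $j$ at a level $J(x)$. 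Below $J(x)$ we use the maximal function of the $F^{p_1,\infty}_{\alpha_1}$ square function. Above $J(x)$ we use the uniform bound by $\norm{f}_{F^{p_1,\infty}_{\alpha_1}}$. We optimize $J(x)$ and integrate with the vector-valued maximal inequality of Lemma~\ref{lem:5}.
\end{itemize}

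\emph{Items (3) and (3$'$).} For (3), write
\[
f=\ee^{-\Lc}f+\int_0^1\Lc\ee^{-t\Lc}f\,\dd t
\]
using Lemma~\ref{lem:9} with $m=0$, and bound the $L^{p_1}$ norm by the $B^{p_1,1}_0$ norm. For $p_2>p_1$ we combine this with (2), landing in $B^{p_2,1}_{0}$ using the strict inequality and (1). For (3$'$) we use (2$'$) to land in $F^{p_2,2}_0$ and then Theorem~\ref{teo:10}. When $p_2=\infty$ with strict inequality, we pass through $B^{p_1,\infty}_{\alpha_1}$ via Proposition~\ref{prop:1} and (3).
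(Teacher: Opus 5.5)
Your treatment of (1), (1$'$), (2), (3) and (3$'$) is fine and essentially the paper's. There are two small differences. The paper removes $\Delta_L$ once and for all by reducing to $\beta=\beta_L$ via Proposition~\ref{prop:5}, instead of conjugating $\Lc$. It also gets (1$'$) for $\alpha_2<\alpha_1$ from (1) and Proposition~\ref{prop:1}; your pointwise H\"older argument is just as good.

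The gap is in (2$'$), the only non-routine item, where your sketch fails as written in two places.

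First, the reduction is backwards. By (1$'$) the spaces $F^{p_2,q_2}_{\alpha_2}$ shrink as $q_2$ decreases, so an embedding into $F^{p_2,\infty}_{\alpha_2}$ says nothing about smaller $q_2$. What (1$'$) allows is to take the \emph{largest} source and the \emph{smallest} target, i.e.\ to prove $F^{p_1,\infty}_{\alpha_1}(\beta)\subseteq F^{p_2,1}_{\alpha_2}(\beta)$ with $\alpha_2=\alpha_1-Q_*/p_1+Q_*/p_2$. This is in fact what your ``sum over $j$'' implicitly requires.

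Second, the two estimates are attached to the wrong scales. Write $\Tc^k_j=2^{j\alpha_k/\dL}W^{(m)}_{2^{-j}}$, $M=\sup_{j\in\N}\abs{\Tc^1_jf}$ and $A=\norm{M}_{L^{p_1}(\beta)}$.
\begin{itemize}
\item The uniform bound from Corollary~\ref{cor:1} is $\abs{\Tc^2_jf(x)}\lesssim 2^{jQ_*/(\dL p_2)}A$. It \emph{grows} in $j$, so it can only be summed over $j\meg J$ (coarse scales).
\item The pointwise bound $\abs{\Tc^2_jf(x)}=2^{-j(\alpha_1-\alpha_2)/\dL}\abs{\Tc^1_jf(x)}\meg 2^{-j(\alpha_1-\alpha_2)/\dL}M(x)$ decays, so it must be used for $j>J$.
\end{itemize}
Using the uniform bound ``above $J(x)$'', as you propose, gives a divergent series.

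What is missing is the bookkeeping that actually avoids the loss in $q$. For every $J\in\N$ the two bounds give $\sum_j\abs{\Tc^2_jf(x)}\lesssim 2^{JQ_*/(\dL p_2)}A+2^{-J(\alpha_1-\alpha_2)/\dL}M(x)$. The key identity is $Q_*/p_2+(\alpha_1-\alpha_2)=Q_*/p_1$. Choose $J=J(x)$ with $2^{J(x)Q_*/(\dL p_1)}\asymp M(x)/A$ when $M(x)\Meg A$, and $J(x)=0$ otherwise. This gives $\sum_j\abs{\Tc^2_jf(x)}\lesssim A^{1-p_1/p_2}M(x)^{p_1/p_2}$, hence $\norm{\sum_j\abs{\Tc^2_jf}}_{L^{p_2}(\beta)}\lesssim A^{1-p_1/p_2}\norm{M}_{L^{p_1}(\beta)}^{p_1/p_2}=A$.

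No maximal operator (Lemma~\ref{lem:5}) is needed. $M$ itself lies in $L^{p_1}(\beta)$ by the definition of the $F^{p_1,\infty}_{\alpha_1}$ norm (Theorem~\ref{teo:2} with $\mi=\sum_j\delta_{2^{-j}}$).

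With these corrections your route is a valid pointwise (Hedberg-type) variant of the paper's proof. The paper uses exactly the same two bounds, but chooses the cut-off $N(t)$ as a function of the height $t$ and concludes through the distribution function of $\sum_j\abs{\Tc^2_jf}$. Choosing $J(x)$ pointwise is slightly shorter and reaches the same conclusion.
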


\begin{proof}
	Using Proposition~\ref{prop:5}, we may reduce to the case $\beta=\beta_L$, whence $\Delta_{L}=1$.
	
	(1) The case $\alpha_2=\alpha_1$ and $q_2\meg q_1$ follows from Theorem~\ref{teo:2} (choosing $\mi= \sum_{j\in\N} \delta_{2^{-j}}$) and the inclusion $\ell^{q_2}(\mi)\subseteq \ell^{q_1}(\mi)$. The case  $\alpha_2<\alpha_1$ follows from    H\"older's inequality when $q_1\meg q_2$, whence the result by the previous case.
	
	(1$'$) This follows from (1) and Proposition~\ref{prop:1} when $\alpha_2<\alpha_1$. The case $\alpha_2=\alpha_1$ and $q_2\meg q_1$ follows from  Theorem~\ref{teo:2} (choosing $\mi= \sum_{j\in\N} \delta_{2^{-j}}$) and the inclusion $\ell^{q_2}(\mi)\subseteq \ell^{q_1}(\mi)$.
	
	(2) Observe  that, by Corollary~\ref{cor:1}, there is a constant $C_1>0$ such that
	\[
	\norm{\ee^{-t\Lc} f}_{L^{p_2}(\beta)}  \meg C_1 t^{Q_*(1/p_2-1/p_1)}\norm{ \ee^{-(t/2)\Lc} f}_{L^{p_1}(\beta)}
	\]
	for every $f\in \Sc'(G)$ and $t\in (0,1]$ (say). The assertion
        follows by means of (1) and Definition~\ref{BTL:def}.

	(2$'$)  By (1$'$), it will suffice to show that $ F^{p_1,\infty}_{\alpha_1}(\beta)\subseteq F^{p_2,1}_{\alpha_2}(\beta)$ continuously when $\alpha_2= \alpha_1+\frac{Q_*}{p_2}-\frac{Q_*}{p_1}$. Take $m\in\N$ with $m>\alpha_1/\dL$ and $f\in   F^{p_1,\infty}_{\alpha_1}(\beta)$. We shall  set $\Tc_j^k \coloneqq 2^{j\alpha_k/\dL} W^{(m)}_{2^{-j}}$ for $j\in \N$ and $k=1,2$. It will then suffice to show that there is a constant $C_2>0$ such that
	\[
	\bigg\| \sum_{j\in \N} \abs{\Tc^2_j f}\bigg\|_{L^{p_2}(\beta)}\meg C_2\Big\| \sup_{j\in \N} \abs{\Tc^1_j f}\Big\|_{L^{p_1}(\beta)}
	\]
	(cf.~Corollary~\ref{cor:1}). We may reduce to the case in which $ \|\sup_{j\in \N} \abs{\Tc^1_j f}\|_{L^{p_1}(\beta)}=1$.
	Observe that, by Corollary~\ref{cor:1} there is a constant $C_3>0$ such that
	\[
	\begin{split}
		\norm{\Tc^2_j f}_{L^\infty(\beta)}&=2^{-j(\alpha_1-\alpha_2)/\dL} \norm{2^{-j(m-\alpha_1/\dL)} \ee^{-2^{-j-1}\Lc} \ee^{-2^{-j-1}\Lc} \Lc^m f}_{L^\infty(\beta)}\\
			&\meg C_3 2^{-j(\alpha_1-\alpha_2)/\dL+j Q_*/(\dL p_1)} \norm{\Tc^1_{j+1}f}_{L^{p_1}(\beta)}\\
			&=  C_3 2^{ j Q_*/(\dL p_2)} \norm{\Tc^1_{j+1}f}_{L^{p_1}(\beta)}.
	\end{split}
	\]
	Then, there is a constant $C_4>0$ such that, for every $N\in\N$,
	\[
	\begin{split}
	\sum_{j=0}^N \abs{\Tc_j^2 f}&\meg C_4 \sum_{j=0}^N 2^{ j Q_*/(\dL p_2)} \Big\|\sup_{j\in \N} \abs{\Tc^1_j f}\Big\|_{L^{p_1}(\beta)}=C_4 \sum_{j=0}^N 2^{j Q_*/(\dL p_2)} 
	\end{split}
	\]
	and 
	\[
	\sum_{j\Meg N}  \abs{\Tc_j^2 f}=\sum_{j\Meg N} 2^{-j(\alpha_1-\alpha_2)/\dL} \abs{\Tc_j^1 f}\meg C_4 2^{-N(\alpha_1-\alpha_2)/\dL} \sup_{j\in \N} \abs{\Tc^1_j f}.
	\]
Since
	\[
	\Big\|\sum_{j\in \N} \abs{\Tc^2_j f} \Big\|^{p_2}_{L^{p_2}(\beta)}=p_2 \int_0^\infty t^{p_2} \beta \bigg( \Big\{x\in G\colon \sum_{j\in \N} \abs{(\Tc^2_j f)(x)}>t  \Big\} \bigg)\,\frac{\dd t}{t},
	\]
one has
	\[
	\Big\{x\in G\colon \sum_{j\in \N} \abs{(\Tc^2_j f)(x)}>t  \Big\}\subseteq \Big\{x\in G\colon \sup_{j\in \N} \abs{(\Tc^1_j f)(x)}>C_4^{-1}t  \Big\},
	\]
	so that
	\[
	\begin{split}
		\int_0^1 t^{p_2} \beta \bigg( &\Big\{x\in G\colon \sum_{j\in \N} \abs{(\Tc^2_j f)(x)}>t  \Big\}\bigg) \, \frac{\dd t}{t}\\
		&\meg \int_0^\infty t^{p_1} \beta \bigg( \Big\{x\in G\colon \sup_{j\in \N} \abs{(\Tc^1_j f)(x)}>C_4^{-1}t  \Big\}\bigg)\,\frac{\dd t}{t}\meg\frac{C_4^{p_1}}{p_1} .
	\end{split}
	\]
For $t\Meg 1$, let now $N(t)$ be the largest integer $N$ such that
	\[
	\sum_{j=0}^N 2^{j Q_*/(\dL p_2)} <\frac{t}{2 C_4 }.
	\] 
	Observe that there is a constant $c>1$ such that $t/c\meg 2^{N(t) Q_*/(\dL p_2)} \meg c t$. Moreover,
	\[
	\begin{split}
		\bigg\{ x\in G\colon \sum_{j\in \N} \abs{(\Tc^2_j f)(x)}>t \bigg\}&\subseteq \bigg\{x\in G\colon \sum_{j>N(t)} \abs{(\Tc^2_j f)(x)}>t/2 \bigg\}\\
			&\subseteq \bigg\{x\in G\colon \sup_{j\in \N} \abs{(\Tc^1_j f)(x)} > \frac{t}{2C_4} 2^{N(t)(\alpha_1-\alpha_2)/\dL}  \bigg\}
	\end{split}
	\]
	and 
	\[
	t 2^{N(t)(\alpha_1-\alpha_2 )/\dL}\Meg c^{(\alpha_2-\alpha_1)p_2/Q_*} t\, t^{  (\alpha_1-\alpha_2) p_2/Q_* }=c^{1-p_2/p_1 }t^{p_2/p_1},
	\]
	so that 
	\[
	\begin{split}
			&\int_1^\infty t^{p_2} \beta \bigg( \bigg\{x\in G\colon \sum_{j\in \N} \abs{(\Tc^2_j f)(x)}>t  \bigg\}\bigg) \, \frac{\dd t}{t}\\
				&\qquad\meg \int_1^\infty t^{p_2} \beta \left( \Set{x\in G\colon \sup_{j\in \N} \abs{(\Tc^1_j f)(x)}>\frac{ c^{1-p_2/p_1}}{2 C_4}t^{p_2/p_1}  }\right)\,\frac{\dd t}{t}\\
			&\qquad\meg \frac{p_1}{p_2}\int_0^\infty t^{p_1} \beta \left( \Set{x\in G\colon \sup_{j\in \N} \abs{(\Tc^1_j f)(x)}>\frac{  c^{1-p_2/p_1 }}{2 C_4} t   }\right)\,\frac{\dd t}{t} =\frac{(2C_4)^{p_1} }{p_2  c^{p_1-p_2}}.
	\end{split}
	\]
	The assertion follows.
	
	(3)   By~(2), we may reduce to proving the first assertion. But this follows since by Lemma~\ref{lem:9},
	\[
	f=\ee^{-\Lc} f+ \int_0^1 \Lc \ee^{-t \Lc} f\,\dd t,
	\]
	so that for $f\in \Sc'(G)$
	\[
	\begin{split}
		\norm{f}_{L^{p_1}(\beta)}\meg \norm{\ee^{-\Lc}f}_{L^{p_1}(\beta)}+ \int_0^1 \norm{W^{(1)}_t f}_{L^{p_1}(\beta)}\,\frac{\dd t}{t}= \norm{f}_{B^{p_1,1}_0(\beta)}.
	\end{split}
	\]
	(3$'$) This follows from (2$'$) and Theorem~\ref{teo:10} when $p_2<\infty$. The case $p_2=\infty$ follows from (3) and Proposition~\ref{prop:1}.
\end{proof}

\section{Duality}\label{dual:sec}

In order to deal with duality we shall make use of the following auxiliary ``Goodman-type'' Sobolev spaces.

\begin{deff}
Suppose  $\alpha\Meg0$ and $p\in [1,\infty]$. We define $L^p_0(\beta)$ as the closure of $C_c(G)$ in $L^p(\beta)$. Then, we define the Sobolev space $W^{\alpha,p}(\beta)$ (resp.\ $W^{\alpha,p}_0(\beta)$) as the space of $f\in L^p(\beta)$ (resp.\ $f\in L^p_0(\beta)$) such that $X f\in L^p(\beta)$ (resp.\ $X f\in L^p_0(\beta)$) for every $X\in U_\alpha$, endowed with the norm 
\[
f\mapsto \max_{\substack{X\in U_\alpha, \;  \abs{X}\meg 1}} \norm{X f}_{L^p(\beta)}.
\]
\end{deff}
Observe that by definition $L^p_0(\beta)= L^p(\beta)$ if $p<\infty$,
while $L^\infty_0(\beta) =  C_0(G)$. By means of Theorems~\ref{teo:10} and~\ref{teo:3}, moreover, recalling also~\eqref{Sobolev}, it is readily seen that 
\[
L^{p,\alpha}(\beta) = F^{p,2}_\alpha(\beta)=W^{\alpha,p}(\beta) \qquad p\in (1,\infty), \: \: \alpha\in \dd \N.
\]
It is important to observe, though, that there are in general no
further exact correspondences between the two families of spaces, 
cf.\ the classical Euclidean case~\cite{Triebel}.

\begin{prop}\label{prop:7}
Suppose $\alpha\Meg 0$, $\alpha'\in \R$ such that $([\alpha'/\dL]+1)_+\meg \alpha/\dL$, and $p,q\in [1,\infty]$. Then, the following hold.
	\begin{enumerate}
		\item[\textnormal{(i)}] $W^{\alpha,p}(\beta)$ is a Banach space.
		
		\item[\textnormal{(ii)}]  $W^{\alpha,p}_0(\beta)$ is the closure of $C^\infty_c(G)$  in $W^{\alpha,p}(\beta)$.
		
		\item[\textnormal{(iii)}]  $W^{\alpha,p}(\beta)\subseteq B^{p,q}_{\alpha'}(\beta)$ continuously.
		
		\item[\textnormal{(iv)}]  If $p\in (1,\infty)$, then $W^{\alpha,p}(\beta)\subseteq F^{p,q}_{\alpha'}(\beta)$ continuously.
	\end{enumerate}
	In particular, $\Sc(G)\subseteq B^{p,q}_\alpha(\beta)$, and if $p\in(1,\infty)$ then $\Sc(G)\subseteq F^{p,q}_\alpha(\beta)$, continuously.
\end{prop}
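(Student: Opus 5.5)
We treat the four items in turn. (i) is routine: $U_\alpha$ is finite-dimensional, so the norm is equivalent to $\max_k\norm{Y_k f}_{L^p(\beta)}$ for a basis $(Y_k)$ of $U_\alpha$. If $(f_n)$ is Cauchy, then $Y_k f_n\to g_k$ in $L^p(\beta)$ and $f_n\to f$. Since convergence in $L^p(\beta)$ implies convergence in the sense of distributions and $Y_k$ is continuous on distributions, we get $Y_k f=g_k$. For $W^{\alpha,p}_0$ we use that $L^p_0(\beta)$ is closed.

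For (ii), the inclusion $\overline{C^\infty_c(G)}\subseteq W^{\alpha,p}_0(\beta)$ is clear. For the converse, take $f\in W^{\alpha,p}_0(\beta)$ and approximate it in two stages.
\begin{itemize}
\item[\textbf{Mollification.}] Set $f*\phi_n$ with $\phi_n\in C^\infty_c$ an approximate identity concentrating at $e$. Since $X(f*\phi_n)=(Xf)*\phi_n$ for left-invariant $X$, and right convolution is an approximate identity on $L^p_0(\beta)$ (here one uses $p<\infty$ or uniform continuity on $C_0$, and notes that the weights $\Delta_R$ are harmless near $e$), we get $f*\phi_n\to f$ in $W^{\alpha,p}$.
\item[\textbf{Truncation.}] Multiply by cutoffs $\psi_R$ equal to $1$ on $B(e,R)$ and supported in $B(e,2R)$, chosen with $\sup\abs{Y\psi_R}$ bounded uniformly in $R$ for $Y\in U_\alpha$. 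Such cutoffs can be built by convolving $\mathbf 1_{B(e,3R/2)}$ on the right with a fixed bump, using left-invariance of $d_*$. Then the Leibniz rule gives $X(\psi_R g)=\sum (X_1\psi_R)(X_2 g)$, and all terms tend to $0$ or to $Xg$ in $L^p$ because $X_2 g\in L^p_0$ (for $p=\infty$, the tails of $C_0$ functions are small).
\end{itemize}
The truncation step is the main technical point, and it is where membership in $L^p_0$, rather than $L^p$, is needed.

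For (iii)–(iv), set $m\coloneqq([\alpha'/\dL]+1)_+$, so that $m\dL\meg\alpha$ and $\Lc^m\in U_{m\dL}\subseteq U_\alpha$. Hence $\norm{\Lc^m f}_{L^p}\lesssim\norm{f}_{W^{\alpha,p}}$ and $\norm{\ee^{-\Lc}f}_{L^p}\lesssim\norm f_{L^p}$ by Corollary~\ref{cor:1}. Then
\[
\abs{W^{(m)}_t f}=t^m\abs{\ee^{-t\Lc}\Lc^m f}\lesssim t^m T_{b,t}(\Lc^m f),\qquad t\in(0,1].
\]
For Besov, Young's inequality gives $\norm{W^{(m)}_tf}_{L^p}\lesssim t^m\norm{\Lc^mf}_{L^p}$, and $t^{m-\alpha'/\dL}\in L^q(\mi_1)$ because $m>\alpha'/\dL$.

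For Triebel–Lizorkin, bound $t^{m-\alpha'/\dL}T_{b,t}g\meg t^{m-\alpha'/\dL}T^*_{b,1}g$, take the $L^q(\mi_1)$ norm in $t$, which is a finite constant, and use the $L^p$-boundedness of $T^*_{b,1}$ from Lemma~\ref{lem:5}. One caveat: $\Lc^m$ may fail to be homogeneous of degree exactly $\le\alpha$ if $\dL m>\alpha$, but the hypothesis $m\le \alpha/\dL$ rules this out.

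The final assertion follows from $\Sc(G)\subseteq W^{\alpha'',p}(\beta)$ for every $\alpha''$, which holds by Theorem~\ref{teo:8}~(5). Applying (iii)–(iv) with $\alpha''\Meg\dL([\alpha/\dL]+1)_+$ then gives the embeddings.
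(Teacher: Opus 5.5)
Your plan matches the paper's: (i) by the usual completeness argument; (ii) by mollification followed by truncation with the cutoffs $\mathbf 1_{B}*\phi$; (iii)--(iv) by writing $W^{(m)}_t f=t^m\ee^{-t\Lc}\Lc^m f$ with $\Lc^m\in U_{m\dL}\subseteq U_\alpha$. However, \textbf{the mollification step in (ii) uses a false identity}.

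With the paper's convention for convolution, a left-invariant $X$ acts on the \emph{right} factor:
\[
X(f*\phi_n)=f*(X\phi_n).
\]
The rule $(Xf)*\phi_n$ is what a \emph{right}-invariant operator gives. You in fact use the correct rule in your truncation step, where $Y(\mathbf 1_{B}*\phi)=\mathbf 1_B*Y\phi$.

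If you try to move the derivative from $\phi_n$ onto $f$ under right convolution, it gets twisted by the adjoint action:
\[
X(f*\phi_n)(x)=\int_G \big((\mathrm{Ad}(z)X)f\big)(xz^{-1})\,\phi_n(z)\,\dd\beta_R(z),
\]
and $\mathrm{Ad}(z)X$ generally has components of degree larger than $\deg X$. For example, on the Heisenberg group take $[X,Y]=T$, $\deg X=\deg Y=1$, $\deg T=2$. Then $\mathrm{Ad}(z)X=X-b(z)T$, where $b$ is the $Y$-coordinate of $z$, so
\[
X(f*\phi_n)=(Xf)*\phi_n-(Tf)*(b\,\phi_n).
\]
For $f\in W^{1,p}(\beta)$ the degree-$2$ derivative $Tf$ need not lie in $L^p(\beta)$. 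So, as written, you cannot conclude $X(f*\phi_n)\to Xf$ in $L^p(\beta)$. Rescuing right convolution would require an extra integration-by-parts argument that exploits the vanishing of the twisting coefficients at $e$.

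\textbf{The fix} is to mollify on the other side, as the paper does, using $\phi_n*f$. Then $X(\phi_n*f)=\phi_n*(Xf)$ for every $X\in U_\alpha$. Moreover, $\phi_n*g\to g$ in $L^p(\beta)$ for $g\in L^p_0(\beta)$:
\begin{itemize}
\item for $p<\infty$, by continuity of left translations on $L^p(\beta)$;
\item for $p=\infty$, by uniform continuity of functions in $C_0(G)$; also $\phi_n*g\in C_0(G)$ whenever $g\in C_0(G)$.
\end{itemize}
The factor $\Delta_L$ is harmless near $e$. With this change, your truncation step completes (ii) exactly as in the paper.

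Your (iii)--(iv) are correct but take a slightly different route. You treat every $q$ directly, using Young's inequality for $B$ and the maximal operator $T^*_{b,1}$ of Lemma~\ref{lem:5} for $F$. The paper proves only $W^{\alpha,p}(\beta)\subseteq B^{p,1}_{\alpha'}(\beta)$ and then invokes Proposition~\ref{prop:2}(1) and Proposition~\ref{prop:1}. Both work: yours avoids the embedding results, the paper's avoids the maximal inequality.
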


Since this result is purely instrumental, we claim no optimality in (iii)--(iv).

\begin{proof}
	(i) This follows from the usual arguments.
	
	(ii) It is readily seen that $W^{\alpha,p}_0(\beta)$ is a Banach space, so that it will suffice to show that $C^\infty_c(G)$ is dense in $W^{\alpha,p}_0(\beta)$. Take $f\in W^{\alpha,p}_0(\beta)$, and let $(\phi_j)$ be a sequence of positive elements of $C^\infty_c(G)$ such that $\supp\phi_j \subseteq B(e,2^{-j})$ and $\int_G \phi_j\,\dd \beta_R=1$ for every $j\in \N$. Then  $\phi_j*f$ converges to $f$ in $W^{\alpha,p}_0(\beta)$. 
	We may then reduce to the case in which $f\in C^\infty(G)$. Then, define $\psi_j\coloneqq \chi_{B(e, j+1)}*\phi_0$, so that 
	\[
	\chi_{B(e,j)}\meg \psi_j\meg \chi_{B(e,j+2)}
	\]
	for every $j\in \N$. In addition, $\sup_{j\in \N}
        \norm{X\psi_j}_{L^\infty(\beta)}$ is finite for every $X\in
        U(\mathfrak{g})$. Then $f \psi_j$ converges to $f$ in
        $W^{\alpha,p}_0(\beta)$, when $p\in[1,\infty]$ (recalling
        that 
        $f\in C^\infty$),
whence the conclusion.
	
	(iii)--(iv)	By Propositions~\ref{prop:1} and~\ref{prop:2}, it will suffice to show that $W^{\alpha,p}(\beta)\subseteq B^{p,1}_{\alpha'}(\beta)$ continuously. Set $m=([\alpha'/\dL]+1)_+$, so that $m\dL\meg \alpha$ by the assumptions. Then, take $f \in W^{\alpha,p}(\beta)$ and observe that 
	\[
	\norm{\Lc^h\ee^{-t \Lc} f}_{L^p(\beta)}\meg C\norm{\Lc^h f}_{L^p(\beta)}, \qquad h=0,\dots, m, \; t\in (0,1],
	\]
	where $C=\sup_{t\in (0,1]} \norm{h_t
          \Delta_R^{1/p'}}_{L^1(\beta)}$. Then, using
        Corollary~\ref{cor:carleson-measures}
	\[
	\norm{f}_{B^{p,1}_\alpha(\beta)}\meg \norm{f}_{L^p(\beta)}+C_1\norm{t^{m-\alpha/\dL}}_{L^1(\mi_1)} \norm{\Lc^m f}_{L^p(\beta)},
	\]
	whence the conclusion. 
\end{proof}
\begin{deff}
	Suppose $p,q\in [1,\infty]$ and $\alpha\in \R$. We define $\mathring B^{p,q}_\alpha(\beta)$ and, if $p\in(1,\infty)$, $\mathring F^{p,q}_\alpha(\beta)$ as the closure of $\Sc(G)$ in $B^{p,q}_\alpha(\beta)$ and in $F^{p,q}_\alpha(\beta)$ respectively.
\end{deff}

For the next lemma, let us denote by $L^{p,q}_0(\beta,\mi+\delta_0)$
the closure of $C_c(G\times R)$ in $L^{p,q}(\beta,\mi+\delta_0)$,
where $R$ is as in~\eqref{topological-sum:eq}; $L^{q,p}_0(\mi+\delta_0,\beta)$ is defined analogously. We recall
that $\theta_\alpha$ is defined in~\eqref{thetaalpha}, and $T_m$ in Lemma~\ref{lem:22}.
\begin{teo}\label{teo:4}
	Take $p,q\in [1,\infty]$ and $\alpha\in \R$. Then, the following hold.
	\begin{enumerate}
		\item[\textnormal{(i)}] Suppose $\mi\in \Mc_\Samp$ and $m\in\N$ with $m>\alpha/\dL$. Then $T_m$ induces an isomorphism of $\mathring B^{p,q}_\alpha(\beta)$ onto $(\theta_\alpha L^{p,q}_0(\beta,\mi+\delta_0))\cap \Fc_m$ and, if $p\in(1,\infty)$, of $\mathring F^{p,q}_\alpha(\beta)$ onto $(\theta_\alpha L^{q,p}_0(\mi+\delta_0,\beta))\cap \Fc_m$.

		\item[\textnormal{(ii)}]  The map
		\[
		B\colon (f,g)\mapsto \lim_{t\to 0^+} \langle \ee^{-t \Lc}f \vert \ee^{-t \Lc^*}g\rangle
		\]
		induces a well-defined continuous sesquilinear map on $B^{p,q}_\alpha(\beta)\times B^{p',q'}_{-\alpha}(\beta)$ and on $F^{p,q}_\alpha(\beta)\times F^{p',q'}_{-\alpha}(\beta)$ if $p \in (1,\infty)$.

		\item[\textnormal{(iii)}]   $B$ induces an antilinear  isomorphism of $B^{p',q'}_{-\alpha}(\beta)$ onto the dual of $\mathring B^{p,q}_\alpha(\beta)$, and also of $F^{p',q'}_{-\alpha}(\beta)$ onto the dual of $\mathring F^{p,q}_\alpha(\beta)$ if $p\in (1,\infty)$.
	\end{enumerate}
\end{teo}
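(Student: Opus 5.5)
The proof will go through the retraction picture of Lemmas~\ref{lem:22} and~\ref{lem:24}, which identifies the spaces with complemented subspaces of mixed-norm Lebesgue spaces over $G\times R$. Duality will then be read off from the classical duality of mixed-norm spaces. Throughout, fix $\mi\in\Mc_\Samp$ (say $\mi=\mi_1$) and $m\in\N$ with $m\dL>\abs{\alpha}$, so that Lemma~\ref{lem:24} applies both to $\alpha$ for $\Lc$ and to $-\alpha$ for $\Lc^*$. By Theorem~\ref{teo:2}, any such $m$ and $\mi$ may be used.

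For (i), I first check that $T_m\Sc(G)\subseteq \theta_\alpha L^{p,q}_0$. For $\phi\in\Sc(G)$, the family $t\mapsto W^{(m)}_t\phi$ is continuous into $\Sc(G)$ by Lemma~\ref{lem:9}, and it is small near $t=0$ in the weighted norm because $m\dL>\alpha$. By Theorem~\ref{teo:8}, $\Sc(G)$ functions are approximable in $L^p$ (or in $C_0$ when $p=\infty$) by $C_c$ functions. Truncating in $t$ and in $x$ therefore gives approximants in $C_c(G\times R)$, so the closure $\mathring B$ maps into $(\theta_\alpha L^{p,q}_0)\cap\Fc_m$.

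For the reverse inclusion, I take $(f_t)$ in the latter space and apply the projector $\Pc_m$ of Lemma~\ref{lem:24}. Applied to approximants $F_n\in C_c(G\times R)$, it yields $T_mP_mF_n\to (f_t)$. It then remains to check that $P_mF_n$ lies in $\mathring B$. Here $P_mF_n$ is a finite integral of heat operators applied to compactly supported continuous functions, so it lies in the Sobolev space $W^{\alpha',p}$ for every $\alpha'$ up to a smoothing argument, and should be approximable by $C^\infty_c$ functions via Proposition~\ref{prop:7}~(ii)–(iii). The Triebel--Lizorkin case is identical.

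For (ii), I use the semigroup property to write $\langle e^{-t\Lc}f|e^{-t\Lc^*}g\rangle=\langle e^{-2t\Lc}f|g\rangle$ formally. I then expand $f$ with the Calder\'on-type reproducing formula from the proof of Lemma~\ref{lem:24}:
\[
f=\sum_{h<2m}\tfrac{2^h}{h!}\Lc^h e^{-2\Lc}f+\tfrac{1}{(2m-1)!}\int_0^2 t^{2m}\Lc^{2m}e^{-t\Lc}f\,\tfrac{\dd t}{t}.
\]
Splitting $t^{2m}\Lc^{2m}e^{-t\Lc}=(t\Lc)^me^{-t\Lc/2}\,(t\Lc)^me^{-t\Lc/2}$ and moving one factor to $g$ as $(t\Lc^*)^me^{-t\Lc^*/2}$, the pairing becomes
\[
\int_G\int_0^2 t^{-\alpha/\dL}W_{t/2}^{(m)}f\;\overline{t^{\alpha/\dL}W'^{(m)}_{t/2}g}\,\tfrac{\dd t}{t}\,\dd\beta
\]
plus harmless low-frequency terms. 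Hölder's inequality in $L^{p,q}\times L^{p',q'}$ (in either order) bounds this by the product of the norms, using Theorem~\ref{teo:2} for $\Lc^*$ and $-\alpha$. The same computation for $e^{-s\Lc}f$ shows that the limit as $s\to0^+$ exists, by dominated convergence in $t$ once the functions are truncated.

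For (iii), the pairing of (ii) gives an injective continuous antilinear map $g\mapsto B(\cdot,g)$ from $B^{p',q'}_{-\alpha}$ into the dual of $\mathring B^{p,q}_\alpha$. Injectivity follows by testing against $\phi\in\Sc(G)$: since $B(\phi,g)=\langle\phi|g\rangle$, we get $g=0$. For surjectivity, I take a functional $\Lambda$ on $\mathring B$. Via (i), $\Lambda\circ T_m^{-1}\circ\Pc_m$ is a continuous functional on $\theta_\alpha L^{p,q}_0(\beta,\mi+\delta_0)$. By the classical duality of mixed-norm spaces (closure of $C_c$ is needed when an exponent is $\infty$), it is represented by some $(g_t)\in\theta_{-\alpha}L^{p',q'}$. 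I then set $g:=P'_m(g_t)$, where $P'_m$ is the analogue of $P_m$ built with $\Lc^*$, and Lemma~\ref{lem:24} gives $g\in B^{p',q'}_{-\alpha}$. Checking that $B(\cdot,g)=\Lambda$ amounts to verifying that the adjoint of $\Pc_m$ with respect to the $(\mi+\delta_0)\otimes\beta$ pairing is essentially $T^*_mP^*_m$ composed with a weight; this may need adjusting the weights in $P_m$.

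I expect this adjoint identity to be the main obstacle, together with the density claim in (i). My first attempt will be a direct Fubini computation against $\phi\in\Sc(G)$, which are dense in $\mathring B$ by definition.
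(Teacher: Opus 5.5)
Your treatment of (ii) is essentially the paper's, and your route to (i) is a legitimate alternative. Approximate $(f_t)$ by $F_n\in C_c(G\times R)$. Then $P_mF_n$ involves only operators $\Lc^k\ee^{-\tau\Lc}$ with $\tau$ bounded below, applied to compactly supported continuous functions, so $P_mF_n$ lies in every $W^{\alpha',p}_0(\beta)$, hence in $\mathring B^{p,q}_\alpha(\beta)$ by Proposition~\ref{prop:7}. Continuity of $\Pc_m$ and Lemma~\ref{lem:22} then give $f\in\mathring B^{p,q}_\alpha(\beta)$. This handles $F^{p,q}_\alpha(\beta)$ for all $q$ at once, whereas the paper uses $\ee^{-s\Lc}f\to f$ and needs a reflexivity/duality detour for Triebel--Lizorkin spaces. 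Note only that Lemma~\ref{lem:24} requires $m\dL>\abs{\alpha}$, so for smaller admissible $m$ you must transfer the statement from a larger $m$.

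The genuine gap is surjectivity in (iii): $g\coloneqq P'_m(g_t)$ does not represent $\Lambda$. If $(g_t)$ represents $\Lambda\circ T_m^{-1}\circ\Pc_m=\Lambda\circ P_m$, then for $\phi\in\Sc(G)$
\[
\Lambda(\phi)=\Lambda(P_mT_m\phi)=\langle T_m\phi\vert (g_t)\rangle=\langle \phi\vert \widetilde g\rangle, \qquad \widetilde g\coloneqq \ee^{-\Lc^*}g_0+\int (t\Lc^*)^m\ee^{-t\Lc^*}g_t\,\dd\mi(t).
\]
So the representative is forced to be this synthesis, i.e.\ the formal adjoint of $T_m$ applied to $(g_t)$. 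By contrast, $P'_m$ is a left inverse of the analysis map built on $\Lc^*$, which is a different operator. Already the $t=0$ parts differ: $\sum_{h<2m}\frac{2^h}{h!}\Lc^{*h}\ee^{-\Lc^*}g_0$ versus $\ee^{-\Lc^*}g_0$.

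Reweighting $P_m$ cannot fix this. $P_m$ must remain a left inverse of $T_m$, and its adjoint then consists of the averaged multipliers $s^{-m}\int_{\kappa s}^{\kappa s/\eps}\xi(t)\,t^{2m}\Lc^{*m}\ee^{-(t-s)\Lc^*}\,\frac{\dd t}{t}$ (up to constants), not of $(s\Lc^*)^m\ee^{-s\Lc^*}$. Hence Lemma~\ref{lem:24} tells you nothing about whether $\widetilde g\in B^{p',q'}_{-\alpha}(\beta)$ (resp.\ $F^{p',q'}_{-\alpha}(\beta)$).

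Proving this membership is the analytic core of the paper's proof, and it is missing from yours. It rests on the almost-orthogonality bound from Theorem~\ref{teo:7},
\[
\abs{(s\Lc^*)^m\ee^{-s\Lc^*}(t\Lc^*)^m\ee^{-t\Lc^*}g_t}\meg C\,\frac{(st)^m}{(s+t)^{2m}}\,T_{b,s+t}\,g_t, \qquad s,t\in(0,1].
\]
One then uses Lemma~\ref{lem:4}~(2) to get $T_{b,s+t}g_t\meg C\,T_{b',s}T_{b',t}g_t$ for $b'<2^{-1/(\dL-1)}b$, and Lemma~\ref{lem:2} to remove these operators. Schur's test (Lemma~\ref{lem:25}) with exponents $m\pm\alpha/\dL>0$ finishes the high-frequency part. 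The term $\ee^{-\Lc^*}g_0$ is handled by Corollary~\ref{cor:1} and Proposition~\ref{prop:7}, and the remaining low-frequency part by Lemma~\ref{lem:2}. Only after this do you get $B(\phi,\widetilde g)=\langle\phi\vert\widetilde g\rangle=\Lambda(\phi)$ on $\Sc(G)$, and density concludes.

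Finally, the representing family need not lie in $\theta_{-\alpha}L^{p',q'}$. For $p=\infty$ the $g_t$ are bounded measures. With $\mi=\mi_1$ and $q=\infty$, the dual contains functionals singular in $t$, such as evaluation at a fixed $t$. The paper avoids both issues by using the discrete measure $\nu=\delta_0+\sum_j\delta_{2^{-j}}$ and allowing $g_t\in\Mc^1(G)$.
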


Observe that, by means of Lemma~\ref{lem:22}, (i) shows that $\mathring B^{p,q}_\alpha(\beta)= B^{p,q}_\alpha(\beta)$ for every $p,q\in [1,\infty)$, and also that $\mathring F^{p,q}_\alpha(\beta)=  F^{p,q}_\alpha(\beta)$ for every $p\in (1,\infty)$ and  $q\in [1,\infty)$. For the classical cases, cf.~\cite[Theorem 2.11.2]{Triebel} and also~\cite[Remark 5.14 and Section 12]{FrazierJawerth} for (iii).

\begin{proof}
We begin by proving (ii), and leave the proof of (i) for the end. Take $m\in\N$ such that $m>\abs{\alpha}/\dL$, and observe that, for every $f\in F^{p,q}_\alpha(\beta)$ and $g\in F^{p',q'}_{-\alpha}(\beta)$, by Lemma~\ref{lem:9}
	\[
	\begin{split}
		&\langle \ee^{-t \Lc}f  \vert \ee^{-t \Lc^*}g\rangle\\
		&=\sum_{h=0}^{2m-1} \frac{2^h}{h!} \langle \Lc^h \ee^{-(t+2) \Lc}f\vert \ee^{-t \Lc^*}g\rangle+\frac{1}{(2m-1)!} \int_0^2 s^{2m} \langle \Lc^{2m} \ee^{-(s +t)\Lc}f\vert  \ee^{-t\Lc^*} g\rangle\,\frac{\dd s}{s}\\
		&=\sum_{h=0}^{2m-1} \frac{2^h}{h!} \langle \Lc^h \ee^{-(t+1) \Lc}f\vert \ee^{-(t+1) \Lc^*}g\rangle+\frac{4^m}{(2m-1)!} \int_0^1 s^{2m} \langle \Lc^{m} \ee^{-(s +t)\Lc}f\vert \Lc^{*m} \ee^{-(s+t)\Lc^*} g\rangle\,\frac{\dd s}{s}\\
		&=\sum_{h=0}^{2m-1} \frac{2^h}{h!} \langle \Lc^h \ee^{-(t+1) \Lc}f\vert \ee^{-(t+1) \Lc^*}g\rangle+\frac{4^m}{(2m-1)!} \int_t^{t+1} \!\!\!\!(s-t)^{2m-1} \langle \Lc^{m} \ee^{-s\Lc}f\vert  \Lc^{*m}\ee^{-s\Lc^*} g\rangle\,\dd s.
	\end{split}
	\]
Now, clearly
	\[
	\lim_{t\to 0^+} \sum_{h=0}^{2m-1} \frac{2^h}{h!} \langle \Lc^h \ee^{-(t+1) \Lc}f\vert \ee^{-(t+1) \Lc^*}g\rangle=\sum_{h=0}^{2m-1} \frac{2^h}{h!} \langle \Lc^h \ee^{-  \Lc}f\vert \ee^{- \Lc^*}g\rangle.
	\]
	In addition, 
	\[
	\begin{split}
		\int_t^{t+1} &(s-t)^{2m-1}\abs{\langle \Lc^{m} \ee^{-s\Lc} f\vert  \Lc^m\ee^{-s\Lc^*} g\rangle}\,\dd s\\
		& \qquad \meg  \int_G \int_0^{t+1} s^{2m-1} \abs{  (\Lc^{m} \ee^{-s\Lc}f)(x)   (\Lc^{*m}\ee^{-s\Lc^*} g)(x)}\,\dd s\,\dd \beta(x)\\
		&\qquad\meg  \int_G\norm{s^{-\alpha/\dL} (W^{(m)}_s f)(x) }_{L^q_s(\mi_{t+1})}\norm{s^{\alpha/\dL} (W^{*(m)}_s g)(x) }_{L^{q'}_s(\mi_{t+1})} \,\dd \beta(x)\\
		&\qquad\meg \big\| \norm{s^{-\alpha/\dL} (W^{(m)}_s f)(x) }_{L^q_s(\mi_{t+1})}\big\|_{L^p(\beta)} \big\|\norm{s^{\alpha/\dL} (W^{*(m)}_s g)(x) }_{L^{q'}_s(\mi_{t+1})}\big\|_{L^{p'}(\beta)},
	\end{split}
	\]
	where $W^{*(m)}_s= (s \Lc^*)^m \ee^{-s \Lc^*}$.
	It then follows that $B(f,g)$ is well defined and by dominated convergence
	\[
	B(f,g)=\sum_{h=0}^{2m-1} \frac{2^h}{h!} \langle \Lc^h \ee^{- \Lc}f\vert \ee^{-  \Lc^*}g\rangle+\frac{4^m}{(2m-1)!} \int_0^{1} s^{2m-1} \langle \Lc^{m} \ee^{-s\Lc}f\vert  \Lc^{*m}\ee^{-s\Lc^*} g\rangle\,\dd s.
	\]
	The proof for Besov spaces is similar.
	
	\smallskip
	
We now prove (iii).  Pick $\nu\coloneqq \delta_0+ \sum\nolimits_{j\in
  \N} \delta_{2^{-j}}$ 	and $L\in B^{p,q}_\alpha(\beta)'$.  Assume
first that $p<\infty$. Observe that, by~\cite[Theorem 1 of \S\
3]{BenedekPanzone},\footnote{The cited reference only deals with the
  case $p,q<\infty$. The case $q=\infty$ may be dealt with in a
  similar way since we chose $\nu$ as a discrete measure.} we may
identify the dual of $\theta_\alpha L^{p,q}_0(\beta,\nu)$   with
$\theta_{-\alpha} L^{p',q'}(\beta,\nu)$, so that by Lemma~\ref{lem:24}
and the Hahn--Banach theorem\footnote{Indeed, notice that 
  $\theta_\alpha L^{p,q}_0(\beta,\mu+\delta_0) \cap \Fc_m \ni \varphi \mapsto
  L(f)\in \C $ is bounded, where $\varphi=T_m f$, so that it extends to a continuous linear functional on $\theta_\alpha L^{p,q}_0(\beta,\mu+\delta_0)$.}
  there is   $g\in \theta_{-\alpha} L^{p',q'}( \beta,\nu)$ such that for every $f\in B^{p,q}_\alpha(\beta)$
	\[
	L( f)=\langle T_m f\vert g \rangle.
	\]
We set $g_t\coloneqq g(t,\,\cdot\,)$ for every $t\Meg 0$ to simplify the notation. Then, for  $f\in \Sc(G)$,
	\[
	\begin{split}
		L(f)& =  \langle T_m f\vert g \rangle\\
		&=  \langle \ee^{-\Lc} f\vert g_0\rangle + \sum_{j\in\N}\langle (2^{-j}\Lc)^m \ee^{-2^{-j}\Lc} f\vert g_{2^{-j}}\rangle   \\
		&=\langle  f\vert \ee^{-\Lc^*} g_0\rangle + \sum_{j\in \N}\langle  f\vert (2^{-j}\Lc^*)^m \ee^{-2^{-j}\Lc^*}g_{2^{-j}}\rangle  =   \langle  f\vert \widetilde g \rangle,
	\end{split}
	\]
	where
	\[
	\widetilde g\coloneqq \ee^{-\Lc^*} g_0+\sum_{j\in \N} (2^{-j}\Lc^*)^m \ee^{-2^{-j}\Lc^*}g_{2^{-j}}   .
	\]
The above computations show that the above sum defines an element of $\Sc'(G)$. We shall prove that $\widetilde g\in B^{p',q'}_{-\alpha}(\beta)$. 

To this aim, observe that   $\ee^{-\Lc^*} g_0\in W^{m,p'}(\beta)\subseteq B^{p',q'}_{-\alpha}(\beta)$ by Corollary~\ref{cor:1} and Proposition~\ref{prop:7}. Next, by Theorem~\ref{teo:7}, take $b,C_1>0$ such that $\abs{\Lc^{*k} h'_t}\meg C_1 t^{-k} p_{t,b}$ for all $t\in (0,2]$ and $k=0,\dots, 2m$, where $(h'_t)$ denotes the heat kernel of $\Lc^*$. 	
	Then,  
	\[
	\begin{split}
		\abs{W^{*(m)}_{s} (t\Lc^*)^m \ee^{-t\Lc^*}g_t}&\meg C_1 (s t)^m (t+s)^{-2m}  T_{b,s+t} g_t
	\end{split}
	\]
	for every $s,t\in (0,1]$. Consequently, given $b'< 2^{-1/(\dL-1)}b$, Lemma~\ref{lem:4}~(2) shows that there is a constant $C_2>0$ such that
	\[
	\abs{W^{*(m)}_{s} (t\Lc^*)^m \ee^{-t\Lc^*}g_t}\meg C_2 (s t)^m (t+s)^{-2m}  T_{b',s} T_{b',t} g_t
	\]
	for every $s,t\in (0,1]$. Then, by Lemmas~\ref{lem:2} and~\ref{lem:25} there is a constant $C_3>0$ such that
	\[
	\begin{split}
		&\bigg\| s^{\alpha/\dL}\sum_j  \norm{W^{*(m)}_{s} (2^{-j}\Lc^*)^m \ee^{-2^{-j}\Lc^*}g_{2^{-j}}}_{L^{p'}(\beta)}   \bigg\|_{L^{q'}_s(\mi_1)}\\
		&\qquad\meg C_3  \bigg\|   \sum_j \frac{s^{m+\alpha/\dL}2^{-j m} }{(2^{-j}+s)^{2m}}   \norm{g_{2^{-j}}}_{L^{p'}(\beta)}  \bigg\|_{L^{q'}_s(\mi_1)}   \meg C_3^2 \big\|  t^{\alpha/\dL} \norm{g_t}_{L^{p'}(\beta)}   \big\|_{L^{q'}_t((0,+\infty),\nu)} 
	\end{split}
	\]
	which is finite.
	Analogously, 
	\[
	\begin{split}
		\bigg\|  \sum_j  (\ee^{-\Lc^*} (2^{-j}\Lc^*)^m \ee^{-2^{-j}\Lc^*}g_{2^{-j}})(x) &  \bigg\|_{L^{p'}_x(\beta)}
		\meg C_1 2^{Q_*/\dL} \bigg\| \sum_j 2^{-jm}  (T_{b,2} g_{2^{-j}})(x)   \bigg\|_{L^{p'}_x(\beta)}\\
		&\meg C_1 2^{Q_*/\dL} \norm{t^{m-\alpha/\dL}}_{L^q_t(\nu)}\big\|  \norm{ (T_{b,2} g_t)(x)}_{L^{q'}_t(\nu)} \big\|_{L^{p'}_x(\beta)},
	\end{split}
	\]
	which is finite thanks to Lemma~\ref{lem:2} again. By
        Theorem~\ref{teo:2} then $\widetilde g \in B^{p',q'}_{-\alpha}(\beta)$.
	Then, by Lemma~\ref{lem:9}, 
	\[
	L(f)= \langle f\vert \widetilde g\rangle=\lim_{t\to 0^+} \langle \ee^{-2 t\Lc}f\vert \widetilde g \rangle= \lim_{t\to 0^+} \langle \ee^{-t \Lc} f\vert \ee^{-t \Lc^*}\widetilde g \rangle= B(f,\widetilde g) 
	\]
	for every $f\in \Sc(G)$. Thus, the canonical antilinear map
        $B^{p',q'}_{-\alpha}(\beta)\to \mathring
        B^{p,q}_{\alpha}(\beta)'$ is onto, (since
       $\mathring B^{p,q}_{\alpha}(\beta)$ is
          the closure of $\Sc(G)$ in the $B^{p,q}_{\alpha}(\beta)$ norm). The fact that it is also one-to-one follows from the density of $\Sc(G)$ in  $\mathring B^{p,q}_{\alpha}(\beta)$ and from the fact that $B(f,g)=\langle f\vert g \rangle$ for every $f\in \Sc(G)$ and $g\in B^{p',q'}_{-\alpha}(\beta)$ (argue as above).
	
	The case $p=\infty$ requires a minor modification of the above argument, since in this case the dual of $\theta_\alpha L^{p,q}_0(\nu,\beta)$   \emph{cannot} be identified with $\theta_{-\alpha} L^{p',q'}(\nu,\beta)$, but rather with $\theta_{-\alpha}\ell^{q'}(\Supp{\nu}; \Mc^1(G))$, that is, the space of families $(\mi_t)_{t\in \Supp{\nu}}$ of bounded Radon measures on $G$ such that
	\[
	\bigg( \norm{\mi_0}_{\Mc^1(G)}^q+\sum_{j\in\N}  (2^{-j\alpha/\dL} \norm{\mi_{2^{-j}}}_{\Mc^1(G)})^q\bigg) ^{1/q}
	\]
	is finite, endowed with the corresponding norm (with the usual modifications when $q=\infty$). In other words, we may take $g_t$ as above, but this time $g_t\in \Mc^1(G)$ for every $t\in \supp \nu$. Then the above computations may be repeated without further issues since $\ee^{-s \Lc}g_t\in L^1(\beta)$ for every $s>0$ and  $t\in \supp \nu$. 
	
	The proof of (iii) for Triebel--Lizorkin spaces follows the same route (we purposefully  highlighted how to deal with the operators $T_{b,s}$ in the above argument even when it was superfluous for Besov spaces).

\smallskip	
	
We then prove (i) for Besov spaces. Since 
\[
T_m \Sc(G)\subseteq (\theta_\alpha L^{p,q}_0(\beta,\mi+\delta_0))\cap \Fc_m,
\]
it will suffice to take $f\in B^{p,q}_\alpha(\beta)$ such that $T_m f\in (\theta_\alpha L^{p,q}_0(\beta,\mi+\delta_0))\cap \Fc_m$ and prove that $f\in \mathring B^{p,q}_\alpha(\beta)$. Moreover, for $ s\to 0^+$
	\[
	\ee^{-s\Lc} \ee^{-\Lc} f\to \ee^{-\Lc}f, \qquad \ee^{-s\Lc} W^{(m)}_t f\to W^{(m)}_t f 	\]
	in $L^p(\beta)$, since $\ee^{-\Lc}f,W^{(m)}_t f\in L^p_0(\beta)$ for every $t>0$. Then, observe that by Corollary~\ref{cor:1} there is a constant $C_1>0$ such that
	\[
	\norm{(I-\ee^{-s\Lc})W^{(m)}_t f}_{L^p(\beta)}\meg C_1 \norm{ W^{(m)}_t f}_{L^p(\beta)}
	\] 
	for every $t>0$ and  $s\in (0,1)$, so that by dominated convergence (if $p<\infty$) or by equicontinuity (if $p=\infty$), we see that $\ee^{-s\Lc}f$ converges to $f$ in $B^{p,q}_\alpha(\beta)$.
	
	Now, observe that $\ee^{-s \Lc} f\in W^{\alpha',p}_0(\beta)$  for every $\alpha' \Meg 0$ and $s>0$, thanks to Corollary~\ref{cor:1}. The conclusion then follows from  (ii) and (iii) of Proposition~\ref{prop:7}.  

\smallskip
	
We finally prove (i) for Triebel--Lizorkin spaces. Assume first that $q\in (1,\infty)$. Then, $L^{q,p}(\mi+\delta_0,\beta)$ is reflexive by~\cite[Theorem 1 of \S\ 4]{BenedekPanzone}, so that also $(\theta_\alpha L^{q,p}(\mi+\delta_0,\beta))\cap \Fc_m$ is reflexive. Now, $T_m$ induces an isomorphism of $\mathring F^{p,q}_\alpha(\beta)$ onto a closed subspace of $(\theta_\alpha L^{q,p}(\mi+\delta_0,\beta))\cap \Fc_m$, so that $\mathring F^{p,q}_\alpha(\beta)$ is reflexive as well. By (iii), the dual of $\mathring F^{p,q}_\alpha(\beta)$ may be  identified with $F^{p',q'}_{-\alpha}(\beta)$, and the dual of the closed subspace $\mathring F^{p',q'}_{-\alpha}(\beta)$ may be identified with $F^{p,q}_\alpha(\beta)$. This shows that $F^{p,q}_\alpha(\beta)$ may be identified with a quotient of the bidual of $\mathring F^{p,q}_{\alpha}(\beta)$, which is reflexive. Consequently, $F^{p,q}_\alpha(\beta)=\mathring F^{p,q}_{\alpha}(\beta)$ (and $F^{p',q'}_{-\alpha}(\beta)=\mathring F^{p',q'}_{-\alpha}(\beta)$). By Lemma~\ref{lem:22}, this is sufficient to prove (i) in this case.\footnote{Notice that we could have followed a `direct' route as in the case of Besov spaces, using the maximal function $T^*_{b,1}$ in order to apply the dominated convergence theorem. Nonetheless, this procedure cannot be extended to the case $q\in \{1,\infty\}$, so that we preferred this indirect route for consistency.}
	
	Now, observe that $T_m$ induces an isomorphism of $\mathring F^{p,\infty}_{\alpha}(\beta)$ onto a closed subspace of $(\theta_{\alpha} L^{\infty,p}_0(\nu,\beta))\cap \Fc_m$, and that the bidual of $L^{\infty,p}_0(\nu,\beta)$ is canonically isomorphic to $L^{\infty,p}(\nu,\beta)$ since $\nu$ is discrete and $p\in (1,\infty)$.\footnote{One first shows that the dual of $L^{\infty,p}_0(\nu,\beta)$ is canonically isomorphic with $L^{1,p'}(\nu,\beta)$ since $\nu$ is discrete and $p<\infty$, proceeding as in the proof of~\cite[Theorem 1 of \S\ 3]{BenedekPanzone}. Then,~\cite[Theorem 1 of \S\ 3]{BenedekPanzone} shows that the dual of $L^{1,p'}(\nu,\beta)$ may be canonically identified with $L^{\infty,p}(\nu,\beta)$ since $p'<\infty$.} 
	Since $T_m$ is defined on the reflexive space $\Sc'(G)$, this proves that the bidual of $\mathring F^{p,\infty}_\alpha(\beta)$ may be identified with a closed subspace of $F^{p,\infty}_\alpha(\beta)$, with equality if and only if $T_m\mathring F^{p,\infty}_\alpha(\beta)= (\theta_{\alpha} L^{\infty,p}_0(\nu,\beta))\cap \Fc_m$. Now, by (iii), $F^{p',1}_{-\alpha}(\beta)$ may be identified with the dual of $\mathring F^{p,\infty}_\alpha(\beta)$, and $F^{p,\infty}_\alpha(\beta)$ may be identified with the dual of the closed subspace $\mathring F^{p',1}_{-\alpha}(\beta)$ of $F^{p',1}_{-\alpha}(\beta)$. This implies that $F^{p,\infty}_\alpha(\beta)=\mathring F^{p,\infty}_\alpha(\beta)$ and $F^{p',1}_{-\alpha}(\beta)=\mathring F^{p',1}_{-\alpha}(\beta)$. This proves (i) for $q=1$ thanks to Lemma~\ref{lem:22}. This also proves (i) for $q=\infty$ and $\mi+\delta_0=\nu$. The case of general $\mi$ follows since one may show, repeating the proof of Lemma~\ref{lem:26}, that $(\theta_{\alpha} L^{\infty,p}_0(\nu,\beta))\cap \Fc_m=(\theta_{\alpha} L^{\infty,p}_0(\mi+\delta_0,\beta))\cap \Fc_m$.	
\end{proof}

\begin{oss}\label{oss:1}
	Observe that Propositions~\ref{prop:5},~\ref{prop:3},~\ref{prop:1}, and~\ref{prop:2}, as well as Theorem~\ref{teo:3}, elementary extend to the spaces $\mathring B^{p,q}_\alpha(\beta)$ and $\mathring F^{p,q}_\alpha(\beta)$.
	
	Also Theorem~\ref{teo:5} extends to these spaces, even though in general 
	\[
	\Lc_\omega^{(\alpha-\alpha')/\dL+i \gamma}\Sc(G)\not \subseteq \Sc(G).
	\] 
	In fact, if $\omega$ is so large that the convolution kernel of $\Lc_\omega^{(\alpha-\alpha')/\dL+i \gamma}$ belongs to $\Delta_R^\delta L^1(\beta)$ for every $\delta\in [0,1]$ (which may happen), then $\Lc_\omega^{(\alpha-\alpha')/\dL+i \gamma}\Sc(G)\subseteq W^{\alpha'',p}_0(\beta)$ for every $\alpha''$ and $p\in [1,\infty]$, so that the assertion follows by means of Proposition~\ref{prop:7}.
\end{oss}

\section{Interpolation}\label{Interpolation:sec}

\begin{teo}\label{teo:9}
Suppose  $\theta\in (0,1)$, $\alpha,\alpha_0,\alpha_1\in \R$, $p,p_0,p_1,q,q_0,q_1\in [1,\infty]$, and
\[
\alpha_\theta=(1-\theta)\alpha_0+\theta \alpha_1, \qquad \frac{1}{p_\theta}=\frac{1-\theta}{p_0}+\frac{\theta}{p_1}, \qquad \frac{1}{q_\theta}=\frac{1-\theta}{q_0}+\frac{\theta}{q_1}.
\]
 Then, the following equalities hold with equivalence of norms.
	\begin{enumerate}
		\item[\textnormal{(1)}] $(B_{\alpha_0}^{p,
                    q_0}(\beta),B_{\alpha_1}^{p,
                    q_1}(\beta))_{\theta,q}= B_{\alpha_\theta}^{p,
                    q}(\beta)$, provided $\alpha_0\neq
                  \alpha_1$.
		
		\item[\textnormal{(2)}] $(B_{\alpha}^{p, q_0}(\beta),B_{\alpha}^{p, q_1}(\beta))_{\theta,q_\theta}= B_{\alpha}^{p, q_\theta}$.
		
		\item[\textnormal{(3)}] $(B_{\alpha_0}^{p_0, q_0}(\beta),B_{\alpha_1}^{p_1, q_1}(\beta))_{\theta,q_\theta}= B_{\alpha_\theta}^{p_\theta, q_\theta}(\beta)$ ($q_0,q_1<\infty$ and $p_\theta=q_\theta$). 
				
		\item[\textnormal{(4)}] $(B_{\alpha_0}^{p_0, q_0}(\beta),B_{\alpha_1}^{p_1, q_1}(\beta))_{[\theta]}= B_{\alpha_\theta}^{p_\theta, q_\theta}(\beta)$ ($q_\theta\neq \infty$ or $\alpha_0=\alpha_1$).
				
		\item[\textnormal{(5)}] $(F_{\alpha_0}^{p, q_0}(\beta),F_{\alpha_1}^{p, q_1}(\beta))_{\theta,q}= B_{\alpha_\theta}^{p, q}(\beta)$ ($\alpha_0\neq \alpha_1$, $p\in (1,\infty)$).
		 		
		\item[\textnormal{(6)}] $(F_{\alpha}^{p_0, q}(\beta),F_{\alpha}^{p_1, q}(\beta))_{\theta,p_\theta}= F_{\alpha}^{p_\theta,q}(\beta)$ ($p_0,p_1\in (1,\infty)$).
		
		\item[\textnormal{(7)}] $(F_{\alpha_0}^{p_0, q_0}(\beta),F_{\alpha_1}^{p_1, q_1}(\beta))_{\theta,q_\theta}= F_{\alpha}^{p_\theta,q_\theta}(\beta)$ ($p_0,p_1\in (1,\infty)$, $p_\theta=q_\theta$).
		
		\item[\textnormal{(8)}] $(F_{\alpha_0}^{p_0, q_0}(\beta),F_{\alpha_1}^{p_1, q_1}(\beta))_{[\theta]}= F_{\alpha_\theta}^{p_\theta, q_\theta}(\beta)$ ($p_0,p_1\in (1,\infty)$ and either $q_\theta\neq \infty$ or $\alpha_0=\alpha_1$).
	\end{enumerate}
\end{teo}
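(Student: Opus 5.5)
The plan is a retraction argument, as already announced before Lemma~\ref{lem:24}. Fix $m\in\N$ with $m\dL>\max(\abs{\alpha},\abs{\alpha_0},\abs{\alpha_1})$ and $\nu=\delta_0+\sum_{j\in\N}\delta_{2^{-j}}$. By Lemma~\ref{lem:22}, $T_m$ is an isomorphism of $B^{p,q}_\alpha(\beta)$ onto $(\theta_\alpha L^{p,q}(\beta,\nu))\cap\Fc_m$, and of $F^{p,q}_\alpha(\beta)$ onto $(\theta_\alpha L^{q,p}(\nu,\beta))\cap\Fc_m$ for $p\in(1,\infty)$. By Lemma~\ref{lem:24}, the single operator $\Pc_m=T_mP_m$, which does not depend on $\alpha,p,q$, is a bounded projector of every $\theta_\alpha L^{p,q}(\beta,\nu)$ (resp.\ $\theta_\alpha L^{q,p}(\nu,\beta)$) onto these images. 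Hence $T_m$ and $P_m$ realize $B^{p,q}_\alpha(\beta)$ (resp.\ $F^{p,q}_\alpha(\beta)$) as a common retract of the weighted mixed-norm spaces, and the standard retraction theorem (Triebel, Theorem~1.2.4) reduces each item (1)--(8) to the corresponding identity for the ambient spaces. Since $\nu$ is discrete, $\theta_\alpha L^{p,q}(\beta,\nu)$ is the weighted sequence space $\ell^q_{\alpha/\dL}(L^p(\beta))$ over $\{0\}\cup\{2^{-j}\}$ with weights $2^{j\alpha/\dL}$ (weight $1$ at $0$), and $\theta_\alpha L^{q,p}(\nu,\beta)=L^p(\beta;\ell^q_{\alpha/\dL})$.

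Then I check each item against the classical interpolation of vector-valued sequence spaces. For (1) and (2), use $(\ell^{q_0}_{s_0}(A),\ell^{q_1}_{s_1}(A))_{\theta,q}=\ell^q_{s_\theta}(A)$ when $s_0\neq s_1$, and $(\ell^{q_0}_s(A),\ell^{q_1}_s(A))_{\theta,q_\theta}=\ell^{q_\theta}_s(A)$. For (3), when $p_\theta=q_\theta$ and $q_0,q_1<\infty$, use $(\ell^{q_0}_{s_0}(L^{p_0}),\ell^{q_1}_{s_1}(L^{p_1}))_{\theta,q_\theta}=\ell^{q_\theta}_{s_\theta}(L^{p_\theta})$, which follows from the Lions--Peetre/Stein--Weiss theorem for weighted $L^p$ spaces on the product $\nu\otimes\beta$. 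For (4), use the complex method for mixed-norm spaces (Benedek--Panzone), namely $[\ell^{q_0}_{s_0}(L^{p_0}),\ell^{q_1}_{s_1}(L^{p_1})]_\theta=\ell^{q_\theta}_{s_\theta}(L^{p_\theta})$ under the stated proviso. Item (5) follows from (1) together with Proposition~\ref{prop:1}, via the reiteration sandwich $B^{p,\min(p,q_i)}_{\alpha_i}\subseteq F^{p,q_i}_{\alpha_i}\subseteq B^{p,\max(p,q_i)}_{\alpha_i}$: the real method with $\alpha_0\neq\alpha_1$ forgets the inner index. Items (6) and (8) follow from the interpolation of $L^p(\ell^q_s)$: real interpolation in $p$ with $q,s$ fixed gives $L^{p_\theta}(\ell^q_s)$, and the complex method gives $L^{p_\theta}(\ell^{q_\theta}_{s_\theta})$ (Triebel~1.18). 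Item (7) reduces to (3) because $L^{p}(\ell^{p}_s)=\ell^{p}_s(L^p)$, so $F^{p,p}_\alpha=B^{p,p}_\alpha$, and one can argue through Proposition~\ref{prop:1}.

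The hard part is the endpoint cases with $q_\theta=\infty$ or $p_i=\infty$. In complex interpolation, $\ell^\infty$-type spaces are not recovered: one gets only the closure of the intersection, which is why the hypothesis $q_\theta\neq\infty$ or $\alpha_0=\alpha_1$ appears. When $\alpha_0=\alpha_1$ and $q_\theta=\infty$, I would first use the ``$\mathring{}$'' spaces of Theorem~\ref{teo:4}~(i), then identify the result with the full space via Calderón's upper method $[\cdot]^\theta$ and duality (Theorem~\ref{teo:4}~(iii)). A second point to check carefully is that the projector $\Pc_m$ is bounded simultaneously on both endpoint spaces, including $p=\infty$ or $q\in\{1,\infty\}$. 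Lemma~\ref{lem:24} gives this for all $p,q\in[1,\infty]$ with $\abs{\alpha}<m\dL$, and this is exactly why $m$ was chosen large in terms of all the $\alpha$'s.
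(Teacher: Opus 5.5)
Your skeleton is the paper's. It retracts through $T_m$ and $\Pc_m$ (Lemmas~\ref{lem:22} and~\ref{lem:24}, with $m$ large enough for all the $\alpha$'s), then applies classical interpolation of the weighted vector-valued spaces, and uses the sandwich of Proposition~\ref{prop:1} for (5). Taking the discrete $\nu$ is a harmless simplification. Two steps, however, do not work as written.

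\textbf{The reduction of (7) to (3) fails.} Only the target $F^{p_\theta,p_\theta}_{\alpha_\theta}(\beta)=B^{p_\theta,p_\theta}_{\alpha_\theta}(\beta)$ is a Besov space; the endpoints are not. The sandwich $B^{p_i,\min(p_i,q_i)}_{\alpha_i}\subseteq F^{p_i,q_i}_{\alpha_i}\subseteq B^{p_i,\max(p_i,q_i)}_{\alpha_i}$ produces Besov couples whose fine indices do not interpolate to $p_\theta$ with parameter $q_\theta$, so (3) does not apply.

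For $\alpha_0=\alpha_1$ information is genuinely lost. With $(p_0,q_0)=(2,4)$, $(p_1,q_1)=(4,2)$ and $\theta=1/2$, the sandwich only gives $B^{8/3,2}_\alpha\subseteq(F^{2,4}_\alpha,F^{4,2}_\alpha)_{1/2,8/3}\subseteq B^{8/3,4}_\alpha$. For $\alpha_0\neq\alpha_1$ the sandwich can be rescued by $(\ell^{r_0}_{s_0}(A_0),\ell^{r_1}_{s_1}(A_1))_{\theta,q}=\ell^{q}_{s_\theta}((A_0,A_1)_{\theta,q})$, but not by (3).

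The fix is to stay inside the retraction, as the paper does. The Lions--Peetre theorem gives $(L^{p_0}(A_0),L^{p_1}(A_1))_{\theta,p_\theta}=L^{p_\theta}((A_0,A_1)_{\theta,p_\theta})$ for $p_0,p_1<\infty$. With $A_i=\ell^{q_i}_{s_i}$ one has $(\ell^{q_0}_{s_0},\ell^{q_1}_{s_1})_{\theta,p_\theta}=\ell^{q_\theta}_{s_\theta}$ precisely because $p_\theta=q_\theta$.

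Similarly, (3) should come from the vector-valued Lions--Peetre theorem for $\ell^{q}(A)$ (with $q_0,q_1<\infty$) together with $(L^{p_0},L^{p_1})_{\theta,q_\theta}=L^{p_\theta}$. Stein--Weiss on $\nu\otimes\beta$ is not the right tool, since the endpoints are mixed-norm spaces rather than $L^p$ spaces on the product.

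\textbf{The endpoint of (4) is not provable when $p_0\neq p_1$.} This is the case $q_0=q_1=\infty$, $\alpha_0=\alpha_1=\alpha$. Your plan (pass to $\mathring B$, then recover the full space via the upper method and duality) cannot succeed, because the identity itself fails there. The lower method always makes $A_0\cap A_1$ dense in $(A_0,A_1)_{[\theta]}$. But $B^{p_0,\infty}_\alpha(\beta)\cap B^{p_1,\infty}_\alpha(\beta)$ is not dense in $B^{p_\theta,\infty}_\alpha(\beta)$.

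A counterexample already exists for $G=\R^n$ and $\Lc=-\Delta$. Take the lacunary sum $f=\sum_k 2^{-k\alpha}2^{kn/p_\theta}\psi(2^k(\cdot-x_k))$ of $L^{p_\theta}$-normalized wave packets, with $\widehat\psi$ supported in a fixed annulus and the $x_k$ far apart. Then $f\in B^{p_\theta,\infty}_\alpha$. Any $g$ close to $f$ in that space has $k$-th Littlewood--Paley piece with $L^{p_\theta}$-mass $\gtrsim 2^{-k\alpha}$ on a ball of radius $\sim 2^{-k}$. By H\"older, that piece has $L^{p_1}$-norm $\gtrsim 2^{-k\alpha}2^{kn(1/p_\theta-1/p_1)}$, which is incompatible with $g\in B^{p_1,\infty}_\alpha$ when $p_1>p_\theta$. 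Under the retraction this is the strict inclusion $(\ell^\infty_s(L^{p_0}),\ell^\infty_s(L^{p_1}))_{[\theta]}\subsetneq\ell^\infty_s(L^{p_\theta})$.

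The paper settles (4) only by citing Triebel (Theorem 1.18.1 and Remark 2) and Bergh--L\"ofstr\"om (Theorem 5.1.2). Those results cover $q_\theta<\infty$. For $q_\theta=\infty$ and $\alpha_0=\alpha_1$, the Besov identity holds only trivially when $p_0=p_1$; the correct general statement is the one for $\mathring B$ in Remark~\ref{oss:3}.

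The corresponding subcase of (8) is unaffected. There the outer norm is $L^p$ with $p<\infty$, and the inner spaces coincide when $\alpha_0=\alpha_1$.
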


\begin{proof}
	The assertions follow from Lemmas~\ref{lem:22} and~\ref{lem:24},~\cite[Theorem 6.4.3]{BerghLofstrom}, and the corresponding interpolation results for the relevant mixed norm spaces:~\cite[Theorem 5.6.1]{BerghLofstrom} for (1) and (2),~\cite[Theorem 1.18.1]{Triebel} for (3),~\cite[Theorem 1.18.1 and the following Remark 2]{Triebel} and~\cite[Theorem  5.1.2]{BerghLofstrom} for (4), and~\cite[Theorems 1.18.1 and the following Remark 2, and Theorem 1.18.4]{Triebel}  for (6)--(8). It remains to prove (5). This follows from Proposition~\ref{prop:1} and (1): since 
	\[
	B_{\alpha_j}^{p,\min(p,q_j)}(\beta)\subseteq F_{\alpha_j}^{p,q_j}(\beta)\subseteq B_{\alpha_j}^{p,\max(p,q_j)}(\beta)
	\]
	continuously ($j=0,1$), one has 
\begin{align*}
	B_{\alpha_\theta}^{p,q}(\beta)
	&=(B_{\alpha_0}^{p, \min(p,q_0)}(\beta),B_{\alpha_1}^{p, \min(p,q_1)}(\beta))_{\theta,q}\\
	& \subseteq (F_{\alpha_0}^{p, q_0}(\beta),F_{\alpha_1}^{p, q_1}(\beta))_{\theta,q}\subseteq (B_{\alpha_0}^{p, \max(p,q_0)}(\beta),B_{\alpha_1}^{p, \max(p,q_1)}(\beta))_{\theta,q}=B_{\alpha_\theta}^{p,q}(\beta)
\end{align*}
	continuously, whence the conclusion.
\end{proof}

\begin{oss}\label{oss:3}
	Arguing as in the proof of Theorem~\ref{teo:9}, one may show that 
	\[
	(\mathring B_{\alpha_0}^{p_0, q_0}(\beta),\mathring B_{\alpha_1}^{p_1, q_1}(\beta))_{[\theta]}= \mathring B_{\alpha_\theta}^{p_\theta, q_\theta}(\beta), \qquad (\mathring F_{\alpha_0}^{p_0, q_0}(\beta),\mathring F_{\alpha_1}^{p_1, q_1}(\beta))_{[\theta]}= \mathring F_{\alpha_\theta}^{p_\theta, q_\theta}(\beta)
	\]
	($p_0,p_1\in (1,\infty)$). Thus, the spaces $\mathring B^{p,q}_\alpha(\beta)$ and $\mathring F^{p,q}_\alpha(\beta)$ behave more nicely with respect to complex interpolation,  as the restriction $q_\theta<\infty$ unless $\alpha_0=\alpha_1$ is not needed. The analogues of (1)--(3) and (5)--(7) also hold, under the same assumptions, replacing $B$ and $F$ with $\mathring B$ and $\mathring F$, respectively, and the interpolation functor $(\,\cdot\,,\,\cdot\,)_{\theta, \infty}$ with $(\,\cdot\,,\,\cdot\,)_{\theta,\infty,0}$, defined so that $(A,B)_{\theta,\infty,0}$ is the closure of the intersection $\Delta(A, B)$ in $(A,B)_{\theta,\infty}$.
      \end{oss}

\section{Proof of
  Proposition~\ref{lem:8bis}}\label{A1:sec}

	(ii) 	 Observe that we may assume that $m'\Meg 0$, since
        otherwise $W^{'(m'),*}_{t,\eps}=0$, and also that $t_1\Meg
        1+\max \supp \mi$, by the boundedness of $\ee^{-t\Lc}$ on
        $L^{p}$. We divide the proof into four steps:
	\begin{itemize}
	\item[\textsc{I}.] We prove~\eqref{eq:5} with $W'^{(m'),*}_{t,\eps}$ replaced by $W'^{(m')}_t= (t\Lc')^{m'}\ee^{-t \Lc'}$ under the assumption that  $\alpha+\lambda >0$, $t_0>0$, and   $m'\in \N$.
	\item[\textsc{II}.] Assuming that $\Lc=\Lc'$, $t_0>0$, and $\lambda=0$, we prove~\eqref{eq:5} with $W'^{(m'),*}_{t,\eps}$ replaced by the operator defined by
\begin{equation}\label{mstarstar}
	W^{(m'),**}_{t,\eps} f \coloneqq t^{m'} \max_{s\in [\eps t,
          t/\eps]} \max_{\substack{Y\in U_{m' \dL}\\ \abs{Y}\meg 1}} \abs{ Y \ee^{-s \Lc} f }, \qquad f\in \Sc'(G).
\end{equation}
	   \item[\textsc{III}.] We assume that $t_0>0$ and $m'\in \N$, and  prove~\eqref{eq:5} with $W'^{(m'),*}_{t,\eps}$ replaced by $W'^{(m')}_{t}$.
	    \item[\textsc{IV}.]  We complete the proof.
	\end{itemize}
	We now discuss the above steps in detail.
	
	\textsc{Step I} Assume that $\alpha+\lambda >0$, $t_0>0$, and   $m'\in \N$. Let us begin by noticing that by Lemma~\ref{lem:9} and a change of variables,
\begin{equation}\label{decfkey}
	\begin{split}
	f&= \sum_{h=0}^{m-1}\frac{1}{h!} W^{(h)}_{2 t_1} f+ \frac{1}{(m-1)!}\int_0^{2 t_1} W^{(m)}_s f\,\dd \mi_{2t_1}(s)\\
		&= \sum_{h=0}^{m-1}\frac{1}{h!} W^{(h)}_{2 t_1} f+ \frac{2^{m}}{(m-1)!}\int_0^{ t_1} \ee^{-s \Lc} W^{(m)}_s f\,\dd \mi_{t_1}(s),
	\end{split}
\end{equation}
whence
\begin{multline}\label{fdecstep1}
t^{-\alpha/\dL' }W'^{(m')}_{t} X f\\
 = \sum_{h=0}^{m-1}\frac{1}{h!} t^{-\alpha/\dL' }W'^{(m')}_{t} X W^{(h)}_{2 t_1} f+ \frac{2^{m}}{(m-1)!}\int_0^{ t_1} t^{-\alpha/\dL' }W'^{(m')}_{t} X\ee^{-s \Lc} W^{(m)}_s f\,\dd \mi_{t_1}(s).
\end{multline}
Therefore, it will be enough to estimate the norms of $\ee^{-t_0\Lc'}X f $ and of the terms
\begin{equation}\label{twoterms}
t^{-\alpha/\dL' }W'^{(m')}_{t} X W^{(h)}_{2 t_1} f  , \qquad \int_0^{ t_1} t^{-\alpha/\dL' }W'^{(m')}_{t} X \ee^{-s \Lc} W^{(m)}_s f\,\dd \mi_{t_1}(s)
\end{equation}
separately. Let us start from the second term in~\eqref{twoterms}, and observe that for all $g\in\mathcal{S}'(G)$
	\[
	\big\|\norm{t^{-\alpha/\dL'}(W'^{(m')}_{t} X g)(x)}_{L^q_t(\mi)}\big\|_{L^p_x(\beta)}=\big\| \norm{t^{-\alpha/\dL }(W'^{(m')}_{t^{\dL'/\dL} } X g)(x)}_{L^q_t(\phi_*\mi)}\big\|_{L^p_x(\beta)},
	\]
where $\phi(t)=t^{\dL/\dL'}$, and notice that $\phi_*\mi\in \Mc_\Car$ by Lemma~\ref{lem:20}.  

If now $(h'_t)$ is the heat kernel associated with $\Lc'$, by Theorem~\ref{teo:7} we may take $C_1,b>0$ so that 
\begin{align}
	\abs{ Y \Lc'^{k'}\Lc^k h_t}&\meg C_1\abs{Y}  t^{-(\deg Y+k d +k'\dL')/\dL} p_{b,t,\dL} \label{eqC1},\\
	\abs{Y^{\dag R\dag}  \Lc'^{k'}   h'_t}&\meg C_1\abs{Y}   t^{-(\deg Y+k' \dL')/\dL'} p_{b,t,\dL'} \label{eqC2}
\end{align}
	for all $k=0,\dots, m$, all $k'=0,\dots, m'$, all $Y\in U_{\lambda+(m+1)d+(m'+1)\dL'}$,  and all $t\in (0,2t_1]$. Observe that for $s\in (0,2t_{1}]$, by~\eqref{eqC2} 
	\[
	\begin{split}
	\abs{ \Lc'^{m'} \ee^{-t^{\dL'/\dL}\Lc'} X \ee^{-s\Lc} W^{(m)}_s f}
	& = \abs{ X [(\ee^{-s\Lc} W^{(m)}_s f) *\Lc'^{m'} h'_{t^{\dL'/\dL}} ]}\\
	& = \abs{(\ee^{-s\Lc} W^{(m)}_s f) *X^{\dag R\dag} \Lc'^{m'} h'_{t^{\dL'/\dL}} }\\
	&\meg C_1  \abs{X}  t^{-(m' \dL'+\lambda)/\dL} T_{b,t^{\dL'/\dL},\dL'}\abs{\ee^{-s\Lc} W^{(m)}_s f}\\
		&\meg  C_1^2 \abs{X}  t^{-(m' \dL'+\lambda)/\dL} T_{b,t^{ \dL'/\dL},\dL'} T_{b, s,\dL}  W^{(m)}_s f,
	\end{split}
	\]
 and analogously, by~\eqref{eqC1}
	\[
	\begin{split}
		\abs{   \Lc'^{m'}\ee^{-t^{\dL'/\dL}\Lc'}   X \ee^{-s\Lc} W^{(m)}_s f}&\meg C_1   T_{b, t^{\dL'/\dL},\dL'}\abs{  \Lc'^{m'} X\ee^{-s\Lc} W^{(m)}_s f}\\
		&\meg  C_1^2  \abs{X}  s^{-(m'\dL'+\lambda)/\dL} T_{b,t^{\dL'/\dL},\dL'} T_{b,s,\dL}  W^{(m)}_s f.
	\end{split}
	\]
	Then,  since $\min(s^{-1},t^{-1}) \leq 2 (s+t)^{-1}$, there is a constant $C_2>0$ such that
		\[
	\abs{W'^{(m')}_{t^{\dL'/\dL}} X \ee^{-s\Lc} W^{(m)}_s f}\meg C_2 \abs{X}  t^{m'\dL'/\dL} (t+s)^{-(m' \dL'+\lambda)/\dL} T_{b,t^{\dL'/\dL},\dL'} T_{b,s,\dL}   W^{(m)}_s f.
	\]
	Then, Fubini's theorem and Lemma~\ref{lem:2} show that (notice that $\mu \in  \Mc_\Car$ has bounded support, hence if $\kappa(t)= t^{\dL'/\dL}$ and $\omega$ is that of Lemma~\ref{lem:2}, then $\ee^{-\omega \kappa(t)}$ is bounded from above and below $\mu$-a.e.) there is a constant $C_3>0$ such that 
	\[
	\begin{split}
		&\bigg\| \norm*{t^{-\alpha/\dL }\left(W'^{(m')}_{t^{\dL'/\dL}} X\int_0^{t_1}\ee^{-s\Lc} W^{(m)}_s f\,\dd \mi_{t_1}(s)\right)(x)}_{L^q_t(\phi_*\mi)}\bigg\|_{L^p_x(\beta)}\\
		&\meg C_2 \abs{X}  \bigg\|\norm*{t^{\frac{m' \dL'-\alpha}{\dL} } T_{b,t^{\dL'/\dL},\dL'}\left( \int_0^{t_1} \!\!(t+s)^{-\frac{m'\dL'+\lambda}{d}} T_{b,s,\dL} W^{(m)}_s f\,\dd \mi_{t_1}(s)\right)(x) }_{L^q_t(\phi_*\mi)}\bigg\|_{L^p_x(\beta)}\\
		&\meg C_3 \abs{X}   \bigg\|\norm*{t^{\frac{m' \dL'-\alpha}{\dL} }  \int_0^{t_1} (t+s)^{-\frac{m'\dL'+\lambda}{\dL}}  (T_{b,s,\dL} W^{(m)}_s f)(x)\,\dd \mi_{t_1}(s) }_{L^q_t(\phi_*\mi)}\bigg\|_{L^p_x(\beta)}.
	\end{split}
	\]
Since $\lambda+\alpha>0$, as well as $m'\dL'-\alpha>0$ by assumption, and
	\[
	t^{\frac{m' \dL'-\alpha}{\dL} } (t+s)^{-(m'\dL'+\lambda)/\dL} = \frac{t^{\frac{m'\dL'-\alpha}{\dL}} s^{\frac{\alpha + \lambda}{\dL}}}{(t+s)^{\frac{m'\dL'+\lambda}{\dL}}}  s^{-\frac{\alpha + \lambda}{\dL}},
	\]
we may use Lemma~\ref{lem:25} and then Lemmas~\ref{lem:2} and~\ref{lem:26}, to see that there is a constant $C_4>0$ such that

	\[
	\begin{split}
		& \bigg\|\norm*{t^{\frac{m' \dL'-\alpha}{\dL} }  \int_0^{t_1} (t+s)^{-(m'\dL'+\lambda)/\dL}  (T_{b,s,\dL} W^{(m)}_s f)(x)\,\dd \mi_{t_1}(s) }_{L^q_t(\phi_*\mi)}\bigg\|_{L^p_x(\beta)}\\
		&\qquad\qquad\meg C_4     \big\|\norm{  {s^{-(\alpha+\lambda)/\dL}} (T_{b,s,d}W^{(m)}_s f)(x)  }_{L^q_s(\mi_{t_1})}\big\|_{L^p_x(\beta)}\\
		&\qquad\qquad\meg C_4^2   \big\| \norm{   {s^{-(\alpha+\lambda)/\dL}}   (W^{(m)}_s f)(x)  }_{L^q_s(\mi_{t_1})}\big\|_{L^p_x(\beta)}\\
		&\qquad\qquad\meg  C_4^3  \big\|\norm{   {s^{-(\alpha+\lambda)/\dL}}   (W^{(m)}_s f)(x)  }_{L^q_s(\nu)}\big\|_{L^p_x(\beta)}.
	\end{split}
	\]
 Let us now estimate the norm of the first terms in~\eqref{twoterms}. For $h=0,\dots, m-1$,  by~\eqref{eqC1}, 
	\[
	\begin{split}
		\abs{ \Lc'^{m'}\ee^{-t \Lc'} X W^{(h)}_{2 t_1} f}&\meg C_1 (2 t_1)^h T_{b,t,\dL'}   \Lc'^{m'} X \Lc^h \ee^{-(2 t_1)\Lc}f \\
		&\meg C_1^2 \abs{X}   (2 t_1)^h t_1^{-(m'\dL'+\lambda+h\dL)/\dL} T_{b,t,\dL'} T_{b,t_1,\dL}  \ee^{-t_1\Lc}f ,
	\end{split}
	\]
	so that there is a constant $C_5>0$ such that
	\[
	\abs{W'^{(m')}_{t}   X W^{(h)}_{2 t_1} f}\meg  C_5 \abs{X}  t^{m'}T_{b,t,\dL'} T_{b,t_1,\dL}  \ee^{-t_1\Lc}f .
	\]
	Consequently, by Lemma~\ref{lem:2}, applied twice, we see that there are constants $C_6,C_{7}>0$ such that 
	\[
	\begin{split}
		\big\| \norm{t^{-\alpha/\dL'}(W'^{(m')}_{t} X W^{(h)}_{2 t_1} f)(x)}_{L^q_t(\mi)}\big\|_{L^p_x(\beta)}&\meg C_6 \abs{X}  \big\|  \norm{ t^{m'-\alpha/\dL'}( \ee^{-t_1\Lc}f)(x)   }_{L^q_t(\mi)}  \big\|_{L^p_x(\beta)}\\
		&\meg C_6\abs{X} \norm{ t^{m'-\alpha/\dL'}   }_{L^q_t(\mi)} \norm{\ee^{-t_1\Lc} f}_{L^p(\beta)}\\
		& {\leq C_7\abs{X} \norm{\ee^{-t_1\Lc} f}_{L^p(\beta)},}
	\end{split}
	\]
 where the last inequality holds since $\norm{ t^{m'-\alpha/\dL'}   }_{L^q_t(\mi)} $ is finite (cf.~the proof of Lemma~\ref{lem:25}).  We have thus estimated the norms of the two terms in~\eqref{twoterms}, whence by~\eqref{fdecstep1} there is a constant $C_8>0$ such that
	\[
	\begin{split}
	&\big\|\norm{t^{-\alpha/\dL'}(W'^{(m') }_{t} X f)(x)}_{L^q_t(\mi)}\big\|_{L^p_x(\beta)} \\
	& \qquad \qquad \meg C_8 \Big( \abs{X} \norm{\ee^{-t_1\Lc} f}_{L^p(\beta)}+ \big\|  \norm{      t^{-(\alpha+\lambda)/\dL}  (W^{(m)}_t f)(x)}_{L^q_t( \nu)}  \big\|_{L^p_x(\beta)}\Big).
		\end{split}
	\]
	Applying the previous inequality with $(m',\alpha,\lambda,\mi)$ replaced by $(0,\alpha-m' \dL', m'\dL'+\lambda,\delta_{t_0})$, we then infer that there is a constant $C_9>0$ such that
	\[
	\begin{split}
		\norm{\ee^{-t_0\Lc'} X f}_{L^p(\beta)}&=	 t_0^{\alpha/\dL'-m'}
		\big\|\norm{t^{m'-\alpha/\dL'}(\ee^{-t \Lc'} X f)(x)}_{L^q_t(\delta_{t_0})}\big\|_{L^p_x(\beta)}\\
		&\meg C_9 \Big( \abs{X} \norm{\ee^{-t_1\Lc} f}_{L^p(\beta)}+   \big\|  \norm{      t^{-(\alpha+\lambda)/\dL}  (W^{(m)}_t f)(x)}_{L^q_t( \nu)}  \big\|_{L^p_x(\beta)}\Big)
	\end{split}
	\] 
	for every $X\in U_{m' \dL'+\lambda}$ (in particular, for every $X\in U_\lambda$), whence the conclusion of the first step.

	\textsc{Step II}  Assume now that $\Lc=\Lc'$, $t_0>0$, and $\lambda=0$. Consider the operator $W^{(m'),**}_{t,\eps} $ defined in~\eqref{mstarstar}.  We shall use the decomposition~\eqref{decfkey} as in the previous step.
		
	By~\eqref{eqC1}, for every $Y\in U_{m' \dL}$, with $\abs{Y}\meg 1$, every $s'\in [\eps t,t/\eps]$, and $t,s\in (0,2 t_1]$,
	\[
	\begin{split}
	\abs{Y \ee^{- s'\Lc} W^{(m)}_{s} f}&=s^m\abs{  Y\ee^{-(s+s')\Lc}\Lc^m f }\\
		&\meg C_1 s^m (s'-\eps^2 t+ s(1-\eps^2))^{-\deg Y/\dL} T_{b,  s'-\eps^2 t+ s(1-\eps^2) ,\dL} \, \ee^{-\eps^2 (t+s)\Lc}  \Lc^m f\\
		&\meg C_1 s^m (s+t)^{-\deg Y/\dL}
                (\eps^2-\eps^3)^{-Q_*/\dL-m'}  T_{b, (t+s)/\eps,\dL} \, \ee^{-\eps (t+s)\Lc}  \Lc^m f,
	\end{split}
	\]
	where we used that
	\[
	(\eps-\eps^2)(t+s)  \meg s'-\eps^2 t+ s(1-\eps^2)\meg (t+s)/\eps .
	\]
	Then, there is a constant $C_{10}>0$ such that
	\[
	 W^{(m'),**}_{t,\eps} W^{(m)}_{s} f\meg C_{10} t^{m'} s^m (s+t)^{-m'} T_{b, (t+s)/\eps,\dL} \ee^{-\eps (t+s)\Lc}  \Lc^m f
	\]
	for every $t,s\in (0,2 t_1]$.	
	Then, if $m\geq 1$, by Lemmas~\ref{lem:25bis} and~\ref{lem:2} there is $C_{11}>0$ such that\footnote{Notice that, in order to apply Lemma~\ref{lem:25bis}, we have to interpret the integral with respect to $\mi_{2 t_1}$ as an integral with respect to the corresponding Haar measure. Consequently, we truncate the integrand accordingly and thus we obtain  an estimate in terms of the $L^q(\mi_{3 t_1})$ norm since $\mi$ is supported in $(0,t_1]$.}
		\[
	\begin{split}
	&\bigg\|  \norm*{t^{-\alpha/\dL}\left(W^{(m'),**}_{t,\eps} \int_0^{2 t_1}W^{(m)}_{s} f\,\dd \mi_{2 t_1}(s)\right)(x)}_{L^q_t(\mi)} \bigg\|_{L^p_x(\beta)}\\
		&\qquad\meg C_{10}\bigg\| \norm*{  t^{m'-\alpha/\dL}\int_0^{2 t_1} s^m(s+t)^{-m'}(T_{b, (t+s)/\eps,\dL} \ee^{-\eps (t+s)\Lc}  \Lc^m f)(x)\,\dd \mi_{2 t_1}(s) }_{L^q_t(\mi)} \bigg\|_{L^p_x(\beta)}\\
		&\qquad \meg C_{11}\big\| \norm{   s^{m-\alpha/\dL}(T_{b, s,\dL} \ee^{-\eps s\Lc}  \Lc^m f)(x) }_{L^q_s(\mi_{3 t_1})} \big\|_{L^p_x(\beta)}\\
		&\qquad\meg  C_{11}^2\big\| \norm{   s^{m-\alpha/\dL}( \ee^{-\eps s\Lc}  \Lc^m f)(x) }_{L^q_s(\mi_{3 t_1})} \big\|_{L^p_x(\beta)}\\
		&\qquad\meg  C_{11}^2 \eps^{-2(m-\alpha/\dL)}\big\| \norm{   s^{m-\alpha/\dL}( \ee^{- s\Lc}  \Lc^m f)(x) }_{L^q_s(\mi_{3\eps^2 t_1})} \big\|_{L^p_x(\beta)},
	\end{split}
	\]
	which may be estimated as desired by means on Lemma~\ref{lem:26} as in the previous step.  The terms 
	\[
	\big\|  \norm{t^{-\alpha/\dL}\big(W^{(m'),**}_{t,\eps} W^{(h)}_{2t_1} f \big)(x)}_{L^q_t(\mi)} \big\|_{L^p_x(\beta)},  \qquad h=0,\dots, m-1,
	\]
	may be estimated as in~\textsc{step I}.	
	Applying these arguments to $\mi=\delta_{t_0}$, we also get the desired estimate of $\norm{ \ee^{-t_0 \Lc}f}_{L^p(\beta)}$. 
	
	We are left with dealing with the case $m=0$. Because of the previous arguments, we may reduce to estimating 
	\[
	\big\|  \norm{t^{-\alpha/\dL}(W^{(1)}_{t}  f )(x)}_{L^q_t(\mi_{2 t_1})} \big\|_{L^p_x(\beta)} .
	\] This is done by means of Lemma~\ref{lem:2}, since $\abs{W^{(1)}_t f}\meg 2 C_1 T_{b,t/2} \ee^{-(t/2)\Lc} f$ for every $t\in (0,2 t_1]$. %\textcolor{blue}{Ma perché considerare $m=0$? Nel passo 1 mi sembra che implicitamente fosse $m\geq 1$...?}
	
	\textsc{Step III} Assume now that $t_0>0$ and $m'\in \N$. Take
        $k\in \N$ with $k \dL+\lambda+\alpha>0$, and        recall that there is $\omega\in \R$ such that
        $\Lc_\omega^{-1}$ induces an endomorphism of $L^p(\beta)$
        (Proposition~\ref{Smulders}). By means of
        Lemma~\ref{lem:27}, we then deduce that there is $g\in
        \Sc'(G)$ such that $  \Lc_\omega^{k} g=f$, provided that
        $\big\|\norm{t^{-(\alpha+\lambda)/\dL}(W^{(m)}_{t }
          f)(x)}_{L^q_t(\nu )}  \big\|_{L^p_x(\beta)}$ is finite (as
        we may assume).\footnote{Notice that, since
          $\Lc^{-k}_\omega\delta_e$ need not decay more than
          exponentially, $\Lc_\omega^{-k}$ does not in general induce
          an endomorphism of $\Sc'(G)$ (its adjoint need not preserve
          $\Sc(G)$), even though $\Lc^{-k}_\omega f$ may be defined
          for every fixed $f$ provided that $\omega$ is sufficiently
          large. Since we need to fix $\omega$ independently of $f$,
          we have to follow a different route.}  
	Observe that we may reduce to the case in which $m>\alpha/\dL$ and $\omega=0$, thanks to~\textsc{step II}, applied with $m'$ replaced by any $m''\in \N$ such that $m''>\alpha/\dL$ -- in fact, observe that clearly $\abs{(t \Lc_\omega)^{m''} \ee^{-t \Lc_\omega}g} \meg\abs{\Lc_\omega^{m''}} \ee^{-\omega t}W^{(m''),**}_{t,\eps}g$ for every $g\in \Sc'(G)$. 
	 
	Then, applying~\textsc{step I} to  $(X\Lc^{k},  m+k, g) $ in place of $(X,  m, f)$, we see that there is a constant $C_{12}>0$ such that
	\[
	\begin{split}
		&\norm{\ee^{-t_0\Lc'} Xf}_{L^p(\beta)}+ \big\|\norm{t^{-\alpha/\dL'}(W'^{(m')}_{t} X f)(x)}_{L^q_t(\mi )}  \big\|_{L^p_x(\beta)}\\
		&\qquad=\norm{\ee^{-t_0\Lc'} X\Lc^{k}g}_{L^p(\beta)}+\big\|\norm{t^{-\alpha/\dL'}(W'^{(m')}_{t} X \Lc^{k}g)(x)}_{L^q_t(\mi )}  \big\|_{L^p_x(\beta)}\\
		&\qquad\meg C_{12}\abs{X}  \Big( \norm{\ee^{-t_1 \Lc}g}_{L^p(\beta)}+\big\|\norm{t^{-(\alpha+\lambda+k)/\dL}(W^{(m+k)}_{t }   g)(x)}_{L^q_t(\nu )}  \big\|_{L^p_x(\beta)}\Big)\\
		&\qquad\meg C_{12}\abs{X}  \Big( \norm{\Lc^{-1}}^k_{  L^p(\beta) \to L^p(\beta) }\norm{\ee^{-  t_1 \Lc}f}_{L^p(\beta)}+ \big\|\norm{t^{-(\alpha+\lambda)/\dL}(W^{(m)}_{t }  f)(x)}_{L^q_t(\nu )}  \big\|_{L^p_x(\beta)}\Big),
	\end{split}
	\]
	whence the conclusion. 
	
	\textsc{Step IV} We now consider the general case. Take $m''\in\N$ such that $m''>\alpha/\dL'$. Observe that~\textsc{steps II} (applied with $(\Lc,\alpha,m',m,f)$ replaced by $(\Lc', \alpha-\deg Y, m'-\deg Y/\dL',m'',YXf)$) and~\textsc{III} (applied with $(\alpha,m',\lambda,X)$ replaced by $(\alpha-\deg Y,m'',\lambda+\deg(Y),YX)$) show that there is a constant $C_{13}>0$ such that
	\[
	\begin{split}
	&\big\|\norm{t^{(\deg Y-\alpha)/\dL'}(W'^{(m'-\deg Y/\dL'),**}_{t,\eps}Y X f)(x)}_{L^q_t(\mi )}  \big\|_{L^p_x(\beta)}\\
		&\qquad\meg C_{13} \Big( \norm{\ee^{-t_1 \Lc}Y X f}_{L^p(\beta)}+  \big\|\norm{t^{-(\deg Y-\alpha)/\dL'}(W'^{(m'')}_{t} Y X f)(x)}_{L^q_t(\mi )}  \big\|_{L^p_x(\beta)}\Big)\\
		&\qquad\meg C_{13}^2 \Big(\abs{X} \norm{\ee^{-  t_1 \Lc}f}_{L^p(\beta)}+\abs{X}  \big\|\norm{t^{-(\alpha+\lambda)/\dL}(W^{(m)}_{t }  f)(x)}_{L^q_t(\nu )}  \big\|_{L^p_x(\beta)}\Big),
	\end{split}
	\]
	for every $Y\in U_{m' \dL'}$ with $\abs{Y}\meg 1$ and $X\in U_\lambda$. By the arbitrariness of $Y$, this proves that there is a constant $C_{14}>0$ such that
	\begin{multline*}
		\big\|\norm{t^{-\alpha/\dL'}(W'^{(m'),*}_{t,\eps}X f)(x)}_{L^q_t(\mi )}  \big\|_{L^p_x(\beta)}\\
		 \meg C_{14} \abs{X} \Big(\norm{\ee^{-  t_1 \Lc}f}_{L^p(\beta)}+  \big\|\norm{t^{-(\alpha+\lambda)/\dL}(W^{(m)}_{t }  f)(x)}_{L^q_t(\nu )}  \big\|_{L^p_x(\beta)}\Big).
\end{multline*}
	If $t_0>0$, then applying the previous arguments to the case $\mi=\delta_{t_0}$ one shows that there is a constant $C_{15}>0$ such that
	\[
	\begin{split}
	\norm{\ee^{-t_0\Lc'} Xf}_{L^p(\beta)}&\meg t_0^{\alpha/\dL'-m'} \big\|\norm{t^{-\alpha/\dL'}(W'^{(m'),*}_{t,\eps}X f)(x)}_{L^q_t(\delta_{t_0} )}  \big\|_{L^p_x(\beta)}\\
		& \meg C_{15} \abs{X} \Big(\norm{\ee^{-  t_1 \Lc}f}_{L^p(\beta)}+  \big\|\norm{t^{-(\alpha+\lambda)/\dL}(W^{(m)}_{t }  f)(x)}_{L^q_t(\nu )}  \big\|_{L^p_x(\beta)}\Big).
	\end{split}
	\]
	Finally, assume that $t_0=0$, so that $\alpha>0$ by the assumptions. Then, by means of Corollary~\ref{cor:1} we see that there is a constant $C_{16}>0$ such that
	\[
	\begin{split}
		&\norm{X f}_{L^p(\beta)}\meg\sum_{h=0}^{m-1}\frac{1}{h!} \norm{X W^{(h)}_{2 t_1} f}_{L^p(\beta)}+ \frac{2^m}{(m-1)!}\norm*{\int_0^{t_1}  (X\ee^{-s \Lc} W^{(m)}_s f)(x)\,\dd \mi_{t_1}(s) }_{L^p_x(\beta)}\\
			&\meg C_{16} \norm{\ee^{-t_1\Lc} f}_{L^p(\beta)}+ 2^m C_1\norm*{\int_0^{ t_1} s^{-\lambda} (T_{b,s,\dL}W^{(m)}_s  f)(x)\,\dd \mi_{t_1}(s) }_{L^p_x(\beta)}\\
			&\meg C_{16} \norm{\ee^{-t_1\Lc} f}_{L^p(\beta)}+ 2^m C_1 t_1^{\lambda-\lambda}\norm{s^{\alpha}}_{L^{q'}_s(\mi_{t_1})}\big\|\norm{ s^{-(\alpha+\lambda)} (T_{b,s,\dL}W^{(m)}_s  f)(x)}_{L^q_s(\mi_{t_1})} \big\|_{L^p_x(\beta)},
	\end{split}
	\]
	so that the assertion follows from Lemmas~\ref{lem:2} and~\ref{lem:26}.
\qed

\section{Proof of Proposition~\ref{min-basis:pro}}\label{A2:sec}
		\textsc{Step I} Define $\pi\colon \gf\to \gf_*$ so that $\pi(X)\coloneqq X+\gf_{\deg(X)^-}$ for every $X\in \gf$. Observe that, even though $\pi$ need not be a homomorphism of Lie algebras (or even a linear mapping), it still satisfies the following properties:
		\begin{enumerate}
			\item[(1)] if $X'_1,\dots, X'_k\in \gf$, $\deg(X'_1)=\cdots=\deg(X'_k)$, and $\alpha_1,\dots, \alpha_k\in \R$, then
			\[
			\pi(\alpha_1 X'_1+\cdots+\alpha_k X'_k)= \alpha_1 \pi(X'_1)+\cdots+\alpha_k \pi(X'_k),
			\]
			\emph{provided that $\alpha_1 \pi(X'_1)+\cdots+\alpha_k \pi(X'_k)\neq 0$} (which is equivalent to saying that $\deg(\alpha_1 X'_1+\cdots+\alpha_k X'_k)=\deg(X'_1)$);
			
			\item[(2)] if $X'_1,\dots, X'_{k+1}\in \gf$, then
			\[
			\pi(\ad(X'_1)\cdots \ad(X'_k) X'_{k+1})= \ad(\pi(X'_1))\cdots \ad(\pi(X'_k)) \pi(X'_{k+1})
			\]
			\emph{provided that $\ad(\pi(X'_1))\cdots \ad(\pi(X'_k)) \pi(X'_{k+1})\neq0$} (which is equivalent to saying that \[
			\deg(\ad(X'_1)\cdots \ad(X'_k) X'_{k+1})=\deg(X'_1)+\cdots+\deg(X'_{k+1})).
			\]
		\end{enumerate}
		The first assertion follows easily from the definition. The second assertion follows from the case $k=1$ by induction, while the case $k=1$ follows from the definitions, since $[\pi(X'_1),\pi(X'_2)]=[X_1',X_2']+\gf_{(\deg(X_1)+\deg(X_2))^-}$.

		\textsc{Step II} For every $\lambda \Meg 1$, define $\Jc_\lambda$ as the set of $(j_1,\dots, j_{k+1})\in J^{k+1}$ ($k\in \N$) such that $\deg(X_{j_1})+\cdots +\deg(X_{j_{k+1}})\meg \lambda$, and define $\gf'_\lambda$ as the vector space generated by the $\ad(X_{j_1})\cdots \ad(X_{j_k}) X_{j_{k+1}}$ as $(j_1,\dots, j_{k+1})$ runs through $\Jc_\lambda$. Then, clearly $\gf'_\lambda \subseteq \gf_\lambda$ and the second part of the statement amounts to saying that $\gf'_\lambda=\gf_\lambda$ for every $\lambda \Meg 1$. Since $\gf_\lambda=\gf$ for $\lambda$ sufficiently large, this implies that $(X_j)$ generates $\gf$ as a Lie algebra.
		Let us then prove that $\gf'_\lambda=\gf_\lambda$ for
                every $\lambda\Meg 1$. Define $d_j\coloneqq \deg(X_j)$
                and $Y_j\coloneqq \pi(X_j)$ for every $j\in J$, so
                that $(Y_j+[\gf_*,\gf_*])_{j\in J}$ is a homogeneous
                basis of $\gf_*/[\gf_*,\gf_*]$; in particular, since $\gf_*$ is homogeneous, $(Y_j)$ generates
                $\gf$ as a Lie algebra. Let $(Y_j)_{j\in J'}$ be a homogeneous basis of $[\gf_*,\gf_*]$, so that $(Y_j)_{j\in J\cup J'}$ is a homogeneous basis of $\gf_*$, and define $d_j\coloneqq \deg(Y_j)$ for every $j\in J'$. If we choose $X_j\in \gf$ so that $\pi(X_j)=Y_j$ for every $j\in J'$, then one may prove, by induction, that $(X_j)_{j\in J\cup J'}$ is a basis of $\gf$, necessarily adapted to the filtration $(\gf_\lambda)$  (cf.~Proposition \ref{da-aggiungere:prop}).  
		Now, take $j\in J'$ and observe that, since $Y_j\in [\gf_*,\gf_*]$ and since $(Y_j)_{j\in J}$ generates $\gf_*$ as a Lie algebra, we may find a subset $J_j$ of $\Jc_{d_j}$ and a family $(\alpha_{j_1,\dots, j_{k+1}})$ of elements of $\R$ such that
		\[
		Y_j= \sum_{(j_1,\dots, j_{k+1})\in J_j} \alpha_{j_1,\dots, j_{k+1}}\ad(Y_{j_1})\cdots \ad(Y_{j_{k}}) Y_{j_{k+1}}.
		\]
		We may also assume that $\alpha_{j_1,\dots, j_{k+1}}\ad(Y_{j_1})\cdots \ad(Y_{j_{k}}) Y_{j_{k+1}}\neq 0$ for every $(j_1,\dots, j_{k+1})\in J_j$, so that~\textsc{step I} shows that
		\[
		X_j- \sum_{(j_1,\dots, j_{k+1})\in J_j} \alpha_{j_1,\dots, j_{k+1}}\ad(X_{j_1})\cdots \ad(X_{j_{k}}) X_{j_{k+1}}\in \gf_{d_j^-},
		\]
		so that
		\[
		X_j\in \gf'_{d_j}+\gf_{d_j^-}.
		\]
		Since clearly $\gf_{\dd_0}=\gf'_{\dd_0}$, by induction we may assume that $\gf_{d_j^-}=\gf'_{d_j^-}\subseteq \gf'_{d_j}$. The assertion then follows easily.
	\qed

\end{document}